\newtheorem{theorem}{Theorem}[section]
\newtheorem{corollary}{Corollary}[theorem]
\newtheorem{lemma}[theorem]{Lemma}
\newtheorem{definition}{Definition}
\newtheorem{conjecture}{Conjecture}
\newtheorem{remark}{Remark}
\def\th@plain{%
  \thm@notefont{}
  \itshape 
}
\def\th@definition{%
  \thm@notefont{}
  \normalfont 
}
\theoremstyle{definition}
\renewcommand{\Im}{\operatorname{Im}}
\DeclareMathOperator*{\argmin}{argmin} 
\title{On the Effect of Misspecifying the Embedding Dimension in Low-rank Network Models} 
\author{Roddy Taing and Keith Levin \\
{\normalsize Department of Statistics, University of Wisconsin--Madison}}
\date{\vspace{-5ex}}
\begin{document}
\newcommand{\R}{\mathbb{R}}
\newcommand{\C}{\mathbb{C}}
\newcommand{\bbN}{\mathbb{N}}
\newcommand{\bbO}{\mathbb{O}}

\newcommand{\ts}{\textsuperscript}

\newcommand{\sm}{\setminus}

\newcommand{\submatrix}[2]{\left( #1 \right)^{\setminus #2}}

\newcommand{\norm}[1]{\left\lVert#1\right\rVert}
\newcommand{\average}[1]{\langle #1 \rangle}
\newcommand{\RDPG}{\operatorname{RDPG}}
\newcommand{\GRDPG}{\operatorname{GRDPG}}
\newcommand{\as}{~\text{ a.s.}}

\newcommand{\twoinfnorm}[1]{\left\lVert#1\right\rVert_{2,\infty}}
\newcommand{\mf}[1]{\mathbf{#1}}
\newcommand{\hmf}[1]{\hat{\mathbf{#1}}}
\newcommand{\E}[1]{\mathbb{E}\left[ #1 \right]}
\newcommand{\Prob}[1]{\mathbb{P}\left( #1 \right)}
\newcommand{\abs}[1]{\left| #1 \right|}

\newcommand{\Op}{O_{\mathbb{P}}}

\newcommand{\uhat}{\hat u}
\newcommand{\uhatt}{\hat{u}^{(\sm \T)}}
\newcommand{\vhatt}{\hat{v}^{(\sm \T)}}
\newcommand{\lambdahat}{\hat\lambda}
\newcommand{\lambdahatt}{\hat\lambda^{(\sm \T)}}
\newcommand{\muhat}{\hat\mu}
\newcommand{\muhatt}{\hat\mu^{(\sm \T)}}

\newcommand{\vhat}{\hat v}
\newcommand{\calX}{\mathcal{X} }

\newcommand{\latent}{\mathbf{X}}

\newcommand{\psitilde}{\tilde{\psi}}
\newcommand{\iu}{\mathrm{i}\mkern1mu}

\newcommand{\msc}{m_{\operatorname{sc}}}
\newcommand{\Tr}{\operatorname{Tr}}

\newcommand{\T}{\mathbb{T}}
\newcommand{\EMt}[1]{u_{i#1}(\bm{u}_{#1})^{\setminus\T}}
\newcommand{\EMtj}[1]{u_{j#1}(\bm{u}_{#1})^{\setminus\T}}

\newcommand{\EM}[1]{u_{i#1}(\bm{u}_{#1})^{\setminus i}}
\newcommand{\EMij}[2]{u_{#2#1}(\bm{u}_{#1})^{\setminus ij}}

\newcommand{\VMt}[1]{v_{i#1}(\bm{v}_{#1})^{\setminus\T}}
\newcommand{\VMtj}[1]{v_{j#1}(\bm{v}_{#1})^{\setminus\T}}
\newcommand{\VM}[1]{v_{i#1}(\bm{v}_{#1})^{\setminus i}}
\newcommand{\VMij}[2]{v_{#2#1}(\bm{v}_{#1})^{\setminus ij}}

\newcommand{\Gtilde}{\tilde{G}}
\newcommand{\Gtildematrix}{\tilde{\mathbf{G}}}
\newcommand{\Hi}{(\mathbf{H_{.i}})^{\setminus i}}
\newcommand{\Ai}{(\mathbf{A_{.i}})^{\setminus i}}
\newcommand{\Bi}{(\mathbf{B_{.i}})^{\setminus i}}
\newcommand{\Ht}{(\mathbf{H_{.i}})^{\setminus\T}}
\newcommand{\Hij}{(\mathbf{H})^{\setminus ij}}
\newcommand{\Gtildei}{\tilde{G}^{(\setminus i)}}
\newcommand{\Gtildeimatrix}{\tilde{\mathbf{G}}^{(\setminus i)}}
\newcommand{\Gtildeij}{\tilde{G}^{(\setminus ij)}}
\newcommand{\Gtildeijmatrix}{\tilde{\mathbf{G}}^{(\setminus ij)}}
\newcommand{\Gtildet}{\tilde{G}^{(\setminus \T)}}
\newcommand{\Gtildetmatrix}{\tilde{\mathbf{G}}^{(\setminus \T)}}
\newcommand{\Ot}{\tilde{\Omega}}
\newcommand{\Ld}{\tilde{\Lambda}_d}
\newcommand{\Lo}{\tilde{\Lambda}_o}
\newcommand{\Lam}{\tilde{\Lambda}}

\newcommand{\calAtilde}{\tilde{\mathcal{A}}}
\newcommand{\calB}{\mathcal{B}}
\newcommand{\calS}{\mathcal{S}}

\newcommand{\et}[1]{(\bm{e}_{#1})^{\setminus \T}}

\newcommand{\vt}[1]{(\bm{v}_{#1})^{\setminus \T}}
\newcommand{\be}{\bm{e}}
\newcommand{\buhat}{\hat{\bm{u}}}
\newcommand{\buhatt}{\hat{\bm{u}}^{(\sm \T)}}
\newcommand{\bu}{\bm{u}}

\newcommand{\bvhat}{\hat{\bm{v}}}
\newcommand{\bvhatt}{\hat{\bm{v}}^{(\sm \T)}}
\newcommand{\bv}{\bm{v}}

\newcommand{\ba}{\bm{a}}

\newcommand{\Ytilde}{\tilde{Y}}
\newcommand{\Ztilde}{\tilde{Z}}
\newcommand{\mtilde}{\tilde{m}}
\newcommand{\mtildet}{\tilde{m}^{(\sm \T)}}
\newcommand{\ntilde}{\tilde{n}}
\newcommand{\rhotilde}{\tilde{\rho}}

\newcommand{\Lambdatilde}{\tilde{\Lambda}}
\newcommand{\mBtilde}{\widetilde{\mathbf{B}}}
\newcommand{\mLtilde}{\widetilde{\mathbf{L}}}
\newcommand{\mWtilde}{\widetilde{\mathbf{W}}}
\newcommand{\mXtilde}{\widetilde{\mathbf{X}}}

\newcommand{\lambdamin}{\lambda_{\min}}
\newcommand{\lambdamax}{\lambda_{\max}}

\newcommand{\Vperp}{\mathbf V_{\perp}}
\newcommand{\Utildeperp}{\widetilde{\mathbf U}_{\perp}}
\newcommand{\Uhatperp}{\widehat{\mathbf U}_{\perp}}
\newcommand{\mUhattildeperp}{{\widetilde{\widehat{\mathbf U}}}_{\perp}}
\newcommand{\Mperp}{\mathbf M_{\perp}}
\newcommand{\mMtildeperp}{\widetilde{\mathbf{M}}_{\perp}}
\newcommand{\Mhatperp}{\widehat{\mathbf M}_{\perp}}

\newcommand{\mMhattildeperp}{{\widetilde{\widehat{\mathbf M}}}_{\perp}}
\newcommand{\valpha}{\bm{v}_{\alpha}}

\newcommand{\deloc}{(\log N)^{\gamma}}
\newcommand{\delocsq}{(\log N)^{2\gamma}}
\newcommand{\logN}{(\log N)}

\newcommand{\xinu}{(\xi, \nu)}

\newcommand{\tti}{2,\infty}

\newcommand{\mA}{\mathbf{A}}
\newcommand{\mB}{\mathbf{B}}
\newcommand{\mD}{\mathbf{D}}
\newcommand{\mE}{\mathbf{E}}
\newcommand{\mI}{\mathbf{I}}
\newcommand{\mP}{\mathbf{P}}
\newcommand{\mR}{\mathbf{R}}
\newcommand{\mS}{\mathbf{S}}
\newcommand{\mU}{\mathbf{U}}
\newcommand{\mW}{\mathbf{W}}
\newcommand{\mH}{\mathbf{H}}
\newcommand{\mQ}{\mathbf{Q}}
\newcommand{\mZ}{\mathbf{Z}}
\newcommand{\mV}{\mathbf{V}}
\newcommand{\mX}{\mathbf{X}}
\newcommand{\mG}{\mathbf{G}}
\newcommand{\mGtilde}{\tilde{\mathbf{G}}}
\newcommand{\mF}{\mathbf{F}}
\newcommand{\boldm}{\mathbf{m}}

\newcommand{\mSigma}{\mathbf{\Sigma}}

\newcommand{\bmA}{\bm{A}}
\newcommand{\bmP}{\bm{P}}
\newcommand{\bmE}{\bm{E}}

\newcommand{\shat}{\hat{s}}
\newcommand{\spop}{s}

\newcommand{\mShat}{\mathbf{\hat S}}
\newcommand{\mUhat}{\mathbf{\hat U}}
\newcommand{\mXhat}{\mathbf{\hat X}}

\newcommand{\mL}{\mathbf{L}}

\newcommand{\rhon}{\rho_N}
\newcommand{\iid}{\text{i.i.d}}

\newcommand{\Dir}{\operatorname{Dir}}
\newcommand{\Exp}{\operatorname{Exp}}
\newcommand{\rank}{\operatorname{rank}}
\newcommand{\Laplace}{\operatorname{Laplace}}
\newcommand{\Pois}{\operatorname{Pois}}
\newcommand{\Bernoulli}{\operatorname{Bernoulli}}

\newcommand{\Deltatilde}{\Delta^{\circ}}

\newcommand{\mutilde}{\tilde{\mu}}
\newcommand{\etan}{\phi_n}
\newcommand{\supp}{supp}

\maketitle


\begin{abstract} 
As network data has become ubiquitous in the sciences, there has been growing interest in network models whose structure is driven by latent node-level variables in a (typically low-dimensional) latent geometric space. 
These ``latent positions'' are often estimated via embeddings, whereby the nodes of a network are mapped to points in Euclidean space so that ``similar'' nodes are mapped to nearby points.
Under certain model assumptions, these embeddings are consistent estimates of the latent positions, but most such results require that the embedding dimension be chosen correctly, typically equal to the dimension of the latent space. 
Methods for estimating this correct embedding dimension have been studied extensive in recent years, but there has been little work to date characterizing the behavior of embeddings when this embedding dimension is misspecified. 
In this work, we provide theoretical descriptions of the effects of misspecifying the embedding dimension of the adjacency spectral embedding under the random dot product graph, a class of latent space network models that includes a number of widely-used network models as special cases, including the stochastic blockmodel.
We consider both the case in which the dimension is chosen too small, where we prove estimation error lower-bounds, and the case where the dimension is chosen too large, where we show that consistency still holds, albeit at a slower rate than when the embedding dimension is chosen correctly.
A range of synthetic data experiments support our theoretical results.
Our main technical result, which may be of independent interest, is a generalization of earlier work in random matrix theory, showing that all non-signal eigenvectors of a low-rank matrix subject to additive noise are delocalized.
\end{abstract}

\section{Introduction} \label{sec:intro}
Networks, which describe collections of interacting entities, have emerged as central to a range of applications in the past twenty years.
As mathematical objects, networks are graphs, in which vertices (also called nodes; we will use the terms interchangeably) correspond to the entities under study, and edges describe interactions among these entities.
Networks arise in genomics
\citep{gene_expression,gene_regulatory_networks,transcription_regulatory_network,parasitology}, where they describe which pairs of genes are co-expressed or are involved in the same biological processes.
In neuroscience, networks describe connectomes \citep{bullmore_brain,sporns_book}, in which vertices correspond to brain regions and edges encode which pairs of brain regions interact closely.
Networks are used in ecology to describe food webs and species co-occurrence \citep{soil_networks}.
In the social sciences, networks arise in  disciplines political science \citep{politics} to sociology \citep{Lazega2001} to economics \citep{Chaney2014,Acemoglu2015,EllGolJac2014}.


With the growing prevalence of network data, there has been a proliferation of statistical models for networks, beginning with the stochastic blockmodel \citep[SBM;][]{SBM_original}.
Under this model, each vertex belongs to one of $r$ communities, and the probability of edge formation between any pair of vertices is determined by their community memberships.
The SBM has been extended in a number of ways to allow for degree heterogeneity \citep{dcSBM_original} and mixed community memberships \citep{MMSBM}.
These aforementioned models make the assumption that network structure is driven by latent node-level variables (e.g., community memberships in the SBM).
Subsequent models have extended this idea to allow for geometric latent structure, in which each node has an associated {\em latent position} in $r$-dimensional Euclidean space.
These models are often called latent space network models \cite[though this term is sometimes reserved for the family of models initiated in][]{HofRafHan2002}.
The most prominent examples include the random geometric graph \citep{random_geo_graph_original, random_geo_graph_Lie_group, garcia_trillos_error_2020}, wherein two nodes form an edge if the distance between their latent positions is below a set threshold; and the random dot product graph \citep[RDPG;][]{original_RDPG_Young, RDPG_survey,GRDPG_original}, in which edge probabilities are determined by inner products between latent positions.
These aforementioned models are all special cases of the graphon \citep{graphon_original, graphon_conference, Lei2021graphon, graphon_book}.

A typical statistical task under the latent space models described above is to estimate the latent positions of the nodes from an observed network.
A common approach is to use \textit{node embeddings}, which map the nodes of a network to a low-dimensional (typically Euclidean) space, in such a way that ``similar'' nodes are mapped to nearby points.
Examples of node embeddings, corresponding to different notions of node similarity, include \cite{ASE_spectral_clustering_original,Belkin,Rohe_clustering,Tang-Laplacian,node2vec}, to name but a few.
Node embeddings are often used to obtain Euclidean representations of vertices that can then be used in downstream tasks such as clustering \citep{Rohe_clustering, ASE_spectral_clustering_original}, inference \citep{Tang_inference, Zheng_inference}, and visualization \citep{visualization}, though our focus in the present work will be embeddings as estimators of latent positions.

The first step in constructing node embeddings is to choose the embedding dimension.
The vast majority of results showing consistent estimation of latent positions assume that this embedding dimension is chosen correctly \citep[see, e.g.,][]{RDPG_survey,Lei_community_consistency,cape_signal-plus-noise_2019,Rohe_clustering,UASE,levin_central_2019,levin_recovering_nodate,ASE_spectral_clustering_original,singular_subspace,consistent_dot_product_SBM, GRDPG_original}, though see \cite{Lei2021graphon,TanCap2025}, which instead rely on spectral properties of the latent structure.
This latent dimension is typically, though not always \citep[see, e.g.,][]{patrick_neurips, AthTanParPri2021, TanCapPri2022, Sansford2025}, equal to the rank of the expected adjacency matrix conditional on the latent positions.
Of course, in practice, the embedding dimension must be chosen based on data.
This model selection problem is widely recognized by the statistical network analysis community as important and challenging, and it has received attention commensurate with this importance.
The earliest methods relied on finding elbows the scree plot of the adjacency matrix \citep{scree_plot_rank}.
More recent approaches include cross-validation and maximum likelihood approaches \cite{li_network_2020, cross-valid_jing_lei, cross_validated_rank, Yang_dimension_selection, Jiang_rank_estimation, Bickel_model_selection_SBM}, parallel analysis \citep{hong_rank, hong_parallel_analysis}, hypothesis testing \citep{bickel_hypothesis_2015, GOF_SBM, Han_rank}, and thresholding approaches \citep{USVT}.

Although the embedding dimension can be estimated using the methods listed above, there is always the possibility that the embedding dimension is chosen incorrectly.
Thus, there is a need to better understand how node embeddings behave under model misspecification.
This question has received comparably little attention in the literature \citep[though see][for an early result in the setting of the SBM]{community_robustness}.
In the present work, we consider this question in the context of latent position estimation under the RDPG.
In particular, we examine the behavior of the adjacency spectral embedding \citep[ASE;][]{ASE_spectral_clustering_original} when the embedding dimension is chosen incorrectly.
We show that when the embedding dimension is too large, the embeddings produced by the ASE are still consistent, though possibly at a slower rate than when the dimension is chosen correctly, settling an open question in \cite{RDPG_survey} (see Section 7 of that work).
We also prove an estimation lower-bound for the setting where the embedding dimension is chosen too small, which captures the cost of failing to capture all signal dimensions present in the data.
Our main technical contribution, which may be of independent interest, shows that under the RDPG, all eigenvectors of the adjacency matrix delocalize under mild conditions.
Our results extend earlier work in random matrix theory \citep{Erdos} on the eigenvectors of a deterministic rank-one matrix subject to additive noise.

Before proceeding, we pause to establish notation to be used in the remainder of this paper.

\subsection{Notation}
Let $\norm{\mZ}$ and $\norm{\mZ}_F$ denote the spectral norm and Frobenius norm of $\mZ$ respectively.
Matrices will be denoted by capital bold letters, e.g., $\mZ$. 
Given the full spectral decomposition of a $n\times n$ symmetric matrix $\mZ = \mU \Lambda \mU^\top$, we assume the eigenvalues and associated eigenvectors are sorted to be nonincreasing.
For a matrix $\mZ \in \R^{n_1 \times n_2}$, we write $\mZ_{a:b}$ to be the $n_1 \times (b-a)$ submatrix whose columns consist of columns $a$ through $b$ of $\mZ$.
We write $\bbO_r$ to denote the set of all $r$-by-$r$ orthogonal matrices.
The symbol $:=$ will be used to indicate definitions. 
To simplify presentation, for a sequence of $a_n,b_n > 0$, we will use the notation $a_n \lesssim b_n$ to mean that for some $C > 0$, $a_n \le Cb_n$ for sufficiently large $n$, and the notation $a_n \ll b_n$ to mean that $a_n/b_n \rightarrow 0$ in the large-$n$ limit.
We use $a_n \asymp b_n$ to mean that there exist constants $c, C > 0$ such that for sequences $a_n, b_n > 0$, $ca_n \le b_n \le Ca_n$ for all $n$.

\section{Setup and Modeling} \label{sec:setup}

We are concerned in the present work with network models in which the adjacency matrix given by a low-rank signal matrix subject to additive noise.
Models of this form are often termed ``signal-plus-noise'' models in the literature 
\citep[see, e.g.,][]{cape_signal-plus-noise_2019,hao_neurips}.
We thus consider
\begin{equation} \label{eqn:model_low_rank_plus_noise}
    \mA = \mP + \mE ,
\end{equation}
where $\mE$ is a mean-zero noise matrix and $\mP$ is an $n \times n$ symmetric matrix of rank $r$ whose spectral decomposition is given by
\begin{equation} \label{eqn:P-spec-decomp}
    \mP  = \mU_{1:r} \mS_{1:r} \mU_{1:r}^\top ,
\end{equation}
where $\mS_{1:r}$ is a diagonal matrix of non-zero eigenvalues sorted in non-increasing order ($\spop_1 \ge \spop_2 \ge \cdots \ge \spop_r > 0$) and $\mU_{1:r} \in \R^{n \times r}$ is the matrix of corresponding orthonormal eigenvectors. 

In latent variable network models such as the SBM, RDPG and variants thereof discussed in Section~\ref{sec:intro}, $\mP$ is expressible as 
\begin{equation*}
    \mP = \rho_n\latent\latent^\top 
    = \left( \rho_n^{1/2} \latent \right) \left( \rho_n^{1/2} \latent \right)^\top,
\end{equation*}
where the rows of $\latent \in \R^{n \times r}$ encode vertex-level $r$-dimensional latent variables and $\rho_n \in (0, 1]$ can be interpreted as a signal strength parameter.
We consider in this work the task of recovering the scaled latent structure $\rho_n^{1/2} \latent$ from the observed $\mA$.
From the spectral decomposition of $\mP$ in Equation~\eqref{eqn:P-spec-decomp}, we may take 
\begin{equation} \label{eq:latent2eigen}
    \rho_n^{1/2} \latent = \mU_{1:r} \mS^{1/2}_{1:r} . 
\end{equation}
An obstacle arises from the observation that for any $\mW \in \bbO_r$, 
\begin{equation} \label{eq:orthogNonID}
    \latent\latent^\top = (\latent\mW)(\latent\mW)^\top .
\end{equation}
That is, any orthogonal rotation of $\latent$ gives rise to the same expected adjacency matrix $\mP$.
Thus, in estimating the scaled latent positions $\rho_n^{1/2} \latent$, we must account for this non-identifiability.
Specifically, we will be content with estimators that recover $\rho_n^{1/2} \latent$ up to orthogonal rotation as in the above display equation.

The expression in Equation~\eqref{eq:latent2eigen} suggests a natural approach to estimating $\rho_n^{1/2} \latent$, namely using the leading eigenvalues and eigenvectors of the observed $\mA$ in place of those of $\mP$.
This motivates the adjacency spectral embedding, first considered in \cite{ASE_spectral_clustering_original}.

\begin{definition}[Adjacency Spectral Embedding] \label{def:ASE}
Given a real, symmetric matrix $\mA$, whose spectral decomposition is given by
\begin{equation*}
   \mA = \mUhat\mShat\mUhat^\top,
\end{equation*}
we define the $d$-dimensional adjacency spectral embedding \citep[ASE;][]{ASE_spectral_clustering_original} of $\mA$ to be
\begin{equation*}
   \mXhat_{1:d} = \mUhat_{1:d}|\mShat|^{1/2}_{1:d} \in \R^d.
\end{equation*}
\end{definition}

The vast majority of recent work on models like the one just described evaluates estimation error via the maximum Euclidean norm between the true structure $\rho_n^{1/2} \latent$ and its estimate \citep[see, e.g.,][]{RDPG_survey,GRDPG_original,levin_central_2019,singular_subspace,levin_recovering_nodate}, after accounting for non-identifiability.
That is, a good estimator $\mXtilde \in \bbO_r$ will be one that achieves small values of
\begin{equation*}
\min_{\mW \in \bbO_r} \twoinfnorm{ \mXtilde \mW - \rho_n^{1/2} \latent } ,
\end{equation*}
where $\twoinfnorm{ \mZ }$ is defined the be the maximum Euclidean norm of the rows of matrix $\mZ$.
Convergence in this $(\tti)$-norm implies uniform recovery of vertex-level structures encoded in the rows of $\latent$ \citep{consistent_dot_product_SBM}.
Such convergence can be guaranteed under a number of latent variable network models, among which we consider the Random Dot Product Graph \citep[RDPG;][]{RDPG_survey,original_RDPG_Young}, a model in which network structure is driven by latent vertex-level geometric structure.

\begin{definition}[Random Dot Product Graph] \label{def:RDPG}
    Let $F$ be a probability distribution on $\R^r$, and let $\latent_1, \latent_2, \dots, \latent_n$ be drawn i.i.d.~according to $F$ and collect these into the rows of $\latent \in \R^{n \times r}$
    and let $(\rho_n)_{n=1}^\infty$ be a sequence with $\rho_n \in [0,1]$ for all $n=1,2,\dots$.
    We say that $\mA$ is distributed according to a {\em weighted RDPG} with latent positions $\latent$ and signal strength $\rho_n$ if, conditional on $\latent$, $\mE = \mA - \rho_n \latent \latent^\top$ has independent entries (up to symmetry).
    When $F$ is such that  $\bf x^\top \bf y \in [0, 1]$ for all $\bf x, \bf y$ in the support of $F$, we say that $\mA$ is distributed according to a {\em binary RDPG} with latent positions $\latent$ and sparsity parameter $\rho_n$ if
    \begin{equation}\label{eqn:conditional_prob}
        \Prob{\mA \mid \latent} = \prod_{i < j}(\rho_n\latent_i^\top\latent_j)^{A_{ij}}(1-\rho_n\latent_i^\top\latent_j)^{1-A_{ij}} .
    \end{equation}
    In either case, we write $(\mA, \latent) \sim \RDPG(F, n)$ with dimension $r$ and parameter $\rho_n$ clear from context, and refer to the rows of $\latent$ as the latent positions of the network.
\end{definition} 

The RDPG extends a number of widely-used network models.
For example, it recovers the SBM as a special case by taking the latent positions to be drawn from a suitably-chosen mixture of point masses.
Similarly, the RDPG recovers the degree-corrected SBM \citep{dcSBM_original} as a special case by taking $\latent_i = c_iY_i$, where $c_i \in (0,1)$ and $Y_i$ is randomly chosen to be one of $r$ distinct vectors $y_1,  y_2, \dots, y_r \in \R^r$.
The RDPG can also be extended and generalized in a number of ways.
For example, one may modify the model so that edge probabilities are given by a kernel function on the latent space $\kappa: \calX\times\calX \rightarrow [0,1]$ \citep{TanSusPri2013}. 
In the RDPG as given in Definition~\ref{def:RDPG}, $\kappa$ is simply the inner product and $\calX = \R^r$. 
The positive semidefinite structure of $\mP = \rho_n \latent \latent^\top$ can also be relaxed \citep{GRDPG_original}.
See \cite{RDPG_survey} for a more in-depth discussion of the RDPG.

\begin{remark} \label{remark:signal-strength-parameter}
The reader may have noticed that the parameter $\rho_n$ in Definition~\ref{def:ASE} is playing two distinct but related roles.
Under the binary RDPG, $\rho_n$ describes sparsity, in the sense familiar to most network researchers: as the number of vertices $n$ grows, a vanishing fraction of the possible $n^2$ edges are actually present in the observed network.
That is, the probability of an edge appearing between two particular nodes shrinks to zero as the number of nodes grows.
As discussed in Definition~\ref{def:RDPG}, we incorporate this into our model through the parameter $\rho_n$, which, for binary networks, we will refer to as the sparsity parameter. 
In the case of the weighted RDPG, $\rho_n$ describes a signal strength: if $\rho_n=1$, the eigenvalues of $\mP$ would grow linearly with $n$ (see Lemma~\ref{lemma:pop_eigenvalues_bound} in Appendix~\ref{apx:spectral}).
Having $\rho_n$ shrink to zero as the network size grows allows us to make the estimation problem harder by having the eigenvalues of $\mP$ grow sublinearly.
The key distinction between these two settings is that in the weighted RDPG, we allow for the variance of the entries of $\mE$ to be specified separately from the signal strength $\rho_n$, while in the case of the binary RDPG, the variance of the edges, being Bernoullis, is inherently tied to the behavior of $\rho_n$.
\end{remark}

\subsection{Model Misspecification}

The consistency of the ASE has been studied extensively in the context where the embedding dimension is correctly specified \citep[i.e., $k = 0$ in our notation above; see][]{RDPG_survey, levin_recovering_nodate, ASE_spectral_clustering_original, GRDPG_original, Levin-Laplacian, Tang-Laplacian, lyzinski_community_2016}.
Of course, in practice, the model rank $r$ is typically unknown and needs to be estimated from observed data.
Methods to estimate the embedding dimension have been widely explored in the literature, as discussed in Section \ref{sec:intro}.
Even with the best embedding dimension estimation techniques, however, it is still possible to misspecify the embedding dimension.
Thus, our goal in the present paper is to characterize the behavior of the ASE when the embedding dimension is incorrectly chosen.
To the best of our knowledge, this is the first work to do so.

Showing consistency of the ASE in the $(\tti)$-norm when the true embedding dimension $r$ is known require that we control the quantity
\begin{equation}\label{eqn:main:norm}
    \min_{\mW \in \bbO_{r}} \twoinfnorm{\mXhat_{1:r} \mW - \rho_n^{1/2}\mX_{1:r} } ,
\end{equation}
where we recall that the minimization over $\mW$ accounts for the non-identifiability of the latent positions.
The quantity described in Equation \eqref{eqn:main:norm}, along with related ones, has been controlled in a range of settings \citep{lyzinski_community_2016, levin_recovering_nodate, levin_central_2019, GRDPG_original, singular_subspace} when the true embedding dimension is known.
These results are contingent on being able to apply the Davis-Kahan $\sin\Theta$ theorem or variants thereof \citep[][]{Davis-Kahan, Samworth, Belkin}, which requires a non-zero eigengap.
When we embed into $d\neq r$ dimensions, these approaches are no longer feasible.
In particular, when the embedding dimension is chosen too large, the ASE includes eigenvectors whose associated population eigenvalues are zero, resulting in a zero eigengap.
Thus, if we are to characterize the behavior of the ASE when the embedding dimension is misspecified, we will need to  deploy different tools.

In investigating the effects of dimension misspecification, we will find it easiest to speak of the true embedding dimension $r$ and the difference $k = d-r$ between the selected dimension and true dimension.
We will then consider the $(r+k)$-dimensional adjacency spectral embedding,
    \begin{equation*}
   \mXhat_{1:r+k} = \mUhat_{1:r+k}|\mShat |^{1/2}_{1:r+k} ,
    \end{equation*}
where $r$ is the rank of $\mP$ and $k=d-r\in \{-r, -r +  1, \dots, n - r \}$ describes the difference between the true model rank and the selected embedding dimension.

In adapting the quantity in Equation \eqref{eqn:main:norm} to the setting where the embedding dimension is incorrectly chosen, we will need the following notation to precisely formulate the estimation error for the ASE. 
When we choose our embedding dimension too small (i.e., $k<0$), we will need to pad our estimate $ \mXhat_{1:r+k}$ with additional columns of zeros so that it can sensibly be compared with the $n$-by-$r$ matrix $\latent_{1:r}$.
Similarly, when the embedding dimension is chosen too large (i.e., $k > 0$), we will need to pad $\latent_{1:r}$ with additional columns of zeros to ensure that our matrix of embeddings and true latent positions are conformable.
Hence, in order to study the setting where $k<0$, we define $\mXhat_{1:r}^\circ \in \R^{n \times r}$ to be a zero-padded version of $\mXhat_{1:r+k}$. 
That is, when $k < 0$, we define
\begin{equation}\label{eq:def:Xcirc}
   \mXhat_{1:r}^\circ :=  \begin{bmatrix}
        \mXhat_{1:r+k} & \mathbf{0}_{r+k+1:r}
    \end{bmatrix} .
\end{equation}
Similarly, when $k> 0$, we define $\mX_{1:r+k}$ as the $n\times (r+k)$ matrix
\begin{equation}\label{eq:def:X_r+k}
    \mX_{1:r+k} :=  \begin{bmatrix}
        \mX_{1:r} & \mathbf{0}_{r+1:r+k}
    \end{bmatrix} .
\end{equation}

In understanding the estimation error of the ASE, in the setting where the embedding dimension is chosen too large ($k>0$), we perform the following decomposition, ignoring the non-identifiability for the time being:
\begin{equation*}
        \begin{aligned}
        \twoinfnorm{\mXhat_{1:r+k} - \rho_n^{1/2}\mX_{1:r+k}} 
         & \le  \twoinfnorm{\mXhat_{1:r} - \rho_n^{1/2}\mX_{1:r}} + \twoinfnorm{\mXhat_{r+1:r+k} - \rho_n^{1/2}\mX_{r+1:r+k} } \\
         & = \underbrace{\twoinfnorm{\mXhat_{1:r} - \rho_n^{1/2}\mX_{1:r}}}_{\text{Controlled via standard techniques}} \hspace{-0.1 cm} + \underbrace{\twoinfnorm{\mXhat_{r+1:r+k} } }_{\text{Goal}} .
        \end{aligned}
\end{equation*}

From this decomposition, we see that there are two terms, one corresponding to the rate that would hold if the embedding dimension were correctly chosen, and the other corresponding to the \enquote{trailing} dimensions. 
We formalize this intuition in the following lemma, which gives results both for the case where the embedding dimension is chosen too large and for the case where it is chosen too small.
A proof is given in Appendix~\ref{apx:estimation-error}.

\begin{lemma}[Misspecified Model Bounds]\label{lemma:decomp}
Suppose that $\mA = \rho_n\latent \latent^\top + \mE$, and denote the $(r+k)$-dimensional ASE of $\mA$ as $\mXhat_{1:r+k} = \mUhat_{1:r+k}|\mShat|^{1/2}_{1:r+k}$.
When $k<0$,
\begin{equation*} 
\min_{\mW \in \bbO_{r + k}} \twoinfnorm{\mXhat_{1:r}^\circ \mW - \rho^{1/2}_n \latent_{1:r} }
\ge \sqrt{ \sum_{j=r+k+1}^r \frac{ \spop_j }{ n } } .
\end{equation*}
On the other hand, when $k > 0$, suppose that there exists a sequence of $\mW^* \in \bbO_r$ such that
\begin{equation} \label{eq:def:etan}
    \twoinfnorm{\mXhat_{1:r}\mW^* - \rho_n^{1/2}\mX_{1:r}} \lesssim \etan,
\end{equation}
for some sequence $(\etan)_{n=1}^\infty$.
Then there exists a sequence of $\mW \in \bbO_{r+k}$ such that
\begin{equation*}
    \twoinfnorm{\mXhat_{1:r+k}\mW - \rho^{1/2}_n\latent_{1:r+k}} 
    \lesssim \etan
    + \norm{\mUhat_{r+1:r+k}}_{\tti}\norm{\mE}^{1/2} .
\end{equation*}
\end{lemma}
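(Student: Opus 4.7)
The plan has two essentially independent parts, one for each case.

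For the case $k<0$, the plan is to exploit a rank deficiency argument. The matrix $\mXhat_{1:r}^\circ \mW$ has rank at most $r+k < r$ by construction, regardless of the choice of $\mW$, because the last $-k$ columns of $\mXhat_{1:r}^\circ$ are zero. On the other hand, $\rho_n^{1/2}\latent_{1:r} = \mU_{1:r}\mS_{1:r}^{1/2}$ has exactly $r$ nonzero singular values $\sqrt{s_1},\dots,\sqrt{s_r}$. By the Eckart--Young theorem, the best rank-$(r+k)$ Frobenius approximation of $\rho_n^{1/2}\latent_{1:r}$ has squared Frobenius error at least $\sum_{j=r+k+1}^r s_j$. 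Combining this with the elementary inequality $\twoinfnorm{\mZ}^2 \ge \|\mZ\|_F^2/n$ (the maximum row squared-norm dominates the average) gives the stated lower bound. This is the straightforward direction; the only subtlety is confirming that the non-identifiability rotation does not change the rank, which is immediate.

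For the case $k>0$, the plan is to make precise the informal decomposition already sketched in the text preceding the lemma, with an explicit choice of rotation. I would take the block-diagonal rotation $\mW = \operatorname{diag}(\mW^*, \mI_k) \in \bbO_{r+k}$, where $\mW^*$ is the rotation provided by the hypothesis~\eqref{eq:def:etan}. Writing $\mXhat_{1:r+k}\mW = [\,\mXhat_{1:r}\mW^{*}\;\; \mXhat_{r+1:r+k}\,]$ and $\rho_n^{1/2}\latent_{1:r+k} = [\,\rho_n^{1/2}\latent_{1:r}\;\; \mathbf{0}\,]$, applying the triangle inequality rowwise gives
\begin{equation*}
    \twoinfnorm{\mXhat_{1:r+k}\mW - \rho_n^{1/2}\latent_{1:r+k}} \le \twoinfnorm{\mXhat_{1:r}\mW^{*} - \rho_n^{1/2}\latent_{1:r}} + \twoinfnorm{\mXhat_{r+1:r+k}}.
\end{equation*}
The first term is bounded by $\phi_n$ by assumption. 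For the second term, I would write $\mXhat_{r+1:r+k} = \mUhat_{r+1:r+k}|\mShat|_{r+1:r+k}^{1/2}$ and note that for each row the squared Euclidean norm is $\sum_{j=r+1}^{r+k} \hat u_{ij}^2 |\hat s_j|$, which is dominated by $(\max_{j\ge r+1}|\hat s_j|) \cdot \|\text{row}_i\|^2$, so
\begin{equation*}
\twoinfnorm{\mXhat_{r+1:r+k}} \le \max_{j\ge r+1}|\hat s_j|^{1/2} \cdot \twoinfnorm{\mUhat_{r+1:r+k}}.
\end{equation*}

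The last ingredient is to control $\max_{j\ge r+1}|\hat s_j|$ by $\|\mE\|$. Since $\mP$ has rank $r$, its $(r+1)$-st eigenvalue is zero, and Weyl's inequality applied to $\mA = \mP + \mE$ yields $|\hat s_j| \le \|\mE\|$ for all $j \ge r+1$. Substituting gives $\twoinfnorm{\mXhat_{r+1:r+k}} \le \twoinfnorm{\mUhat_{r+1:r+k}}\|\mE\|^{1/2}$, and combining with the first-term bound finishes the proof.

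No part of this is technically deep: the only genuine choices are the block-diagonal rotation, which preserves the trailing columns unchanged, and the splitting of the trailing error into a delocalization factor $\twoinfnorm{\mUhat_{r+1:r+k}}$ times a spectral-norm factor $\|\mE\|^{1/2}$. The latter is precisely the decomposition that motivates the main technical contribution of the paper (controlling $\twoinfnorm{\mUhat_{r+1:r+k}}$ via delocalization of non-signal eigenvectors); the present lemma just sets up that bookkeeping, so I anticipate no genuine obstacle inside this proof itself.
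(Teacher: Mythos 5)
Your argument for $k>0$ is essentially the paper's: the same block-diagonal rotation $\operatorname{diag}(\mW^*, \mI_k)$, the same triangle-inequality split, and the same use of $\twoinfnorm{\cdot}$-submultiplicativity plus Weyl's inequality to turn the trailing block into $\twoinfnorm{\mUhat_{r+1:r+k}}\norm{\mE}^{1/2}$.

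For $k<0$ you reach the same conclusion by a genuinely different route. The paper reduces to $\norm{\cdot}_F^2/n$, expands $\norm{\mXhat_{1:r}^\circ\mW - \rho_n^{1/2}\latent_{1:r}}_F^2$ via the standard trace identity, and controls the cross term with the Von Neumann trace inequality, arriving at the intermediate bound $\sum_{j=1}^{r+k}(\sqrt{\shat_j}-\sqrt{\spop_j})^2 + \sum_{j=r+k+1}^r\spop_j$ before discarding the first (nonnegative) sum. You instead observe that $\mXhat_{1:r}^\circ\mW$ has rank at most $r+k$ for every $\mW$ and invoke the Eckart--Young theorem to obtain the lower bound $\sum_{j=r+k+1}^r\spop_j$ on the squared Frobenius error directly, followed by the same $\twoinfnorm{\mZ}^2 \ge \norm{\mZ}_F^2/n$ step. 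Both are correct and both hinge on the same elementary fact (the truncation cost is unavoidable for a rank-deficient estimate); your version is shorter and more conceptually transparent, while the paper's more explicit computation also produces the extra nonnegative term $\sum_{j\le r+k}(\sqrt{\shat_j}-\sqrt{\spop_j})^2$, which, though unused here, could sharpen the bound when the retained empirical eigenvalues are far from the population ones.
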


The term $\etan$ in Equation~\eqref{eq:def:etan} is meant to capture the estimation rate that would hold if the embedding dimensions were correctly chosen.
As mentioned earlier, this setting has been studied extensively in the literature, and the precise behavior of $\etan$ is known under several different variants of the RDPG corresponding to different modeling choices for $\latent$ and the edge noise $\mE$.
We collect a few representative results below, by way of example.

In the case of binary networks, we present the following theorem adapted from \cite{lyzinski_community_2016}.
\begin{theorem}[\cite{lyzinski_community_2016}, Theorem 5] \label{thm:lyzinski}
     Suppose that $(\mA,\latent) \sim \RDPG(F,n)$, then there exists a sequence of $\mW \in \bbO_{r}$ such that
     \begin{equation*}
        \twoinfnorm{\mXhat_{1:r} \mW- \rho_n^{1/2}\mX_{1:r}} \lesssim \frac{r^{1/2} \log^2 n}{\sqrt{\rho_n n}} 
     \end{equation*}
     with high probability. 
\end{theorem}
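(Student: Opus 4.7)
The strategy is the standard two-to-infinity perturbation analysis for low-rank signal-plus-noise matrices: combine concentration of $\norm{\mE}$, linear growth of the signal eigenvalues, a Davis--Kahan spectral-norm bound, and an entrywise leave-one-out upgrade, and then transfer the resulting eigenvector bound to the scaled latent positions via Weyl's inequality applied to the square-root factor.

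First I would establish the two ingredients that set the signal-to-noise ratio. Since $\mP = \rho_n\latent\latent^\top$ and the rows of $\latent$ are i.i.d.\ with $\E{\latent_1\latent_1^\top}$ positive definite of rank $r$, a concentration argument for $\latent^\top\latent/n$ gives $\spop_j \asymp \rho_n n$ for $j=1,\dots,r$ with high probability. Under the binary RDPG, $\mE$ has independent, bounded, mean-zero entries with variance at most $\rho_n$, so matrix Bernstein (or the Bandeira--Van Handel bound) yields $\norm{\mE}\lesssim \sqrt{\rho_n n}$ with high probability provided $\rho_n n \gtrsim \log n$. Together these give $\norm{\mE}/\spop_r \lesssim 1/\sqrt{\rho_n n}$, and the Davis--Kahan $\sin\Theta$ theorem then produces $\min_{\mW_U\in\bbO_r}\norm{\mUhat_{1:r}\mW_U - \mU_{1:r}} \lesssim 1/\sqrt{\rho_n n}$.

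The crux of the proof is upgrading this to an entrywise bound via a leave-one-out construction: for each $i\in [n]$, define $\mA^{(i)}$ by zeroing the $i$th row and column of $\mA$, so that the top-$r$ eigenvector matrix $\mUhat_{1:r}^{(i)}$ is independent of the $i$th row of $\mE$. Using the decomposition
\begin{equation*}
\mUhat_{1:r}\mW_U - \mU_{1:r}
= \bigl(\mUhat_{1:r}\mW_U - \mUhat_{1:r}^{(i)}\mW^{(i)}\bigr) + \bigl(\mUhat_{1:r}^{(i)}\mW^{(i)} - \mU_{1:r}\bigr),
\end{equation*}
one controls the swap term via Davis--Kahan applied to $\mA-\mA^{(i)}$ and controls the $i$th row of the residual by a Bernstein-type inequality that exploits the independence. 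Combined with $\twoinfnorm{\mU_{1:r}}\lesssim\sqrt{r/n}$ from the i.i.d.\ structure of $\latent$, plus a union bound over $i$ (which contributes the $\log^2 n$ factor), this yields $\min_{\mW_U\in\bbO_r}\twoinfnorm{\mUhat_{1:r}\mW_U - \mU_{1:r}}\lesssim r^{1/2}\log^2 n / (\rho_n n)$ with high probability.

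To finish, I would transfer the eigenvector bound to the scaled latent positions. Pick $\mathbf{Q}\in\bbO_r$ so that $\rho_n^{1/2}\latent = \mU_{1:r}\mS_{1:r}^{1/2}\mathbf{Q}$, let $\mW_U^\star$ be the Procrustes rotation aligning $\mUhat_{1:r}$ with $\mU_{1:r}$, and set $\mW = \mW_U^\star\mathbf{Q}$. Writing $\mathbf{D} := (\mW_U^\star)^\top|\mShat|_{1:r}^{1/2}\mW_U^\star$, one obtains $\mXhat_{1:r}\mW - \rho_n^{1/2}\latent = (\mUhat_{1:r}\mW_U^\star - \mU_{1:r})\mathbf{D}\mathbf{Q} + \mU_{1:r}(\mathbf{D} - \mS_{1:r}^{1/2})\mathbf{Q}$. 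The first term has $(2,\infty)$-norm of order $r^{1/2}\log^2 n/(\rho_n n)\cdot\sqrt{\rho_n n} = r^{1/2}\log^2 n/\sqrt{\rho_n n}$, matching the advertised rate; the second is of strictly lower order, using $|\sqrt{a}-\sqrt{b}|\le|a-b|/(\sqrt{a}+\sqrt{b})$ together with Weyl to get $\norm{\mathbf{D}-\mS_{1:r}^{1/2}}\lesssim 1$, multiplied by $\twoinfnorm{\mU_{1:r}}\lesssim\sqrt{r/n}$. The main obstacle throughout is the leave-one-out step: handling the Procrustes rotations $\mW^{(i)}$ that align $\mUhat_{1:r}^{(i)}$ with $\mUhat_{1:r}$, and uniformly controlling the residual across all $n$ vertices to extract the sharp $\log^2 n$ factor. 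Everything else reduces to scalar perturbation bounds and bookkeeping through the non-identifiability rotation $\mathbf{Q}$.
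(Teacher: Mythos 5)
This theorem is quoted by the paper from Lyzinski et al.\ (2016), Theorem 5; the paper does not supply its own proof, so there is no in-text argument to compare against. Taking your sketch on its own merits: it is a plausible route to the stated rate, and the arithmetic checks out (the eigenvector bound of order $r^{1/2}\log^2 n/(\rho_n n)$ times the $\sqrt{\rho_n n}$ scale of $|\mShat|^{1/2}_{1:r}$ gives the advertised $r^{1/2}\log^2 n/\sqrt{\rho_n n}$). However, it is not the route of the cited source. Lyzinski et al.\ do not use leave-one-out; they inherit an explicit expansion of $\mXhat_{1:r}\mW - \rho_n^{1/2}\latent$ from their earlier perfect-clustering work, writing the difference as $(\mA-\mP)\mU_{1:r}\mS_{1:r}^{-1/2}$ plus higher-order residuals, then controlling each term via Hoeffding/Bernstein concentration on rows, Weyl's inequality, and Davis--Kahan, with the $\log^2 n$ emerging from stacked union bounds. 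The leave-one-out machinery you describe (in the style of Abbe--Fan--Wang--Zhong, Chen--Chi--Fan--Ma, or Cape--Tang--Priebe) postdates that reference and is a different, arguably cleaner, way to land at essentially the same conclusion. So this is a legitimate alternative proof strategy rather than a reconstruction of the cited one.

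One technical point deserves care in your transfer step. You set $\mathbf D = (\mW_U^\star)^\top|\mShat|_{1:r}^{1/2}\mW_U^\star$ and then invoke Weyl to bound $\left\lVert \mathbf D - \mS_{1:r}^{1/2}\right\rVert$. Weyl controls the eigenvalue difference $\left\lVert |\mShat|_{1:r}^{1/2} - \mS_{1:r}^{1/2}\right\rVert$, but $\mathbf D$ is a conjugation, and turning that into a bound on $\left\lVert \mathbf D - \mS_{1:r}^{1/2}\right\rVert$ additionally requires that $\mW_U^\star$ approximately commute with $\mS_{1:r}^{1/2}$, i.e.\ that the signal eigengaps be of the same order as the eigenvalues themselves. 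This holds generically for the RDPG (and is Assumption A6 in the present paper), but it is a separate lemma, not a consequence of Weyl alone; without it the cross term $\mU_{1:r}(\mathbf D - \mS_{1:r}^{1/2})\mathbf Q$ need not be of lower order.
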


For the next example, we introduce the generalized random dot product graph (GRDPG) from \cite{GRDPG_original}, which extends the RDPG from Definition~\ref{def:RDPG} to allow $\mP$ to be indefinite.

\begin{definition}[\cite{GRDPG_original}, Definition 2]
    Denote $\mI_{p,q}$ as a $(p+q) \times (p+q)$ diagonal matrix with $1$ in the first $p$ diagonal entries and $-1$ in the last $q$ diagonal entries. Suppose $\calX \subset \R^r$ with the property that $x^\top \mI_{p,q}y \in [0,1]$ for all $x,y$. Let $F$ be a distribution on $\calX^n$, and $\latent_1, \latent_2, \dots, \latent_n$ be drawn i.i.d according to $F$. Collect these latent positions in the rows of $\latent \in \R^{n\times r}$. Then conditional on $\mX$, if the adjacency matrix $\mA$ is obtained by
    \begin{equation*}
        A_{ij} \stackrel{\text{ind}}{\sim} \Bernoulli(\rho_n\latent_i^\top \mI_{p,q}\latent_j) 
    \end{equation*}
    for all $i > j$. Then we say $(\mA, \latent) \sim \GRDPG(F, n)$ with signature $(p,q)$.
\end{definition}

\begin{theorem}[\cite{GRDPG_original}, Theorem 1]
Suppose that $(\mA, \latent) \sim \GRDPG(F, n)$ with signature $(p,q)$. Further suppose $\rank(\mP)$ is fixed. There exists a universal constant $c > 1$ such that, provided the sparsity factor satisfies $\rho_n n = \omega(\log^{4c} n)$, there exists a sequence of $\mW \in \bbO_{r}$ such that
\begin{equation}\label{thm:grdpg-rate}
    \twoinfnorm{\mXhat_{1:r} \mW - \rho_n^{1/2}\latent_{1:r}} \lesssim \frac{\log^c n}{\sqrt{n}} 
\end{equation}
with high probability. 
\end{theorem}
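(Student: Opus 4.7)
The plan is to follow the standard two-step template for row-wise spectral bounds in sparse network models. Conditional on $\latent$, I would first obtain a spectral-norm perturbation bound via matrix concentration and Davis-Kahan, and then upgrade this to a two-to-infinity bound through a row-wise perturbation and leave-one-out argument. Standard concentration for sample Gram matrices under i.i.d.~sampling from $F$ gives, with high probability, that $\mU_{1:r}$ is delocalized ($\twoinfnorm{\mU_{1:r}} \lesssim \sqrt{r/n}$) and that the nonzero eigenvalues of $\mP = \rho_n \latent \mI_{p,q} \latent^\top$ satisfy $|\spop_j| \asymp \rho_n n$ for $j \le r$. Since the entries of $\mE = \mA - \mP$ are independent, bounded in $[-1,1]$, and have variance at most $\rho_n$, matrix Bernstein gives $\|\mE\| \lesssim \sqrt{n\rho_n} + \log n$, and under $\rho_n n = \omega(\log^{4c} n)$ the first term dominates. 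Combined with the eigengap $|\spop_r| \asymp \rho_n n$, a Davis-Kahan $\sin\Theta$ argument (in its form for indefinite symmetric matrices) produces $\mW_* \in \bbO_r$ with $\|\mUhat_{1:r}\mW_* - \mU_{1:r}\| \lesssim 1/\sqrt{\rho_n n}$.

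The core of the argument is upgrading this spectral bound to a two-to-infinity bound. From the identity $\mA \mUhat_{1:r} = \mUhat_{1:r} \mShat_{1:r}$ one derives the expansion
\begin{equation*}
\mUhat_{1:r} \mW_* - \mU_{1:r} \;=\; \mE \, \mU_{1:r} \mS_{1:r}^{-1} \;+\; \mR,
\end{equation*}
where $\mR$ collects quadratic-in-$\mE$ terms involving the already-controlled subspace error $\mUhat_{1:r}\mW_* - \mU_{1:r}$. For the leading term, the $i$-th row is $\sum_j E_{ij} (\mU_{1:r})_{j,\cdot}\, \mS_{1:r}^{-1}$, a sum of independent mean-zero bounded vectors; a vector Bernstein inequality combined with delocalization of $\mU_{1:r}$ bounds this row in Euclidean norm by $\sqrt{r \log n / (\rho_n n^2)}$ uniformly in $i$. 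For $\mR$, the standard device is a leave-one-out construction: for each $i$, define $\mA^{(i)}$ by replacing the $i$-th row and column of $\mE$ with zeros, so that the leading eigenvectors of $\mA^{(i)}$ are independent of the $i$-th row of $\mE$; controlling the subspace difference between $\mA$ and $\mA^{(i)}$ in spectral norm and plugging this into the expansion gives the same order of bound for $\mR$ up to additional logarithmic factors. Multiplying through by $\sqrt{|\spop_1|} \asymp \sqrt{\rho_n n}$ converts the resulting $\twoinfnorm{\mUhat_{1:r}\mW_* - \mU_{1:r}}$ bound into the ASE error $\twoinfnorm{\mXhat_{1:r}\mW - \rho_n^{1/2}\latent_{1:r}}$, yielding the claimed $(\log n)^c/\sqrt{n}$ rate after using Weyl's inequality to compare $|\mShat_{1:r}|^{1/2}$ with $|\mS_{1:r}|^{1/2}$.

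The main obstacle is two-fold. First, the leave-one-out analysis for $\mR$ is delicate: one must keep track of compatible rotations between the eigenvectors of $\mA$ and of each $\mA^{(i)}$, and each inequality in this chain contributes logarithmic factors that ultimately determine the constant $c$. Second, the indefinite signature $(p,q)$ prevents one from directly identifying $\rho_n^{1/2}\latent$ with $\mU_{1:r}|\mS_{1:r}|^{1/2}$ by an orthogonal rotation, because the natural transformation preserving $\mP = \rho_n \latent \mI_{p,q} \latent^\top = \mU_{1:r}\mS_{1:r}\mU_{1:r}^\top$ lies in the indefinite orthogonal group rather than $\bbO_r$. One resolves this by noting that the positive and negative eigenvalue blocks of $\mS_{1:r}$ are asymptotically separated (since $\rank(\mP)$ is fixed and the signature is determined by $F$), so Weyl's inequality ensures $\mShat_{1:r}$ preserves the signature with high probability; the signature-preserving alignment can then be composed with a single block-wise orthogonal rotation, with the mismatch absorbed into the stated rate.
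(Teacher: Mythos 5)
This statement is not proved in the paper: it is quoted (in adapted form) from \cite{GRDPG_original}, so there is no internal proof to compare your proposal against. Judged on its own terms, your outline follows the standard template that such results in the literature actually use --- spectral-norm concentration of $\mE$, an eigengap of order $\rho_n n$, a first-order expansion $\mUhat_{1:r}\mW_* - \mU_{1:r} = \mE\,\mU_{1:r}\mS_{1:r}^{-1} + \mR$, row-wise Bernstein for the linear term, and a higher-order residual analysis --- and the rate bookkeeping you give is consistent with the claimed $(\log n)^c/\sqrt{n}$. The original proof in \cite{GRDPG_original} controls $\mR$ via the Cape--Tang--Priebe two-to-infinity machinery and a sequence of deterministic matrix decompositions rather than a leave-one-out construction, but leave-one-out is a legitimate alternative route to the same residual bounds.

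The one place where your outline glosses over genuinely nontrivial content is the indefinite non-identifiability. For the GRDPG, $\rho_n^{1/2}\latent$ and the spectral representation $\mU_{1:r}|\mS_{1:r}|^{1/2}$ generally differ by an element of the \emph{indefinite} orthogonal group $\bbO(p,q)$, which is non-compact, so the mismatch cannot in general be ``absorbed'' by composing with a single block-wise element of $\bbO_r$ --- an indefinite transformation can stretch individual rows by an unbounded factor. The original theorem is in fact stated with the alignment matrix taken in $\bbO(p,q)$ (or, equivalently, with the latent positions already identified with the spectral representation so that only a residual $\bbO(p)\times\bbO(q)$ ambiguity remains, as is implicitly assumed in the version quoted here). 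If you intend the statement with $\mW \in \bbO_r$ and arbitrary latent positions drawn from $F$, your final paragraph does not actually close this gap; you would need either to restrict to the canonical spectral parametrization of $\latent$ or to bound the operator norm of the specific indefinite alignment that arises, which requires an additional argument.
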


The following theorem, adapted from \cite{levin_recovering_nodate}, provides a bound for $\etan$ in the weighted network setting, when the entries of $\mE$ obey sub-gamma tail decay \citep[see Chapter 2 in][]{BLM2013}.
\begin{theorem}[\cite{levin_recovering_nodate}, Theorem 6]
    Suppose $\mA = \mP + \mE = \rho_n\latent\latent^\top + \mE$, and suppose that conditional on $\mX \in \R^{n\times r}$, the entries of $\mE$ are independent $(\nu, b)$-sub-gamma random variables, with $ n(\nu + b^2)\log^2 n \ll \spop_r^2$.
    Then there exists a sequence of $\mW \in \bbO_r$ such that
    \begin{equation*}
        \twoinfnorm{\mXhat_{1:r} \mW - \rho_n^{1/2}\latent_{1:r}}
        \lesssim \frac{r(\log n)\sqrt{\nu + b^2}}{\spop_r^{1/2}} 
                + \frac{rn(\log n)^2(\nu + b^2) \spop_1 }{\spop_r^{5/2}}
    \end{equation*}
    with high probability.
\end{theorem}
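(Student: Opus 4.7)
The plan is to follow the standard $(2,\infty)$-norm perturbation framework for spectral embeddings (as developed in \cite{cape_signal-plus-noise_2019} and \cite{levin_recovering_nodate}), using leave-one-out decoupling to handle the dependence between $\mUhat_{1:r}$ and $\mE$. First, I would control the operator norm of the noise: since the entries of $\mE$ are independent $(\nu,b)$-sub-gamma, a matrix Bernstein-type inequality for sub-gamma matrices gives $\norm{\mE} \lesssim \sqrt{n(\nu+b^2)\log n}$ with high probability. Combined with the assumption $n(\nu+b^2)\log^2 n \ll \spop_r^2$, this forces $\norm{\mE} \ll \spop_r$, so Weyl's inequality yields $\shat_j \asymp \spop_j$ for $j \le r$, and the Davis--Kahan $\sin\Theta$ theorem gives $\norm{\mUhat_{1:r}\mW_U - \mU_{1:r}} \lesssim \norm{\mE}/\spop_r$ for the Procrustes orthogonal matrix $\mW_U$ arising from the SVD of $\mU_{1:r}^\top \mUhat_{1:r}$.

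Next, taking $\mW = \mW_U$, I would use the decomposition
\begin{equation*}
  \mXhat_{1:r}\mW_U - \rho_n^{1/2}\latent_{1:r} = (\mUhat_{1:r}\mW_U - \mU_{1:r})\mS_{1:r}^{1/2} + \mUhat_{1:r}\mW_U\bigl(\mW_U^\top |\mShat|_{1:r}^{1/2}\mW_U - \mS_{1:r}^{1/2}\bigr),
\end{equation*}
in which the eigenvalue-perturbation summand is controlled by a matrix square-root perturbation bound together with Weyl's inequality, and is dominated by the residual terms below. For the eigenvector summand, the eigenequations $\mUhat_{1:r} = \mA\mUhat_{1:r}\mShat_{1:r}^{-1}$ and $\mU_{1:r} = \mP\mU_{1:r}\mS_{1:r}^{-1}$ yield
\begin{equation*}
  \mUhat_{1:r}\mW_U - \mU_{1:r} = \mE\mU_{1:r}\mS_{1:r}^{-1}\mW_U + \text{residual},
\end{equation*}
where the residual collects the terms $\mE(\mUhat_{1:r}\mW_U - \mU_{1:r})\mShat_{1:r}^{-1}$ and the Procrustes-type correction $\mU_{1:r}(\mS_{1:r}\mU_{1:r}^\top\mUhat_{1:r}\mShat_{1:r}^{-1}\mW_U - \mI)$. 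Multiplying the leading term by $\mS_{1:r}^{1/2}$ and concentrating each row $(\mE\mU_{1:r})_{i\cdot}$ via a sub-gamma Bernstein inequality (since $\mU_{1:r}$ is deterministic conditional on $\latent$), with a union bound over $i$, produces the first term $(\log n)\sqrt{r(\nu+b^2)}/\spop_r^{1/2}$ of the stated bound.

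The main obstacle is controlling the residual in $(2,\infty)$-norm, because $\mUhat_{1:r}$ depends on every entry of $\mE$ and naive row-wise concentration fails. To decouple, I would use a leave-one-out construction: for each $i$, let $\mA^{(i)}$ be $\mA$ with its $i$-th row and column replaced by their population values, and let $\mUhat^{(i)}_{1:r}$ denote the corresponding leading eigenvectors, so that $\mUhat^{(i)}_{1:r}$ is independent of the $i$-th row of $\mE$. A Davis--Kahan bound applied to the rank-two perturbation $\mA - \mA^{(i)}$, combined with a sub-gamma tail bound on $\norm{\mE_{i\cdot}}$, gives $\norm{\mUhat_{1:r} - \mUhat^{(i)}_{1:r}\mW^{(i)}} \lesssim (\log n)\sqrt{n(\nu+b^2)}/\spop_r$ uniformly in $i$. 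Substituting $\mUhat^{(i)}_{1:r}$ for $\mUhat_{1:r}$ in the $i$-th row of each residual term enables an independent-row sub-gamma concentration at the cost of a second $\log n$ factor, and tracking the factors of $\spop_1/\spop_r$ arising from bounding $\norm{\mP\mS_{1:r}^{-1}}$ together with the two factors of $\spop_r^{-1}$ from $\mShat_{1:r}^{-1}$ and $\norm{\mUhat_{1:r}\mW_U - \mU_{1:r}}$, followed by the final multiplication by $\mS_{1:r}^{1/2}$, yields the second term $rn(\log n)^2(\nu+b^2)\spop_1/\spop_r^{5/2}$. The main technical challenge lies in this bookkeeping: ensuring each residual piece comes in at this rate, handling the alignment matrices $\mW^{(i)}$ across different leave-one-out indices, and accumulating the correct powers of $\spop_r$ and $\log n$ throughout.
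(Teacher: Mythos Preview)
This theorem is not proved in the present paper: it is quoted verbatim from \cite{levin_recovering_nodate} (their Theorem~6) as one of several illustrative bounds on the well-specified error rate $\etan$ in Equation~\eqref{eq:def:etan}, and no proof is given or attempted here. There is therefore nothing in the paper to compare your proposal against.

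That said, your outline is the standard route for this kind of result and is essentially the argument in the cited source: operator-norm control of $\mE$ via a sub-gamma matrix Bernstein inequality, Weyl and Davis--Kahan to stabilize the top-$r$ eigenstructure, the decomposition of $\mXhat_{1:r}\mW - \rho_n^{1/2}\latent_{1:r}$ into a linear-in-$\mE$ term plus residuals, and leave-one-out decoupling to concentrate the residual rows. The bookkeeping you flag---tracking the alignment matrices across leave-one-out indices and accumulating the correct powers of $\spop_r$, $\spop_1$, and $\log n$---is indeed where the effort lies, but the structure is right and matches the approach of \cite{levin_recovering_nodate} and \cite{cape_signal-plus-noise_2019}.
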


From Lemma~\ref{lemma:decomp}, we see that showing consistency when the embedding dimension is chosen too large (i.e., $k > 0$) amounts to being able to control $\twoinfnorm{\mXhat_{r+1:r+k}}$.
By Definition~\ref{def:ASE}, the columns of $\mXhat_{r+1:r+k}$ can be expressed as
\begin{equation*}
    \mXhat_{r+1:r+k} 
    = \begin{bmatrix}
    \uhat_{r+1}|\shat_{r+1}|^{1/2} & \uhat_{r+2}|\shat_{r+2}|^{1/2} &\dots 
    & \uhat_{r+k}|\shat_{r+k}|^{1/2}
    \end{bmatrix} \in \R^{n \times (r+k)}.
\end{equation*}
Bounding the eigenvalues as $\shat_{r+k} \le \| \mE \|$, we may deploy a range of tools from the matrix concentration inequalities literature \citep[see, e.g.,][]{Lei_community_consistency, Erdos,Vershynin_2018}, depending on the distribution of the entries of $\mE$.
Thus, the challenge lies in controlling the entries of the eigenvectors associated with these \enquote{trailing} eigenvalues.
Intuitively, the entries of the ASE associated with these eigenvectors must shrink to zero, as these are zero in the true latent positions $\latent$.
Since $\norm{\mE} \lesssim \sqrt{n}$ (ignoring logarithmic factors), the entries of $\mUhat_{r+1:r+k}$ must vanish uniformly.
Many random matrix models produce eigenvectors whose entries are bounded as $C(\log n)^c/\sqrt{n}$ for some $c > 0$.
These eigenvectors are called \textit{delocalized}, and they have been widely explored in the random matrix literature \citep{Erdos, Erdos_book, ErdHos2007SemicircleLO, eigenvectors_survey, Benigni2020OptimalDF, veryshnin_rudelson}. 
To show delocalization for our setting of a real, symmetric matrix $\mA$, we assume that $\mE$ is an $n \times n$ real, symmetric matrix whose entries are mean-zero, conditionally independent random variables given $\mP$ (up to symmetry).
Moreover, we will make the following assumptions on the spectral properties of $\mP$ and the moments of $E_{ij}$, both of which are allowed to depend on $n$:
\begin{enumerate}[label=(\textbf{A\arabic*}),ref=\textbf{A\arabic*}]
    \item{Assumption 1 (Delocalization)}: The eigenvectors associated with non-zero eigenvalues of $\mL$ are delocalized.
    That is, for some $\gamma \ge 0$ with no dependence on $n$.,
    \begin{equation*} \label{assumption_main:delocalization}
        |u_{ij}| \lesssim \frac{(\log n)^{\gamma}}{\sqrt{n}}
    \end{equation*}
    for all $i \in [n]$, and $j \in [r]$ where $v_{ij}$ refers to the $i$-th entry of the $j$-th eigenvector of $\mP$. 
    \label{assumption_main:deloc} 
    \item{Assumption 2}: The signal strength parameter $\rho_n = \omega(1/\sqrt{n})$\label{assumption_main:signal_strength}
    
    \item{Assumption 3}: The non-zero eigenvalues of $\mP$ are distinct. \label{assumption_main:eigen_nondegen}

    \item{Assumption 4}: The non-zero eigenvalues of $\mP$ are of order $\rho_n n$: $\spop_j \asymp \rho_n n$ for $j \in [r]$. \label{assumption_main:eigen_bound}

    \item{Assumption 5}: $r = \rank(\mP) \ll (\log n)^{\zeta}$ for some $\zeta \ge 0$. \label{assumption_main:rank}

    \item{Assumption 6 (Eigengap)}: The eigengap $\Delta := \min_{j \in [r]} \min_{i\ne j} |\spop_i - \spop_j|$ obeys $\Delta = \Omega\left(\rho_n n\right)$. \label{assumption_main:eigengap}

    \item{Assumption 7}: The entries of $\mE$ satisfy the moment conditions \label{assumption_main:moments}
    \begin{equation*} 
        \E{E_{ij}} = 0, \quad \E{|E_{ij}|^2} = \sigma^2, \quad \E{|E_{ij}|^{p}} \le \frac{C_1^p}{\tilde{q}_n^{(p/2-1)}} \quad(p >2)
    \end{equation*}
    where $\tilde{q}_n \in \left[ n^{-1/2} (\log n)^6, 1 \right]$ is a parameter the controls the tail behavior of $\mE$, corresponding to the sparsity parameter in the context of binary networks.
\end{enumerate}

\begin{remark}
We pause to briefly discuss some of these assumptions.
Assumption \ref{assumption_main:deloc} is similar to the incoherence condition often found in spectral methods \citep{spectral_methods_textbook, hao_neurips, Zheng_inference}. 
It is crucial for our main delocalization result to hold. In general, without such conditions, we do not necessarily expect eigenvectors of random matrices to delocalize \citep[see][for further discussion]{eigenvectors_survey}. 
In the case of the RDPG, it can be shown that $\twoinfnorm{\mU} \lesssim 1/\sqrt{n}$ (see Corollary \ref{cor:RDPG_delocalization:binary} below).
Assumption \ref{assumption_main:eigen_nondegen} assumes distinct non-zero eigenvalues in $\mP$.
We expect that this assumption can be relaxed at the expense of increased notational complexity in tracking the eigenspaces corresponding to repeated eigenvalues in the proof of Theorem \ref{theorem:main:deloc}, a task we leave to future work.
Assumptions \ref{assumption_main:signal_strength} and \ref{assumption_main:eigengap} are necessary to use results from spectral methods. 
We suspect delocalization of the eigenvectors of $\mA$ still hold even when these assumptions are relaxed, as suggested by results in \cite{hao_neurips}.
Finally, the moment conditions for $\mE$ in Assumption \ref{assumption_main:moments} allow for sub-exponential decay. 
See Remark 2.5 from \cite{Erdos} for details. 
\end{remark}

\section{Theoretical Results} \label{sec:theory}
We now state our main theoretical results for the case of weighted networks.
These results rely basic properties of the $(\tti)$-norm, as well as on fundamental results in spectral methods and random matrix theory.
Our first result establishes that the trailing eigenvectors of the adjacency matrix $\mA$ delocalize under the assumptions given above.
A proof is given in Appendix~\ref{apx:deloc}.

\begin{theorem} \label{theorem:main:deloc}
Let $\mA = \mP + \mE$, and suppose Assumptions \eqref{assumption_main:deloc} -- \eqref{assumption_main:moments} hold. Then, we have
\begin{equation}
    \max_{\alpha > r} \abs{\uhat_{j\alpha}} \lesssim \frac{r^2(\log n)^{4+6\gamma}}{\sqrt{n}}
\end{equation}
with high probability.
\end{theorem}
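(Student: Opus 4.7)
The plan is to adapt the resolvent/leave-one-out strategy used by \cite{Erdos} for the rank-one deformation of Wigner matrices to our rank-$r$ signal-plus-noise setting. The governing intuition is that, since $\mP$ has rank $r$ and by Assumption \eqref{assumption_main:eigengap} the top $r$ eigenvalues of $\mA$ separate cleanly from the rest, the trailing eigenvalues $\shat_\alpha$ (for $\alpha > r$) are forced by Weyl/interlacing into the bulk of $\mE$'s spectrum, at distance $\lesssim \sqrt{n}\cdot(\log n)^c$ from the origin. The trailing eigenvectors should therefore inherit the delocalization enjoyed by bulk eigenvectors of Wigner-type matrices, with corrections controlled by Assumption \eqref{assumption_main:deloc}.

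The key technical identity is the standard Schur-complement formula. Fix $j \in [n]$ and $\alpha > r$. Let $\mA^{(\sm j)}$ denote $\mA$ with row/column $j$ removed, with eigenpairs $(\shat_\beta^{(\sm j)}, \buhat_\beta^{(\sm j)})$, and let $\ba_j$ be the $j$-th column of $\mA$ with the diagonal entry removed. Then
\begin{equation*}
|\uhat_{j\alpha}|^2 \;=\; \frac{1}{\,1 + \sum_\beta |\ba_j^\top \buhat_\beta^{(\sm j)}|^2 / (\shat_\alpha - \shat_\beta^{(\sm j)})^2\,}.
\end{equation*}
So the task reduces to a \emph{lower} bound on the denominator. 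I would split the $\beta$-sum into \emph{signal} indices $\beta \le r$ and \emph{bulk} indices $\beta > r$. For the bulk part, decompose $\ba_j = \rho_n (\latent\latent^\top)_{\cdot j}^{(\sm j)} + \mE_{\cdot j}^{(\sm j)}$ and note that $\mE_{\cdot j}^{(\sm j)}$ is independent of $\mA^{(\sm j)}$; a Hanson-Wright-type estimate together with Assumption \eqref{assumption_main:moments} then gives $|\mE_{\cdot j}^{(\sm j)} \cdot \buhat_\beta^{(\sm j)}|^2 \asymp \sigma^2$ up to $\mathrm{polylog}(n)$ factors for most $\beta$. A local law / eigenvalue-rigidity estimate for the bulk spectrum of $\mA^{(\sm j)}$ (obtained by bootstrapping from the standard semicircle law for $\mE^{(\sm j)}$ and Weyl's inequality) then supplies at least a polylog-fraction of bulk indices $\beta$ with $|\shat_\alpha - \shat_\beta^{(\sm j)}| \lesssim (\log n)^c / \sqrt{n}$, making the bulk contribution to the denominator at least of order $n/\mathrm{polylog}$.

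The main obstacle, and the reason for the $r^2(\log n)^{4+6\gamma}$ factor in the conclusion, is handling the signal part of $\ba_j$, which has magnitude of order $\rho_n \sqrt{n}$ and could \emph{a priori} dominate everything. To control it, I would use Assumption \eqref{assumption_main:deloc} to show that $|(\rho_n\latent\latent^\top)_{\cdot j}^{(\sm j)} \cdot \buhat_\beta^{(\sm j)}|$ is, for $\beta > r$, essentially an inner product of a vector in the (nearly) $r$-dimensional signal subspace of $\mP^{(\sm j)}$ with a vector nearly orthogonal to it. Making this \enquote{nearly} quantitative requires a Davis-Kahan / $\sin\Theta$ bound between the top-$r$ eigenspaces of $\mP^{(\sm j)}$ and $\mA^{(\sm j)}$, which is where Assumptions \eqref{assumption_main:signal_strength}, \eqref{assumption_main:eigen_nondegen}, \eqref{assumption_main:eigen_bound}, and \eqref{assumption_main:eigengap} enter: they ensure $\|\mE\| \ll \rho_n n$ and separate the top $r$ eigenvalues from the bulk, making the perturbation analysis viable. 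Assumption \eqref{assumption_main:rank} keeps the accumulated factors of $r$ subpolynomial.

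To close the argument I would set it up as a bootstrap: assume the bound $|\uhat_{j\alpha}| \le r^2(\log n)^{4+6\gamma}/\sqrt{n}$ holds for $\mA^{(\sm j)}$ with a slightly smaller constant on an event of high probability (the base case being delocalization for $\mE$ itself, from \cite{Erdos}), then use the identity above to upgrade it to $\mA$. Finally, take a union bound over $j \in [n]$ and $\alpha \in \{r+1,\dots,n\}$, absorbing the $n^2$ union-bound factor into the $(\log n)^{4+6\gamma}$ tail using the very-high-probability tails provided by Assumption \eqref{assumption_main:moments} and the isotropic local law for $\mE$.
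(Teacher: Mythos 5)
Your high-level strategy shares several ingredients with the paper's proof---both use a Schur-complement representation of the diagonal resolvent, both need a local law for the perturbed matrix, and both invoke a Davis-Kahan / $\sin\Theta$ bound to control the overlap between the rank-$r$ signal and the bulk eigenvectors of the minor. In that sense the proposal is aimed correctly. However, there are two concrete gaps that would stop the argument from closing.

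First, you claim eigenvalue rigidity and a local law for $\mA^{(\sm j)}$ can be obtained ``by bootstrapping from the standard semicircle law for $\mE^{(\sm j)}$ and Weyl's inequality.'' Interlacing does let you transfer \emph{eigenvalue locations} from $\mE^{(\sm j)}$ to $\mA^{(\sm j)}$ (up to an index shift of $r$; this is precisely how Lemma \ref{lemma:7.1} controls $|\Lambdatilde - \Lambda|$), but delocalization via your identity needs control of the individual overlaps $|\ba_j^\top \buhat_\beta^{(\sm j)}|$, which in turn requires the \emph{entry-wise} local law $\Gtilde_{ii}(z) \approx \msc(z)$. The diagonal entries of the resolvent are not protected by interlacing---the signal part $\rho_n\latent\latent^\top$ shifts them by $O(1)$---so they must be controlled directly. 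This is exactly why the paper spends most of Appendix~\ref{apx:deloc} (Lemmas \ref{lemma:7.2}--\ref{lemma:controlDiagResolvent} and Theorem~\ref{thm:semicirc}) re-deriving the Schur-complement expansion for $\Gtilde_{ii}(z)$ with explicit control of the signal--noise cross-terms; Lemma~\ref{lemma:cross_term} in particular has no rank-one analogue in \cite{Erdos} and is a genuinely new estimate needed when $r>1$. Treating this step as a by-product of Weyl's inequality understates the core of the proof.

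Second, your bootstrap over $j$ does not have a base case. You propose to assume delocalization for $\mA^{(\sm j)}$ and conclude it for $\mA$, declaring the base case to be ``delocalization for $\mE$ itself.'' But $\mA^{(\sm j)} = \mP^{(\sm j)} + \mE^{(\sm j)}$ still contains the rank-$r$ signal---removing a vertex never removes the signal---so the induction never reaches $\mE$. The paper instead bootstraps in the spectral parameter $\eta = \Im z$: the elementary bounds of Lemma~\ref{lemma:initial_elementary_resolvent_estimates} supply the base case at $\eta\ge 2$, and Lipschitz continuity of the resolvent and of $\Phi(z)$ lets one propagate down to $\eta \asymp r^4(\log N)^{L+12\gamma}/N$. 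That final choice of $\eta$, fed into $\Im\Gtilde_{jj}(\lambdahat_\alpha + \iu\eta) \ge |\vhat_{j\alpha}|^2/\eta$ together with $|\Gtilde_{jj}|\le C$, is precisely what yields the $r^2(\log n)^{4+6\gamma}/\sqrt{n}$ rate; without the local law to make $|\Gtilde_{jj}|=O(1)$ at that small $\eta$, you cannot recover the exponent.
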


The proof of this result is primarily based on the approach given in \cite{Erdos}, where they prove the semicircle law for the case where $\mP$ in Equation~\eqref{eqn:model_low_rank_plus_noise} has rank one.
Our extension to a more general low-rank signal matrix requires us to deal with a non-trivial interaction between the eigenspaces corresponding to the non-zero signal eigenvalues of $\mP$.
We first obtain a local law for a low-rank expectation matrix whose spectral norm is not bounded by a constant after scaling, which is possible due to our assumption on $\twoinfnorm{\mU_{1:r}}$.
Eigenvector delocalization is then a corollary of the local law.
There are similar works that prove a local law for an expectation matrix whose spectral norm is either bounded by a constant or assumed to be diagonal but with moment conditions different to our setting \citep{erdos_cusp_2025, diagonal_RMT}. 
\cite{largest_eigenvalue_finite_rank} investigates the distribution of the largest eigenvalue for a low-rank-plus-noise model, but in the setting where the spectral norm of $\mP$ is bounded by a constant.
Our proof requires the matrix of entry-wise variances of $\mE$ to be doubly stochastic, which binary networks do not satisfy, in general.
However, we suspect that delocalization still occurs even when this assumption is relaxed.
We offer more discussion of this point in Section \ref{subsec:consistencyForBinary}.
In another direction, we expect that Assumption \eqref{assumption_main:rank} can be relaxed to allow $r$ to grow as fast as $r \ll n^{1/4}$, possibly at the expense of a small polynomial factor in the delocalization bound.

\begin{remark}
Results from random matrix theory concerning the distribution of the eigenvalues have received attention in the networks literature for use in model selection \citep[i.e., embedding dimension selection; see][]{GOF_SBM, bickel_hypothesis_2015}.
We emphasize that we are concerned with the behavior of the ASE when it is (potentially) misspecified, which occurs downstream of selecting an embedding dimension and requires understanding the behavior of the eigenvectors associated with the perturbed null space, rather than the behaviors of the eigenvalues alone.
\end{remark}

Most important for the purposes of the present work, Theorem \ref{theorem:main:deloc} allows us to characterize the eigenvectors associated with the perturbed null eigenvalues.
In particular, we are able to show that for some $c \ge 4$
\begin{equation*}
    \twoinfnorm{\mUhat_{r+1:r+k}} \lesssim \frac{\sqrt{k}\log^c n}{\sqrt{n}} 
\end{equation*}
with high probability.
We highlight the use of this bound in our main contribution---quantifying the behavior of the ASE when its embedding dimension is misspecified. 
A proof is given in Appendix~\ref{apx:estimation-error}.

\begin{theorem}[Estimation Bounds for Weighted Networks] \label{theorem:main_result:weighted} 
Suppose that $(\mA,\latent) \sim \RDPG(F,n)$ is an $r$-dimensional weighted RDPG with signal strength $\rho_n$, and suppose Assumptions \eqref{assumption_main:deloc} -- \eqref{assumption_main:moments} hold.
Denote the $(r+k)$-dimensional adjacency spectral embedding of $\mA$ as $\mXhat_{1:r+k} = \mUhat_{1:r+k}|\mShat|^{1/2}_{1:r+k}$ and assume the existence of a sequence of $\mW^* \in \bbO_r$ and $(\etan)_{n=1}^\infty$ as in Equation~\eqref{eq:def:etan}.
Then there exists a sequence of $\mW \in \bbO_{r+k}$ such that for $k > 0$,
\begin{equation}
    \twoinfnorm{\mXhat_{1:r+k} - \rho_n^{1/2}\latent_{1:r+k}\mW} \lesssim \etan + \sqrt{\sigma^2 k}\frac{r^2(\log n)^{5 + 6\gamma}}{n^{1/4}}
\end{equation} 
with high probability.
On the other hand, for $k < 0$,
\begin{equation*}
 \min_{\mW \in \bbO_{r}} \twoinfnorm{\mXhat_{1:r}^\circ \mW - \rho_n^{1/2}\latent_{1:r}} 
 \gtrsim \sqrt{ |k| \rho_n } .
\end{equation*}
\end{theorem}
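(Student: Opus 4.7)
My plan splits into the two cases $k<0$ and $k>0$.

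For $k < 0$, the lower bound follows almost directly from Lemma~\ref{lemma:decomp} combined with Assumption~\ref{assumption_main:eigen_bound}. The padded estimator $\mXhat_{1:r}^\circ$ has rank at most $r+k<r$, so for any orthogonal $\mW$ the product $\mXhat_{1:r}^\circ\mW$ inherits this rank deficit. Lemma~\ref{lemma:decomp} converts this into a Frobenius-norm gap of at least $\sum_{j=r+k+1}^r \spop_j$ via Eckart-Young-Mirsky, and then passes to the $(2,\infty)$-norm through the elementary inequality $\twoinfnorm{\mZ}^2 \ge \norm{\mZ}_F^2/n$. Under Assumption~\ref{assumption_main:eigen_bound} each nonzero $\spop_j$ is of order $\rho_n n$, so this sum is of order $|k|\rho_n n$; dividing by $n$ and taking a square root gives the claimed $\sqrt{|k|\rho_n}$.

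For $k > 0$, Lemma~\ref{lemma:decomp} reduces the problem to controlling $\twoinfnorm{\mUhat_{r+1:r+k}}\cdot\norm{\mE}^{1/2}$, which decomposes naturally into an eigenvector factor and a spectral-norm factor. The eigenvector factor is handled directly by the delocalization result Theorem~\ref{theorem:main:deloc}: since $|\uhat_{i\alpha}|\lesssim r^2(\log n)^{4+6\gamma}/\sqrt{n}$ uniformly for $\alpha > r$, and each of the $n$ rows of $\mUhat_{r+1:r+k}$ contains only $k$ such entries, we get $\twoinfnorm{\mUhat_{r+1:r+k}}\le \sqrt{k}\cdot r^2(\log n)^{4+6\gamma}/\sqrt{n}$. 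The spectral-norm factor is handled by a standard symmetric random matrix concentration inequality valid under the moment conditions of Assumption~\ref{assumption_main:moments}, giving $\norm{\mE}^{1/2}$ of order $n^{1/4}$ up to $\sigma$ and logarithmic factors. Multiplying the two yields the second term $\sqrt{\sigma^2 k}\cdot r^2(\log n)^{5+6\gamma}/n^{1/4}$, and adding the correctly-specified-dimension rate $\etan$ from Equation~\eqref{eq:def:etan} produces the full bound in the theorem.

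The substantive content has already been supplied by Theorem~\ref{theorem:main:deloc}: without uniform control of the trailing eigenvectors at the $1/\sqrt{n}$ scale, the spurious directions would contribute a non-vanishing error and consistency would fail, so delocalization is exactly what makes the sharper estimate possible. The one remaining care point is the choice of the rotation $\mW \in \bbO_{r+k}$ in the decomposition: one takes the minimizing $\mW^* \in \bbO_r$ from Equation~\eqref{eq:def:etan} and extends it block-diagonally to $\bbO_{r+k}$ with an arbitrary orthogonal completion on the spurious block. Because the trailing-block bound is rotation-invariant, this extension preserves the rate and lets the two contributions add cleanly on the right-hand side.
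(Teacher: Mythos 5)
Your proof is correct and mirrors the paper's argument: for $k<0$ you invoke Lemma~\ref{lemma:decomp} and Assumption~\ref{assumption_main:eigen_bound}, and for $k>0$ you pass through Lemma~\ref{lemma:decomp}, bound $\twoinfnorm{\mUhat_{r+1:r+k}}$ by $\sqrt{k}$ times the entrywise delocalization rate from Theorem~\ref{theorem:main:deloc}, bound $\|\mE\|^{1/2}$ by standard concentration, multiply, and extend $\mW^*$ block-diagonally to $\bbO_{r+k}$. The only cosmetic difference is that you attribute the Frobenius-norm gap inside Lemma~\ref{lemma:decomp} to Eckart--Young--Mirsky, whereas the paper derives it via the Von Neumann trace inequality; since the lemma is used as a black box, either view is fine and the theorem-level argument is identical.
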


\subsection{Consistency for Binary Networks} \label{subsec:consistencyForBinary}

Theorem~\ref{theorem:main_result:weighted} shows consistency in $(\tti)$-norm for weighted networks when the embedding dimension is chosen at least as large as $r$.
The proof relies on being able to show that the eigenvectors of the observed matrix $\mA$ are delocalized, as we have done in Theorem~\ref{theorem:main:deloc}. Binary networks do not always satisfy the moment conditions as outlined in Assumption \ref{assumption_main:moments} required by our current work because of the heterogeneous entries (probabilities) in $\mP$. These probabilities determine determine the variance of the entries of $\mE$, since the entries of $\mA$ are Bernoulli distributed. However, we strongly suspect the core ideas developed in the previous section can be extended to binary networks. Binary networks, in general, violate the doubly stochastic assumption of the matrix of variances, which is needed for us to show delocalization of the eigenvectors of $\mA$. Recent developments in random matrix theory \citep{wigner-type, QVE} have allowed for the doubly stochastic assumption to be relaxed to allow for more general variances. Based on these recent theoretical advancements, along with the experimental results for binary networks shown in Section \ref{subsec:expt:binary}, we suspect we can also relax the doubly stochastic assumption, extending the delocalization result to more general binary networks.
In particular, we conjecture that the eigenvectors of $\mA$ are delocalized, analogous to Theorem~\ref{theorem:main:deloc} 
for general binary networks,
\begin{conjecture}[Delocalization] \label{conj:maintext:deloc}
    Suppose that $\mA = \mP + \mE$, and Assumptions \eqref{assumption_main:deloc} -- \eqref{assumption_main:moments} hold, with Assumption~\eqref{assumption_main:moments} relaxed to the condition that $c \le \E{|E_{ij}|^2} \le C$. 
    Then we have for all $j \in [n]$, 
    \begin{equation*}
    \max_{\alpha > r} \abs{\vhat_{j\alpha}} \lesssim \frac{r^2(\log n)^{4+6\gamma}}{\sqrt{n}}
    \end{equation*} 
\end{conjecture}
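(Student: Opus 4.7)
The plan is to mirror the strategy used to prove Theorem~\ref{theorem:main:deloc} (itself an adaptation of \cite{Erdos}), but replace the semicircle local law, which requires the variance matrix $\mS = (\sigma^2_{ij})$ to be (approximately) doubly stochastic, with a local law for \emph{Wigner-type} matrices of the kind developed in \cite{wigner-type, QVE}. Under the relaxed moment condition $c \le \E{|E_{ij}|^2} \le C$, the resolvent $\mG(z) := (\mE - z\mI)^{-1}$ is entrywise close to a deterministic diagonal matrix $\mathbf{M}(z) := \operatorname{diag}(m_1(z), \ldots, m_n(z))$, whose entries solve the quadratic vector equation
\begin{equation*}
    -\frac{1}{m_i(z)} = z + \sum_{j=1}^{n} \sigma^2_{ij}\, m_j(z),
    \qquad \Im m_i(z) > 0 \text{ for } \Im z > 0.
\end{equation*}
The self-consistent density of states need no longer be the semicircle, but it is still compactly supported with explicit square-root edge behaviour, and in particular $\norm{\mE} \lesssim \sqrt{n}$ with high probability.

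From here I would carry out the following steps. First, establish the entrywise local law $\max_{i,j}|G_{ij}(z) - \delta_{ij} m_i(z)| \lesssim (\log n)^C/\sqrt{n \eta}$ for $z = E + i\eta$ with $\eta \gg n^{-1}$, using the Schur complement / self-consistent equation argument exactly as in the rank-one setting of \cite{Erdos}, but with the QVE in place of the scalar semicircle fixed-point equation. Second, transport this to the perturbed resolvent $\widehat{\mG}(z) := (\mA - z\mI)^{-1}$ via a Woodbury/Sherman-Morrison expansion using the spectral decomposition $\mP = \mU_{1:r} \mS_{1:r} \mU_{1:r}^\top$; because $r \ll (\log n)^\zeta$ and $\twoinfnorm{\mU_{1:r}} \lesssim (\log n)^\gamma/\sqrt{n}$ by Assumption~\ref{assumption_main:deloc}, the correction terms can be controlled exactly as in the proof of Theorem~\ref{theorem:main:deloc}. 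Third, for any non-signal eigenvalue $\shat_\alpha$ with $\alpha > r$, one shows $\shat_\alpha$ lies in the bulk of the Wigner-type spectrum (since $\mP$ only creates $r$ outliers), and then applies the standard resolvent-to-eigenvector bound
\begin{equation*}
    |\uhat_{j\alpha}|^2 \;\le\; \eta \cdot \Im \widehat{G}_{jj}(\shat_\alpha + i\eta) \;\lesssim\; \eta \cdot (\Im m_j(\shat_\alpha + i \eta) + \text{error})
\end{equation*}
at the scale $\eta \asymp (\log n)^C/\sqrt{n}$, which yields the claimed $|\uhat_{j\alpha}| \lesssim (\log n)^{C'}/\sqrt{n}$ bound.

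The principal obstacle is the first step: proving the anisotropic local law for $\mE + \mP$ when the variance profile is merely bounded above and below. The QVE solution $m_i(z)$ can differ substantially across $i$, which complicates the stability analysis for the self-consistent equation, and the contributions from $\mP$ must be controlled without the cancellations that the doubly stochastic assumption provided in Theorem~\ref{theorem:main:deloc}. The required stability estimates for the QVE near the spectral edges and around possible singular points of the density of states are exactly what \cite{wigner-type, QVE} supply under mild flatness hypotheses on $\sigma^2_{ij}$, so the main verification is that the variance profile $\sigma^2_{ij} = (\rho_n \latent_i^\top \latent_j)(1 - \rho_n \latent_i^\top \latent_j)$ inherits such flatness from the latent-position distribution $F$ (and from the assumption $\rho_n = \omega(1/\sqrt{n})$). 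A second, more technical obstacle is that the resolvent minor / leave-one-out arguments used in \cite{Erdos} need to track not a single scalar $m_{\mathrm{sc}}$ but the full vector $(m_i)_{i=1}^n$; I expect this to inflate the polylogarithmic exponent in the final bound relative to Theorem~\ref{theorem:main:deloc}, but not to alter the $1/\sqrt{n}$ rate claimed in Conjecture~\ref{conj:maintext:deloc}.
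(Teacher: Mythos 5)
This statement is labelled a \emph{conjecture} in the paper: the authors explicitly state in Section~\ref{subsec:consistencyForBinary} that their proof technique (Theorem~\ref{theorem:main:deloc}) requires the variance matrix of $\mE$ to be doubly stochastic, that binary networks violate this, and that they leave the general case open. There is therefore no proof in the paper to compare your proposal against. Your sketch does align closely with the roadmap the authors themselves gesture at: they cite exactly the Wigner-type / quadratic vector equation references and suggest that those tools should allow the doubly-stochastic assumption to be dropped.

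That said, your proposal is a plan, not a proof, and you identify the right pressure points yourself. Two remarks on where it departs from or would need to go beyond what the paper actually does. First, in the proved (doubly-stochastic) case the paper does not pass through the resolvent of $\mE$ and then perturb; it works directly with the resolvent of $\mB = \mL + \mH$ via Schur complements, carrying along the extra $\mL$-dependent terms (the $\calAtilde_i(z)$ quantities in Lemma~\ref{lemma:7.2}) and controlling them using the delocalization and eigengap assumptions on $\mL$. Your Woodbury/Sherman--Morrison route is a genuinely different organization of the argument; it may well be cleaner here, but for the QVE setting you would need an \emph{anisotropic} local law (control of $\bm{u}^\top (\mG(z) - \mathbf{M}(z)) \bm{w}$ for the signal directions $\bm{u},\bm{w}$), which is a stronger statement than the entrywise law you cite in step one and is not automatic from it. Second, the QVE stability analysis in the cited references requires uniform ``flatness'' of the variance profile, i.e.\ $c/n \le \sigma^2_{ij} \le C/n$ after the $1/\sqrt{n}$ rescaling. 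Under the binary RDPG this is equivalent to $c \le \rho_n \latent_i^\top \latent_j (1-\rho_n \latent_i^\top \latent_j) \le C$ uniformly in $i,j$, which fails whenever $\rho_n \to 0$ (the lower bound degenerates) or when some $\latent_i^\top\latent_j$ is near $0$ or $1$. So the reduction to the QVE machinery is not just a verification but requires either strengthening the model assumptions or invoking the sparse extensions of the local law; this is precisely the open difficulty that keeps the statement a conjecture rather than a theorem.
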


An immediate corollary of this conjecture, if it can be proven true, would be consistent estimation of the latent positions under the binary RDPG when the embedding dimension is chosen too large $(k >0)$, provided that the sparsity parameter $\rho_n$ does not converge to zero too fast.
For example, delocalization would yield the following,  using the same arguments outlined for the case of weighted networks and Theorem 5 from \cite{lyzinski_community_2016} (quoted above as Theorem~\ref{thm:lyzinski}).
 \begin{equation} \label{eq:XhatX:conjectured}
        \min_{\mW \in \bbO_{r + k}} \twoinfnorm{\mXhat_{1:r+k} \mW - \rho_n^{1/2}\latent_{1:r+k}} 
        \lesssim \sqrt{r}\frac{(\log n)^2}{\sqrt{\rho_n n}}
        + \sqrt{k}r^2(\log n)^4  \left(\frac{\rho_n}{n} \right)^{1/4} .
    \end{equation} 
    with high probability. Similarly, for $k < 0$,
    \begin{equation*} \label{eq:XhatX:conjectured_under}
    \min_{\mW \in \bbO_{r}} \twoinfnorm{\mXhat_{1:r}^\circ \mW - \rho_n^{1/2}\latent_{1:r}} 
    \gtrsim \sqrt{ |k|\rho_n } . 
\end{equation*}
Similarly, another corollary of Conjecture \eqref{conj:maintext:deloc} would yield the following for the generalized random dot product graph (GRDPG), based on Theorem \ref{thm:grdpg-rate} from above,
\begin{equation*}
    \min_{\mW \in \bbO_{r + k}} \twoinfnorm{\mXhat_{1:r+k} \mW - \rho_n^{1/2}\latent_{1:r+k}} 
        \lesssim \frac{ \log^c n}{\sqrt{n}} + \sqrt{k}r^2(\log n)^4  \left(\frac{\rho_n}{n} \right)^{1/4}
\end{equation*}
with high probability, where $c > 1$. Similarly, for $k < 0$,
   \begin{equation*}
    \min_{\mW \in \bbO_{r}} \twoinfnorm{\mXhat_{1:r}^\circ \mW - \rho_n^{1/2}\latent_{1:r}} 
    \gtrsim \sqrt{ |k|\rho_n } . 
\end{equation*}

\section{Experiments} \label{sec:expts}

We now turn to an experimental investigation of our theoretical results presented in Section~\ref{sec:theory}.
We begin by studying the behavior of the ASE for weighted networks, as predicted by our main results Theorems~\ref{theorem:main_result:weighted} and~\ref{theorem:main:deloc}.
In Section~\ref{subsec:expt:binary}, we turn our attention to the effects of sparsity in binary networks for different choices of embedding dimension to support our Conjecture~\ref{conj:maintext:deloc}.

\subsection{Weighted Networks}\label{subsec:weighted_expts}

We first consider the behavior of the ASE in the weighted RDPG setting of Theorem \ref{theorem:main_result:weighted}, in which the signal strength parameter $\rho_n$ is held constant.
To obtain $\mP = \rho_n\latent\latent^\top$, we consider latent positions drawn from a Dirichlet distribution, whereby the rows of $\latent \in \R^{n \times r}$ are drawn i.i.d.~ according to a Dirichlet distribution with parameter $\alpha = (1, 1, 1, 1, 1)$, so that the true latent dimension is $r=5$.
Given these latent positions, a network is specified by generating an adjacency matrix $\mA$ according to 
\begin{equation} \label{eq:def:weightedDense}
    \mA = \latent\latent^\top + \mE,
\end{equation}
where $\mE$ has entries $E_{ij} = E_{ji}$ for $i > j$ drawn i.i.d.~from a mean-zero distribution.
We consider four different noise models for the entries of $\mE$:
\begin{enumerate}
    \item[(a)] $E_{ij} \overset{\iid}{\sim} N(0,1)$; 
    \item[(b)] $E_{ij} + 1\overset{\iid}{\sim} \Laplace(0,1)$;
    \item[(c)] $E_{ij} + 1\overset{\iid}{\sim} \Exp(1)$;
    \item[(d)] $E_{ij} + 1\overset{\iid}{\sim} \Pois(1)$.
\end{enumerate} 
Note that we center the distribution in settings (c) and (d) about $1$ to ensure that the entries of $\mE$ are mean zero as required by our theory.
Given the adjacency matrix $\mA$, we construct its $(r+k)$-dimensional ASE.
Our goal is to examine how well the ASE recovers the latent positions $\latent$, but to do this, we must account for the orthogonal non-identifiability of the model, as discussed in Equation~\ref{eq:orthogNonID}.
To do this, we compute the singular value decomposition of 
\begin{equation*}
    \mXhat_{1:r + k}^\top \mX_{1:r + k} = \mU\Sigma\mV^\top,
\end{equation*} 
and set $\mQ = \mU\mV^\top$, which is the solution to the Procrustes alignment problem
\begin{equation}\label{eqn:expts:procrustes}
    \mQ = \mU\mV^\top = \argmin_{\substack{\mW \in \bbO_{r+k}}} \norm{\mXhat_{1:r+k}\mW - \mX_{1:r+k}}_F .
\end{equation}
We use this choice of $\mW$ to compute
\begin{equation*}
\twoinfnorm{\mXhat_{1:r+k} \mW - \mX_{1:r+k}}
\end{equation*}
as an estimate of the $(\tti)$-norm error between $\mXhat_{1:r+k}$ and the true latent positions.

We conducted the above experiment for varying networks sizes of $n = 300, 600, \dots, 7800$ vertices and varying embedding dimensions $r + k = 4,5,6,10,20,40$, with $80$ Monte Carlo trials per condition.
Figure~\ref{fig:nonbinary_dirichlet} shows the results of this experiment, plotting estimation error of the ASE in $(\tti)$-norm as a function of network size $n$ for varying choices of embedding dimension.
The axes are on a log-log scale to highlight the convergence rate predicted by Theorem~\ref{theorem:main_result:weighted} and demonstrate consistency of the ASE for when the embedding dimension is chosen larger than the true embedding dimension. 
The dashed black line indicates the $n^{-1/2}$ convergence rate (ignoring logarithmic factors) predicted by Theorem~\ref{theorem:main_result:weighted} when the dimension is correctly specified, while the dashed grey line indicates the $n^{-1/4}$ convergence rate (ignoring logarithmic factors) predicted to hold when the embedding dimension is chosen too large.

\begin{figure}[ht]\centering
\subfloat[Normal]{\includegraphics[width=.45\linewidth]{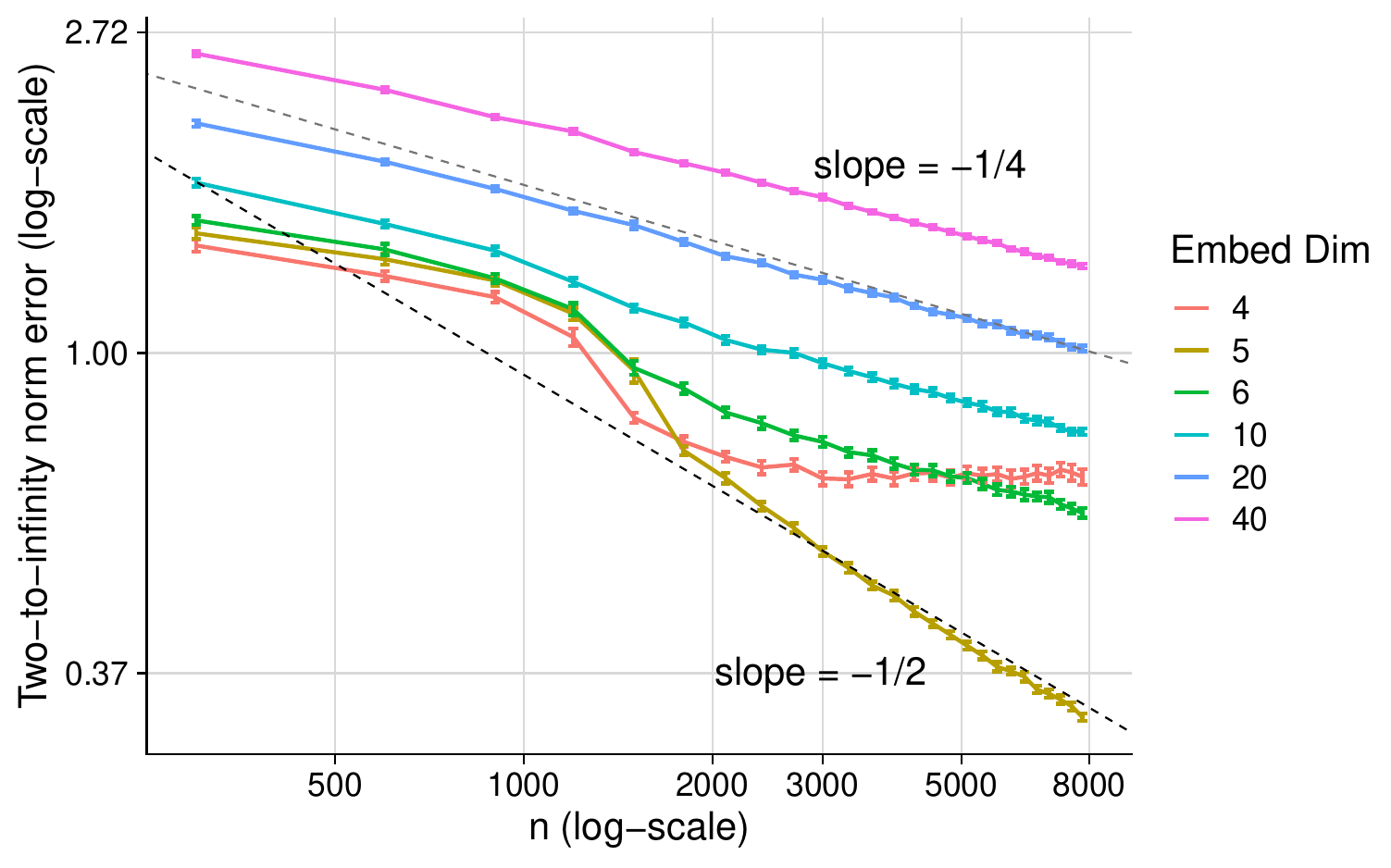}}\hfill
\subfloat[Laplace]{\includegraphics[width=.45\linewidth]{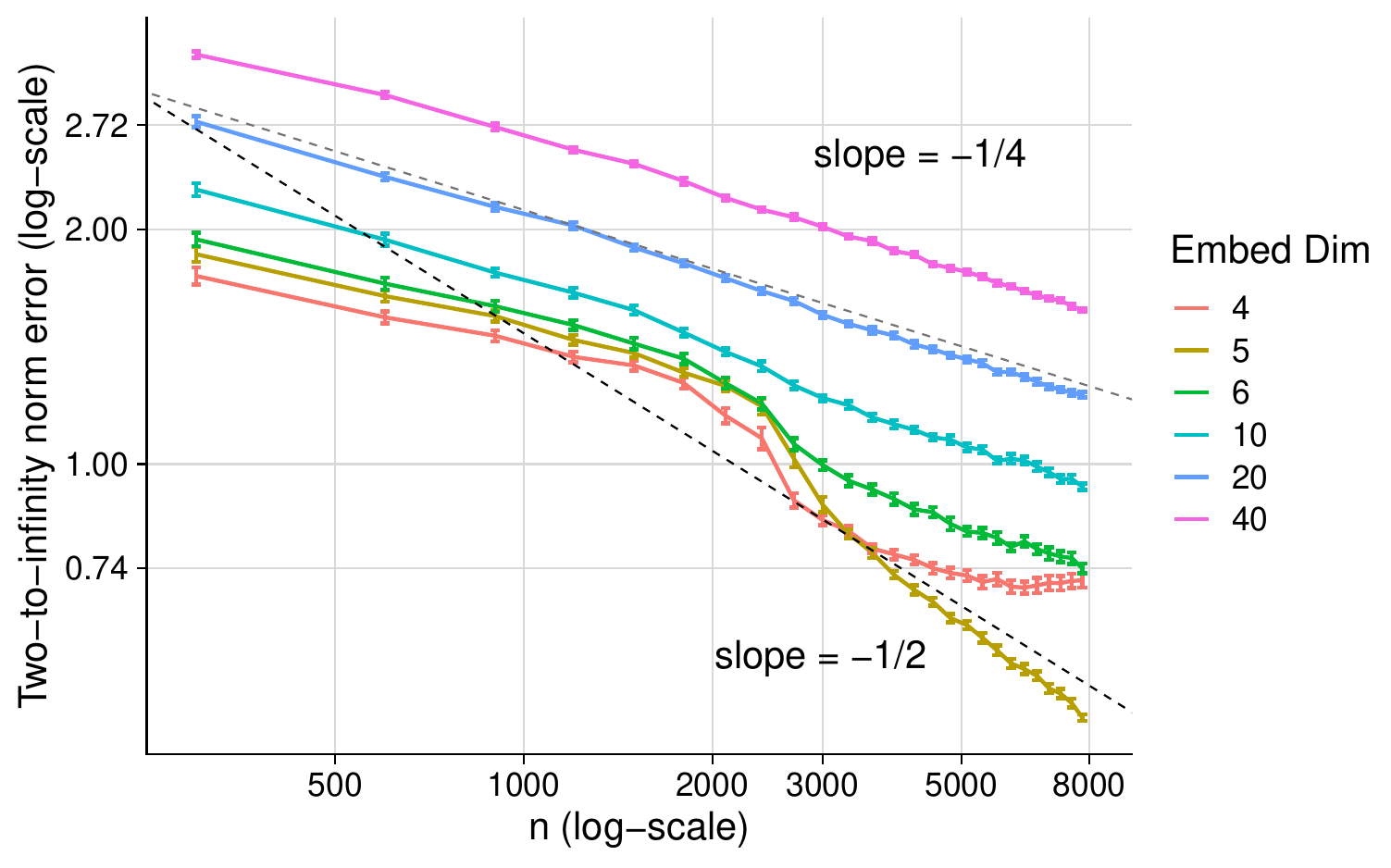}}\par 
\subfloat[Mean-zero Exponential]{\includegraphics[width=.45\linewidth]{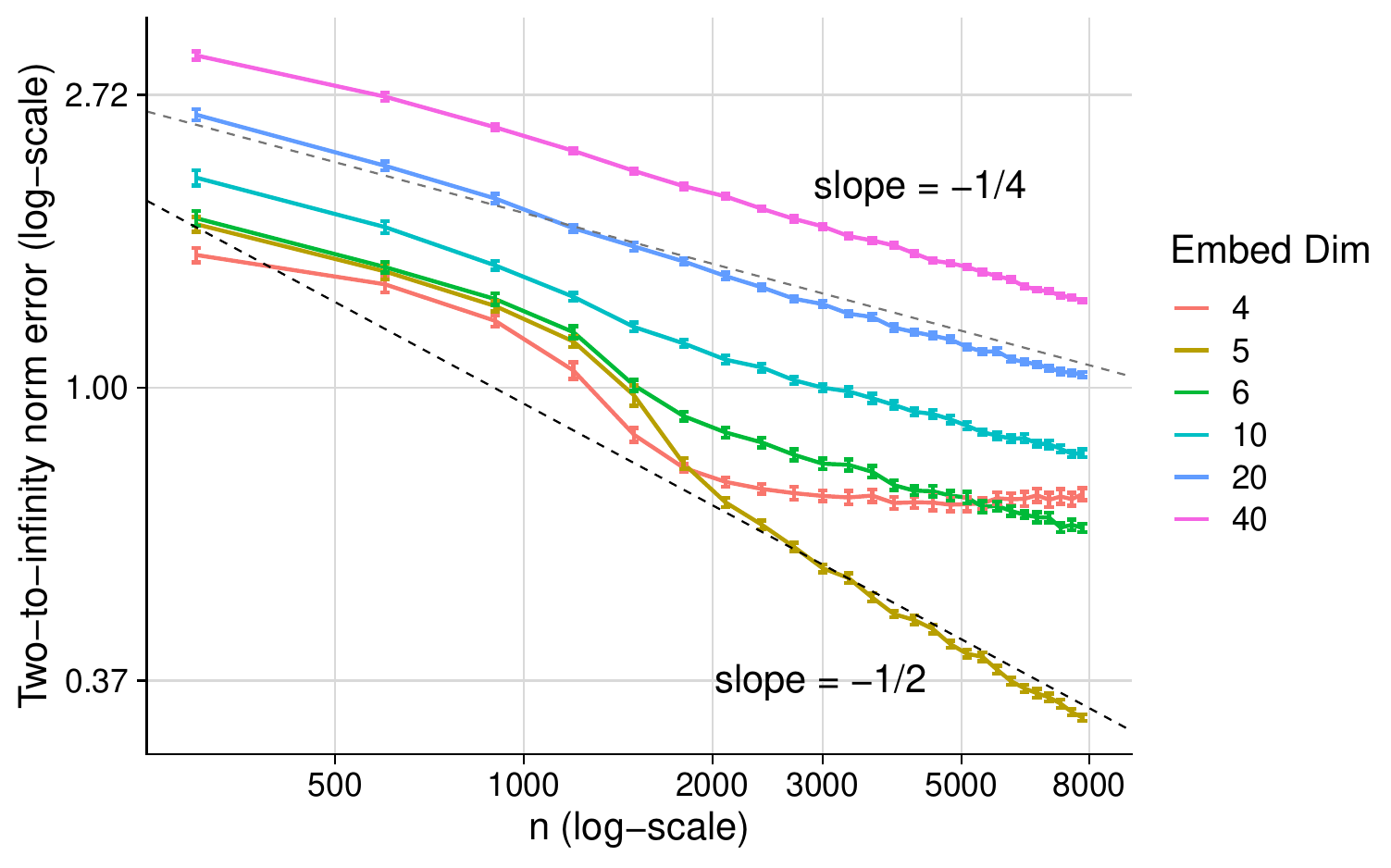}}\hfill
\subfloat[Mean-zero Poisson]{\includegraphics[width=.45\linewidth]{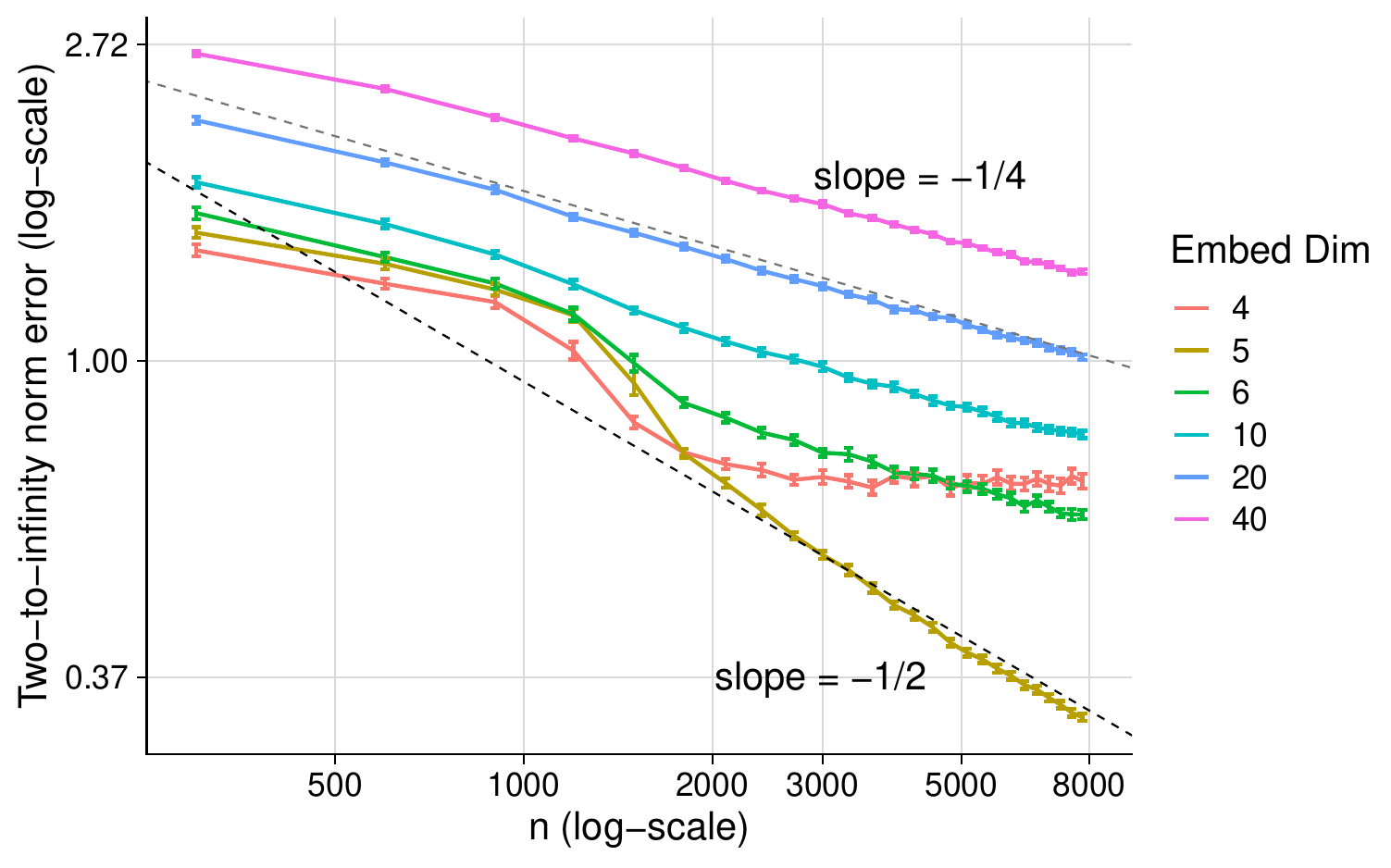}}
\caption{Estimation error of ASE in $(\tti)$-norm as a function of number of vertices $n$ for weighted networks as in Equation~\eqref{eq:def:weightedDense} under four different choices of noise models (a) normal (b) Laplace (c) exponential and (d) Poisson,
under six choices of embedding dimension (colored lines).
The correctly-specified embedding dimension ($r = 5, k= 0$; in gold) is display alongside five misspecified embedding dimensions ($r + k = 4, 6, 10, 20, 40$; orange, green, teal, purple and magenta, respectively).
 The black and gray dashed lines indicate, respectively, the convergence rate predicted for the correctly-specified setting and the setting where the embedding dimension is chosen too large. 
Error bars represent two standard errors of the mean.} 
\label{fig:nonbinary_dirichlet} 
\end{figure} 

Examining Figure \ref{fig:nonbinary_dirichlet}, consistency appears to hold for embedding dimensions that are at least as large as the true embedding dimension ($r=5$).
When the embedding dimension is chosen too small (in this case, $4$, indicated by the orange line), consistency no longer holds: the error between the ASE and the true latent positions appears to approach a constant for suitably large $n$ for all four error models.
With the value of $\rho_n = 1$ in these experiments, we see that the lower bound provided in Theorem \ref{theorem:main_result:weighted}, is concordant with the inconsistency when the embedding dimension is chosen too small here.
Turning our attention to the case of well-specified dimension, indicated by the gold line in Figure~\ref{fig:nonbinary_dirichlet}, we see that as $n$ grows, the estimation error appears to match the predicted $n^{-1/2}$ rate predicted by Theorem~\ref{theorem:main_result:weighted}.
When the dimension is chosen too large, indicated by the green, blue, purple and magenta lines (corresponding embeddings dimensions 6, 10, 20 and 40, respectively) in the figure, we see that consistency still appears to hold, albeit at the slower $n^{-1/4}$ rate, again in line with Theorem~\ref{theorem:main_result:weighted}. 

For fixed $n$, Theorem \ref{theorem:main_result:weighted} predicts that estimation error under the $(\tti)$-norm should exhibit square root behavior as the number of embedding dimensions increases past the true embedding dimension (i.e., $k > 0$).
We explore this experimentally under the setting of previous experiments, restricting our attention to the case where $E_{ij} + 1 \overset{\iid}{\sim} \Exp(1)$ for $i < j$.
We computed the ASE for embedding dimensions of $r+k = 1,  2, \dots, 10, 11$ and then for larger dimensions $r+k = 20, 40, 45$. 

Figure \ref{fig:embed_dim_vs_error} shows that our error in estimating the latent positions grows at most at a rate of $\sqrt{k}$ for when $k > 0$ and $n$ is held fixed. The rates observed in our experiment grows slightly slower that the $\sqrt{k}$ we predict from Theorem \eqref{theorem:main_result:weighted}. 
We suspect that this could be due to uncertainty induced by computing $\mW$ as in Equation~\eqref{eqn:expts:procrustes}.
We also observe that the $(\tti)$-norm is smallest (for $n > 1500$) when the embedding dimension of the ASE is the same as the true embedding dimension ($r = 5$). We remark that the $(\tti)$-norm is not minimized at $r + k = 5$ for $n = 300, 600, 900,$ and $1500$ because our results are asymptotic in nature. 

\begin{figure}[ht]
\centering
\includegraphics[width=0.8\linewidth]{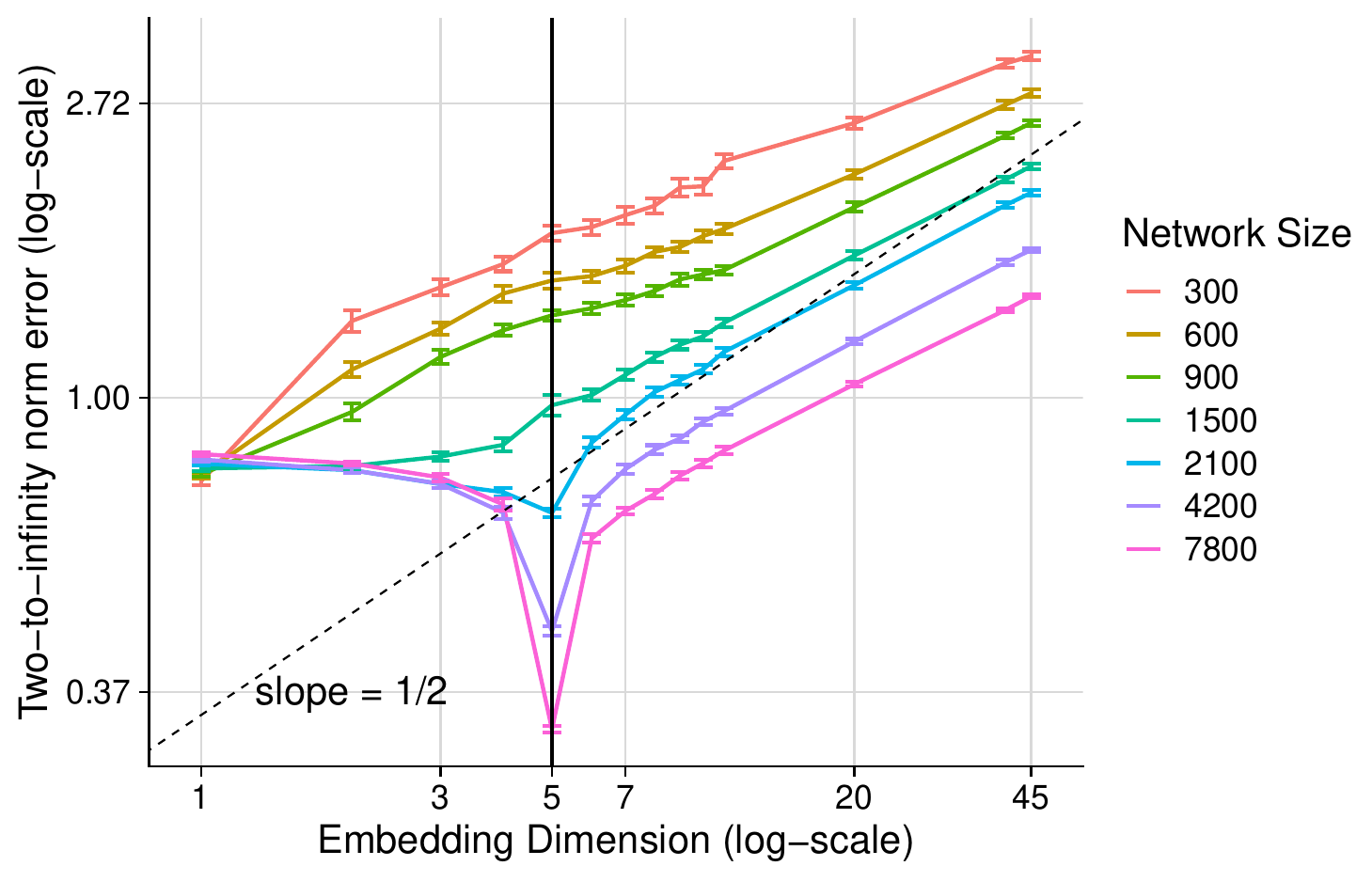} 
  \caption{
  Error in recovering the true latent positions in $(\tti)$-norm, as a function of embedding dimension for varying choices of network size $n$, indicated by line color.
  Both axes are on a logarithmic scale, and the dashed black line indicates the $\sqrt{k}$-like behavior predicted by our results.
  The black vertical line indicates the true embedding dimension, $\rank \mP = 5$.
  Error bars indicate two standard errors of the mean.} \label{fig:embed_dim_vs_error} 
\end{figure} 

In our next set of experiments for weighted networks, we examine the effects of the signal strength parameter, $\rho_n$, on the estimation error of the ASE and its interaction with embedding dimension.
We again sampled latent positions from a Dirichlet distribution as in our previous experiments.
That is, the rows of $\latent$ are drawn i.i.d according to $\Dir(\alpha)$ where $\alpha = (1, 1, 1, 1, 1)$, so that the true latent dimension is $r=5$.
The observed network is then generated according to
\begin{equation}
    \mA = \rho_n\latent\latent^\top + \mE 
\end{equation}
where $\rho_n \in [0, 1]$ is a signal strength parameter, which will shrink to zero with $n$ (see Remark \ref{remark:signal-strength-parameter}).
In particular, we take $\rho_n = n^{-\gamma}$ for $\gamma \ge 0$ and generate edge noise according to $E_{ij} \overset{\iid}{\sim} N(0, 0.1^2)$ for $i > j$.
Given the adjacency matrix $\mA$, we again constructed its $(r+k)$-dimensional ASE for varying choices of $k$ and computed the error in estimating the true latent positions in the $(\tti)$-norm, after solving the orthogonal Procrustes problem to account for the nonidentifiability present in the RDPG as done in Equation \eqref{eqn:expts:procrustes}.

We conducted this experiment for networks sizes $n = 1000, 2000, \dots, 8000$ and for various choices of embedding dimension $r + k = 4,5,6, 7,10,20$, with $40$ Monte Carlo replicates per condition.
The results of this experiment are given in Figure~\ref{fig:weighted-weak-signal}, with both axes given on a logarithmic scale to highlight the convergence rate.
We plot the predicted $n^{-1/4}$ convergence rate (for when the embedding dimension is chosen too large relative to the true dimension) as a gray, dashed line as well to highlight the accuracy of our theoretical results in Theorem \ref{theorem:main_result:weighted}, along with the $n^{-1/2}$ for the optimal convergence rate in a black, dashed line. 

\begin{figure}[ht]
\centering
\includegraphics[width=0.9\linewidth]{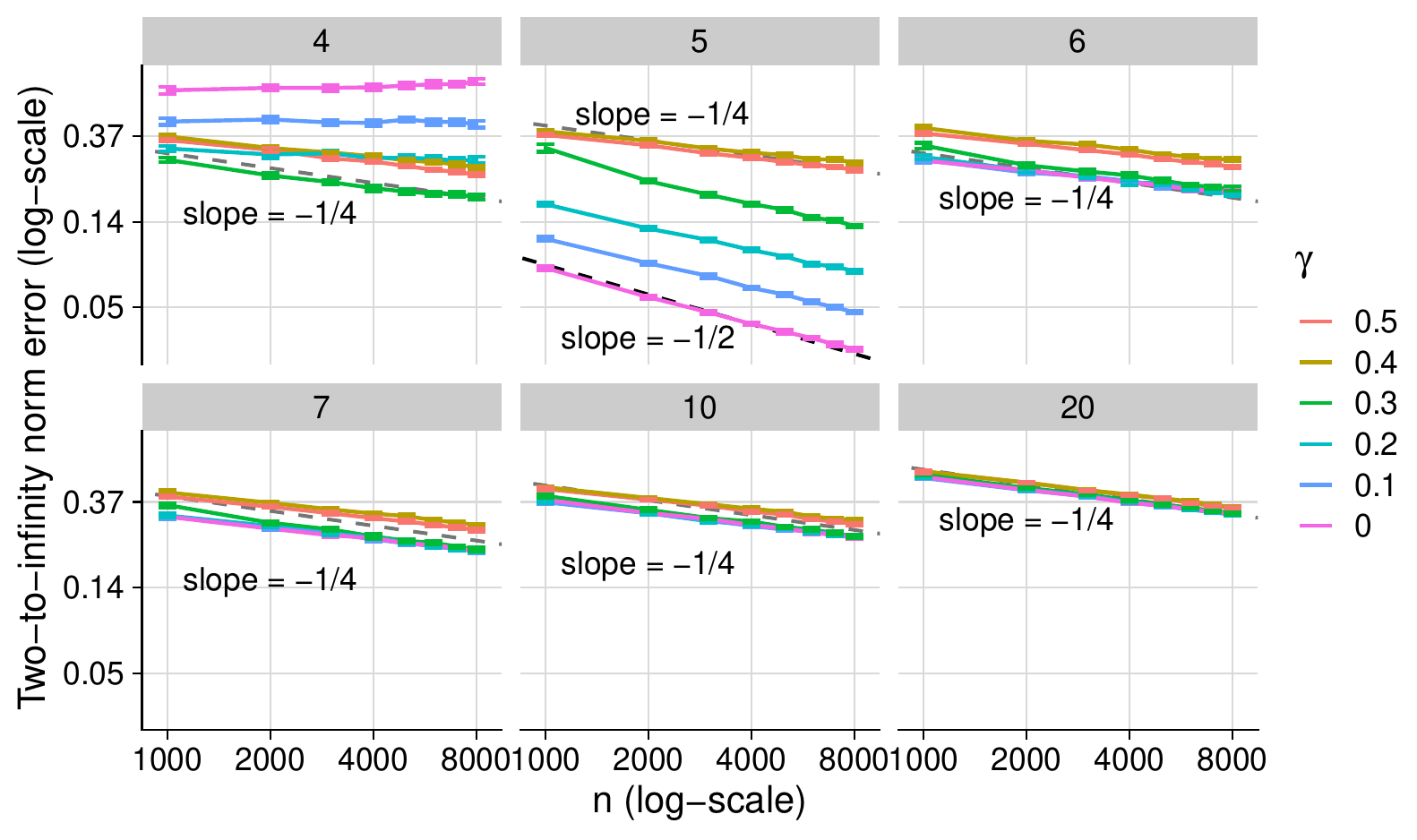}
\caption{Estimation error of ASE in $(\tti)$-norm as a function of network size for varying signal strength levels (indicated by line color) for six choices of embedding dimension.
Subplots show behavior under correctly-specified embedding dimension ($r+k = r = 5$; top row, middle panel), as well as when the dimension is chosen too low ($r+k=4$; top row, left) and too high ($r + k = 6, 7, 10, 20$; top-row right panel and all three bottom row panels).
The black (resp. gray) dashed line with a slope of $-1/2$ (resp. $-1/4$) indicates the predicted convergence rate of $n^{-1/2}$ (resp. $n^{-1/4}$) from Theorem \ref{theorem:main_result:weighted} for the correctly chosen (resp. incorrectly chosen) embedding dimension.
Error bars represent two standard errors of the mean. 
} \label{fig:weighted-weak-signal}
\end{figure}

\subsection{Binary Networks} \label{subsec:expt:binary}

We now investigate the behavior of the ASE under the setting of binary networks.
As discussed in Section~\ref{subsec:consistencyForBinary}, our main results presented in Section~\ref{sec:theory} are not applicable to binary networks because the matrix of entry-wise variances of the adjacency matrix is not doubly-stochastic.
In general, under models like the RDPG, binary matrices have heterogeneous entry-wise variances.
Nonetheless, we expect the eigenvectors of the observed adjacency matrix to still be delocalized, as described in Conjecture~\ref{conj:maintext:deloc}, along with similar behaviors to weighted networks for when the embedding dimension is incorrectly chosen. For our first set of binary network experiments, we consider the stochastic blockmodel.
The SBM models network formation by assigning each vertex to one of $r$ communities, and generating edges conditionally independently given these community assignments, in such a way that the probability of two vertices forming an edge is determined by their community memberships.
We store these probabilities in the entries of a symmetric matrix $\mB \in [0,1]^{r \times r}$, so that $B_{k,\ell}$ encodes the probability that $A_{ij} = 1$, given that vertex $i$ is in community $k$ and vertex $j$ is in community $\ell$. 
All told,
\begin{enumerate}
    \item Draw a random probability vector $\pi \sim \Dir(\alpha)$, where $\alpha \in \R^r$ is a vector of non-negative reals.
    \item Assign each node $i \in [n]$ to one of $r$ communities independently according to $\pi$ and record these memberships as one-hot encodings in the rows of $\mZ \in \R^{n\times r}$.
    \item Construct $\mB \in [0,1]^{r \times r}$ by setting the within-community probabilities (i.e., on-diagonal entries) to $0.9$ and the inter-community edge probabilities (i.e., off-diagonal entries) to $0.1$.
    \item Set $\mP = \mZ\mB\mZ^\top$ and obtain $\latent$ from the eigendecomposition of $\mP = \mU\mS\mU^\top$ as $\latent = \mU_{1:r} \mS^{1/2}_{1:r}$ .
    \item Generate $\mA \in \{0,1\}^{n \times n}$ according to
    \begin{equation} \label{eq:def:SBM}
        \Prob{\mA \mid \latent} = \prod_{i < j} P_{ij}^{A_{ij}}(1 - P_{ij})^{1-A_{ij}} .
    \end{equation}
\end{enumerate} 

We generated $\mA$ according to the procedure described above, with true number of communities (and hence true latent dimension) again set to $r=5$.
From $\mA$, we then calculated the $(r+k)$-dimensional ASE with the embedding dimension chosen correctly (i.e., $k=0$) and incorrectly ($r+k = 4, 6, 7, 10, 20$), and computed the error in $(\tti)$-norm after solving the Procrustes alignment described in Equation \eqref{eqn:expts:procrustes}. 
We repeated this process for network size $n = 300, 600, \dots, 7800$, with $80$ Monte Carlo replicates per condition. 
Figure \ref{fig:uSBM_binary} shows the results of this experiment. 
As in the previous figures, the axes are on a log-log scale to highlight the convergence rate and show consistency of the ASE when the embedding dimension is chosen suitably large.
The black dashed line indicates our conjecture $n^{-1/2}$ rate for the correctly-specified case.
The grey dashed line indicates our conjectured $n^{-1/4}$ convergence rate for when the embedding dimension is chosen too large.
Different choices of embedding dimension are indicated by different line colors, with the gold line indicating the case of correctly-specified dimension $r+k=5$.
Inspecting the figure, we see that the convergence behaviors are broadly similar to the weighted networks considered in Section~\ref{subsec:weighted_expts} (see Figure \ref{fig:nonbinary_dirichlet}).
The predicted $n^{-1/2}$ convergence rate is again obtained when the embedding dimension is chosen correctly ($r =5, k = 0$, in gold), and consistency still appears to hold when the embedding dimension is chosen too large (i.e., $k > 0$, green, teal, purple and magenta), albeit at the slower $n^{-1/4}$ rate, as conjectured.
When the dimension is chosen too small ($k < 0$, in orange), we no longer see consistency, again in line with our theoretical results and conjectured behavior for binary networks.

\begin{figure}[ht]
\centering
\includegraphics[width=0.8\textwidth]{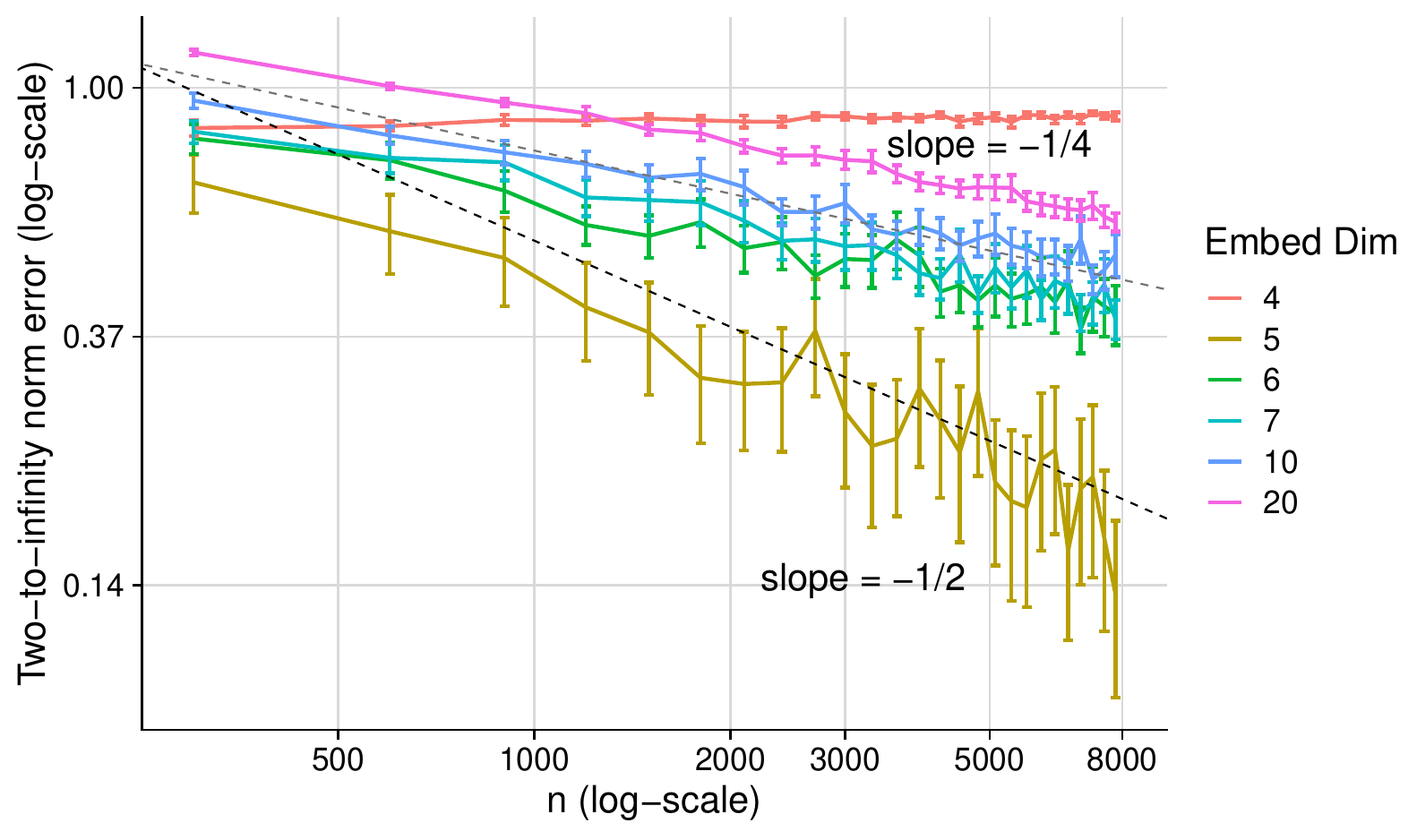}
  \caption{Average estimation error of ASE in $(\tti)$-norm as a function of number of vertices $n$ for different choices of embedding dimension $r+k$ (line colors) under the SBM described in Equation~\eqref{eq:def:SBM}.
  Both axes are on logarithmic scales, with dashed lines in black and grey indicating, respectively, the $n^{-1/2}$ and $n^{-1/4}$ rates predicted by our Conjecture~\ref{conj:maintext:deloc} under the settings where the dimension is chosen, respectively, either correctly or too large.
  Error bars indicate two standard errors of the mean.}
  \label{fig:uSBM_binary}
\end{figure}

In our next set of experiments for binary networks, we return to the setting where the latent positions are Dirichlet-distributed, this time in the sparse binary RDPG.
To obtain our latent positions, we simulated from a Dirichlet distribution with $\mX_i, \dots, \mX_n \overset{\iid}{\sim} \Dir(\alpha)$ and $\alpha = (1, 1, 1, 1, 1)$. The resulting adjacency matrix was then obtained according to
    \begin{equation} \label{eq:def:sparseDiri}
        \Prob{\mA \mid \latent} = \prod_{i < j}(\rho_n\latent_i^\top\latent_j)^{A_{ij}}(1-\rho_n\latent_i^\top\latent_j)^{1-A_{ij}}
    \end{equation}
where $\rho_n = n^{-\gamma}$ for $\gamma \ge 0$. 
Recall that smaller values of $\rho_n$ (i.e., higher levels of sparsity, i.e., larger values of $\gamma$) tends to correspond to more challenging estimation.
Given $\mA$ generated according to Equation~\eqref{eq:def:sparseDiri}, we calculated its ASE for varying choices of embedding dimension, and determined the error in $(\tti)$-norm between these estimated latent positions the true latent positions $\latent$ after finding an optimal alignment between them via solving the orthogonal Procrustes problem as described in Equation \eqref{eqn:expts:procrustes}. 
We conducted this experiment with network sizes $n = 300, 600, \dots, 7800$ and under various sparsity rates $\rho_n = n^{-\gamma}$ for $\gamma = 0,0.1,0.2,0.3,0.4,0.5$, with $80$ Monte Carlo replicates per condition.
Figure \ref{fig:sparse_binary_network} shows the results of this experiment.

\begin{figure}[ht]
\centering
\includegraphics[width=0.9\linewidth]{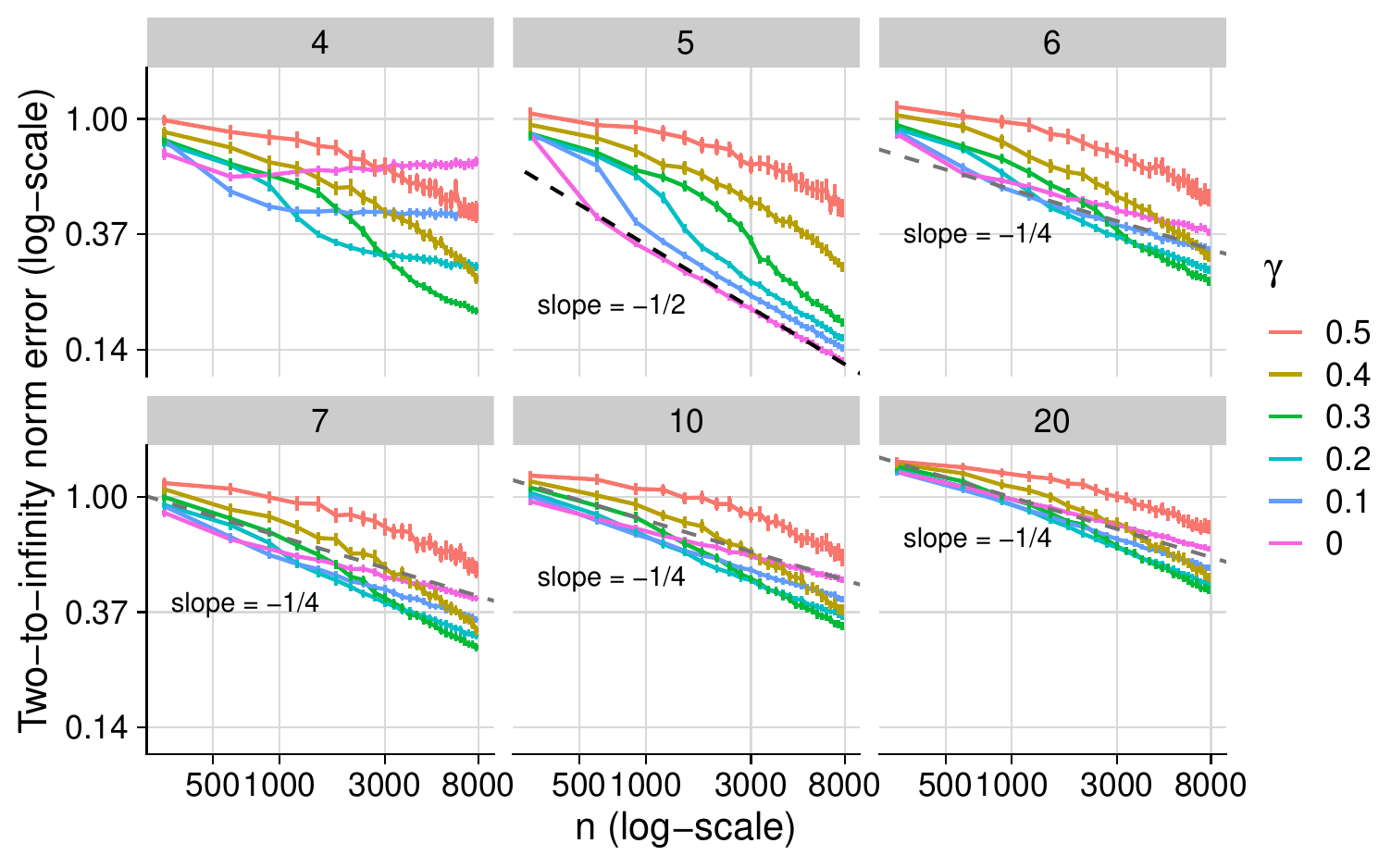}
\caption{Estimation error of ASE in $(\tti)$-norm as a function of network size $n$ under the sparse binary model in Equation~\eqref{eq:def:sparseDiri} for varying sparsity $\rho_n = n^{-\gamma}$ ($\gamma=0,0.1,0.2,0.3,0.4,0.5$; indicated by line color) for varying choices of embedding dimension $r+k=4,5,6,7,10,20$.
The $n^{-1/2}$ rate suggested by our theory for the well-specified case ($r+k=5$; top row, middle plot) is indicated by a black dashed line.
The $n^{-1/4}$ rate suggest by our theory when the embedding dimension is too large ($r+k>5$; top-row right-hand plot and bottom row) is indicated by a grey dashed line.
Errors bars indicate two standard errors of the mean.}
\label{fig:sparse_binary_network}
\end{figure}

Examining the figure, our simulations suggest that when the embedding dimension is correctly chosen ($r+k=r=5$; top row, center), the ASE converges in the $(\tti)$-norm at approximately a $n^{-1/2}$ rate for all values of $\gamma$, in agreement with existing results (up to logarithmic factors)  found in the literature \citep{lyzinski_community_2016, GRDPG_original}. 
When the embedding dimension is chosen too large (bottom row and top-row right), the ASE still converges for all values of $\gamma$, but it is a slower rate, with higher values of $\gamma$ appearing to approach this rate more quickly. 
The similarity in the convergence rate for when the embedding dimension is chosen slightly larger than the true embedding dimension ($r+k = 6$) compared to when it is chosen much larger than the true embedding dimension ($r+k = 20$) shows that the ASE is asymptotically comparatively insensitive to the embedding dimension, once it is chosen too large.
When the embedding dimension is chosen too small ($r+k=4$, top row, left), we see that the ASE is no longer consistent when the network is dense (i.e., for smaller values of $\gamma$); the performance at higher levels of sparsity (i.e., larger values of $\gamma$) is less conclusive, but extrapolating from the curves for larger values suggests the same inconsistency holds, with the curves leveling out at values of $n$ beyond $8000$.

\section{Discussion} \label{sec:disco}

In this work, we have considered how different choices of embedding dimension impact estimation of the latent positions under the RDPG, and more generally under signal-plus-noise matrix models $\mA = \mP + \mE$
Our results characterize how different choices of embedding dimension affect the estimation rate of the ASE, as measured in $(\tti)$-norm.
Theorem \eqref{theorem:main_result:weighted} and our experiments in Section~\ref{subsec:weighted_expts} show that under a weighted version of the RDPG, when the embedding dimension is chosen too small, consistency in the $(\tti)$-norm is not guaranteed for the ASE.
Indeed, we see from Theorem \eqref{theorem:main_result:weighted} that when the embedding dimension is chosen too small, the estimation error is lower bounded by the signal strength parameter $\rho_n$.
This implies that the ASE is inconsistent when $\rho_n$ is non-vanishing and the embedding dimension is chosen too small.
We may think of this as the embedding failing to capture all of the information contained in the network.
When the embedding dimension is correctly specified, all of the signal present in the network is captured, with no trailing noise eigenvalues, and we achieve the $n^{-1/2}$ convergence rate (ignoring log-factors) seen in the literature and and widely believed to be optimal \citep[see e.g., discussion in][]{YanLev2023,hao_neurips}.
On the other hand, Theorem \eqref{theorem:main_result:weighted} and our experiments in Section~\ref{subsec:weighted_expts} show that when the embedding dimension is chosen too large, consistency may still hold, so long as the embedding dimension is not too much larger than the true dimension.
This result confirms the conjecture in Section 7 of \cite{RDPG_survey}: as long as all of the signal information is captured, the ASE should still be consistent.
This consistency is a direct consequence of the delocalization of the eigenvectors associated with the perturbed null space. 
Because of the stark differences in the asymptotic behaviors under these three regimes (embedding dimension either too small, chosen correctly, or too large), our results lend support to the folklore in network embeddings and in model selection more broadly, that it is better to err on the side of choosing model rank too large.

The most immediate avenue for future work is to extend our delocalization proofs to the setting where the matrix of entry-wise variances of the adjacency matrix is no longer doubly stochastic, as described in Section \ref{subsec:consistencyForBinary}.
This would prove our Conjecture~\ref{conj:maintext:deloc}, with which we could extend Theorem \ref{theorem:main_result:weighted} to the binary RDPG.
Our experiments for binary networks described in Section \ref{subsec:expt:binary} show that the behavior of the ASE is similar to that predicted by Theorem \ref{theorem:main_result:weighted}, lending strong evidence to the validity of our delocalization conjecture.
Other directions for future work include investigating the effects of model misspecification on downstream tasks, such as community detection and hypothesis testing; and extending our results to other node embedding methods such as the Laplacian spectral embedding \citep{Tang-Laplacian}, Laplacian eigenmaps \citep{Belkin} and multiple-network versions of the ASE \citep{levin_central_2019}.

\bibliography{RMT_Embeddings}

@article{Erdos,
	title = {Spectral statistics of {Erdős}–{Rényi} graphs {I}: {Local} semicircle law},
	volume = {41},
	number = {3B},
	journal = {The Annals of Probability},
	author = {Erdős, László and Knowles, Antti and Yau, Horng-Tzer and Yin, Jun},
	year = {2013},
	note = {Publisher: Institute of Mathematical Statistics},
	keywords = {density of states, Erdős–Rényi graphs, Local semicircle law},
	pages = {2279 -- 2375},
}

@book{Horn_Johnson, place={Cambridge}, title={Matrix Analysis}, publisher={Cambridge University Press}, author={Horn, Roger A. and Johnson, Charles R.}, year={1985}}

@misc{Erdos_lecture_notes,
      title={Lectures on the local semicircle law for {W}igner matrices}, 
      author={Florent Benaych-Georges and Antti Knowles},
      year={2018},
      eprint={1601.04055},
      archivePrefix={arXiv},
      primaryClass={math.PR},
}

@article{spectral_methods_textbook,
   title={Spectral Methods for Data Science: A Statistical Perspective},
   volume={14},
   ISSN={1935-8245},
   number={5},
   journal={Foundations and Trends® in Machine Learning},
   publisher={Emerald},
   author={Chen, Yuxin and Chi, Yuejie and Fan, Jianqing and Ma, Cong},
   year={2021},
   pages={566–806}
}

@inproceedings{original_RDPG_Young,
author = {Young, Stephen J. and Scheinerman, Edward R.},
title = {Random dot product graph models for social networks},
year = {2007},
isbn = {3540770038},
publisher = {Springer-Verlag},
address = {Berlin, Heidelberg},
booktitle = {Proceedings of the 5th International Conference on Algorithms and Models for the Web-Graph},
pages = {138–149},
numpages = {12},
location = {San Diego, CA, USA},
series = {WAW'07}
}

@article{consistent_dot_product_SBM,
author = {Vince Lyzinski and Daniel L. Sussman and Minh Tang and Avanti Athreya and Carey E. Priebe},
title = {{Perfect clustering for stochastic blockmodel graphs via adjacency spectral embedding}},
volume = {8},
journal = {Electronic Journal of Statistics},
number = {2},
publisher = {Institute of Mathematical Statistics and Bernoulli Society},
pages = {2905 -- 2922},
keywords = {clustering, degree corrected stochastic block model, Stochastic block model},
year = {2014},
}

@article{Lei_community_consistency,
author = {Jing Lei and Alessandro Rinaldo},
title = {{Consistency of spectral clustering in stochastic block models}},
volume = {43},
journal = {The Annals of Statistics},
number = {1},
publisher = {Institute of Mathematical Statistics},
pages = {215 -- 237},
keywords = {network data, Sparsity, spectral clustering, Stochastic block model},
year = {2015},
}

@book{ sporns_book,
  author = {Olaf Sporns},
  title = { Discovering the Human Connectome },
  year = { 2012 },
  publisher = { {MIT} Press }
}

@article{ HofRafHan2002,
  author = { Peter D. Hoff and Adrian E. Raftery and Mark S. Handcock },
  year = { 2002 },
  title = { Latent Space Approaches to Social Network Analysis },
  journal = { Journal of the American Statistical Association },
  volume = { 97 },
  number = { 460 },
  pages = { 1090--1098 }
}

@article{ TanSusPri2013,
  title = { Universally consistent vertex classification for latent positions graphs },
  author = { Minh Tang and Daniel L. Sussman and Carey E. Priebe },
  journal = { The Annals Statistics },
  volume = { 41 },
  number = { 3 },
  pages = { 1406--1430 },
  year = { 2013 }
}

@article{Levin-Laplacian,
  author  = {Keith D. Levin and Fred Roosta and Minh Tang and Michael W. Mahoney and Carey E. Priebe},
  title   = {Limit theorems for out-of-sample extensions of the adjacency and Laplacian spectral embeddings},
  journal = {Journal of Machine Learning Research},
  year    = {2021},
  volume  = {22},
  number  = {194},
  pages   = {1--59},
}

@article{Tang-Laplacian,
author = {Minh Tang and Carey E. Priebe},
title = {{Limit theorems for eigenvectors of the normalized Laplacian for random graphs}},
volume = {46},
journal = {The Annals of Statistics},
number = {5},
publisher = {Institute of Mathematical Statistics},
pages = {2360 -- 2415},
year = {2018},
}

@article{RDPG_survey,
author = {Athreya, Avanti and Fishkind, Donniell E. and Tang, Minh and Priebe, Carey E. and Park, Youngser and Vogelstein, Joshua T. and Levin, Keith and Lyzinski, Vince and Qin, Yichen},
title = {Statistical inference on random dot product graphs: a survey},
year = {2017},
journal = { Journal of Machine Learning},
volume = {18},
number = {1},
pages = {8393--8484}
}

@article{li_network_2020,
	title = {Network cross-validation by edge sampling},
	volume = {107},
	number = {2},
	journal = {Biometrika},
	author = {Li, Tianxi and Levina, Elizaveta and Zhu, Ji},
	month = apr,
	year = {2020},
	pages = {257--276}
}

@article{Chaney2014,
  title = { The Network Structure of International Trade },
  author = {Thomas Chaney},
  journal = { American Economic Review },
  volume = { 104 },
  number = { 11 },
  year = { 2014 },
  pages = {3600--3634}
}

@article{Acemoglu2015,
  author = { Daron Acemoglu and Asuman Ozdaglar and Alireza Tahbaz-Salehi },
  year = { 2015 },
  title = { Systemic Risk and Stability in Financial Networks },
  journal = { American Economic Review },
  volume = { 105 },
  number = { 2 },
  pages = { 564--608 }
}

@article{EllGolJac2014,
  author = { Matthew Elliott and Benjamin Golub and Matthew O. Jackson },
  year = { 2014 },
  title = { Financial Networks and Contagion },
  journal = { American Economic Review },
  volume = { 104 },
  number = { 10 },
  pages = { 3115--3153 }
}

@article{Rohe_clustering,
author = {Karl Rohe and Sourav Chatterjee and Bin Yu},
title = {{Spectral clustering and the high-dimensional stochastic blockmodel}},
volume = {39},
journal = {The Annals of Statistics},
number = {4},
publisher = {Institute of Mathematical Statistics},
pages = {1878 -- 1915},
year = {2011}
}

@article{Tang_inference,
author = {Xinjie Du and Minh Tang},
title = {{Hypothesis testing for equality of latent positions in random graphs}},
volume = {29},
journal = {Bernoulli},
number = {4},
publisher = {Bernoulli Society for Mathematical Statistics and Probability},
pages = {3221 -- 3254},
year = {2023}
}

@misc{Zheng_inference,
      title={Detection and estimation of vertex-wise latent position shifts across networks}, 
      author={Runbing Zheng},
      year={2025},
      eprint={2502.01947},
      archivePrefix={arXiv},
      primaryClass={stat.ME},
      url={https://arxiv.org/abs/2502.01947}, 
}

@article{cross-valid_jing_lei,
author = {Kehui Chen and Jing Lei},
title = {Network Cross-Validation for Determining the Number of Communities in Network Data},
journal = {Journal of the American Statistical Association},
volume = {113},
number = {521},
pages = {241--251},
year = {2018},
publisher = {Taylor \& Francis}
}

@inproceedings{node2vec,
  author = {Aditya Grover and Jure Leskovec},
  title = {Node2vec: Scalable Feature Learning for Networks },
  year = { 2016 },
  booktitle = { Proceedings of the 22nd {ACM SIGKDD} International Conference on Knowledge Discovery and Data Mining },
  pages = { 855--864 }
}

@article{ Lei2021graphon,
  year = { 2021 },
  title = { Network representation using graph root distributions },
  author = { Jing Lei },
  journal = { The Annals of Statistics },
  volume = { 49 },
  number = { 2 },
  pages = { 745--768 }
}

@article{ AthTanParPri2021,
  title = { On Estimation and Inference in Latent Structure Random Graphs },
  author = { Avanti Athreya and Minh Tang and Youngser Park and Carey E. Priebe },
  journal ={ Statistical Science },
  year = { 2021 },
  volume = { 36 },
  number = { 1 },
  pages = { 68--88 }
}

@article{ TanCap2025,
  title = { Eigenvector fluctuations and limit results for random graphs with infinite rank kernels },
  author = { Ming Tang and Joshua R. Cape },
  year = { 2025 },
  journal = { arXiv:2501.15725 }
}

@article{TanCapPri2022,
author = {Minh Tang and Joshua Cape and Carey E. Priebe},
title = {{Asymptotically efficient estimators for stochastic blockmodels: The naive MLE, the rank-constrained MLE, and the spectral estimator}},
volume = {28},
journal = {Bernoulli},
number = {2},
publisher = {Bernoulli Society for Mathematical Statistics and Probability},
pages = {1049 -- 1073},
keywords = {Asymptotic efficiency, asymptotic normality, random dot product graph, spectral embedding, stochastic blockmodels},
year = {2022},
}

@article{GOF_SBM,
author = {Jing Lei},
title = {{A goodness-of-fit test for stochastic block models}},
volume = {44},
journal = {The Annals of Statistics},
number = {1},
publisher = {Institute of Mathematical Statistics},
pages = {401 -- 424},
year = {2016}
}

@article{Bickel_model_selection_SBM,
author = {Y. X. Rachel Wang and Peter J. Bickel},
title = {{Likelihood-based model selection for stochastic block models}},
volume = {45},
journal = {The Annals of Statistics},
number = {2},
publisher = {Institute of Mathematical Statistics},
pages = {500 -- 528},
keywords = {likelihood ratio statistic, model misspecification, Network communities, stochastic block models},
year = {2017},
}

@article{bickel_hypothesis_2015,
	title = {Hypothesis {Testing} for {Automated} {Community} {Detection} in {Networks}},
	volume = {78},
	issn = {1369-7412},
	number = {1},
	journal = {Journal of the Royal Statistical Society Series B: Statistical Methodology},
	author = {Bickel, Peter J. and Sarkar, Purnamrita},
	month = may,
	year = {2015},
	pages = {253--273},
}

@article{scree_plot_rank,
title = {Automatic dimensionality selection from the scree plot via the use of profile likelihood},
journal = {Computational Statistics \& Data Analysis},
volume = {51},
number = {2},
pages = {918-930},
year = {2006},
issn = {0167-9473},
author = {Mu Zhu and Ali Ghodsi},
keywords = {Data compression, Denoising, Isomap, Latent semantic indexing, Manifold learning, Principal component analysis (PCA), Resampling methods, Singular value decomposition (SVD)},
}

@article{SBM_original,
title = {Stochastic blockmodels: First steps},
journal = {Social Networks},
volume = {5},
number = {2},
pages = {109--137},
year = {1983},
author = {Paul W. Holland and Kathryn Blackmond Laskey and Samuel Leinhardt},
}

@article{dcSBM_original,
  title = {Stochastic blockmodels and community structure in networks},
  author = {Karrer, Brian and Newman, M. E. J.},
  journal = {Physical Review {E}},
  volume = {83},
  issue = {1},
  pages = {016107},
  numpages = {10},
  year = {2011},
  month = {Jan},
  publisher = {American Physical Society}
}

@inproceedings{hao_neurips,
 author = {Yan, Hao and Levin, Keith},
 booktitle = {Advances in Neural Information Processing Systems},
 editor = {A. Globerson and L. Mackey and D. Belgrave and A. Fan and U. Paquet and J. Tomczak and C. Zhang},
 pages = {126566--126619},
 publisher = {Curran Associates, Inc.},
 title = {Coherence-free Entrywise Estimation of Eigenvectors in Low-rank Signal-plus-noise Matrix Models},
 volume = {37},
 year = {2024}
}

@article{ YanLev2023,
  author = { Hao Yan and Keith Levin },
  title = { Minimax rates for latent position estimation in the generalized random dot product graph },
  year = { 2023 },
  journal = { arXiv:2307.01942 }
}

@article{ASE_spectral_clustering_original,
author = {Daniel L. Sussman and Minh Tang and Donniell E. Fishkind and Carey E. Priebe},
title = {A Consistent Adjacency Spectral Embedding for Stochastic Blockmodel Graphs},
journal = {Journal of the American Statistical Association},
volume = {107},
number = {499},
pages = {1119--1128},
year = {2012},
publisher = {American Statistical Association}
}

@misc{visualization,
      title={Spectral embedding and the latent geometry of multipartite networks}, 
      author={Alexander Modell and Ian Gallagher and Joshua Cape and Patrick Rubin-Delanchy},
      year={2022},
      eprint={2202.03945},
      archivePrefix={arXiv},
      primaryClass={stat.ME},
      url={https://arxiv.org/abs/2202.03945}, 
}

@article{community_robustness,
author = {Fishkind, Donniell E. and Sussman, Daniel L. and Tang, Minh and Vogelstein, Joshua T. and Priebe, Carey E.},
title = {Consistent Adjacency-Spectral Partitioning for the Stochastic Block Model When the Model Parameters Are Unknown},
journal = {SIAM Journal on Matrix Analysis and Applications},
volume = {34},
number = {1},
pages = {23-39},
year = {2013}
}

@article{USVT,
author = {Sourav Chatterjee},
title = {{Matrix estimation by Universal Singular Value Thresholding}},
volume = {43},
journal = {The Annals of Statistics},
number = {1},
publisher = {Institute of Mathematical Statistics},
pages = {177 -- 214},
year = {2015}
}

@article{ Sansford2025,
  title = { How high is `high'? Rethinking the roles of dimensionality in topological data analysis and manifold learning },
  author = { Hannah Sansford and Nick Whiteley and Patrick Rubin-Delanchy },
  journal = { arXiv:2505.16879 },
  year = { 2025 }
}

@misc{cross_validated_rank,
      title={Estimating Graph Dimension with Cross-validated Eigenvalues}, 
      author={Fan Chen and Sebastien Roch and Karl Rohe and Shuqi Yu},
      year={2021},
      eprint={2108.03336},
      archivePrefix={arXiv},
      primaryClass={stat.ME},
      url={https://arxiv.org/abs/2108.03336}, 
}

@misc{hong_rank,
      title={Network signflip parallel analysis for selecting the embedding dimension}, 
      author={David Hong and Joshua Cape},
      year={2025},
      eprint={2509.05722},
      archivePrefix={arXiv},
      primaryClass={math.ST},
      url={https://arxiv.org/abs/2509.05722}, 
}

@inproceedings{patrick_neurips,
 author = {Rubin-Delanchy, Patrick},
 booktitle = {Advances in Neural Information Processing Systems},
 editor = {H. Larochelle and M. Ranzato and R. Hadsell and M.F. Balcan and H. Lin},
 pages = {11687--11699},
 publisher = {Curran Associates, Inc.},
 title = {Manifold structure in graph embeddings},
 volume = {33},
 year = {2020}
}

@article{random_geo_graph_Lie_group,
author = {Pierre-Lo{\"i}c M{\'e}liot},
title = {{Asymptotic representation theory and the spectrum of a random geometric graph on a compact Lie group}},
volume = {24},
journal = {Electronic Journal of Probability},
publisher = {Institute of Mathematical Statistics and Bernoulli Society},
pages = {1 -- 85},
year = {2019}
}

@article{random_geo_graph_original,
author = {Edgar Nelson Gilbert},
title = {Random Plane Networks},
journal = {Journal of the Society for Industrial and Applied Mathematics},
volume = {9},
number = {4},
pages = {533-543},
year = {1961}
}

@article{garcia_trillos_error_2020,
	title = {Error {Estimates} for {Spectral} {Convergence} of the {Graph} {Laplacian} on {Random} {Geometric} {Graphs} {Toward} the {Laplace}–{Beltrami} {Operator}},
	volume = {20},
	issn = {1615-3383},
	number = {4},
	journal = {Foundations of Computational Mathematics},
	author = {García Trillos, Nicolás and Gerlach, Moritz and Hein, Matthias and Slepčev, Dejan},
	month = aug,
	year = {2020},
	pages = {827--887},
}

@book{Vershynin_2018, place={Cambridge}, series={Cambridge Series in Statistical and Probabilistic Mathematics}, title={High-Dimensional Probability: An Introduction with Applications in Data Science}, publisher={Cambridge University Press}, author={Vershynin, Roman}, year={2018}, collection={Cambridge Series in Statistical and Probabilistic Mathematics}}

@article{MMSBM,
  author  = {Edoardo M. Airoldi and David M. Blei and Stephen E. Fienberg and Eric P. Xing},
  title   = {Mixed Membership Stochastic Blockmodels},
  journal = {Journal of Machine Learning Research},
  year    = {2008},
  volume  = {9},
  number  = {65},
  pages   = {1981--2014}
}

@InProceedings{Belkin,
  title = 	 {Unperturbed: spectral analysis beyond Davis-Kahan},
  author = 	 {Eldridge, Justin and Belkin, Mikhail and Wang, Yusu},
  booktitle = 	 {Proceedings of Algorithmic Learning Theory},
  pages = 	 {321--358},
  year = 	 {2018},
  editor = 	 {Janoos, Firdaus and Mohri, Mehryar and Sridharan, Karthik},
  volume = 	 {83},
  series = 	 {Proceedings of Machine Learning Research},
  month = 	 {07--09 Apr},
  publisher =    {PMLR},
}

@article{Samworth,
	title = {A useful variant of the {Davis}–{Kahan} theorem for statisticians},
	volume = {102},
	number = {2},
	journal = {Biometrika},
	author = {Yi Yu and Tengyao Wang and Richard J. Samworth},
	year = {2014},
	pages = {315--323},
}

@article{Han_rank,
author = {Xiao Han and Qing Yang and Yingying Fan},
title = {{Universal rank inference via residual subsampling with application to large networks}},
volume = {51},
journal = {The Annals of Statistics},
number = {3},
publisher = {Institute of Mathematical Statistics},
pages = {1109 -- 1133},
keywords = {asymptotic expansions, Eigenvalues, eigenvectors, high dimensionality, large random matrices, low-rank models, Rank inference, robustness},
year = {2023},
}

@article{singular_subspace,
author = {Joshua R. Cape and Minh Tang and Carey E. Priebe},
title = {{The two-to-infinity norm and singular subspace geometry with applications to high-dimensional statistics}},
volume = {47},
journal = {The Annals of Statistics},
number = {5},
publisher = {Institute of Mathematical Statistics},
pages = {2405 -- 2439},
year = {2019}
}

@article{Davis-Kahan,
author = {Davis, Chandler and Kahan, W. M.},
title = {The Rotation of Eigenvectors by a Perturbation. III},
journal = {SIAM Journal on Numerical Analysis},
volume = {7},
number = {1},
pages = {1-46},
year = {1970}
}

@article{GRDPG_original,
	author = {Rubin-Delanchy, Patrick and Cape, Joshua and Tang, Minh and Priebe, Carey E.},
	journal = {Journal of the Royal Statistical Society Series B: Statistical Methodology},
	month = {06},
	number = {4},
	pages = {1446-1473},
	title = {A Statistical Interpretation of Spectral Embedding: The Generalised Random Dot Product Graph},
	volume = {84},
	year = {2022}
}

@article{Jiang_rank_estimation,
author = {Wenlong Jiang and Chris McKennan and Jesús Arroyo and Joshua Cape},
title = {Simultaneous estimation of connectivity and dimensionality in samples of networks},
journal = {Journal of Computational and Graphical Statistics},
volume = {0},
number = {ja},
pages = {1--23},
year = {2025}
}

@article{graphon_original,
title = {Convergent sequences of dense graphs I: Subgraph frequencies, metric properties and testing},
journal = {Advances in Mathematics},
volume = {219},
number = {6},
pages = {1801-1851},
year = {2008},
author = {Christian Borgs and Jennifer T. Chayes and L{\'a}szl{\'o} Lovász and Vera T. Sós and Katalin Vesztergombi}
}

@misc{hong_parallel_analysis,
      title={Selecting the number of components in {PCA} via random signflips}, 
      author={David Hong and Yue Sheng and Edgar Dobriban},
      year={2025},
      eprint={2012.02985},
      archivePrefix={arXiv},
      primaryClass={math.ST}
}

@inproceedings{graphon_book,
  title={Large Networks and Graph Limits},
  author={L{\'a}szl{\'o} Mikl{\'o}s Lov{\'a}sz},
  booktitle={Colloquium Publications},
  year={2012}
}

@article{Yang_dimension_selection,
author = {Congyuan Yang and Carey E. Priebe and Youngser Park and David J. Marchette},
title = {Simultaneous Dimensionality and Complexity Model Selection for Spectral Graph Clustering},
journal = {Journal of Computational and Graphical Statistics},
volume = {30},
number = {2},
pages = {422--441},
year = {2021}
}

@inproceedings{graphon_conference,
 author = {Airoldi, Edo M and Costa, Thiago B and Chan, Stanley H},
 booktitle = {Advances in Neural Information Processing Systems},
 editor = {C.J. Burges and L. Bottou and M. Welling and Z. Ghahramani and K.Q. Weinberger},
 pages = {},
 publisher = {Curran Associates, Inc.},
 title = {Stochastic blockmodel approximation of a graphon: Theory and consistent estimation},
 volume = {26},
 year = {2013}
}

@book{ BLM2013,
  title = {Concentration {{Inequalities}}: {{A Nonasymptotic Theory}} of {{Independence}}},
  author = {St\'{e}phane Boucheron and G\'{a}bor Lugosi and Pascal Massart },
  year = {2013},
  publisher = {Oxford University Press},
}

@article{UASE,
      title={The multilayer random dot product graph}, 
      author={Andrew Jones and Patrick Rubin-Delanchy},
      year={2020},
      journal={arXiv:2007.10455}
}

@article{gene_expression,
	author = {van Dam, Sipko and V{\~o}sa, Urmo and van der Graaf, Adriaan and Franke, Lude and de Magalh{\~a}es, Jo{\~a}o Pedro},
	journal = {Briefings in Bioinformatics},
	month = {01},
	number = {4},
	pages = {575-592},
	title = {Gene co-expression analysis for functional classification and gene--disease predictions},
	volume = {19},
	year = {2017}
}

@article{gene_regulatory_networks,
    author = {Marku, Malvina AND Pancaldi, Vera},
    journal = {PLOS Computational Biology},
    publisher = {Public Library of Science},
    title = {From time-series transcriptomics to gene regulatory networks: A review on inference methods},
    year = {2023},
    month = {08},
    volume = {19},
    pages = {1-26},
    number = {8}
}

@article{bullmore_brain,
	title = {Complex brain networks: graph theoretical analysis of structural and functional systems},
	volume = {10},
	number = {3},
	journal = {Nature Reviews Neuroscience},
	author = {Bullmore, Ed and Sporns, Olaf},
	year = {2009},
	pages = {186--198}
}

@article{soil_networks,
title = {From diversity to complexity: Microbial networks in soils},
journal = {Soil Biology and Biochemistry},
volume = {169},
pages = {108604},
year = {2022},
author = {Ksenia Guseva and Sean Darcy and Eva Simon and Lauren V. Alteio and Alicia Montesinos-Navarro and Christina Kaiser}
}

@article{parasitology,
    author = {Larremore, Daniel B. AND Clauset, Aaron AND Buckee, Caroline O.},
    journal = {PLOS Computational Biology},
    publisher = {Public Library of Science},
    title = {A Network Approach to Analyzing Highly Recombinant Malaria Parasite Genes},
    year = {2013},
    volume = {9},
    pages = {1-12},
    number = {10}

}

@article{transcription_regulatory_network,
	title = {Early {B} {Cell} {Factor} 1 {Regulates} {B} {Cell} {Gene} {Networks} by {Activation}, {Repression}, and {Transcription}- {Independent} {Poising} of {Chromatin}},
	volume = {32},
	number = {5},
	journal = {Immunity},
	author = {Treiber, Thomas and Mandel, Elizabeth M. and Pott, Sebastian and Györy, Ildiko and Firner, Sonja and Liu, Edison T. and Grosschedl, Rudolf},
	year = {2010},
	pages = {714--725}
}

@article{politics,
author = {Mason A. Porter  and Peter J. Mucha  and M. E. J. Newman and Casey M. Warmbrand },
title = {A network analysis of committees in the {U.S.} House of Representatives},
journal = {Proceedings of the National Academy of Sciences},
volume = {102},
number = {20},
pages = {7057-7062},
year = {2005}
}

@article{largest_eigenvalue_finite_rank,
author = {Mireille Capitaine and Catherine Donati-Martin and Delphine F{\'e}ral},
title = {{The largest eigenvalues of finite rank deformation of large Wigner matrices: Convergence and nonuniversality of the fluctuations}},
volume = {37},
journal = {The Annals of Probability},
number = {1},
publisher = {Institute of Mathematical Statistics},
pages = {1 -- 47},
year = {2009}
}

@article{eigenvectors_survey,
title = {Eigenvectors of random matrices: A survey},
journal = {Journal of Combinatorial Theory, Series A},
volume = {144},
pages = {361-442},
year = {2016},
issn = {0097-3165},
author = {Sean O'Rourke and Van Vu and Ke Wang}
}

@article{Erdos_general_RMT,
author = {L{\'a}szl{\'o} Erdős and Antti Knowles and Horng-Tzer Yau and Jun Yin},
title = {{The local semicircle law for a general class of random matrices}},
volume = {18},
journal = {Electronic Journal of Probability},
publisher = {Institute of Mathematical Statistics and Bernoulli Society},
pages = {1 -- 58},
year = {2013}
}

@inproceedings{Erdos_book,
  title={A Dynamical Approach to Random Matrix Theory},
  author={L{\'a}szl{\'o} Erdős and Horng-Tzer Yau},
  year={2017},
  booktitle={Courant Lecture Notes, Volume 28},
}

@article{ErdHos2007SemicircleLO,
  title={Semicircle law on short scales and delocalization of eigenvectors for Wigner random matrices},
  author={L{\'a}szl{\'o} Erdős and Benjamin Schlein and Horng-Tzer Yau},
  journal={Annals of Probability},
  year={2007},
  volume={37},
  pages={815-852}
}

@article{Benigni2020OptimalDF,
  title={Optimal delocalization for generalized Wigner matrices},
  author={Lucas Benigni and Patrick Lopatto},
  journal={Advances in Mathematics},
  year={2020}
}

@article{veryshnin_rudelson,
author = {Mark Rudelson and Roman Vershynin},
title = {{Delocalization of eigenvectors of random matrices with independent entries}},
volume = {164},
journal = {Duke Mathematical Journal},
number = {13},
publisher = {Duke University Press},
pages = {2507 -- 2538},
year = {2015}
}

@article{diagonal_RMT,
author = {Ji Oon Lee and Kevin Schnelli and Ben Stetler and Horng-Tzer Yau},
title = {{Bulk universality for deformed Wigner matrices}},
volume = {44},
journal = {The Annals of Probability},
number = {3},
publisher = {Institute of Mathematical Statistics},
pages = {2349 -- 2425},
year = {2016}
}

@article{erdos_cusp_2025,
	title = {Cusp {Universality} for {Correlated} {Random} {Matrices}},
	volume = {406},
	number = {10},
	journal = {Communications in Mathematical Physics},
	author = {Erdős, László and Henheik, Joscha and Riabov, Volodymyr},
	month = sep,
	year = {2025},
	pages = {253},
}

@article{erdos_bulk_2012,
	title = {Bulk universality for generalized {Wigner} matrices},
	volume = {154},
	number = {1},
	journal = {Probability Theory and Related Fields},
	author = {Erdős, László and Yau, Horng-Tzer and Yin, Jun},
	year = {2012},
	pages = {341--407},
}

@article{QVE,
  title={Quadratic Vector Equations On Complex Upper
 Half-Plane},
  author={Oskari Ajanki and L{\'a}szl{\'o} Erdős and Torben Kruger},
  journal={Memoirs of the American Mathematical
                Society},
  year={2015}
}

@article{wigner-type,
	title = {Universality for general {Wigner}-type matrices},
	volume = {169},
	issn = {1432-2064},
	number = {3},
	journal = {Probability Theory and Related Fields},
	author = {Ajanki, Oskari H. and Erdős, László and Krüger, Torben},
	month = dec,
	year = {2017},
	pages = {667--727},
}

@misc{levin_central_2019,
	title = {A central limit theorem for an omnibus embedding of multiple random graphs and implications for multiscale network inference},
	language = {en},
	urldate = {2024-03-19},
	publisher = {arXiv},
	author = {Levin, Keith and Athreya, Avanti and Tang, Minh and Lyzinski, Vince and Park, Youngser and Priebe, Carey E.},
	month = jun,
	year = {2019},
	note = {arXiv:1705.09355 [stat]},
	keywords = {62H12, 62H15, 05C80, Statistics - Methodology},
	file = {Levin et al. - 2019 - A central limit theorem for an omnibus embedding o.pdf - 2019 - A central limit theorem for an omnibus embedding o.pdf:application/pdf},
}

@article{levin_recovering_nodate,
  author  = {Keith Levin and Asad Lodhia and Elizaveta Levina},
  title   = {Recovering shared structure from multiple networks with unknown edge distributions},
  journal = {Journal of Machine Learning Research},
  year    = {2022},
  volume  = {23},
  number  = {3},
  pages   = {1--48}
}

@article{cape_signal-plus-noise_2019,
	title = {Signal-plus-noise matrix models: eigenvector deviations and fluctuations},
	volume = {106},
	issn = {0006-3444},
	number = {1},
	journal = {Biometrika},
	author = {Joshua R. Cape and Minh Tang and Carey E. Priebe},
	month = jan,
	year = {2019},
	pages = {243--250}
}

@article{lyzinski_community_2016,
	title = {Community {Detection} and {Classification} in {Hierarchical} {Stochastic} {Blockmodels}},
	publisher = {IEEE},
    journal={IEEE Transactions on Network Science and Engineering}, 
	author = {Lyzinski, Vince and Tang, Minh and Athreya, Avanti and Park, Youngser and Priebe, Carey E.},
    year={2017},
    volume={4},
    number={1},
    pages={13-26}
}

@book{ AndGuiZei2010,
  author = { Greg W. Anderson and Alice Guionnet and Ofer Zeitouni },
  year = { 2010 },
  title = { An introduction to random matrices },
  publisher = { Cambridge University Press }
}

@book{ Kato1995,
  title = { Perturbation Theory for Linear Operators },
  author = { Tosio Kato },
  year = { 1995 },
  edition = { 2nd },
  publisher = { Springer }
}

@Book{ Lazega2001,
  title     = {The Collegial Phenomenon: The Social Mechanisms of Cooperation Among Peers in a Corporate Law Partnership},
  publisher = {Oxford University Press},
  year      = {2001},
  author    = {Emmanuel Lazega},
}

\appendix

\section{Bounds on Estimation Error} \label{apx:estimation-error}

Here we provide proofs related to our RDPG estimation results, Lemma~\ref{lemma:decomp} and Theorem~\ref{theorem:main_result:weighted}.
We begin by establishing our basic upper- and lower-bounds on estimation error.

\begin{proof}[Proof of Lemma~\ref{lemma:decomp}]
We first consider the case where $k > 0$.
    Define $\mWtilde \in \bbO_{r+k}$ according to
    \begin{equation} \label{eq:def:Wtilde}
    \mWtilde = \begin{bmatrix}
    \mW^* & \bm 0 \\
    \bm 0 & \mI_{r+1:r+k}
    \end{bmatrix},
    \end{equation}
    where $\mW^* \in \bbO_r$ is the matrix guaranteed by our assumption in Equation~\eqref{eq:def:etan}. 
    Trivially,
    \begin{equation} \label{eq:Wtilde:inplay}
    \min_{\mW \in \bbO_{r+k}} \twoinfnorm{\mXhat_{1:r+k} \mW - \rho_n^{1/2}\mX_{1:r+k} } 
    \le \twoinfnorm{\mXhat_{1:r+k} \mWtilde - \rho_n^{1/2}\mX_{1:r+k} }.
    \end{equation}
    
    Now, express $\mXhat_{1:r+k}$ as the block matrix
    \begin{equation*}
    \mXhat_{1:r+k} = \begin{bmatrix}
    \mXhat_{1:r} & \mXhat_{r+1:r+k}
    \end{bmatrix} .
    \end{equation*}
    Decomposing $\rho_n^{1/2}\mX_{1:r+k}$ similarly, recalling from Equation~\eqref{eq:def:X_r+k} that the trailing $k$ columns of $\mX$ are zero by construction, and using the definition of $\mWtilde$ from Equation~\eqref{eq:def:Wtilde}, we have
    \begin{equation*}
        \mXhat_{1:r_k}\mWtilde - \rho_n^{1/2}\mX_{1:r+k}  
        =  \begin{bmatrix} \left( \mXhat_{1:r} \mW^* - \rho_n^{1/2} \mX_{1:r}\right) & \mXhat_{r+1:r+k} \end{bmatrix} \in \R^{n \times (r+k)}.
    \end{equation*}
    By the triangle inequality, it follows that
    \begin{equation*} 
    \twoinfnorm{\mXhat_{1:r+k} \mWtilde - \rho_n^{1/2}\mX_{1:r+k} } 
    \le \twoinfnorm{\mXhat_{1:r} \mW^* - \mX_{1:r} } + \twoinfnorm{\mXhat_{r+1:r+k} } .
    \end{equation*}
    By our choice of $\mW^*$, Equation~\eqref{eq:def:etan} allows us to bound the first right-hand term and write
    \begin{equation*} 
    \twoinfnorm{\mXhat_{1:r+k} \mWtilde - \rho_n^{1/2}\mX_{1:r+k} } 
    \le C\etan + \twoinfnorm{\mXhat_{r+1:r+k} } .
    \end{equation*}
    Applying this to Equation~\eqref{eq:Wtilde:inplay},
    \begin{equation}\label{eq:intermediate:estimation_error}
      \min_{\mW \in \bbO_{r+k}} \twoinfnorm{\mXhat_{1:r+k} \mW - \rho_n^{1/2}\mX_{1:r+k} } 
    \le C\etan + \twoinfnorm{\mXhat_{r+1:r+k} \mW^* } .
    \end{equation}
    
    Recall that by construction, $\mXhat_{r+1:r+k} = \mUhat_{r+1:r+k}|\mShat_{r+1:r+k}|^{1/2}$.
    Thus, using basic properties of the $(\tti)$-norm \citep[see, e.g.,][]{singular_subspace},
    \begin{equation}\label{eq:delocalized_estimate_1}
        \twoinfnorm{\mXhat_{r+1:r+k} \mW^*}
        \le \twoinfnorm{\mUhat_{r+1:r+k} } \norm{ \mShat_{r+1:r+k}^{1/2} \mW^* }
        \le  \twoinfnorm{\mUhat_{r+1:r+k} } \norm{\mE}^{1/2},
    \end{equation}
    where the second inequality follows from unitary invariance of the spectral norm and Weyl's inequality.
Substituting Equation \eqref{eq:delocalized_estimate_1} into Equation \eqref{eq:intermediate:estimation_error} completes the claim for the case of $k > 0$.

For the lower-bound half of our claim, which holds when $k < 0$, notice that by definition of the $(\tti)$-norm as a maximum, recalling the definition of $\mXhat_{1:r}^\circ$ from Equation~\eqref{eq:def:Xcirc}, we have
\begin{equation} \label{eq:tti:keyLB}
\min_{\mW \in \bbO_{r}} \twoinfnorm{\mXhat_{1:r}^\circ \mW - \rho^{1/2}_n\latent_{1:r}}^2
\ge
\frac{1}{n}
\min_{\mW \in \bbO_{r}}
\left\| \mXhat_{1:r}^\circ \mW - \rho^{1/2}_n\latent_{1:r} \right\|_F^2 .
\end{equation}
Expanding the Frobenius norm, for any $\mW \in \bbO_{r}$, 
\begin{equation*} 
\left\| \mXhat_{1:r}^\circ \mW - \rho^{1/2}_n\latent_{1:r} \right\|_F^2
= \left\| \mXhat_{1:r}^\circ \mW \right\|_F^2 + \left\| \rho^{1/2}_n\latent_{1:r} \right\|_F^2
- 2 \Tr (\mXhat_{1:r}^\circ \mW)^\top \left( \rho_n^{1/2} \latent_{1:r} \right) .
\end{equation*}
Using unitary invariance of the Frobenius norm and noting that $\norm{\mXhat_{1:r}^\circ}_F = \norm{\mXhat_{1:r+k}}_F$ by construction,
\begin{equation} \label{eq:frobnorm:decomp}
\left\| \mXhat_{1:r}^\circ \mW - \rho^{1/2}_n\latent_{1:r} \right\|_F^2
= \left\| \mXhat_{1:r+k} \right\|_F^2 + \left\| \rho^{1/2}_n \latent_{1:r} \right\|_F^2
- 2 \Tr (\mXhat_{1:r}^\circ \mW)^\top \left( \rho_n^{1/2} \latent_{1:r} \right) .
\end{equation}

Recalling that $\sqrt{\shat_j}$ and $\sqrt{\spop_j}$ are the singular values of $\mXhat_{1:r}$ and $\rho_n^{1/2} \latent_{1:r}$, respectively, the Von Neumann trace inequality \citep[see, e.g.,][Theorem 7.4.1.1]{Horn_Johnson} yields
\begin{equation*}
\left| \Tr (\mXhat_{1:r}^\circ \mW)^\top (\rho_n^{1/2} \latent_{1:r}) \right|
\le \sum_{j=1}^r \sqrt{ \spop_j \shat_j }
= \sum_{j=1}^{r+k} \sqrt{ \spop_j \shat_j },
\end{equation*}
where we have used the fact that the singular values of $\mXhat_{1:r}^\circ$ are invariant to right-multiplication by $\mW$ and the fact that $\mXhat_{1:r+k}$ has at most $r+k$ non-zero singular values by construction.
Applying this bound in Equation~\eqref{eq:frobnorm:decomp},  
\begin{equation*} \begin{aligned}
\left\| \mXhat_{1:r}^\circ \mW - \rho^{1/2}_n\latent_{1:r} \right\|_F^2
&\ge \sum_{j=1}^{r+k} \shat_j + \sum_{j=1}^r \spop_j
- 2 \sum_{j=1}^{r+k} \sqrt{ \spop_j \shat_j } \\
&= \sum_{j=1}^{r+k} \left( \sqrt{ \shat_j} - \sqrt{\spop_j} \right)^2
+ \sum_{j=r+k+1}^r \spop_j .
\end{aligned} \end{equation*}
Trivially lower-bounding the first sum by zero,
\begin{equation*}
\left\| \mXhat_{1:r}^\circ \mW - \rho^{1/2}_n\latent_{1:r} \right\|_F^2
\ge \sum_{j=r+k+1}^r \spop_j .
\end{equation*}
Applying this lower-bound to Equation~\eqref{eq:tti:keyLB} and taking square roots completes the proof.
\end{proof}

We now turn to a proof of our general result on asymptotic behavior of the ASE, using Lemma~\ref{lemma:decomp}.

\begin{proof}[Proof of Theorem~\ref{theorem:main_result:weighted}]
    We first consider the case of $k > 0$.
    Applying Lemma \ref{lemma:decomp}, there exists a sequence of $\mW \in \bbO_{r+k}$ such that
    \begin{equation}\label{eqn:apx_main:chkpt1}
    \twoinfnorm{\mXhat_{1:r+k} \mW - \rho_n^{1/2}\latent_{1:r+k}} 
    \le C\etan + |\shat_{r + 1}|^{1/2}\norm{\mUhat_{r+1:r+k}}_{\tti} .
    \end{equation}
    To bound $ |\shat_{r + 1}|$, standard matrix concentration inequalities \citep[see, e.g.,][]{Vershynin_2018} yield $\| \mE \| \le C \sigma \sqrt{n} \log n$ \citep[see also Lemma 4.3 in][]{Erdos}. 
    Combining this with Weyl's inequality,
    \begin{equation*}
       |\shat_{r + 1}|^{1/2}\norm{\mUhat_{r+1:r+k}}_{\tti}
        \le C n^{1/4} (\sigma \log n)\max_{j \in [n]}\left(\sum_{\alpha = r + 1}^{r + k} \uhat_{j\alpha}^2\right)^{1/2} 
        \le C n^{1/4} (\sigma \log n) \left(k \max_{j \in [n]}\max_{\alpha > r} \uhat_{j\alpha}^2\right)^{1/2} .
    \end{equation*}
    Then by Theorem \ref{theorem:main:deloc}, we have
    \begin{equation}\label{apx:main_test_proofs_thm1_chkpt2}
      |\shat_{r + 1}|^{1/2}\norm{\mUhat_{r+1:r+k}}_{\tti} 
      \le C n^{1/4} (\log n)\sqrt{k \sigma^2} \left(\frac{r^4(\log n)^{8+12\gamma}}{n} \right)^{1/2}
    \le C \sqrt{k \sigma^2}\frac{r^2(\log n)^{5+6\gamma}}{n^{1/4}}
    \end{equation}
    with high probability. Substituting Equation \eqref{apx:main_test_proofs_thm1_chkpt2} back into Equation \eqref{eqn:apx_main:chkpt1}, we obtain
    \begin{equation*}
    \min_{\mW \in \bbO_{r + k}} \twoinfnorm{\mXhat_{1:r+k} \mW - \rho_n^{1/2}\latent_{1:r+k} } 
    \le C\etan + C\sqrt{k \sigma^2}\frac{r^2(\log n)^{5 + 6\gamma}}{n^{1/4}}
    \end{equation*}
    with high probability, which establishes our estimation error upper-bound.
    
    For the the case $k < 0$, by Lemma \ref{lemma:decomp},
    \begin{equation*}
    \min_{\mW \in \bbO_{r}} \twoinfnorm{\mXhat_{1:r}^\circ \mW - \rho_n^{1/2}\latent_{1:r}}
    \ge \sqrt{ \sum_{j=r+k+1}^r \frac{ \spop_j }{ n } }.
    \end{equation*}
    By Assumption \ref{assumption_main:eigen_bound}, 
    it follows that
    \begin{equation*}
    \min_{\mW \in \bbO_{r}} \twoinfnorm{\mXhat_{1:r}^\circ \mW - \rho_n^{1/2}\latent_{1:r}}
    \ge C\sqrt{\rho_n |k|} ,
    \end{equation*}
    as desired.
\end{proof}

\section{Basic Results from Spectral Methods} \label{apx:spectral}

Here we collect a handful of basic results related to matrix spectra and eigenspaces for use in Appendix~\ref{apx:deloc}.

\begin{lemma} \label{lemma:perturbed_eigengap}
Let $\{\spop_i\}$ be the eigenvalues of a rank $r$ matrix $\mP$, and let $\{\shat_i\}$ be the eigenvalues of $\mA = \mP + \mE$. 
Suppose Assumption \eqref{assumption_main:eigen_nondegen}  holds and define $\Delta = \min_{j \in [r]} \min_{i\ne j} |\spop_i - \spop_j|$.
Then for $j \in [r]$ and $i \neq j$, 
\begin{equation}
     |\shat_i - \spop_j| \ge \Delta - \norm{\mE} .
\end{equation}
\end{lemma}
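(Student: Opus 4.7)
The plan is to apply Weyl's inequality paired with the reverse triangle inequality. This is essentially a direct computation once we identify the right tool, so the proposal is short.

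First, I would invoke Weyl's inequality on the perturbation $\mA = \mP + \mE$ to obtain the entry-wise eigenvalue bound $|\shat_i - \spop_i| \le \norm{\mE}$ for every index $i$. Here the eigenvalues of both $\mP$ and $\mA$ are ordered in non-increasing fashion, consistent with the convention established in the notation subsection.

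Next, I would use the definition of the eigengap. For any $j \in [r]$ and $i \neq j$, the quantity $|\spop_i - \spop_j|$ is at least $\Delta$ by construction, since $\Delta$ is taken as a double minimum over exactly these index pairs. (Note that $i$ need not lie in $[r]$; if $i > r$ then $\spop_i = 0$ and $|\spop_i - \spop_j| = \spop_j$, which is still at least $\Delta$ by the definition.)

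Combining these two ingredients via the reverse triangle inequality gives
\begin{equation*}
|\shat_i - \spop_j| = |(\spop_i - \spop_j) - (\spop_i - \shat_i)| \ge |\spop_i - \spop_j| - |\spop_i - \shat_i| \ge \Delta - \norm{\mE},
\end{equation*}
which is the desired bound. There is no real obstacle here; the only subtlety worth flagging in the write-up is the indexing convention (so that Weyl's inequality is applied to the same index $i$ appearing in $\shat_i$), and the remark that $i$ may exceed $r$, in which case one is comparing $\shat_i$ against a nonzero population eigenvalue $\spop_j$ with $j \in [r]$.
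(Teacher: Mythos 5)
Your proposal is correct and follows essentially the same route as the paper's proof: Weyl's inequality to get $|\shat_i - \spop_i| \le \norm{\mE}$, the definition of $\Delta$ to lower-bound $|\spop_i - \spop_j|$, and the reverse triangle inequality to combine them. Your added remark about indices $i > r$ (where $\spop_i = 0$) is a reasonable clarification the paper leaves implicit.
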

\begin{proof}
Fix $j \in [r]$ and let $i \neq j$.
By the reverse triangle inequality,
\begin{equation*}
    \left| \shat_i - \spop_j \right|
    \ge \left| | \shat_i-\spop_i | - | \spop_j-\spop_i| \right|
    \ge \Delta - \left| \shat_i - \spop_i \right|,
\end{equation*}
where the second inequality follows from the definition of $\Delta$.
Applying Weyl's inequality, it follows that
\begin{equation*}
    \left| \shat_i - \spop_j \right|
    \ge \Delta - \| \mE \|,
\end{equation*}
completing the proof.
\end{proof}

\begin{lemma}\label{lemma:pop_eigenvalues_bound} 
    Suppose that $\latent_1, \latent_2,\dots,\latent_n$ are identically distributed $r$-dimensional random vectors with $\norm{\mX_1} \le M < \infty$ almost surely and define $\latent = [\latent_1 \cdots \latent_n]^\top \in \R^{n \times r}$.
    Suppose that $r$ obeys Assumption~\eqref{assumption_main:rank}.
    Then with high probability, $\left\| \latent \latent^\top - n \E{\mX_1 \mX_1}) \right\| \le Cr \sqrt{n \log n}$.
\end{lemma}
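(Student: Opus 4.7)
The plan is to reduce the spectral-norm bound to an entry-wise bound via a union bound, an approach that is affordable here because $r$ is polylogarithmic in $n$ by Assumption~\eqref{assumption_main:rank}. Since the non-zero spectrum of $\latent \latent^\top$ coincides with that of $\latent^\top \latent = \sum_{i=1}^n \latent_i \latent_i^\top$, I would work with the $r \times r$ deviation
\begin{equation*}
    \mD := \latent^\top \latent - n\,\E{\latent_1 \latent_1^\top} = \sum_{i=1}^n \left( \latent_i \latent_i^\top - \E{\latent_1 \latent_1^\top} \right),
\end{equation*}
which is a sum of i.i.d.\ centered, bounded random matrices.

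Next, I would establish an entry-wise bound via scalar Hoeffding. The $(j,k)$-entry of $\mD$ is
\begin{equation*}
    D_{jk} = \sum_{i=1}^n \left( X_{ij} X_{ik} - \E{X_{1j} X_{1k}} \right),
\end{equation*}
a centered sum of independent random variables, each bounded in absolute value by $2M^2$, since $\norm{\latent_1} \le M$ forces $|X_{1j}| \le M$ for every coordinate $j \in [r]$. Hoeffding's inequality combined with a union bound over the $r^2$ pairs $(j,k)$ gives
\begin{equation*}
    \Prob{\max_{j,k} |D_{jk}| \ge t} \le 2 r^2 \exp\!\left(- \frac{c t^2}{n M^4} \right).
\end{equation*}
By Assumption~\eqref{assumption_main:rank}, $r^2 \le (\log n)^{2\zeta}$, so taking $t = C\sqrt{n \log n}$ with $C$ sufficiently large makes this tail probability at most $n^{-\kappa}$ for any preselected $\kappa > 0$ (the $\log r^2$ overhead from the union bound is absorbed into the $\log n$ factor).

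Finally, I would convert the entry-wise bound into a spectral-norm bound using the elementary chain $\norm{\mD} \le \norm{\mD}_F \le r \max_{j,k} |D_{jk}|$, valid for any $r \times r$ matrix. This yields $\norm{\mD} \le C r \sqrt{n \log n}$ with high probability, as claimed. I do not anticipate any real obstacle; the argument is completely standard. A sharper rate of order $\sqrt{n \log r}$ could be obtained by invoking matrix Bernstein instead of entry-wise Hoeffding plus union bound, but the looser $r\sqrt{n \log n}$ bound suffices for its use elsewhere in the paper.
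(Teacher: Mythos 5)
Your proposal matches the paper's own proof essentially line-for-line: both reduce to the $r\times r$ Gram deviation $\latent^\top\latent - n\,\E{\latent_1\latent_1^\top}$, apply scalar Hoeffding to each entry (bounded because $\norm{\latent_1}\le M$ a.s.), union bound over the $r^2$ entries, pass from entry-wise to Frobenius to spectral norm, and invoke Assumption~\eqref{assumption_main:rank} so that the $r^2$ factor in the union bound does not spoil the high-probability guarantee. The only cosmetic difference is that you explicitly state the tail exponent can be made $n^{-\kappa}$ for any fixed $\kappa$ by enlarging $C$, whereas the paper fixes a tail of $2/n^2$ per entry and lets the polylogarithmic growth of $r$ do the rest; both are the same argument.
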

\begin{proof}
    Since $\norm{\mX_1} \le M$ almost surely, the coordinates of $\mX_1$ are bounded random variables.
    By Hoeffding's inequality, we see that for any $j,k \in [r]$,
    \begin{equation*}
    \Prob{ \abs{ \left(\latent^\top\latent\right)_{jk} - n [\E{\latent_1\latent_1^\top}]_{jk}} \ge C\sqrt{n\log n} } \le \frac{2}{n^2} .
    \end{equation*}
    From a union bound over $j,k \in [r]$, we have
    \begin{equation*}
        \Prob{ \norm{ \latent^\top\latent - n \E{\latent_1\latent_1^\top}}^2_F \ge r^2(C\sqrt{n\log n})^2 } \le \frac{2r^2}{n^2} .
    \end{equation*}
    Using the fact that the Frobenius norm is an upper bound on the spectral norm, it follows that
    \begin{equation*} 
        \Prob{ \norm{ \latent^\top\latent - n \E{\latent_1\latent_1^\top}} \ge Cr\sqrt{n\log n} } \le \frac{2r^2}{n^2} ,
    \end{equation*}
    and the result follows after using our assumption in Equation~\eqref{assumption_main:rank} to ensure that this bound converges to zero suitably quickly. 
\end{proof}

Bounds similar to the following appear in \cite{levin_central_2019}.
We give this stand-alone version for ease of reference and to incorporate the signal strength parameter $\rho_n$.

\begin{lemma}\label{placeholder:RDPG_delocalization} 
Suppose that $(\mA,\latent) \sim \RDPG(F,n)$ with signal strength parameter $\rho_n$, with the condition that the latent positions are bounded almost surely. 
Then
\begin{equation*}
\left\| \mU \right\|_{\tti}
\lesssim \frac{\norm{\latent }_{\tti} }{ \sqrt{n} } 
\end{equation*}
with high probability.
\end{lemma}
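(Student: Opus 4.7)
The plan is to reduce the statement to a bound on the smallest non-zero eigenvalue of $\mP = \rho_n \latent \latent^\top$. First I would relate the eigenvector matrix $\mU$ directly to the latent positions. Writing the compact SVD of $\rho_n^{1/2}\latent$ as $\rho_n^{1/2}\latent = \mU\mS^{1/2}\mQ^\top$ for some $\mQ \in \bbO_r$ (assuming $\latent$ has rank $r$, which I will verify shortly), we obtain
\begin{equation*}
\mU = \rho_n^{1/2}\,\latent\,\mQ\,\mS^{-1/2}.
\end{equation*}
Standard submultiplicativity of the $(\tti)$-norm under right-multiplication then gives
\begin{equation*}
\norm{\mU}_{\tti} \le \rho_n^{1/2}\,\norm{\latent}_{\tti}\,\norm{\mQ\mS^{-1/2}} = \rho_n^{1/2}\,\norm{\latent}_{\tti}\,\spop_r^{-1/2},
\end{equation*}
since $\mQ$ is orthogonal and $\mS$ is diagonal with entries $\spop_1 \ge \dots \ge \spop_r > 0$.

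The remaining task is to show $\spop_r \gtrsim \rho_n n$ with high probability, after which the $\rho_n^{1/2}$ in the numerator cancels the $\rho_n^{1/2}$ in the denominator, yielding the desired $\norm{\latent}_{\tti}/\sqrt{n}$ bound. Since the non-zero eigenvalues of $\rho_n \latent \latent^\top$ coincide with those of $\rho_n \latent^\top \latent$, it suffices to lower bound $\lambda_{\min}(\latent^\top\latent)$. By Lemma~\ref{lemma:pop_eigenvalues_bound} applied to the $r$-dimensional latent positions (which are bounded almost surely by hypothesis), we have $\norm{\latent^\top\latent - n\,\E{\mX_1\mX_1^\top}} \lesssim r\sqrt{n\log n}$ with high probability. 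Under the RDPG, $\mP$ has rank exactly $r$ in expectation, which forces $\lambda_{\min}(\E{\mX_1\mX_1^\top}) =: c > 0$; by Weyl's inequality this gives $\lambda_{\min}(\latent^\top\latent) \ge cn - Cr\sqrt{n\log n}$, which is $\gtrsim n$ for $n$ large since $r$ satisfies Assumption~\eqref{assumption_main:rank}. Multiplying by $\rho_n$ yields $\spop_r \gtrsim \rho_n n$, and substituting back completes the proof. As a byproduct, this argument confirms $\latent$ has full column rank $r$ with high probability, justifying the SVD decomposition used at the outset.

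The proof is largely a bookkeeping exercise once the SVD identification is made, so I do not anticipate a substantive obstacle. The only mild point worth being careful about is ensuring that the constant $\lambda_{\min}(\E{\mX_1\mX_1^\top})$ is indeed bounded away from zero; this is built into the standing assumption that the RDPG is genuinely $r$-dimensional (otherwise $\mP$ would have rank strictly less than $r$), but it should be stated explicitly in the hypotheses or deduced from them.
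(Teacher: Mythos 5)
Your proof is correct and follows essentially the same route as the paper's: express $\mU$ as $\rho_n^{1/2}\latent$ times (an orthogonal factor and) $\mS^{-1/2}$, apply submultiplicativity of the $(\tti)$-norm, and lower-bound $\spop_r \gtrsim \rho_n n$ via Lemma~\ref{lemma:pop_eigenvalues_bound}. The only differences are cosmetic: you carry the orthogonal factor $\mQ$ explicitly (the paper omits it) and you spell out the Weyl-inequality justification of $\spop_r \gtrsim \rho_n n$, including the needed positivity of $\lambda_{\min}(\E{\latent_1\latent_1^\top})$, where the paper simply invokes Assumption~\eqref{assumption_main:eigen_bound}.
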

\begin{proof}
Rearranging the identity $\mU \mS \mU^\top = \mP = \rho_n \latent \latent^\top$,
\begin{equation*}
\mU = \sqrt{\rho_n} \latent \mS^{-1/2}. 
\end{equation*}
Applying basic properties of the $(\tti)$-norm \citep[see, e.g.,][]{singular_subspace},
\begin{equation} \label{eq:Utti:submult}
\norm{ \mU }_{\tti}
\le \norm{ \latent }_{\tti} \norm{ \sqrt{\rho_n} \mS^{-1/2} } .
\end{equation}
Noting that the diagonal entries of $\mS/\rho_n$ are precisely the eigenvalues of $\mP/\rho_n = \latent \latent^\top$,
Lemma~\ref{lemma:pop_eigenvalues_bound} implies
\begin{equation*}
\norm{ \rho_n^{1/2} \mS^{-1/2} }
= \frac{ \rho_n^{1/2} }{ \sqrt{\spop_{r}} }
= O\left( \frac{1}{\sqrt{n}} \right)
\end{equation*}
with high probability, where we have used Assumption \eqref{assumption_main:eigen_bound}.
Applying this to Equation~\eqref{eq:Utti:submult} completes the proof.
\end{proof}

\begin{corollary} \label{cor:RDPG_delocalization:binary} 
Suppose that $(\mA,\latent) \sim \RDPG(F,n)$ with binary edges and sparsity parameter $\rho_n$.
Then, with high probability,
\begin{equation*}
\norm{\mU }_{\tti}
\lesssim \frac{1}{ \sqrt{n} } ,
\end{equation*}
\end{corollary}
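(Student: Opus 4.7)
The plan is to derive this as an immediate consequence of Lemma \ref{placeholder:RDPG_delocalization}, which already provides the bound $\norm{\mU}_{\tti} \lesssim \norm{\latent}_{\tti}/\sqrt{n}$ with high probability under the assumption that the latent positions are almost surely bounded. The task therefore reduces to verifying that in the binary RDPG setting, the latent positions are bounded and, more specifically, that $\norm{\latent}_{\tti}$ is $O(1)$.

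First I would recall the constraint imposed by Definition~\ref{def:RDPG} in the binary case: the support of $F$ lies in a set $\calX \subseteq \R^r$ for which $\bf x^\top \bf y \in [0,1]$ for every pair $\bf x, \bf y$ in the support. Taking $\bf y = \bf x$, this forces $\|\bf x\|^2 = \bf x^\top \bf x \le 1$, so every $\bf x$ in the support satisfies $\|\bf x\| \le 1$. Consequently, the latent positions $\latent_1, \dots, \latent_n$ drawn i.i.d.\ from $F$ satisfy $\|\latent_i\| \le 1$ almost surely for all $i$, which yields
\begin{equation*}
\norm{\latent}_{\tti} = \max_{i \in [n]} \|\latent_i\| \le 1 .
\end{equation*}

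Next I would invoke Lemma \ref{placeholder:RDPG_delocalization}, whose hypothesis of almost-sure bounded latent positions is now verified, to conclude that with high probability
\begin{equation*}
\norm{\mU}_{\tti} \lesssim \frac{\norm{\latent}_{\tti}}{\sqrt{n}} \lesssim \frac{1}{\sqrt{n}} ,
\end{equation*}
which is the claim. There is no real obstacle here; the corollary is essentially a specialization of the lemma that takes advantage of the specific geometry of the binary RDPG support to make the constant $M$ bounding the latent positions explicit (namely $M = 1$).
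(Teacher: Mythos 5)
Your proposal is correct and follows exactly the paper's own argument: invoke Lemma~\ref{placeholder:RDPG_delocalization} and note that $\twoinfnorm{\latent} \le 1$ under the binary RDPG. You additionally spell out why the bound $\|\latent_i\| \le 1$ holds (taking $\mathbf{y} = \mathbf{x}$ in the inner-product constraint), which the paper leaves implicit, but this is the same proof.
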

\begin{proof}
Applying Lemma~\ref{placeholder:RDPG_delocalization},
\begin{equation*}
\norm{ \mU }_{\tti}
= O\left( \frac{ \norm{ \latent }_{\tti} }{ \sqrt{n} } \right)
\end{equation*}
with high probability.
The proof is complete once we note that $\twoinfnorm{\mX } \le 1$ under the binary RDPG.
\end{proof}

\section{Delocalization Results} \label{apx:deloc}

In this section, we prove that the eigenvectors of the weighted RDPG are delocalized, in the sense that the entries of the eigenvectors are bounded by $C n^{-1/2} \log^c n$, for some constants $C,c > 0$.
Our approach is based on earlier work by \cite{Erdos} proving the local semicircle law for eigenvalues in the bulk under the Erd\H{o}s–R\'{e}nyi model, the expectation of which is a rank-one real symmetric matrix.
We extend their approach to encompass the more general setting of a low-rank, real, and symmetric expectation.
For background on the technical tools used in this section, we refer the interested reader to \cite{Erdos_general_RMT, Erdos_lecture_notes, Erdos_book,AndGuiZei2010}. 

The semicircle density is defined as
\begin{equation*}
    \mu_{\text{sc}}(x) := \frac{1}{2\pi}\sqrt{4 - x^2}
\end{equation*} 
for $x \in [ -2, 2]$, with its associated Stieltjes transform for $\Im z > 0$,
\begin{equation}\label{def:msc}
    \msc(z):= \int_{\mathbb{R}} \frac{\mu_{\text{sc}}(x)}{x-z}\text{d}x .
\end{equation}

Our goal in this section is to show that for a suitably recentered and rescaled version of our adjacency matrix $\mA$, the Stieltjes transform of its empirical eigenvalue density converges to $\msc(z)$.
This is done in Theorem~\ref{thm:semicirc} below.
This result may be of independent interest to researchers working with low-rank signal-plus-noise network models like the RDPG.
Most important for the present work, however, is that we may then use this result to show that the eigenvectors of $\mA$ are delocalized, which we do in Theorem \ref{theorem:perturbed_bulk_eigenvectors_delocalized}.

\subsection*{Notational Change: Eigenvalue Ordering and Submatrices} 
For this section and this section only, we make a notational change in order to agree with the convention of the vast majority of random matrix theory results \cite[and, in particular, the convention followed in ][, whose results we extend]{Erdos}.
In what follows, we number the eigenvalues from smallest to largest, rather than the largest-to-smallest indexing used in the main text.
Thus, in this section, the eigenvalues $d_i$ of some matrix $\mD$ will be indexed so that
\begin{equation*}
    d_1 \le d_2 \le \cdots \le d_N .
\end{equation*}
That is, $d_N$ is the largest eigenvalue of $\mD$.
To mark this change of convention in what follows, we use $N$ rather than $n$ to denote the number of nodes in the network, but stress that $N=n$. The notational change is only to remind the reader of the altered indexing convention we are following.
In this and a few other changes noted below, our notation largely follows that of \cite{Erdos}.
This change in notation is for easier comparison between the results presented here and the results that they extend.
For a matrix $\mD$, we let $\mD_{\cdot j}$ denote its $j$-th column. 
The entries of a matrix will be denoted with its non-bold variant, so that the $(i,j)$ entry of $\mD$ is denoted as $D_{ij}$.
We denote by $(\mD)^{\sm i} \in \R^{(N-1) \times (N-1)}$ the principal submatrix of a matrix $\mD \in \R^{N \times N}$ obtained by removing the $i$-th row and column.
More generally, for some index set $\T \subset [N]$, we write $(\mD)^{\sm \T}$ to mean the $(N - \abs{\T})$-by$(N - \abs{\T})$ principal submatrix of $\mD$ obtained by removing the $i$-th row and column from $\mD$ for every $i \in \T$.
If $\bm{d}$ and $d$ are used to denote the eigenvectors and eigenvalues, respectively of a matrix $\mD$, we denote the eigenvectors and eigenvalues of $(\mD)^{\sm \T}$ by $\bm{d}^{(\sm\T)}_i$ and $d^{(\sm\T)}_i$, respectively.
The indices of eigenvectors and eigenvalues will be relabeled to be in the set $[N - \abs{\T}]$, preserving their ordering described in beginning of this section: $d_1 \le d_2 \le \cdots \le d_{N - \abs{\T}}$. 
However, we keep the names of indices of $\mD$ when defining $\submatrix{\mD}{\T}$. 

As an example, consider the matrix
\begin{equation*}
\mD = \begin{bmatrix}  a & b & c \\
                       d & e & f \\
                       g & h & i \end{bmatrix} .
\end{equation*}
Then \begin{equation*}
(\mD)^{\sm 2} = \begin{bmatrix}  a & c \\
                       g  & i \end{bmatrix} .
\end{equation*}
We will overload the submatrix notation for vectors in the analogous way.
For example, if a vector is given by $\bm{d} = (a, b, c)^\top$, then the vector $\submatrix{\bm d}{2} = (a, c)^\top$ has indices 1 and 3, skipping index 2. 
For an $N$-dimensional vector $\bm{d}$, let $(\bm{d})^{\sm \T}$ denote the $ \left(N - \abs{\T} \right)$-dimensional vector obtained by removing every entry of $\bm{d}$ that is indexed by $\T$. 
Because we will often be working with submatrices and their individual entries, we adopt the following notation to indicate summing over indices:
\begin{equation*}
    \sum^{(\sm \T)} : = \sum_{i \notin \T} =  \sum_{i \in [N]\sm\T} .
\end{equation*}

Furthermore, given an index set $\T \subset [N]$, we define $\mD[\T^c] \in \R^{N \times N}$ to be the matrix obtained by setting all entries in the $i$-th column and row to zero for every element $i \in \T$. 
Similarly, we let $\mD[\T]$ denote the matrix obtained by setting all entries in $\mD$ to zero {\em except} for the columns and rows indexed by $\T$. 
When the instance the index set consists of a single element $i$, we will simply denote the set by that element.
That is, if $\T = \{i\}$, then $\mD[i] := \mD[\T]$.
We denote the eigenvectors and eigenvalues of these ``zeroed-out'' matrices with the corresponding superscript.
For example, if matrix $\mD$ has eigenvector $\bm{d}$, then the corresponding eigenvector of $\mD[*]$ will be denoted as $\bm{d}^{[*]}$. 
We continue use the constants $C$ and $c$ to denote positive constants whose value may change from line to line, but their value is always constant with respect to $N$. 

The following definitions are largely adapted from \cite{Erdos}.

\begin{definition}
    An $N$-dependent event $\Omega$ holds with $(\xi, \nu)$-high probability if 
    $\Prob{\Omega^c} \le \exp\{-\nu(\log N)^{\xi} \}$
    whenever $N \ge N_0(\nu, c_0, C_0)$ for $c_0, C_0$ in Equation \eqref{def:H}.
    Moreover, for a given event $\Omega_0$, we say that $\Omega$ holds with $(\xi, \nu)$-high probability on $\Omega_0$ if  
    $\Prob{\Omega_0 \cap\Omega^c} \le \exp\{-\nu(\log N)^{\xi}\}$
    for all $N \ge N_0(\nu, c_0, C_0)$.
\end{definition}

\begin{definition} \label{def:H} 
Let $\xi \equiv \xi(N)$ be such that 
\begin{equation}\label{eq:def:xi}
    1 + c_0 \le \xi \le C_0\log(\log N) , 
\end{equation} 
where $c_0 > 0$ and $C_0 \ge 10$. 
For $N=1,2,\dots$, consider an $N \times N$ random matrix $\mH = (H_{ij})$ whose entries are real and independent up to the symmetry constraint $H_{ij} = H_{ji}$. We assume the entries of $\mH$ satisfy the moment conditions 
\begin{equation} \label{eq:def:h_moments}
    \E{H_{ij}} = 0, \quad \E{|H_{ij}|^2} = \frac{1}{N}, \quad \E{|H_{ij}|^{p}} \le \frac{C_1^p}{Nq^{p-2}}
\end{equation}
where $3 \le p \le (\log N)^{C_0\log\log N},\ C_1>0$ and $q \equiv q(N)$ is a parameter that controls the tail behavior of the noise \citep{BLM2013}, satisfying
\begin{equation}\label{eq:def:q}
    r^2(\log N)^{3\xi + 3\gamma} \le q \le C_2N^{\frac{1}{2}}
\end{equation}
for some $C_2 > 0$.
We denote the eigenvalues of $\mH$ as $\omega_1 \le \omega_2 \le \cdots \le \omega_N$.
\end{definition}

     
    We next consider our non-zero expectation matrix, which is a scaled version of $\mP$ from the main text (see Table \ref{tab:notation-key} below for reference).
    Define an $N \times N$ real-valued, symmetric, and non-random rank $r$ matrix $\mL$ with its spectral decomposition given as
    \begin{equation}\label{def:L:spectral_decomp}
        \mL = \sum_{\ell=N-r+1}^N\lambda_{\ell}\bm{v}_{\ell}\bm{v}_{\ell}^\top,
    \end{equation}
    where the eigenvalues are given by
    \begin{equation*}
    \lambda_1 = \lambda_2 = \cdots = \lambda_{N-r} = 0 < \lambda_{N - r + 1} < \lambda_{N-r + 2} < \cdots < \lambda_N .
    \end{equation*}

Denote the sum of $\mH$ and $\mL$ by
\begin{equation} \label{eq:def:B} 
    \mB = \mL + \mH,
\end{equation}
with spectral decomposition given by
\begin{equation*}
    \mB = \sum_{j=1}^N\lambdahat_{j}\bvhat_{j}\bvhat_{j}^\top .
\end{equation*}
In relating the setup described here back to the main text, we note that 
\begin{equation*}
    \mH = \frac{1}{\sqrt{N}} \left(\mA - \rho_n\latent\latent^\top \right) = \frac{1}{\sqrt{N}} \left(\mA - \mP \right) .
\end{equation*}
In other words, $\mB$ is a version of the observed adjacency matrix $\mA$, scaled by a factor of $1/\sqrt{N}$.
The scaling factor $1/\sqrt{N}$ is necessary in this section, because our objects of interest are related to the limiting distribution of the eigenvalues of a random matrix. 
Without this scaling factor, the eigenvalues of $\mA$ would grow unbounded with $N$ for the RDPG (see, e.g., Lemma~\ref{lemma:pop_eigenvalues_bound}), and there would be no limiting spectral distribution. 
The need for a limiting distribution of the eigenvalues is also why we assume the variance of the entries of $\mH$ are $1/N$, although this assumption can be relaxed to allow for a more general doubly stochastic matrix of the variances of the entries of $\mH$ \citep[see Section 5 of][]{erdos_bulk_2012}. 
Our goal is to show that the eigenvectors of $\mB$ (and hence also those of $\mA$) associated with the perturbed null eigenvalues are delocalized.
To do this, we study the distribution of eigenvalues of $\mB$ and $\mH$.

For the sake of clarity and ease of reference, we restate the Assumptions from the main text here, rewritten to match the notation in this appendix. 
Denote $\mathcal{S} := \{N-r+1, \dots, N\}$.

\begin{enumerate}[label=(\textbf{\~{A}\arabic*}),ref=\textbf{\~{A}\arabic*}]
    \item{Assumption 1 (Delocalization)}: The eigenvectors associated with the non-zero eigenvalues of $\mL$ are delocalized.
            That is, for some $\gamma \ge 0$ with no dependence on $N$,
            \begin{equation*} 
                |v_{ij}| \lesssim \frac{\deloc}{\sqrt{N}} 
            \end{equation*}
            for all $i \in [N]$, and $j \in \mathcal{S}$ where $v_{ij}$ refers to the $i$-th entry of the $j$-th eigenvector of $\mL$. To avoid confusion, this can be thought as indexing into the $(i,j)$-th entry of the matrix of eigenvectors $\mV$. 
            \label{assumption:deloc}
          
    \item{Assumption 2}: The signal strength parameter $\rhon$ satisfies $\rhon \sqrt{N} \gg 1$. \label{assumption:signal_strength}
    
    \item{Assumption 3}: The non-zero eigenvalues of $\mL$ are distinct. \label{assumption:eigen_nondegen}

    \item{Assumption 4}: The non-zero eigenvalues of $\mL$ are of order $\rhon \sqrt{N}$: $\lambda_j \asymp \rhon \sqrt{N}$ for $j \in \mathcal{S}$. \label{assumption:eigen_bound}

    \item{Assumption 5}: $\rank(\mL) \lesssim (\log n)^{\zeta}$ for some $\zeta \ge 0$. \label{assumption:rank}

    \item{Assumption 6 (Eigengap)}: The eigengap $\Deltatilde := \min_{j \in \calS}\min_{i\ne j} |\lambda_i - \lambda_j|$ of $\mL$
                obeys $\Deltatilde = \Omega\left(\rhon\sqrt{N}\right)$. \label{assumption:eigengap}

    \item{Assumption 7}: The entries of $\mH$ satisfy the moment conditions 
    \begin{equation*} 
        \E{H_{ij}} = 0, \quad \E{|H_{ij}|^2} = \frac{1}{N}, \quad \text{ and } \quad \E{|H_{ij}|^{p}} \le \frac{C_1^p}{Nq^{(p-2)}} \quad(p >2).
    \end{equation*} \label{assumption:moments}
\end{enumerate}


We summarize the notational changes we have made in this appendix and their corresponding symbols from the main text in Table \ref{tab:notation-key}.
The main differences are that in this appendix, we have rescaled the expectation matrix and random matrices by $1/\sqrt{N}$, and eigenvalue index ordering is reversed.
\begin{table}[ht]
    \centering
    \begin{tabular}{c|c}
       Main Text  & Appendix~\ref{apx:deloc}\\
       \hline
        $n$ & $N$ \\
        $\mA$ & $\mB$ \\
        $\mP$ & $\sqrt{N} \mL$ \\
        $\mE$ & $\sqrt{N} \mH$ \\
        $\Delta$ & $\sqrt{N}\Deltatilde$ \\
        $\spop_1 \ge \spop_2 \ge \cdots \ge \spop_n$ &  $\lambda_1 \le \lambda_2 \le \cdots \le \lambda_N$
    \end{tabular}
    \caption{Dictionary of the notational correspondence between the main text and this appendix.}
    \label{tab:notation-key}
\end{table}

We now introduce the empirical eigenvalue density functions for $\mB$ and $\mH$ respectively:
\begin{equation} \label{eq:def:stieltjesTransforms} 
\tilde{\mu}(x) = \frac{1}{N}\sum_{\alpha}\delta(x - \lambdahat_{\alpha})
~~~\text{ and }~~~
\mu(x) = \frac{1}{N}\sum_{\alpha}\delta(x - \omega_{\alpha}),
\end{equation}
where $\delta( \cdot )$ is the Dirac measure. 
We define corresponding empirical cumulative distribution function (integrated empirical densities) according to
\begin{equation} \label{eq:def:ntilde}
    \ntilde(x) = \frac{1}{N} \left| \{\alpha: -\infty < \lambdahat_\alpha \le x \} \right|
\end{equation}
and
\begin{equation} \label{eq:def:n}
    n(x) = \frac{1}{N} \left| \{\alpha: -\infty < \omega_\alpha \le x \} \right| .
\end{equation}
These empirical densities are not the main focus for most of this section, as it is easier to work with their Stieltjes transforms.
Showing the convergence of the Stieltjes transforms is equivalent to showing the convergence of the densities \citep[see Lemma 2,1 in][]{Erdos_lecture_notes}. 

As elsewhere, to study the Stieltjes transform, we consider the matrix resolvent \citep[see, e.g.,][]{Kato1995}.
We denote the resolvents of $\mB$ and $\mH$ by, respectively,
\begin{equation} \label{eq:def:resolvents}
    \Gtildematrix(z) := (\mB-z\mI)^{-1} 
    ~~~\text{ and }~~~
    \mathbf G(z) := \left(\mH - z \mI \right)^{-1},
\end{equation}
where $z \in \{x + \iu y : y > 0; x,y \in \R \}$.
The trace of the resolvent is related to the Stieltjes transform of the empirical eigenvalue density function of $\mB$ as \citep[see, e.g.,][Equation (2.5)]{Erdos_lecture_notes} 
\begin{equation}\label{eq:def:m-mtilde}
    \mtilde(z) :=  \int\frac{\mutilde(x)}{x-z} = \frac{1}{N} \Tr \Gtildematrix(z) dx ~~~\text{ and }~~~ m(z) := \int\frac{\mu(x)}{x-z} dx = \frac{1}{N}\Tr \mathbf G(z) .
\end{equation}
In working with submatrices of $\mB$, we denote corresponding resolvents by
\begin{equation*}
    \Gtildetmatrix := \left(\submatrix{\mB}{\T} - z \submatrix{\mI}{\T} \right)^{-1} .
\end{equation*}
Note that in defining the corresponding Stieltjes transform $\mtildet(z)$, we keep the $1/N$ normalization:
\begin{equation}\label{eqn:def:mtildet}
    \mtildet(z) := \frac{1}{N}\Tr\Gtildetmatrix .
\end{equation}
We are interested in understanding the error between $\mtilde(z)$ and $\msc(z)$.
We denote these errors by
\begin{equation} \label{eq:def:stieltjes_errors}
    \Lambdatilde(z) := |\mtilde(z) - \msc(z)|, \quad \Lambda(z) := |m(z) - \msc(z)| . 
\end{equation}

Below, we will consider complex numbers, parameterized by $E$ and $\eta$ as
\begin{equation*}
    z = E + \iu\eta,
\end{equation*}
where $E \in \R$ and $\eta > 0$ are chosen so that $z$ is near the support of the semicircle law.
In particular, we consider the domain
\begin{equation}
    D := \left\{z \in \C : |E| \le \Sigma, 0 < \eta \le 3 \right\},
\end{equation}
and a subset $L$-dependent domain
\begin{equation} \label{eq:def:DL}
    D_L := \left\{z \in \C : |E| \le \Sigma, \frac{r^4(\log N)^{L+12\gamma}}{N} \le \eta \le 3 \right\}
\end{equation}
where $\Sigma \ge 3$ is a fixed, arbitrary constant and $L \equiv L(N)$ satisfies 
\begin{equation}\label{eq:L_constraints}
L \ge 8\xi,
\end{equation}
where $\xi$ is given by Equation \eqref{eq:def:xi} in Definition~\ref{def:H}.

\subsection{Main Estimates}
The following bounds are adapted from Section 7 of \cite{Erdos}.
We make modifications to allow for a more general choice of $\mL$ having rank $r\ge1$.
In particular, we allow for $\mL$ to have finite, possibly slowly-growing rank, rather than the case of $\rank \mL = 1$ considered in \cite{Erdos}.
\begin{lemma} 
\label{lemma:7.1}
     Let $\mB$ satisfy Equation~\eqref{eq:def:B}. Then for any $z \in D_L$, we have 
    \begin{equation*}
        \left| \Lambdatilde(z) - \Lambda(z) \right| \le \frac{r\pi}{N\eta}
    \end{equation*}
\end{lemma}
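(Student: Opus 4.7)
My plan is to reduce the claim to bounding $|\tilde m(z) - m(z)|$ and then exploit the fact that $\mathbf{B}$ and $\mathbf{H}$ differ by a rank-$r$ matrix.

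First, by the reverse triangle inequality applied to the definitions in Equation~\eqref{eq:def:stieltjes_errors},
\begin{equation*}
\bigl| \tilde\Lambda(z) - \Lambda(z) \bigr|
= \bigl| |\tilde m(z) - \msc(z)| - |m(z) - \msc(z)| \bigr|
\le |\tilde m(z) - m(z)|,
\end{equation*}
so it suffices to establish $|\tilde m(z) - m(z)| \le r\pi/(N\eta)$ for $z = E + \iu\eta \in D_L$.

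Next, I would move to the integrated empirical density functions $\tilde n$ and $n$ defined in Equations~\eqref{eq:def:ntilde}--\eqref{eq:def:n}, writing
\begin{equation*}
\tilde m(z) - m(z) = \int_{\R} \frac{d\tilde\mu(x) - d\mu(x)}{x-z}
= \int_{\R} \frac{\tilde n(x) - n(x)}{(x-z)^2}\, dx ,
\end{equation*}
where the second equality is integration by parts (the boundary terms vanish because $\tilde n(x) - n(x) = 0$ for $|x|$ sufficiently large and $1/(x-z)^2 \to 0$ at infinity). The key structural input is that $\mathbf B - \mathbf H = \mathbf L$ has rank $r$, so by the standard Cauchy/Weyl interlacing bound for rank-$r$ perturbations (which one gets by restricting to the $(N-r)$-dimensional kernel of $\mathbf L$ and applying min-max), we have
\begin{equation*}
\bigl| N \tilde n(x) - N n(x) \bigr| \le r
\quad\text{for every } x \in \R.
\end{equation*}

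Combining this pointwise bound with the explicit evaluation of the Cauchy kernel integral,
\begin{equation*}
\int_{\R} \frac{dx}{|x-z|^2} = \int_{\R} \frac{dx}{(x-E)^2 + \eta^2} = \frac{\pi}{\eta},
\end{equation*}
yields
\begin{equation*}
|\tilde m(z) - m(z)| \le \frac{r}{N} \int_{\R} \frac{dx}{(x-E)^2 + \eta^2} = \frac{r\pi}{N\eta},
\end{equation*}
which establishes the lemma. This argument is essentially a routine adaptation of the rank-one estimate in Lemma 7.1 of \cite{Erdos}; the only step that requires any thought is the interlacing bound $|N\tilde n - Nn| \le r$ for general finite-rank $\mathbf L$, but this is classical and independent of the specific structure of the nonzero eigenvalues of $\mathbf L$, so no further assumptions beyond $\rank(\mathbf L) = r$ are needed.
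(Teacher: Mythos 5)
Your proof is correct and follows essentially the same route as the paper: reverse triangle inequality to reduce to $|\tilde m - m|$, integration by parts to pass to the integrated densities, the rank-$r$ interlacing bound $|\tilde n(x) - n(x)| \le r/N$ (the paper cites its Lemma~\ref{lemma:weyl} for this), and then the explicit $\pi/\eta$ evaluation of the Cauchy kernel integral.
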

\begin{proof}
The proof largely follows that of Lemma 7.1 in \cite{Erdos}.
The key difference is in the relation between $\ntilde(x)$ and $n(x)$.
In our setting, $|\ntilde(x) - n(x)| \le r/N$, which can be seen from Lemma \ref{lemma:weyl} and noticing that the eigenvalues of $\mH$ and $\mB$ are interlaced at most every $r$ indices, as we now demonstrate.
Recalling the definitions of $\Lambda$ and $\Lambdatilde$ from Equation~\eqref{eq:def:stieltjes_errors} and applying the reverse triangle inequality,
\begin{equation} \label{eq:Lambda2Lambdat:step1}
|\Lambdatilde(z) - \Lambda(z)|
\le |\mtilde(z) - \msc(z) - m(z) + \msc(z) | 
= |\mtilde(z) - m(z) | .
\end{equation}
Recalling the definitions of $\mtilde(z)$ and $m(z)$ from Equation~\eqref{eq:def:m-mtilde}, 
\begin{equation*}
\mtilde(z) - m(z)
= \int\frac{\mutilde(x) - \mu(x)}{x-z}dx 
= \int\frac{\ntilde(x) - n(x)}{(x-z)^2}dx ,
\end{equation*}
where the second equality follows from integration by parts. 
Substituting this into Equation~\eqref{eq:Lambda2Lambdat:step1} and applying Jensen's inequality,
\begin{equation} \label{eq:Lambdatilde2Lambda}
|\Lambdatilde(z) - \Lambda(z)|
\le \int\frac{| \ntilde(x)-n(x) |}{|x-z|^2}dx .
\end{equation}
Then Lemma \ref{lemma:weyl} implies the interlacing
\begin{equation*}
    \omega_{i} \le \omega_{i+1} \le \dots \le \omega_{i+r-1} \le \lambdahat_i \le \dots \le \lambdahat_{i+r-1} \le \omega_{i+r} \le \lambdahat_{i+r}.
\end{equation*}
From these inequalities, we can see that there are at most $r$ eigenvalues of $\mH$ between the $i$-th eigenvalue of $\mH$ and the $i$-th eigenvalue of $\mB$, since the $(i+r)$-th eigenvalue of $\mH$ must come after the $i$-th eigenvalue of $\mB$.
Thus, for any $x$,
\begin{equation*}
    \left| \left|\{\alpha: -\infty < \omega_\alpha \le x \} \right| - \left|\{\alpha: -\infty < \lambdahat_\alpha \le x \} \right|\right| \le r .
\end{equation*}
It then follows that $|\ntilde(x) - n(x)| \le r/N$.
Applying this to Equation~\eqref{eq:Lambdatilde2Lambda}, 
\begin{equation*}
|\Lambdatilde(z) - \Lambda(z)|
\le \frac{r}{N}\int_{-\infty}^{\infty}\frac{1}{|x-z|^2}dx
= \frac{r}{N}\int_{-\infty}^{\infty}\frac{1}{(x-E)^2 + \eta^2}dx
= \frac{r}{N\eta}\lim_{t \rightarrow \infty}\left[\tan^{-1}(t) - \tan^{-1}(-t) \right]
= \frac{r\pi}{N\eta} ,
\end{equation*}
as we set out to show.
\end{proof} 

\begin{lemma}\label{lemma:leave_one_out_E}
Let $\mB$ satisfy Equation \ref{eq:def:B}. For any $\T \subset [N]$ such that $0 < | \T |\le \tau < N$, where $\tau$ is a constant with respect to $N$, denote $\mE(\T) :=\mathbf  H[\T^c] - \mL[\T]$. Then 
\begin{equation}
     \norm{\mE(\T)} \le C(1 +  r \sqrt{\tau} \rhon\delocsq) 
\end{equation}
with $(\xi, \nu)$-high-probability.
\end{lemma}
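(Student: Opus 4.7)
The strategy is to split $\mE(\T) = \mH[\T^c] - \mL[\T]$ via the triangle inequality, control the noise piece by a standard operator-norm bound for $\mH$, and control the signal piece by expanding $\mL$ in its spectral decomposition and exploiting the eigenvector delocalization Assumption~\eqref{assumption:deloc}.

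\textbf{The noise piece.} Let $\mQ_{\T^c}$ denote the diagonal projection that keeps coordinates in $\T^c$ and zeros out coordinates in $\T$. Then $\mH[\T^c] = \mQ_{\T^c}\mH\mQ_{\T^c}$, so sub-multiplicativity of the spectral norm gives $\norm{\mH[\T^c]} \le \norm{\mH}$. The entries of $\mH$ satisfy the Wigner-type moment conditions in Assumption~\eqref{assumption:moments} with variance $1/N$ and tail parameter $q$ obeying Equation~\eqref{eq:def:q}, so a standard operator-norm concentration bound for such matrices (e.g., Lemma~4.3 of \cite{Erdos}) yields $\norm{\mH} \le C$ with $(\xi,\nu)$-high probability. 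This already contributes the constant term in the claimed bound.

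\textbf{The signal piece.} For each $\ell \in \calS$, split the eigenvector as $\bm{v}_\ell = \bm{v}_\ell^{\T} + \bm{v}_\ell^{\T^c}$, where $\bm{v}_\ell^{\T}$ agrees with $\bm{v}_\ell$ on $\T$ and is zero on $\T^c$, and similarly for $\bm{v}_\ell^{\T^c}$. Expanding the outer product and using linearity of the zero-out operation $[\T]$,
\begin{equation*}
\mL[\T] \;=\; \sum_{\ell \in \calS} \lambda_\ell \Big[ \bm{v}_\ell^{\T}(\bm{v}_\ell^{\T})^\top + \bm{v}_\ell^{\T}(\bm{v}_\ell^{\T^c})^\top + \bm{v}_\ell^{\T^c}(\bm{v}_\ell^{\T})^\top \Big],
\end{equation*}
since the remaining cross term $\bm{v}_\ell^{\T^c}(\bm{v}_\ell^{\T^c})^\top$ is supported on $\T^c \times \T^c$ and is therefore killed by $[\T]$. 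By the triangle inequality and the rank-one spectral norm identity $\norm{\bm{a}\bm{b}^\top} = \norm{\bm{a}}\norm{\bm{b}}$, each summand contributes at most $\lambda_\ell(\norm{\bm{v}_\ell^{\T}}^2 + 2\norm{\bm{v}_\ell^{\T}}\,\norm{\bm{v}_\ell^{\T^c}})$. Using Assumption~\eqref{assumption:deloc} gives $\norm{\bm{v}_\ell^{\T}}^2 \le \tau \delocsq/N$ and $\norm{\bm{v}_\ell^{\T^c}} \le 1$; together with $\lambda_\ell \le C\rhon\sqrt{N}$ from Assumption~\eqref{assumption:eigen_bound} and $|\calS| = r$ from Assumption~\eqref{assumption:rank}, one obtains
\begin{equation*}
\norm{\mL[\T]} \;\le\; C r \rhon \sqrt{N}\left(\tfrac{\tau\delocsq}{N} + 2\sqrt{\tfrac{\tau}{N}}\,\deloc\right) \;\le\; Cr\sqrt{\tau}\,\rhon\,\delocsq,
\end{equation*}
where the last inequality uses $\tau \lesssim 1$ and $\deloc \le \delocsq$ for $N$ large. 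Combining with the noise bound via the triangle inequality gives $\norm{\mE(\T)} \le C(1 + r\sqrt{\tau}\rhon\delocsq)$ with $(\xi,\nu)$-high probability.

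The only potentially delicate point is the bound $\norm{\mH} \le C$, since entries of $\mH$ are only controlled through moment conditions with a tail parameter $q$ that may shrink with $N$; however, this is precisely the regime addressed by \cite{Erdos}, so the bound can be quoted. All other steps are deterministic manipulations leveraging the delocalization and eigenvalue-scaling assumptions on $\mL$.
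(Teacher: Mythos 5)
Your proof is correct, and it reaches the same $\norm{\mE(\T)} \le C(1 + r\sqrt{\tau}\rhon\delocsq)$ conclusion, but you take a genuinely different route for the signal piece. The paper bounds $\norm{\mL[\T]}$ by first deriving an entrywise bound $|L_{kj}| \lesssim r\rhon\delocsq/\sqrt{N}$ from Assumptions~\eqref{assumption:deloc} and~\eqref{assumption:eigen_bound}, then counting the $2N\tau - \tau^2$ non-zero entries of $\mL[\T]$ and passing through the Frobenius norm. You instead split each eigenvector $\bm v_\ell$ into its $\T$- and $\T^c$-supported parts, expand the rank-one term $\bm v_\ell\bm v_\ell^\top$, observe the $\T^c\times\T^c$ block is annihilated by $[\T]$, and then bound the surviving rank-one pieces via $\norm{\bm a\bm b^\top}=\norm{\bm a}\norm{\bm b}$ together with $\norm{\bm v_\ell^\T}\le\sqrt{\tau}\,\deloc/\sqrt N$ and $\norm{\bm v_\ell^{\T^c}}\le1$. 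Your approach is a bit more structural and avoids the Frobenius detour; in fact it gives the slightly sharper intermediate bound $\norm{\mL[\T]}\lesssim r\rhon(\tau\delocsq/\sqrt N + \sqrt{\tau}\deloc)$, which dominates in $(\log N)^\gamma$ rather than $(\log N)^{2\gamma}$ and therefore certainly implies the claimed bound. The noise piece is handled identically in both proofs (projection trick plus Lemma~4.3 of \cite{Erdos} and the lower bound on $q$ from Equation~\eqref{eq:def:q}).
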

\begin{proof}
Lemma 4.3 from \cite{Erdos} states that with $(\xi, \nu)$-high-probability, we have 
\begin{equation}
    \norm{\mH} \le 2 + \frac{(\log N)^{\xi}}{q^{1/2}}.
\end{equation}
Recalling that $r^2(\log N)^{3\xi + 3\gamma}\le q$ by Equation \eqref{eq:def:q}, 
\begin{equation} \label{eq:H:spectral} 
    \norm{\mH} 
    \le 2 + \frac{(\log N)^{\xi}}{(\log N)^{3\xi/2}} \le C. 
\end{equation}
Now for any $\T \subset [N]$, we have
\begin{equation*}
\norm{\mE(\T)}
= \norm{\mH[\T^c] - \mL[\T]}
\le \norm{\mH[\T^c]} + \norm{\mL[\T]}.
\end{equation*}
By Theorem 4.3.17 from \cite{Horn_Johnson}, we have that $\norm{\mH[\T^c]} \le \norm{\mH}$ for any $\T \subset [N]$.
Thus, using Equation~\eqref{eq:H:spectral}, it holds with $(\xi, \nu)$-high-probability that
\begin{equation} \label{eq:normE:checpkpt}
   \norm{\mE(\T)} \le C + \norm{\mL[\T]} .
\end{equation}
To obtain an upper bound for $\norm{\mL[\T]}$, recall from Equation~\eqref{def:L:spectral_decomp} that
\begin{equation*}
   \mL = \sum_{\ell = N - r + 1}^N \lambda_{\ell}\bv_{\ell}\bv_{\ell}^\top .
\end{equation*}
By Assumption \eqref{assumption:eigen_bound}, $c\rhon\sqrt{N} \le \lambda_{\ell} \le C\rhon\sqrt{N}$ for all $\ell \in \{N-r+1, N-r+2, \dots, N \}$, and thus for each $k,j \in [N]$, we have
\begin{equation}\label{eqn:L-entry-bound}
\left| L_{k j} \right|
= \left| \left(\sum_{\ell = N - r + 1}^N \lambda_{\ell}\bv_{\ell}\bv_{\ell}^\top \right)_{k j} \right|
\le C\rhon\sqrt{N} \left| \left(\sum_{\ell = N - r + 1}^N\bv_{\ell}\bv_{\ell}^\top \right)_{k j} \right|.
\end{equation}
By Assumption \eqref{assumption:deloc}, $\bv_\ell\bv_\ell^\top$ has entries of order $\delocsq/N$, whence
\begin{equation*}
\left| L_{kj} \right|
\le C\left( \frac{r\rhon\delocsq}{\sqrt{N}} \right) .
\end{equation*}
It follows that, since $\mL[\T]$ has $2N\tau - \tau^2$ non-zero entries by construction,
\begin{equation*}
\norm{\mL[\T] }
\le \norm{\mL[\T] }_F
\le C( r \sqrt{\tau} \rhon\delocsq ) .
\end{equation*}
Applying this bound to Equation~\eqref{eq:normE:checpkpt} completes the proof.
\end{proof} 

\begin{lemma}\label{lemma:subspace_bound}
For each $\alpha=1,2,\dots,N$, let $\bvhat_{\alpha}^{[\T^c]}$ be the orthonormal eigenvectors of $\mB[\T^c] = \mL[\T^c] + \mH[\T^c]$. Denote $\mE(\T) = \mH[\T^c] - \mL[\T]$. 
Then for all $j \in \{N-r+1, N-r+2, \dots, N \}$ and any $\T \subset [N]$ where $0 < |\T| \le \tau$, we have with $\xinu$-high probability 
\begin{equation*}
     \sqrt{\sum_{\alpha\ne j}|\bv_{j}^\top\bvhat_{\alpha}^{[\T^c]}|^2 } 
     \le \frac{C r\delocsq }{\lambda_j} ,
\end{equation*}
where $\bv_j$ is the eigenvector associated with the $j$-th smallest eigenvalue of $\mL$.
\end{lemma}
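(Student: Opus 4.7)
The approach is to treat $\mB[\T^c] = \mL + \mE(\T)$ as a small perturbation of $\mL$ and to bound the sine of the angle between $\bv_j$ and $\bvhat_j^{[\T^c]}$ through a Davis--Kahan-style expansion in the perturbed eigenbasis. Since $\{\bvhat_\alpha^{[\T^c]}\}_\alpha$ is an orthonormal basis of $\R^N$, the quantity of interest equals $1 - |\bv_j^\top\bvhat_j^{[\T^c]}|^2$, i.e., the squared sine of the angle between $\bv_j$ and $\bvhat_j^{[\T^c]}$, so the claim is a quantitative sine-$\Theta$ bound.

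The first step is an eigenvector-coefficient identity. Setting $c_\alpha := \bv_j^\top \bvhat_\alpha^{[\T^c]}$ so that $\bv_j = \sum_\alpha c_\alpha \bvhat_\alpha^{[\T^c]}$, and combining $\mL\bv_j = \lambda_j\bv_j$ with $\mB[\T^c] = \mL + \mE(\T)$, gives $(\mB[\T^c] - \lambda_j\mI)\bv_j = \mE(\T)\bv_j$. Projecting against each $\bvhat_\alpha^{[\T^c]}$ yields $c_\alpha(\lambdahat_\alpha^{[\T^c]} - \lambda_j) = (\bvhat_\alpha^{[\T^c]})^\top\mE(\T)\bv_j$. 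Squaring, summing over $\alpha\ne j$, and applying Parseval's identity to $\mE(\T)\bv_j$ produces
\begin{equation*}
\sum_{\alpha\ne j} c_\alpha^2(\lambdahat_\alpha^{[\T^c]} - \lambda_j)^2 \;\le\; \|\mE(\T)\bv_j\|^2 \;\le\; \|\mE(\T)\|^2,
\end{equation*}
whence $\sum_{\alpha\ne j} c_\alpha^2 \le \|\mE(\T)\|^2/\delta^2$ with $\delta := \min_{\alpha\ne j}|\lambdahat_\alpha^{[\T^c]} - \lambda_j|$.

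The second step is to show $\delta \gtrsim \lambda_j$. Weyl's inequality applied to $\mB[\T^c] = \mL + \mE(\T)$, under the increasing-order convention of this appendix, gives $|\lambdahat_\alpha^{[\T^c]} - \lambda_\alpha| \le \|\mE(\T)\|$ for every $\alpha$. For $\alpha\in\calS\setminus\{j\}$, Assumption~\ref{assumption:eigengap} then yields $|\lambdahat_\alpha^{[\T^c]} - \lambda_j| \ge \Deltatilde - \|\mE(\T)\|$, while for $\alpha\notin\calS$ (where $\lambda_\alpha=0$) one obtains $|\lambdahat_\alpha^{[\T^c]} - \lambda_j| \ge \lambda_j - \|\mE(\T)\|$. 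Lemma~\ref{lemma:leave_one_out_E} combined with Assumptions~\ref{assumption:signal_strength}, \ref{assumption:eigen_bound}, \ref{assumption:rank}, and the fact that $\tau = O(1)$ implies $\|\mE(\T)\| \ll \rhon\sqrt{N} \asymp \min(\Deltatilde,\lambda_j)$ with $\xinu$-high probability, so $\delta \gtrsim \lambda_j$. Substituting back and again invoking Lemma~\ref{lemma:leave_one_out_E} for the numerator, we conclude
\begin{equation*}
\sqrt{\sum_{\alpha\ne j} c_\alpha^2} \;\lesssim\; \frac{\|\mE(\T)\|}{\lambda_j} \;\lesssim\; \frac{1 + r\sqrt{\tau}\rhon\delocsq}{\lambda_j} \;\lesssim\; \frac{r\delocsq}{\lambda_j},
\end{equation*}
where the final step uses $r\delocsq\ge 1$, $\rhon\le 1$, and absorbs the constant $\sqrt{\tau}$ into $\lesssim$. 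The main obstacle is precisely the quantitative comparison $\|\mE(\T)\| \ll \min(\Deltatilde,\lambda_j)$: without it, $\lambdahat_j^{[\T^c]}$ could fail to be well-separated from the rest of the perturbed spectrum and $\delta$ could collapse. Lemma~\ref{lemma:leave_one_out_E} supplies exactly the control needed, after which the remainder is a routine first-order eigenvector-perturbation manipulation.
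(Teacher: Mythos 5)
Your proof is correct and follows essentially the same Davis--Kahan-style strategy as the paper, differing only in presentation: where the paper works with block-matrix decompositions, a centering trick ($\mLtilde = \mL - \lambda_j\mI$), and the identity $(\Uhatperp^{[\T^c]})^\top\mE(\T)\bv_j = \mMhattildeperp^{[\T^c]}(\Uhatperp^{[\T^c]})^\top\bv_j$, you obtain the equivalent relation $c_\alpha(\lambdahat_\alpha^{[\T^c]} - \lambda_j) = (\bvhat_\alpha^{[\T^c]})^\top\mE(\T)\bv_j$ by direct projection onto each perturbed eigenvector and then invoke Parseval. Both proofs reduce to the same eigengap estimate $\Deltatilde - \|\mE(\T)\| \gtrsim \rhon\sqrt{N} \asymp \lambda_j$ via Lemma~\ref{lemma:leave_one_out_E} together with Assumptions~\ref{assumption:signal_strength}, \ref{assumption:eigen_bound}, \ref{assumption:rank}, and~\ref{assumption:eigengap}.
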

\begin{proof}
Following the proof of the Davis-Kahan $\sin \Theta$ theorem from Chapter 2 in \cite{spectral_methods_textbook}, 
for a given $j \in \{N - r + 1, \dots, N \}$, we represent $\mL$ via the eigendecomposition
\begin{equation}\label{eq:weyl_P_decomp}
    \mL = \begin{bmatrix}\bv_j & \mU_{\perp} \end{bmatrix}
    \begin{bmatrix}\lambda_j & 0\\
                    0 & \mathbf M_{\perp} \end{bmatrix}
                    \begin{bmatrix}\bv_j & \mU_{\perp} \end{bmatrix} ^\top,
\end{equation}
where $\mU_{\perp}$ is the $(N-1) \times (N-1)$ matrix of eigenvectors that span the orthogonal complement of $\bv_j$, and $\mathbf M_{\perp}$ is the diagonal matrix of associated eigenvalues.
Write the eigendecomposition of $\mB[\T^c]$ similarly:
\begin{equation}\label{eq:weyl_A_decomp}
    \mB[\T^c] = \begin{bmatrix}\bvhat_j^{[\T^c]} &\Uhatperp^{[\T^c]}
    \end{bmatrix}
    \begin{bmatrix}\lambdahat_j^{[\T^c]} & 0\\
                    0 & \Mhatperp^{[\T^c]} \end{bmatrix}
                    \begin{bmatrix}\bvhat_j^{[\T^c]} &\Uhatperp^{[\T^c]} \end{bmatrix} ^\top,
\end{equation}
where $\bvhat_j^{[\T^c]}$ is the eigenvector associated with $\lambdahat_j^{[\T^c]}$, the $j$-th smallest eigenvalue of $\mB[\T^c]$, $\Uhatperp^{[\T^c]}$ is the matrix of orthonormal eigenvectors that span the orthogonal complement of $\bvhat_j^{[\T^c]}$, and $\Mhatperp^{[\T^c]}$ is the diagonal matrix of associated eigenvalues. 

By construction, $\mB[\T^c] = \mL + \mH[\T^c] - \mL[\T] = \mL + \mE(\T)$, and thus we may view $\mE(\T)$ as the error between $\mL$ and $\mB[\T^c]$. That is,
\begin{equation}\label{eqn:apx:davis-kahan-mE}
    \mE(\T) = \mB[\T^c] - \mL
\end{equation}
We will now employ a centering trick to bound the eigenvalues of $\Mhatperp^{[\T^c]}$ away from $0$, by working with the surrogate matrices 
\begin{equation}
    \mLtilde = \mL - \lambda_j\mI \quad \text{and} \quad \tilde{\mB}[\T^c] = \mB[\T^c] - \lambda_j\mI.
\end{equation}
From these surrogate matrices, we can see that
\begin{equation*}
    \mE(\T) = \mB[\T^c] - \mL = \mB[\T^c] - \lambda_j\mI  + \lambda_j\mI - \mL = \mBtilde[\T^c] - \mLtilde .
\end{equation*}

We observe that by Lemma \ref{lemma:perturbed_eigengap}, the eigenvalues on the diagonal of $\Mhatperp^{[\T^c]}$ are contained in the set $(-\infty,\lambda_j - \Deltatilde + \norm{\mE(\T)}] \cup [\lambda_j + \Deltatilde - \norm{\mE(\T)}, \infty)$.
The eigenvalues of $\mLtilde$ and $\tilde{\mB}[\T^c]$ are derived by applying the map $x \mapsto x - \lambda_j$ to the spectra of $\mL$ and $\mB[\T^c]$, respectively, and therefore,
\begin{equation} \label{eq:Mtilde:svLB}
\sigma_{\min}(\mMtildeperp^{[\T^c]}) \ge \Deltatilde - \norm{\mE(\T)}.
\end{equation}

We denote the eigendecomposition of these surrogate matrices using the same notation as in Equations~\eqref{eq:weyl_P_decomp} and \eqref{eq:weyl_A_decomp} but with an additional tilde (~$\tilde{}$~) above each object.
Noting that the surrogate matrices share the same eigenvectors as their original matrices, we have
\begin{equation} \label{eq:surragateEvecs}
\Utildeperp^{[\T^c]} = \mU_{\perp}
~~~\text{ and }~~~
\mUhattildeperp^{[\T^c]} = \Uhatperp^{[\T^c]} .
\end{equation}
We then examine the quantity
\begin{equation*}
\left( \mUhattildeperp^{[\T^c]} \right)^\top\mE(\T)\tilde{\bv}_j 
= \left( \Uhatperp^{[\T^c]} \right) ^\top\mE(\T)\bv_j
=\left( \Uhatperp^{[\T^c]} \right)^\top\left(\mBtilde[\T^c] - \mLtilde \right)\bv_j 
= \mMhattildeperp^{[\T^c]}\left( \Uhatperp^{[\T^c]} \right)^\top \bv_j ,
\end{equation*}
where the first equality follows from Equation~\eqref{eq:surragateEvecs}; 
the second equality is obtained from substituting $\mE(\T)$ from Equation~\eqref{eqn:apx:davis-kahan-mE}; 
and the third equality follows from the fact that $\mLtilde \bv_j = 0$ by construction.
Taking the Frobenius norm, we have
\begin{equation*}\begin{aligned}
\norm{\left( \Uhatperp^{[\T^c]}\right)^\top \mE(\T)\bv_j}_F 
&=\norm{\mMhattildeperp^{[\T^c]}\left( \Uhatperp^{[\T^c]}
\right)^\top\bv_j}_F
\ge \sigma_{\min}\left(\mMhattildeperp^{[\T^c]}\right) \norm{\left( \Uhatperp^{[\T^c]}
\right)^\top\bv_j}_F \\
   & \ge \left(\Deltatilde - \norm{\mE(\T)} \right)\norm{\left( \Uhatperp^{[\T^c]}
\right)^\top\bv_j}_F ,
\end{aligned} \end{equation*}
where the first inequality is from basic properties of unitarily invariant norms and the second inequality follows from Equation~\eqref{eq:Mtilde:svLB}. 
Hence, by basic properties of the norms,
\begin{equation*}
\left(\Deltatilde - \norm{\mE(\T)} \right)\norm{\left( \Uhatperp^{[\T^c]} \right)^\top\bv_j}_F
\le \norm{\left( \Uhatperp^{[\T^c]} \right)^\top\mE(\T)\bv_j}_F 
\le \norm{\left( \Uhatperp^{[\T^c]}\right)^\top}\norm{\mE(\T)\bv_j}_F
\le \norm{\mE(\T)\bv_j}_F .
\end{equation*}
Lower bounding $\Deltatilde$ using Assumption~\eqref{assumption:eigengap} and upper-bounding $\norm{\mE(\T)}$ using Lemma~\ref{lemma:leave_one_out_E}, Assumptions~\eqref{assumption:signal_strength} and~\eqref{assumption:rank} ensure that $\Deltatilde - \norm{\mE(\T)} > 0$ for suitably large $n$ and thus
\begin{equation*}
\norm{\left( \Uhatperp^{[\T^c]} \right)^\top\bv_j}_F 
\le \frac{\norm{\mE(\T)\bv_j}_F}{\Deltatilde - \norm{\mE(\T)} }.
\end{equation*}
Now since 
\begin{equation*}
    \sqrt{\sum_{\alpha \ne j}|\bv_{j}^\top\bvhat_{\alpha}^{[\T^c]} |^2 } = \norm{\left( \Uhatperp^{[\T^c]}
\right)^\top\bv_j}_F,
\end{equation*}
we have 
\begin{equation*}
\sqrt{\sum_{\alpha \ne j}|\bv_{j}^\top\bvhat_{\alpha}^{[\T^c]}|^2 }
\le \frac{\norm{\mE(\T)\bv_j}_F}{\Deltatilde - \norm{\mE(\T)}} \le \frac{\norm{\mE(\T)}}{\Deltatilde - \norm{\mE(\T)}} \le \frac{C(1 +  r  \rhon\delocsq)}{\Deltatilde - (1 +  r \rhon\delocsq)}
\end{equation*}
where the last inequality holds with $(\xi, \nu)$-high-probability, by Lemma \ref{lemma:leave_one_out_E}.
Note that we have absorbed $\tau$ into the constant $C$.

Hence, by Assumption~\ref{assumption:eigengap}, which states $\Deltatilde = \Omega\left(\rhon\sqrt{N}\right)$, we have shown that
\begin{equation*} \begin{aligned}
\sqrt{\sum_{\alpha \ne j}|\bv_{j}^\top\bvhat_{\alpha}^{[\T^c]} |^2 }
\le \frac{C(1 +  r\tau  \rhon\delocsq)}{\Deltatilde - (1 +  r\tau  \rhon\delocsq)}
& \le \frac{C(1 +  r \rhon\delocsq)}{\rhon\sqrt{N} - (1 +  r  \rhon\delocsq)} \\
&\hspace{-2cm}= \frac{C(1 +  r \rhon\delocsq)}{\rhon\sqrt{N}(1 - (1 +  r  \rhon\delocsq)/\rhon\sqrt{N})} .
\end{aligned} \end{equation*}
with $(\xi, \nu)$-high-probability. Then since
(1) $\rhon \sqrt{N} \gg 1$ by Assumption \eqref{assumption:signal_strength} 
and (2) $r\delocsq \ll \sqrt{N}$ by Assumption \eqref{assumption:rank}, we have that $1 + r\rhon\delocsq \ll \rhon\sqrt{N}$. It then follows that,
\begin{equation*}
\sqrt{\sum_{\alpha \ne j}|\bv_{j}^\top\bvhat_{\alpha}^{[\T^c]} |^2 }
\le 
\frac{C(1 +  r \rhon\delocsq)}{\rhon\sqrt{N}} .
\end{equation*}
with $(\xi, \nu)$-high-probability. To simplify presentation in the following sections, we will $\rhon$ trivially by 1 and absorb the other summand.
So then
\begin{equation}
\sqrt{\sum_{\alpha \ne j}|\bv_{j}^\top\bvhat_{\alpha}^{[\T^c]} |^2 }
\le \frac{C}{\rhon\sqrt{N}} r\delocsq 
\end{equation}
with $(\xi, \nu)$-high-probability.
Recalling that $\lambda_j \asymp \rhon\sqrt{N}$ by Assumption~\eqref{assumption:eigen_bound}, we have that with $(\xi, \nu)$-high-probability,
\begin{equation}
\sqrt{\sum_{\alpha \ne j}|\bv_{j}^\top\bvhat_{\alpha}^{[\T^c]} |^2 }
\le \frac{C}{\lambda_j} r\delocsq  .
\end{equation}

Noting that the above argument only relied on the probability guarantee in Lemma~\ref{lemma:leave_one_out_E} and our choice of $\T$ and $j$ were otherwise arbitrary, this estimate holds for any $\T \subset [N]$ and $j \in \{N-r+1, N-r+2, \dots, N \}$ with $(\xi, \nu)$-high-probability, as we set out to show.
\end{proof} 

The following lemma largely mirrors Lemma 7.2 from \cite{Erdos}, but there are more terms in the matrix $\calAtilde_i(z)$ owing to the fact that we now have a rank-$r$ matrix $\mL$, instead of the rank-one case considered by \cite{Erdos}.
Below, for a vector $F = (F_1,F_2,\dots,F_N)$, we write
\begin{equation} \label{eq:def:backetAverage}
\average{F} := \frac{1}{N} \sum_{i=1}^N F_i .
\end{equation}

\begin{lemma} \label{lemma:7.2}
Let $\mB$ be as given in Equation \eqref{eq:def:B} and denote the resolvent of $\mB$ by $\Gtildematrix(z) = \left(\mB - z \mI \right)^{-1}$ as in Equation~\eqref{eq:def:resolvents}.
Then
\begin{equation*}
    \Gtilde_{ii}(z) = \frac{1}{-z-\msc(z) - \left(\average{\psitilde(z)} - \Ytilde_i(z) \right)},
\end{equation*}
where
\begin{equation} \label{eq:def:Ytilde}
    \Ytilde_{i}(z) = H_{ii}
        - \Ztilde_i(z)
        +\calAtilde_i(z),
\end{equation}
and
\begin{equation}\label{eq:def:Ztilde}
    \Ztilde_i(z) = Q_i\left( (\Bi)^\top\Gtildeimatrix(z)\Bi \right)
\end{equation}
and, for a matrix $\mD$,
\begin{equation}\label{eq:def:Q_i}
    Q_i\left(\mD \right) = \mD - \E{\mD \mid \submatrix{\mH}{i} } 
\end{equation}
and
\begin{equation} \label{eq:def:calAtilde}
\begin{aligned}
    \calAtilde_i(z) &= L_{ii} - \sum_{\ell = N - r + 1}^N \lambda^2_{\ell}(\VM{\ell})^\top \Gtildeimatrix(z) \VM{\ell}  -  \sum_{\ell_1 \ne \ell_2} \lambda_{\ell_1}\lambda_{\ell_2}(\VM{\ell_1})^\top \Gtildeimatrix(z) \VM{\ell_2} \\
    &~~~~~~~~~ + \frac{1}{N}\sum_j\frac{\Gtilde_{ij}(z)\Gtilde_{ji}(z)}{\Gtilde_{ii}(z)} .
\end{aligned}
\end{equation}
Moreover,
\begin{equation}\label{eqn:def:psi}
    \psitilde_i(z) := \Gtilde_{ii}(z) - \msc(z) .
\end{equation}
\end{lemma}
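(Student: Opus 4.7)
The plan is to derive the formula via the Schur complement (block matrix inversion) identity applied to $\mB - z\mI$, combined with a centering-and-substitution argument for the off-diagonal contribution. The structure of the proof exactly mirrors Lemma~7.2 of \cite{Erdos}, with the new feature being that $\mL$ has rank $r$ rather than rank one, so the cross terms between the $r$ eigenvectors $\bv_{\ell}$ must be tracked explicitly.

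First, I would invoke the standard one-row Schur complement identity to write
\begin{equation*}
\Gtilde_{ii}(z) = \frac{1}{(L_{ii} + H_{ii} - z) - (\Bi)^\top \Gtildeimatrix(z)\, \Bi},
\end{equation*}
using $\mB_{ii} = L_{ii} + H_{ii}$ and the fact that $(\mB^{\setminus i} - z \mI^{\setminus i})^{-1} = \Gtildeimatrix(z)$. Next I would split $\Bi = \Li + \Hi$ and decompose
\begin{equation*}
(\Bi)^\top \Gtildeimatrix \Bi = Q_i\!\left((\Bi)^\top \Gtildeimatrix \Bi\right) + \E{(\Bi)^\top \Gtildeimatrix \Bi \,\big|\, \mH^{\setminus i}},
\end{equation*}
with $Q_i$ as in \eqref{eq:def:Q_i}. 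Since $\Li$ and $\Gtildeimatrix$ are $\mH^{\setminus i}$-measurable while $\E{\Hi \mid \mH^{\setminus i}} = 0$, the conditional expectation reduces to $(\Li)^\top \Gtildeimatrix \Li + \E{(\Hi)^\top \Gtildeimatrix \Hi \mid \mH^{\setminus i}}$. Using the variance condition $\E{|H_{ji}|^2} = 1/N$ from Assumption~\eqref{assumption:moments} and the independence of the entries of $\Hi$ from $\Gtildeimatrix$, the quadratic form collapses to $\tfrac{1}{N}\Tr \Gtildeimatrix = \mtildei(z)$.

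The key algebraic step is to expand $(\Li)^\top \Gtildeimatrix \Li$ in the eigenbasis of $\mL$. Writing $\Li = \sum_{\ell = N-r+1}^N \lambda_\ell v_{i\ell} (\bv_\ell)^{\setminus i} = \sum_\ell \lambda_\ell \VM{\ell}$, the quadratic form splits as
\begin{equation*}
(\Li)^\top \Gtildeimatrix \Li = \sum_{\ell} \lambda_\ell^2 (\VM{\ell})^\top \Gtildeimatrix \VM{\ell} + \sum_{\ell_1 \ne \ell_2} \lambda_{\ell_1}\lambda_{\ell_2} (\VM{\ell_1})^\top \Gtildeimatrix \VM{\ell_2}.
\end{equation*}
This is precisely the source of the two summation terms in $\calAtilde_i$; the rank-one setting of \cite{Erdos} has only the $\ell_1 = \ell_2$ piece, so this separation is the main place the proof diverges from theirs.

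To convert the $\mtildei(z)$ contribution into $\mtilde(z) = \msc(z) + \average{\psitilde(z)}$, I would apply the standard resolvent identity $\Gtilde^{(\setminus i)}_{jj} = \Gtilde_{jj} - \Gtilde_{ij}\Gtilde_{ji}/\Gtilde_{ii}$ for $j \ne i$, sum over $j$, and use the algebraic identity $\Gtilde_{ii} = \Gtilde_{ii}^2/\Gtilde_{ii}$ to include the $j = i$ term, yielding
\begin{equation*}
\mtildei(z) = \mtilde(z) - \frac{1}{N}\sum_j \frac{\Gtilde_{ij}(z)\Gtilde_{ji}(z)}{\Gtilde_{ii}(z)}.
\end{equation*}
Substituting everything back into the Schur formula and using $\mtilde(z) - \msc(z) = \average{\psitilde(z)}$ from \eqref{eqn:def:psi}, all contributions outside of $-z - \msc(z) - \average{\psitilde(z)}$ combine to precisely $\Ytilde_i(z) = H_{ii} - \Ztilde_i(z) + \calAtilde_i(z)$. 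There are no real obstacles here: the argument is essentially bookkeeping, with the rank-$r$ generalization requiring only that the cross terms $\ell_1 \ne \ell_2$ be carried along rather than discarded. The only subtle point worth double-checking is the diagonal convention in computing $\E{(\Hi)^\top \Gtildeimatrix \Hi \mid \mH^{\setminus i}}$, since the $(i,i)$ entry of $\mH$ is excluded from $\Hi$, but the moment conditions and the trace of $\Gtildeimatrix$ align cleanly as in the rank-one case.
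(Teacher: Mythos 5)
Your proposal is correct and follows the same route as the paper: Schur complement for $\Gtilde_{ii}$, split the quadratic form into the $Q_i$-fluctuation $\Ztilde_i$ and its conditional expectation, reduce the $\Hi$ quadratic to $\frac{1}{N}\Tr\Gtildeimatrix$ via the variance condition, expand the $\mL$-part in its eigenbasis (the rank-$r$ generalization producing exactly the diagonal and cross sums in $\calAtilde_i$), and apply the resolvent identity $\Gtildei_{jj} = \Gtilde_{jj} - \Gtilde_{ij}\Gtilde_{ji}/\Gtilde_{ii}$ to pass from $\mtildei$ to $\mtilde$. The bookkeeping order differs slightly (you isolate the $\mL$ contribution before conditioning rather than after), but it is the same argument.
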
 
\begin{proof}
By definition of $\Gtildematrix(z)$, taking Schur complements yields
\begin{equation} \label{eq:Gtildeii:start}
    \Gtilde_{ii}(z)
    = \left(- z + B_{ii} - \left(\Bi\right)^\top\Gtildeimatrix(z)\Bi \right)^{-1} .
\end{equation}
Looking at the quadratic form, by the definition of $\mB$ from \eqref{eq:def:B}, we have 
\begin{equation*} 
    \left(\Bi\right)^\top\Gtildeimatrix(z)\Bi
    = \left[ (\Hi)^\top 
    + \hspace{-0.5em} \sum_{\ell = N - r + 1}^{N} \hspace{-1em} \lambda_{\ell}(\VM{\ell})^\top \right] \Gtildeimatrix(z) \left[ \Hi + \hspace{-0.5em} \sum_{\ell = N - r + 1}^{N} \hspace{-1em} \lambda_{\ell}(\VM{\ell}) \right] ,
\end{equation*}
where we remind the reader that $\submatrix{\mH}{i}$ denotes the $(N - 1)\times(N - 1)$ submatrix of $\mH$ where the $i$-th rows and columns were removed; the notation $\mH_{\cdot i} \in \R^N$ denotes the $i$-th row of $\mH$; and hence $\Hi \in \R^{N-1}$ denotes the $i$-th row of $\mH$ with its $i$-th entry removed.
Expanding and taking the conditional expectation given $\submatrix{\mH}{i}$, the terms that are linear in $\Hi$ disappear, since the elements of $\Hi$ are not included in the matrix $\submatrix{\mH}{i}$, and we have
\begin{equation} \label{eqn:quad_expansion_7.1}
\begin{aligned}
    \E{(\Bi)^\top\Gtildeimatrix(z)\Bi \mid \submatrix{\mH}{i}} 
    &= \E{(\Hi)^\top\Gtildeimatrix(z)\Hi \mid \submatrix{\mH}{i}} \\
    & ~~~~~~~~~+  \sum_{\ell = N - r + 1}^N \lambda^2_{\ell}~(\VM{\ell})^\top \Gtildeimatrix(z) \VM{\ell}  \\
    & ~~~~~~~~~+  \sum_{\ell_1 \ne \ell_2} \lambda_{\ell_1}~\lambda_{\ell_2}~
                (\VM{\ell_1})^\top \Gtildeimatrix(z) \VM{\ell_2} .
\end{aligned} 
\end{equation}
Recalling the definition of $\calAtilde_i(z)$ from Equation~\eqref{eq:def:calAtilde}, it follows that
\begin{equation}\label{eq:expected_quad_form_7.1}
\begin{aligned}
&\E{(\Bi)^\top\Gtildeimatrix(z) \Bi \mid \submatrix{\mH}{i}} \\
&~~~~~~~~~~~~~~~~~~~= \E{(\Hi)^\top\Gtildeimatrix(z)\Hi \mid \submatrix{\mH}{i}} - \calAtilde_i(z) + L_{ii} + \frac{1}{N}\sum_j\frac{\Gtilde_{ij}(z)\Gtilde_{ji}(z)}{\Gtilde_{ii}(z)} .
\end{aligned}
\end{equation} 
The first term in Equation \eqref{eq:expected_quad_form_7.1} can be expressed as 
\begin{equation} \label{eq:conditional_expectation_resolvent}
\begin{aligned}
    \E{(\Hi)^\top\Gtildeimatrix(z)\Hi \mid \Hi} &= \frac{1}{N} \sum_j^{(\sm i)}\Gtildei_{jj}(z)
    = \frac{1}{N}\sum_j \left(\Gtilde_{jj}(z) - \frac{\Gtilde_{ji}(z)\Gtilde_{ij}(z)}{\Gtilde_{ii}(z)} \right) \\
    &=  \frac{1}{N}\sum_j\Gtilde_{jj}(z) - \frac{1}{N}\sum_j\frac{\Gtilde_{ji}(z)\Gtilde_{ij}(z)}{\Gtilde_{ii}(z)} ,
\end{aligned}
\end{equation}
where in the first equality, we have used the fact the variances of $H_{ii}$ are $1/N$, along with the fact that $\E{H_{ij}} = 0$, and in the second equality, we have used basic properties of the entries of resolvents, given in Lemma~\ref{lemma:resolvent_identities}. 
Substituting Equation~\eqref{eq:conditional_expectation_resolvent} into Equation~\eqref{eq:expected_quad_form_7.1} yields 
\begin{equation} \label{eq:conditional_expectation_resolvent:simplified}
\E{(\Bi)^\top\Gtildeimatrix(z)\Bi \mid \submatrix{\mH}{i}} = \frac{1}{N}\sum_j\Gtilde_{jj}(z) - \calAtilde_i(z) + L_{ii}
\end{equation}
Recalling the definition of $Q_i( \cdot )$ from Equation~\eqref{eq:def:Q_i}, let us write
\begin{equation*}
\Ztilde_i(z) = Q_i\left( (\Bi)^\top\Gtildeimatrix(z)\Bi \right)
\end{equation*}
for ease of notation.
Now, consider
\begin{equation*}
\begin{aligned}
B_{ii} - (\Bi)^\top\Gtildeimatrix(z)\Bi &= B_{ii} - (\Bi)^\top\Gtildei(z)\Bi + \E{(\Bi)^\top\Gtildeimatrix(z)\Bi \mid \submatrix{\mH}{i}} \\
& ~~~~~~~ - \E{(\Bi)^\top\Gtildeimatrix(z)\Bi \mid \submatrix{\mH}{i}}  \\
&= B_{ii}  - \E{(\Bi)^\top\Gtildeimatrix(z)\Bi \mid \submatrix{\mH}{i}} - \Ztilde_i(z) \\
& =  B_{ii} - \left(\frac{1}{N}\sum_j\Gtilde_{jj}(z) - \calAtilde_i(z) + L_{ii} \right) - \Ztilde_i(z) \\
 & =  L_{ii} + H_{ii} - \frac{1}{N}\sum_j\Gtilde_{jj}(z) + \calAtilde_i(z) - L_{ii} - \Ztilde_i(z)  + \msc(z) - \msc(z) \\
 & = H_{ii} - \frac{1}{N}\sum_j\Gtilde_{jj}(z) + \calAtilde_i(z) - \Ztilde_i(z)  + \msc(z) - \msc(z).
\end{aligned}
\end{equation*}
where in the second equality, we have substituted the definition of $\Ztilde_i(z)$; and in the third equality, we have substituted the expansion of the conditional expectation derived in Equation \eqref{eq:conditional_expectation_resolvent:simplified}.
Rearranging terms and recalling our bracket notation defined in Equation~\eqref{eq:def:backetAverage},
we find
\begin{equation*}
\begin{aligned}
B_{ii} - (\Bi)^\top\Gtildei(z)\Bi
&= -\msc(z) +\msc(z)  -\frac{1}{N}\sum_j\Gtilde_{jj}(z) + H_{ii} - &\Ztilde_i(z) + \calAtilde_i(z) \\
&= -\msc(z) - \average{\psitilde(z)} + \Ytilde_i(z),
\end{aligned}
\end{equation*}
where we have absorbed the $\msc(z)$ term into $\average{\psitilde(z)}$ (see Equation \eqref{eqn:def:psi} for the definition of $\psitilde$) 
and we have used the definition of $\Ytilde_i(z)$ from Equation~\eqref{eq:def:Ytilde}.
Substituting this into Equation~\eqref{eq:Gtildeii:start},
\begin{equation*}
\Gtilde_{ii}(z)
    = \left(- z - \msc(z) - \average{\psitilde(z)} + \Ytilde_i(z) \right)^{-1},
\end{equation*}
as we set out to show.
\end{proof} 

We now introduce two new quantities:
\begin{equation} \label{eq:def:Ld-Lo}
\Lo(z) := \max_{i\ne j} |\Gtilde_{ij}(z)| \quad \text{ and } \quad\Ld(z) := \max_{i} |\Gtilde_{ii}(z) - \msc(z)| .
\end{equation}
The error terms from Lemma~\ref{lemma:7.2}---$\calAtilde_i(z), \Ztilde_i(z), \text{ and } \Ytilde_i(z)$---are often estimated in terms of these quantities, which in turn are bounded by a deterministic $\Phi(z)$. 

Next, for $\eta$ satisfying $N^{-1} r^4(\log N)^{L+12\gamma} \le \eta \le 3$, define the set $D(\eta) := \{z \in D_L : \Im z = \eta\}$ and the event 
\begin{equation} \label{eq:def:Ot}
    \Ot(\eta) := \left\{ \sup_{z\in D(\eta)} (\Ld(z) + \Lo(z))\le (\log N)^{-\xi} \right\} .
\end{equation}
We will need Lemma 7.4 from \cite{Erdos}. 
No adaptation to the present $r>1$ setting is necessary, since the proof only involves basic properties of resolvents (see the proof of Lemma 3.12 in \cite{Erdos}).
\begin{lemma}[\cite{Erdos}, Lemma 7.4]\label{lemma:trace_difference}
    Let $z = E + \iu\eta \in D_L$, 
    we have for any $i$ and $\T \subset \{1, \dots, N\}$ satisfying $i \notin \T$ and $|\T| \le \tau < N $, where $\tau$ is constant with respect to $N$, that
    \begin{equation} 
        \mtilde^{(\{i\}\cup\T)}(z) =  \mtilde^{(\T)}(z) + O\left(\frac{1}{N\eta}\right)
    \end{equation}
    holds on $\Ot(\eta)$. 
\end{lemma}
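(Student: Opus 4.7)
The plan is to reduce the claim to a single removal step and then iterate $|\T|$ times (which costs a constant, since $\tau$ is constant in $N$). So I will focus on showing that, deterministically for any index $i \notin \T$,
\begin{equation*}
\mtilde^{(\{i\}\cup\T)}(z) = \mtilde^{(\T)}(z) + O\!\left(\frac{1}{N\eta}\right),
\end{equation*}
which, since $\mtilde^{(\T)}(z) = N^{-1}\Tr\Gtildetmatrix(z)$ by Equation~\eqref{eqn:def:mtildet}, is equivalent to showing
\begin{equation*}
\Tr\Gtildetmatrix(z) - \Tr\Gtildematrix^{(\{i\}\cup\T)}(z) = O(1/\eta).
\end{equation*}

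The main tool is the Schur complement resolvent identity: applied to the matrix $\mB^{(\T)}$ with the extra row/column indexed by $i$ removed, we obtain for all $j,k \notin \T\cup\{i\}$,
\begin{equation*}
(\Gtildetmatrix)_{jk}(z) = (\Gtildematrix^{(\{i\}\cup\T)})_{jk}(z) + \frac{(\Gtildetmatrix)_{ji}(z)\,(\Gtildetmatrix)_{ik}(z)}{(\Gtildetmatrix)_{ii}(z)},
\end{equation*}
which is the analogue of the identity invoked in the derivation of Equation~\eqref{eq:conditional_expectation_resolvent}. Taking $j=k$ and summing over $j \in [N]\setminus(\T\cup\{i\})$ gives
\begin{equation*}
\Tr\Gtildetmatrix(z) - \Tr\Gtildematrix^{(\{i\}\cup\T)}(z) = (\Gtildetmatrix)_{ii}(z) + \frac{1}{(\Gtildetmatrix)_{ii}(z)}\sum_{j\in[N]\setminus(\T\cup\{i\})} (\Gtildetmatrix)_{ji}(z)^2.
\end{equation*}

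Now I bound the two right-hand terms. Since $\mB^{(\T)}$ is symmetric, the spectral decomposition gives $\|\Gtildetmatrix(z)\|\le 1/\eta$, hence $|(\Gtildetmatrix)_{ii}(z)|\le 1/\eta$. For the second term, I use the Ward identity
\begin{equation*}
\sum_{j} |(\Gtildetmatrix)_{ji}(z)|^2 = (\Gtildetmatrix\Gtildetmatrix^*)_{ii}(z) = \frac{\Im (\Gtildetmatrix)_{ii}(z)}{\eta},
\end{equation*}
together with the elementary inequality $|(\Gtildetmatrix)_{ii}(z)| \ge \Im (\Gtildetmatrix)_{ii}(z)$ (valid since $\eta>0$ makes $(\Gtildetmatrix)_{ii}$ lie in the upper half-plane). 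Combining these gives
\begin{equation*}
\left| \frac{1}{(\Gtildetmatrix)_{ii}(z)} \sum_{j}(\Gtildetmatrix)_{ji}(z)^2 \right| \le \frac{\sum_j |(\Gtildetmatrix)_{ji}(z)|^2}{|(\Gtildetmatrix)_{ii}(z)|} \le \frac{1}{\eta},
\end{equation*}
so the trace difference is $O(1/\eta)$. Dividing by $N$ and iterating at most $|\T|\le \tau = O(1)$ times yields the claim. The argument is deterministic, but the role of the event $\Ot(\eta)$ is implicit when one iterates: it ensures that at each step $|\Gtilde_{ii}|$ stays bounded away from zero (since $|\msc(z)|$ is bounded below on $D_L$ and $\Ld(z)\le(\log N)^{-\xi}$), which is essentially automatic here via the Ward-identity reformulation. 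The main minor subtlety is bookkeeping the indices carefully when reducing the multi-removal to a sequence of single removals, but no analytic difficulty arises.
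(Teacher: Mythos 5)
Your argument is correct and uses exactly the tools the paper has in mind when it says Erd\H{o}s's Lemma 3.12 ``only involves basic properties of resolvents'': the Schur-complement expansion of Lemma~\ref{lemma:resolvent_identities} (Equation~\eqref{eq:resolvent:b} with $k=i$) followed by the Ward identity, yielding the deterministic bound $\left|\Tr\Gtildetmatrix(z) - \Tr\Gtildematrix^{(\{i\}\cup\T)}(z)\right| \le C/\eta$. Two small remarks: (i) the opening plan to ``iterate $|\T|$ times'' is a red herring, since the lemma compares $\T$-removal to $(\{i\}\cup\T)$-removal directly and your single-step estimate is already the full claim; and (ii) your own observation is right that the argument is fully deterministic, so the conditioning on $\Ot(\eta)$ in the statement is not actually needed here --- one can even collapse your two terms into the single fraction $\sum_{j\notin\T}(\Gtildetmatrix)_{ij}(z)^2 / (\Gtildetmatrix)_{ii}(z)$ and apply Ward once to get the cleaner bound $1/\eta$.
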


Next, we adapt Lemma 7.5 from \cite{Erdos}. 
To do so, we first introduce a few simple concentration inequalities.

\begin{lemma}[\cite{Erdos}, Lemma 3.8]\label{lemma:concetration}
    Let $a_1, \dots, a_N$ be centered and independent random variables satisfying 
    \begin{equation}\label{eqn:moment_condition}
        \E{|a_i|^p} \le \frac{C_1^p}{Nq^{p-2}}
    \end{equation}
    for $2 \le p \le (\log N)^{C_0\log\log N}$. Then there is a $\nu > 0$, depending only on $C_1$, 
    such that for all $\xi$, and for any $A_i, F_{ij} \in \C$, we have with $(\xi, \nu)$-high-probability 
    \begin{equation}
        \left| \sum_{i=1}^N A_ia_i \right| \le (\log N)^{\xi} \left[\frac{\max_i|A_i|}{q}  + \left(\frac{1}{N} \sum_{i=1}^N |A_i|^2 \right)^{1/2}\right],
    \end{equation}
    \begin{equation}
        \abs{\sum_{i=1}^N a_iF_{ii}a_i - \sum_{i=1}^N\sigma_i^2 F_{ii}} \le (\log N)^{\xi} \frac{\max_i|F_{ii}|}{q},
    \end{equation}
    and 
    \begin{equation}
        \abs{\sum_{1 \le i \ne j \le N} a_iF_{ij}a_j} \le (\log N)^{2\xi} \left(\frac{\max_{i\ne j}|F_{ij}|}{q} + \left(\frac{1}{N^2} \sum_{i\ne j} \abs{F_{ij}}^2 \right)^{1/2} \right).
    \end{equation}
    Let $a_1, \dots, a_N$ and $b_1, \dots, f_N$ be mutually independent random variables, each satisfying Equation \eqref{eqn:moment_condition}.
    Then there is a $\nu >0$ depending only on $C$ such that for all $\xi$ and $F_{ij} \in \C$, we have with $(\xi, \nu)$-high-probability
        \begin{equation}\label{lemma:concentration:quadratic_form}
        \left| \sum_{i,j=1}^N a_iF_{ij}b_j \right| \le (\log N)^{2\xi} \left[\frac{\max_i|F_{ii}|}{q^2}  +  \frac{\max_{i\ne j}|F_{ij}|}{q} + \left(\frac{1}{N^2} \sum_{i\ne j} |F_{ij}|^2 \right)^{1/2}\right]
    \end{equation}

\end{lemma}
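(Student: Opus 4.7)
The plan is to prove all four bounds by the same method: compute high moments of the sum, invoke the moment hypothesis \eqref{eqn:moment_condition} to control each term in the expansion, and then pass to a tail bound via Markov's inequality with $2p \asymp (\log N)^{\xi}$. This is the classical polynomial-chaos approach; the inequalities are stated as Lemma~3.8 of \cite{Erdos}, and I would follow that blueprint.

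First, for the linear form $S := \sum_i A_i a_i$, I would write
\begin{equation*}
\E{|S|^{2p}} = \sum_{(i_1,\ldots,i_{2p})} \Big(\prod_k A_{i_k}\Big)\, \E{\prod_k a_{i_k}}.
\end{equation*}
Centering kills any term in which some index appears exactly once, so only partitions of $[2p]$ whose blocks have size $\ge 2$ survive. Applying the moment condition block-by-block gives a factor $C_1^{m}/(Nq^{m-2})$ for each block of size $m$. A careful combinatorial count (there are at most $(Cp)^{p}$ such partitions) then yields
\begin{equation*}
\E{|S|^{2p}} \le (Cp)^{Cp}\,\Big(\tfrac{\max_i|A_i|}{q} + \big(\tfrac{1}{N}\sum_i|A_i|^2\big)^{1/2}\Big)^{2p},
\end{equation*}
where the first summand comes from partitions where one block concentrates mass on a single coordinate, and the second from pair partitions. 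Markov's inequality with $2p = \lfloor (\log N)^{\xi-1}\rfloor$ and a suitable $\nu$ then produces the $(\xi,\nu)$-high-probability statement.

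Next, for the diagonal quadratic term $T := \sum_i F_{ii}(a_i^2 - \sigma_i^2)$, I would reduce to the linear case by setting $b_i := a_i^2-\sigma_i^2$: the $b_i$ are centered, independent, and still satisfy a moment bound of the same shape (with an extra factor of $1/q$, since $\E{a_i^{2k}}\lesssim C^{2k}/(Nq^{2k-2})\cdot q^{-1}$ after centering). Applying the linear bound to $T$ with coefficients $F_{ii}$ and variables $b_i$, the pair-partition term $(N^{-1}\sum F_{ii}^2)^{1/2}$ gets absorbed into $\max_i|F_{ii}|/q$ thanks to the extra $q^{-1}$ from the improved moment bound. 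For the off-diagonal form $U := \sum_{i\ne j} a_i F_{ij} a_j$ I would use the same $2p$-moment expansion, this time indexed by $2p$ pairs $(i_k,j_k)$ with $i_k\ne j_k$. Surviving partitions must match every $a$-factor, and the dominant contribution comes from pair matchings on both indices, which yields the $\big(N^{-2}\sum_{i\ne j}|F_{ij}|^2\big)^{1/2}$ term; "heavier" contractions, in which a single variable carries three or more factors, are controlled by $\max_{i\ne j}|F_{ij}|/q$. The bilinear inequality~\eqref{lemma:concentration:quadratic_form} is treated identically: the independence of $\{a_i\}$ from $\{b_j\}$ only simplifies matters, because now one never needs contractions mixing an $a$ with a $b$, and the extra $q^{-2}$ term tracks the case when both the $a$-block and the $b$-block concentrate on a single coordinate.

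The main obstacle is the combinatorial bookkeeping: one must show that across all admissible partitions, the two "extremal" structures (a single large block vs. a pure pairing) dominate, and that the number of surviving partitions of a given shape grows only like $(Cp)^{Cp}$, so that after taking $2p$-th roots the $p$-dependent prefactors do not spoil the bound. This step is already carried out in detail in \cite{Erdos}, so I would invoke that argument verbatim, verifying only that our parameter regime $2\le p\le (\log N)^{C_0\log\log N}$ is exactly the regime in which their moment computation is valid.
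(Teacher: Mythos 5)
The paper does not prove this lemma; it quotes it verbatim as Lemma~3.8 of \cite{Erdos} without supplying a proof. Your proposal correctly reconstructs the high-moment / polynomial-chaos argument used in that reference (expand $\E{|S|^{2p}}$, discard singleton indices by centering, bound surviving partition-structured terms via the moment hypothesis, and finish with Markov at $2p\asymp(\log N)^{\xi}$), including the rescaling reduction $b_i := a_i^2-\sigma_i^2$ for the diagonal term, so it is in line with the cited source and with the parameter regime assumed in the paper.
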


\begin{lemma}[\cite{Erdos}, Lemma 7.5, adapted]\label{lemma:hb_bound}
    Let $\tau < N$ be constant with respect to $N$, and let $\mB$ be as in Equation \eqref{eq:def:B}. 
    Let $z = E + \iu\eta \in D_L$, we have for any $\T \subset [N]$ satisfying $|\T| \le \tau < N$, as long as $i \in \T$, that for $j \in \{N-r+1, N-r+2, \dots, N \}$, 
    \begin{equation}\label{ineq:hb_bound}
        \left|\lambda_j(\VMt{j})^\top\Gtildetmatrix(z)\Ht \right| 
        \le Cr(\log N)^{\xi+ 3\gamma}\left(\frac{1}{q} + \sqrt{\frac{\Im \mtilde(z)}{N\eta}} + \frac{1}{N\eta}\right)
    \end{equation}
    with $\xinu$-high probability on the event $\Ot(\eta)$, where $q$ satisfies Equation \eqref{eq:def:q}.
\end{lemma}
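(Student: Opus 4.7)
My plan exploits two structural observations. First, because $i \in \T$, the submatrix $\submatrix{\mB}{\T}$ contains none of the entries in the $i$-th row or column of $\mH$, so the resolvent $\Gtildetmatrix(z)$ is independent of the vector $\Ht$. Conditional on $\submatrix{\mH}{\T}$, the quantity to control is therefore a linear form
\[
\lambda_j (\VMt{j})^\top \Gtildetmatrix(z)\Ht \;=\; \beta_j\,\tilde{\bm v}^\top \Gtildetmatrix(z) \Ht \;=\; \sum_{l \notin \T} A_l\, H_{li},
\]
in the mean-zero independent variables $\{H_{li}\}_{l \notin \T}$, where $\beta_j := \lambda_j v_{ij}$, $\tilde{\bm v} := (\bv_j)^{\sm\T}$, and $A_l := \beta_j [\Gtildetmatrix(z)\tilde{\bm v}]_l$. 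Second, delocalization (Assumption~\eqref{assumption:deloc}) together with Assumption~\eqref{assumption:eigen_bound} forces $|\beta_j| \lesssim \rhon \deloc$, so the scalar prefactor $\beta_j$ is much smaller than $\lambda_j$; the resulting extra factor of $\deloc$ is precisely what allows the final estimate to scale with $1/\sqrt{N\eta}$ rather than $1/\sqrt{\eta}$.

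I would then apply the first inequality in Lemma~\ref{lemma:concetration} to reduce the proof to controlling $\max_l|A_l|/q$ and $\bigl(\tfrac{1}{N}\sum_l|A_l|^2\bigr)^{1/2}$. For the $L^2$ sum, the Ward identity $\Gtildetmatrix \Gtildetmatrix^\ast = \Im \Gtildetmatrix/\eta$ combined with spectral expansion in the eigenbasis $\{\bvhatt_\alpha\}$ of $\submatrix{\mB}{\T}$ gives $\sum_l |A_l|^2 = \beta_j^2 \sum_\alpha \tilde c_\alpha^2/|\lambdahatt_\alpha - z|^2$, where $\tilde c_\alpha := \tilde{\bm v}^\top \bvhatt_\alpha$. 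I split this sum at $\alpha = j$. The signal term is handled by Weyl's inequality and Assumption~\eqref{assumption:eigen_bound}: these force $|\lambdahatt_j - z| \gtrsim \lambda_j$, so the $\alpha=j$ contribution is at most $\beta_j^2/\lambda_j^2 \le \delocsq/N$, which fits cleanly into the $r(\log N)^{3\gamma}/(N\eta)$ term of the target. For $\alpha \neq j$, I would invoke Lemma~\ref{lemma:subspace_bound} applied to $\mB[\T^c]$ (whose non-trivial eigenvectors coincide, after zero-padding, with those of $\submatrix{\mB}{\T}$) to conclude $\sum_{\alpha \neq j} \tilde c_\alpha^2 \lesssim (r\delocsq/\lambda_j)^2$. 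Combining this with the trace identity $\sum_\alpha |\lambdahatt_\alpha - z|^{-2} = N\Im \mtildet(z)/\eta$, using the favorable prefactor $\beta_j^2/\lambda_j^2 \le \delocsq/N$, and transferring $\mtildet \to \mtilde$ via iterated use of Lemma~\ref{lemma:trace_difference}, extracts precisely the $r(\log N)^{3\gamma}\sqrt{\Im\mtilde(z)/(N\eta)}$ piece. A parallel pointwise expansion -- using Cauchy--Schwarz together with Lemma~\ref{lemma:subspace_bound} and entrywise control of $\Gtilde^{(\sm\T)}_{ll}$ on $\Ot(\eta)$ afforded by iterated Schur-complement identities -- produces $\max_l |A_l|/q \lesssim r(\log N)^{3\gamma}/q + r(\log N)^{3\gamma}/(N\eta)$, supplying the remaining two terms of the target bound.

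The principal obstacle is that the event $\Ot(\eta)$ constrains only the full resolvent $\Gtildematrix$, whereas the argument above repeatedly invokes entrywise and trace estimates on the submatrix resolvent $\Gtildetmatrix$. Transferring these bounds requires iterated application of the Schur-complement identity across the $|\T| \le \tau$ removed indices, with careful tracking of the accumulating error so that it is absorbed into the $1/(N\eta)$ term rather than inflating the leading $\sqrt{\Im\mtilde/(N\eta)}$ estimate. The rank-$r$ adaptation relative to \cite{Erdos} additionally demands controlling the joint behavior of the $r$ signal eigenvectors of $\mL$ through the Davis--Kahan step in Lemma~\ref{lemma:subspace_bound}, and verifying that the resulting dependence on $r$ enters only linearly rather than as a larger power.
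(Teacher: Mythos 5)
Your proposal is correct and follows essentially the same route as the paper's proof: the same signal/bulk split in the eigenbasis of $\submatrix{\mB}{\T}$, with the independence of $\Gtildetmatrix(z)$ from $\Ht$ (since $i \in \T$) feeding into Lemma~\ref{lemma:concetration}, the overlap sum controlled by Lemma~\ref{lemma:subspace_bound}, the eigenvalue denominators handled via the Ward/trace identity, and the transfer $\mtildet \to \mtilde$ via Lemma~\ref{lemma:trace_difference}. The only organizational difference is that you apply the concentration inequality once to the aggregated linear form $\sum_{l} A_l H_{li}$ and then estimate the deterministic coefficients $A_l$, whereas the paper decomposes spectrally first and applies concentration to each $(\bvhatt_{\alpha})^\top \Ht$ separately; your ordering sidesteps the paper's auxiliary estimate $R \le C\sqrt{\eta}$ on the entries of the minor's bulk eigenvectors and the implicit union bound over $\alpha$, which is a modest but genuine simplification.
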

\begin{proof}
We note that our proof of this bound deviates slightly from \cite{Erdos}.
Our proof is simplified because we assume that the signal eigenvalues of $\mL$ obey $\lambda_i \asymp \rhon\sqrt{N}$ by Assumption \eqref{assumption:eigen_bound}. 

First, with $\calS = \{N-r + 1 - |\T|, N-r+2 - |\T|, \dots, N - |\T| \}$, define
\begin{equation*}
    R := \max_{|\T| \le \tau}\max_{\alpha\notin \mathcal{S}}\max_j|\vhatt_{j\alpha}| .
\end{equation*}

Recall the event $\Ot(\eta)$ from Equation~\eqref{eq:def:Ot}.
On $\Ot(\eta)$, we have by Lemma \ref{lemma:bounded_resolvents},
\begin{equation*}
c \le |\Gtildet_{ii}(z) | \le C ,
\end{equation*}
so that for $z := \lambdahatt_{\alpha} + \iu \eta \in D_L$ and $\alpha \notin \calS$,
\begin{equation*}\begin{aligned}
    C &\ge \Im \Gtildet_{ii}(\lambdahatt_{\alpha} + \iu\eta) 
    = \sum_{\beta \in [N - \abs{\T}]}\frac{\eta|\vhatt_{i\beta}|^2}{(\lambdahatt_{\beta}- \lambdahatt_{\alpha})^2 + \eta^2} 
    =  \frac{\eta|\vhatt_{i\alpha}|^2}{\eta^2} + \sum_{\beta \ne \alpha}\frac{\eta|\vhatt_{i\beta}|^2}{(\lambdahatt_{\beta}- \lambdahatt_{\alpha})^2 + \eta^2} \\
    &\ge \frac{|\vhatt_{i\alpha}|^2}{\eta}.
\end{aligned}\end{equation*}
Taking square roots and recalling that our choices of $\alpha \in \calS$, $\T \subset [N]$, and $j \in [N]$ were arbitrary, we obtain
\begin{equation} \label{eq:R:UB}
R \le C\sqrt{\eta}.
\end{equation}

Now, notice that the spectral decomposition of $\Gtildet(z)$ is given by
\begin{equation*}
    \Gtildetmatrix(z)= \sum_{\alpha =1}^{N-|\T|} \frac{\bvhatt_{\alpha}(\bvhatt_{\alpha})^\top}{\lambdahatt_{\alpha} - z}.
\end{equation*}
Hence, applying the triangle inequality, 
\begin{equation}\label{eq:something2}
\begin{aligned}
    \abs{\lambda_j(\VMt{j})^\top\Gtildetmatrix(z)\Ht } &= \abs{\lambda_j(\VMt{j})^\top \left(\sum_{\alpha}\frac{\bvhatt_{\alpha}(\bvhatt_{\alpha})^\top}{\lambdahatt_{\alpha} - z}\right)\Ht } \\
    & = \abs{\sum_{\alpha}\frac{\lambda_j(\VMt{j})^\top\bvhatt_{\alpha}(\bvhatt_{\alpha})^\top\Ht}{\lambdahatt_{\alpha} - z} } \\
    &\le  \left| \frac{\lambda_j(\VMt{j})^\top\bvhatt_{j - |\T|}(\bvhatt_{j - |\T|})^\top\Ht}{\lambdahatt_{j - \abs{\T}} - z} \right| \\
    & ~~~~~~~~~~~~~~~+ \left| \sum_{\alpha \ne j - |\T|}\frac{\lambda_j(\VMt{j})^\top\bvhatt_{\alpha}(\bvhatt_{\alpha})^\top\Ht}{\lambdahatt_{\alpha} - z} \right| .
\end{aligned}
\end{equation}
Note that by Assumption~\eqref{assumption:deloc} and the fact that the eigenvectors are normalized, we have
\begin{equation} \label{eq:innerprod:bound}
\abs{(\VMt{j})^\top\bvhatt_{j - |\T|} }
= |v_{ij}|\abs{(\submatrix{\bv_j}{\T})^\top\bvhatt_{j - |\T|}}
\le \frac{C\deloc}{\sqrt{N}}\abs{(\submatrix{\bv_j}{\T})^\top\bvhatt_{j - |\T|} }
\le \frac{C\deloc}{\sqrt{N}} .
\end{equation}
Applying submultiplicativity to the first term of the right-hand side of the inequality in Equation \eqref{eq:something2} and using Equation~\eqref{eq:innerprod:bound}, we see that 
\begin{equation} \label{eq:term1:submult}
     \abs{ \frac{\lambda_j(\VMt{j})^\top\bvhatt_{j - |\T|}(\bvhatt_{j - |\T|})^\top\Ht}{\lambdahatt_{j - \abs{\T}} - z} }
     \le \frac{C\deloc}{\sqrt{N}}\abs{ \frac{\lambda_j}{\lambdahatt_{j - \abs{\T}} - z}}\abs{ (\bvhatt_{j - |\T|})^\top\Ht } .
\end{equation}
Applying Lemma \ref{lemma:concetration} to control the inner product, we have with $(\xi, \nu)$-high-probability,
\begin{equation*}
\abs{ \frac{\lambda_j(\VMt{j})^\top\bvhatt_{j - |\T|}(\bvhatt_{j - |\T|})^\top\Ht}{\lambdahatt_{j - \abs{\T}} - z} }
     \le
     \frac{\deloc}{\sqrt{N}}\left| \frac{\lambda_j}{\lambdahatt_{j - \abs{\T}} - z}\right|(C\log N)^{\xi} \left[\frac{1}{q}  + \frac{1}{\sqrt{N}}\right] .
\end{equation*}

When $\lambda_j \ge \Sigma + 3$, which occurs due to Assumptions \eqref{assumption:signal_strength} and \eqref{assumption:eigen_bound},
we have $|\lambdahat^{(\T)}_{j - |\T|} - z| \ge c\lambda_j$, so with $(\xi, \nu)$-high-probability
\begin{equation} \label{eq:term1:finalbound}
     \abs{ \frac{\lambda_j(\VMt{j})^\top\bvhatt_{j - |\T|}(\bvhatt_{j - |\T|})^\top\Ht}{\lambdahatt_{j - \abs{\T}} - z} } \le \frac{C\deloc(\log N)^{\xi}}{\sqrt{N}q} .
\end{equation}
The second term in Equation \eqref{eq:something2}, consisting of eigenvalues associated with the bulk, is approached similarly.

Applying the triangle inequality,
\begin{equation*}
 \left| \sum_{\alpha \ne j - |\T|}\frac{\lambda_j(\VMt{j})^\top\bvhatt_{\alpha}(\bvhatt_{\alpha})^\top\Ht}{\lambdahatt_{\alpha} - z} \right| 
     \le \lambda_j\sum_{\alpha \ne j - |\T|} \left|\frac{(\VMt{j})^\top\bvhatt_{\alpha}}{\lambdahatt_{\alpha} - z} \right| |(\bvhatt_{\alpha})^\top\Ht|.
\end{equation*}
By Lemma \ref{lemma:concetration}, we have with $(\xi, \nu)$-high-probability,
\begin{equation} \label{eq:term2:start}
     \left| \sum_{\alpha \ne j - |\T|} \! \frac{\lambda_j(\VMt{j})^\top\bvhatt_{\alpha}(\bvhatt_{\alpha})^\top\Ht}{\lambdahatt_{\alpha} - z} \right| 
\le 
     \lambda_j\!\sum_{\alpha \ne j - |\T|} \left|\frac{(\VMt{j})^\top\bvhatt_{\alpha}}{\lambdahatt_{\alpha} - z} \right|(\log N)^{\xi} \left[\frac{C\sqrt{\eta}}{q} \! + \! \frac{1}{\sqrt{N}}\right] .
\end{equation}
By the Cauchy-Schwarz inequality,
\begin{equation} \label{eq:term2:sum2bound}
    \sum_{\alpha \ne j - |\T|} \left|\frac{(\VMt{j})^\top\bvhatt_{\alpha}}{\lambdahatt_{\alpha} - z} \right| \le \left(\sum_{\alpha \ne j - |\T|} |(\VMt{j})^\top\bvhatt_{\alpha}|^2\right)^{1/2} \left(\sum_{\alpha} \frac{1}{|\lambdahatt_{\alpha} - z|^2}\right)^{1/2} .
\end{equation}
By Lemma \ref{lemma:trace_difference}, on the event $\Ot(\eta)$,
\begin{equation} \label{eq:muinvsum}
    \sum_{\alpha} \frac{1}{|\lambdahatt_{\alpha} - z|^2} = \frac{1}{\eta}\Im \sum_{\alpha} \frac{1}{\lambdahatt_{\alpha} - z} =  \frac{1}{\eta}\Im \Tr \Gtildetmatrix(z) = \frac{N}{\eta} \Im \mtilde(z) + O\left( \frac{1}{\eta^2} \right) .
\end{equation}
Moreover,
\begin{equation*} 
\sum_{\alpha \ne j - |\T|} |(\VMt{j})^\top\bvhatt_{\alpha}|^2 \le |v_{ij}|^2\sum_{\alpha \ne j - |\T|} |(\submatrix{\bv_j}{\T})^\top\bvhatt_{\alpha}|^2 
\le \frac{C\delocsq}{N} \left( \frac{r\logN^{ 2\gamma}}{\lambda_j} \right)^2
\end{equation*}
with $\xinu$-high probability, where the last inequality follows from Lemma \ref{lemma:subspace_bound} and Assumption \eqref{assumption:deloc} on the entries of $\bv_j$. 
Taking square rools, we have with $\xinu$-high probability,
\begin{equation} \label{eq:sumoveralpha}
    \left(\sum_{\alpha \ne j - |\T|} |(\VMt{j})^\top\bvhatt_{\alpha}|^2\right)^{1/2} \le \frac{C\deloc}{\sqrt{N}} \left( \frac{r\logN^{2\gamma}}{\lambda_j} \right) .
\end{equation}
Applying the bounds in Equations~\eqref{eq:muinvsum} and ~\eqref{eq:sumoveralpha} to Equation~\eqref{eq:term2:sum2bound},
\begin{equation*}
    \sum_{\alpha \ne j - |\T|} \left|\frac{(\VMt{j})^\top\bvhatt_{\alpha}}{\lambdahatt_{\alpha} - z} \right| 
    \le \frac{C\deloc}{\sqrt{N}} \left( \frac{r\logN^{2\gamma}}{\lambda_j} \right) \sqrt{  \frac{N}{\eta} \Im \mtilde(z) + O\left( \frac{1}{\eta^2} \right) }
\end{equation*}
with $\xinu$-high probability on the event $\Ot(\eta)$. Applying this bound to Equation~\eqref{eq:term2:start},
\begin{equation}\label{eq:something1} \begin{aligned}
     &\left| \sum_{\alpha \ne j - |\T|}\frac{\lambda_j(\VMt{j})^\top\bvhatt_{\alpha}(\bvhatt_{\alpha})^\top\Ht}{\lambdahatt_{\alpha} - z} \right| \\
     &~~~~~~\le C\left( \frac{r\logN^{\xi + 3\gamma}}{\sqrt{N}} \right) \left[ \frac{ C\sqrt{\eta}}{q} + \frac{1}{\sqrt{N}} \right]
    \sqrt{  \frac{N}{\eta} \Im \mtilde(z) + O\left( \frac{1}{\eta^2} \right) } 
\end{aligned} \end{equation}
with $\xinu$-high probability on the event $\Ot(\eta)$. 
Altogether, Equations \eqref{eq:term1:finalbound} and \eqref{eq:something1} imply
\begin{equation*} 
\begin{aligned}
    &\left| \frac{\lambda_j(\VMt{j})^\top\bvhatt_{j - |\T|}(\bvhatt_{j - |\T|})^\top\Ht}{\lambdahatt_{j - \abs{\T}} - z} \right| + \left| \sum_{\alpha \ne j - |\T|}\frac{\lambda_j(\VMt{j})^\top\bvhatt_{\alpha}(\bvhatt_{\alpha})^\top\Ht}{\lambdahatt_{\alpha} - z} \right| \\
    &~~~\le \frac{C\logN^{\xi + \gamma}}{\sqrt{N}q} +  C\left( \frac{r\logN^{\xi + 3\gamma}}{\sqrt{N}} \right) \left[ \frac{ C\sqrt{\eta}}{q} + \frac{1}{\sqrt{N}} \right] \left( \sqrt{  \frac{N}{\eta} \Im \mtilde(z) } + \frac{1}{\eta} \right) \\
    &~~~\le Cr(\log N)^{\xi+ 3\gamma}\left(\frac{1}{q} + \sqrt{\frac{\Im \mtilde(z)}{N\eta}} + \frac{1}{N\eta}\right)
\end{aligned}
\end{equation*}
with $\xinu$-high probability on the event $\Ot(\eta)$. 
Applying this to Equation \eqref{eq:something2},
    \begin{equation*}
        \left|\lambda_j(\VMt{j})^\top\Gtildetmatrix(z)\Ht \right| 
        \le Cr(\log N)^{\xi + 3\gamma}\left(\frac{1}{q} + \sqrt{\frac{\Im \mtilde(z)}{N\eta}} + \frac{1}{N\eta} \right)
    \end{equation*}
    with $\xinu$-high probability on the event $\Ot(\eta)$, completing the proof.
\end{proof} 

\begin{lemma}\label{lemma:main_quad}
Let $\tau < N$ be constant with respect to $N$ and let $z = E + \iu\eta \in D_L$.
    We have, uniformly over $j \in \{N-r + 1, \dots, N\}$ and $\T \subset \{1, \dots, N\}$ satisfying $|\T| \le \tau$, 
    that with $\xinu$-high probability,
    \begin{equation}\label{ineq:main_quad}
        \left|\lambda_j^2(\VMt{j})^\top\Gtildetmatrix(z) \VMt{j}\right| \le \frac{C\lambda_j\delocsq}{N} + C \frac{r^2\logN^{6\gamma}}{N\eta} .
    \end{equation}
\end{lemma}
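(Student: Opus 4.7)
The plan is to expand $(\VMt{j})^\top \Gtildetmatrix(z) \VMt{j}$ using the spectral decomposition $\Gtildetmatrix(z) = \sum_{\alpha} \bvhatt_\alpha(\bvhatt_\alpha)^\top/(\lambdahatt_\alpha - z)$, pull out the scalar prefactor $v_{ij}^2$ (which is bounded by $C\delocsq/N$ via Assumption \eqref{assumption:deloc}), and split the resulting sum into a single ``aligned'' term corresponding to the perturbed outlier eigenvalue near $\lambda_j$, plus the remaining ``bulk'' terms. Concretely, I would first bound
\begin{equation*}
\left|\lambda_j^2(\VMt{j})^\top\Gtildetmatrix(z) \VMt{j}\right|
\le \frac{C\lambda_j^2 \delocsq}{N}\left(\frac{|((\bv_j)^{\setminus\T})^\top\bvhatt_{j-|\T|}|^2}{|\lambdahatt_{j-|\T|} - z|} + \sum_{\alpha \ne j-|\T|}\frac{|((\bv_j)^{\setminus\T})^\top\bvhatt_\alpha|^2}{|\lambdahatt_\alpha - z|}\right)
\end{equation*}
and then estimate the two summands separately.

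For the aligned term, Weyl's inequality applied to $\submatrix{\mB}{\T}$ (equivalently, via $\mE(\T)$ as in Lemma \ref{lemma:leave_one_out_E}), together with Assumptions \eqref{assumption:signal_strength}, \eqref{assumption:rank}, and \eqml{assumption:eigen_bound}, gives $\|\mE(\T)\| = o(\lambda_j)$ with $\xinu$-high probability, and hence $\lambdahatt_{j-|\T|} = \lambda_j(1+o(1))$. Since $|E| \le \Sigma$ on $D_L$ while $\lambda_j \gg 1$, this yields $|\lambdahatt_{j-|\T|} - z| \ge c\lambda_j$. Combining with the trivial bound $|((\bv_j)^{\setminus\T})^\top \bvhatt_{j-|\T|}|^2 \le 1$, this term contributes at most $C\lambda_j \delocsq /N$, matching the first summand in the claim.

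For the bulk sum, use the elementary resolvent bound $|\lambdahatt_\alpha - z| \ge \eta$ to pull $\eta^{-1}$ out of the sum, and then invoke Lemma \ref{lemma:subspace_bound}---via the identification between the nontrivial eigenvectors of $\mB[\T^c]$ and the eigenvectors of $\submatrix{\mB}{\T}$ padded with zeros on $\T$---to obtain
\begin{equation*}
\sum_{\alpha \ne j-|\T|} |((\bv_j)^{\setminus\T})^\top \bvhatt_\alpha|^2
\le \frac{Cr^2(\log N)^{4\gamma}}{\lambda_j^2}
\end{equation*}
with $\xinu$-high probability. The bulk contribution is therefore bounded by $Cr^2(\log N)^{6\gamma}/(N\eta)$, which is precisely the second claimed term.

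The main obstacle is verifying the separation $|\lambdahatt_{j-|\T|} - z| \ge c\lambda_j$ uniformly in $\T$ with $|\T| \le \tau$. Lemma \ref{lemma:leave_one_out_E} gives $\|\mE(\T)\| \lesssim 1 + r\sqrt{\tau}\rhon \delocsq$, and this must be shown to be of smaller order than $\lambda_j \asymp \rhon \sqrt{N}$; this holds because Assumption \eqref{assumption:rank} implies $r\delocsq \le (\log N)^{\zeta + 2\gamma} \ll \sqrt{N}$, while $\tau$ is constant with respect to $N$. Everything else reduces to standard resolvent estimates and the already-established Davis--Kahan bound.
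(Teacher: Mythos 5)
Your proof is correct and takes essentially the same route as the paper's: expand the resolvent via its spectral decomposition, factor out $v_{ij}^2 \le C\delocsq/N$ via Assumption~\eqref{assumption:deloc}, and split into the single ``aligned'' term (bounded using $|\lambdahatt_{j-|\T|}-z| \ge c\lambda_j$, which follows from Weyl and the magnitude of $\lambda_j$ under Assumptions~\eqref{assumption:signal_strength} and~\eqref{assumption:eigen_bound}) plus the bulk sum (bounded using $|\lambdahatt_\alpha - z| \ge \eta$ and Lemma~\ref{lemma:subspace_bound}). If anything you spell out two points the paper elides: the explicit verification that $\|\mE(\T)\| = o(\lambda_j)$ so the eigenvalue separation holds uniformly over $|\T|\le\tau$, and the identification between the eigenvectors of $\submatrix{\mB}{\T}$ and the zero-padded eigenvectors of $\mB[\T^c]$ used in Lemma~\ref{lemma:subspace_bound}. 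These are welcome clarifications but do not constitute a different argument.
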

\begin{proof}
Applying the triangle inequality to the spectral decomposition of $\Gtildetmatrix(z)$,
\begin{equation}\label{eq:something3}
\begin{aligned}
\left|\lambda_j^2(\VMt{j})^\top \left(\sum_{\alpha}\frac{\bvhatt_{\alpha}(\bvhatt_{\alpha})^\top}{\lambdahatt_{\alpha} - z} \right) \VMt{j}\right| 
&\le  \left| \frac{\lambda_j^2(\VMt{j})^\top\bvhatt_{j - |\T|}(\bvhatt_{j - |\T|})^\top\VMt{j}}{\lambdahatt_{j - \abs{\T}} - z} \right| \\
&~~~+ \left| \sum_{\alpha \ne j - |\T|}\frac{\lambda_j^2(\VMt{j})^\top\bvhatt_{\alpha}(\bvhatt_{\alpha})^\top\VMt{j}}{\lambdahatt_{\alpha} - z} \right| .
\end{aligned}
\end{equation}
Using similar arguments as in the proof of Lemma \ref{lemma:hb_bound} (e.g., as in Equation \eqref{eq:term1:submult}), the first term of Equation \eqref{eq:something3} is controlled as 
\begin{equation}\label{eq:something4}
    \abs{ \frac{\lambda_j^2(\VMt{j})^\top\bvhatt_{j - |\T|}(\bvhatt_{j - |\T|})^\top\VMt{j}}{\lambdahatt_{j - \abs{\T}} - z} } \le \frac{C\lambda_j^2}{\lambda_j}\frac{\delocsq}{N} = \frac{C\lambda_j\delocsq}{N}
\end{equation}
with $\xinu$-high probability.
Turning our attention to the second term on the right-hand side of Equation \eqref{eq:something3}, noticing that $|\lambdahatt_{j - \abs{\T}} - z| \ge \eta$ since $\lambdahatt_{j - \abs{\T}}$ is real, 
\begin{equation} \label{eq:something5}
\begin{aligned}
    \left| \sum_{\alpha \ne j - |\T|}\frac{\lambda_j^2(\VMt{j})^\top\bvhatt_{\alpha}(\bvhatt_{\alpha})^\top\VMt{j}}{\lambdahatt_{\alpha} - z} \right|
    &\le \left(\frac{C\lambda_j^2\delocsq}{N\eta} \right)&\sum_{\alpha \ne j - |\T|} 
            \left| (\submatrix{\bv_j}{\T})^\top\bvhatt_{j - |\T|} \right|^2 \\
    &\le C \frac{r^2\logN^{6\gamma}}{N\eta},
\end{aligned} \end{equation}
where the last inequality holds with $\xinu$-high probability by Lemma \ref{lemma:subspace_bound}.
Applying Equations \eqref{eq:something4} and \eqref{eq:something5} to the right-hand side of Equation \eqref{eq:something3}, we obtain
\begin{equation*}
\left|\lambda_j^2(\VMt{j})^\top\Gtildetmatrix(z) \VMt{j}\right| \le \frac{C\lambda_j\delocsq}{N} + C \frac{r^2\logN^{6\gamma}}{N\eta}
\end{equation*}
with $\xinu$-high probability.
The choice of $j \in [N]$ was arbitrary, so this bound holds for all $j \in [N]$ uniformly with $\xinu$-high probability, as we set out to show.
\end{proof} 

The following lemma has no analogue in \cite{Erdos}, as it is required only when we consider a rank $r>1$ signal matrix.

\begin{lemma} \label{lemma:cross_term}
    Let $\mB$ satisfy Equation \eqref{eq:def:B}, and let $\tau < N$ be constant with respect to $N$
    Let $z = E + \iu\eta \in D_L$, 
    we have uniformly for any $\T \subset \{1, \dots, N\}$ satisfying $|\T| \le \tau < N$, $\ell_1, \ell_2 \in \{N - r +1, \dots, N\}$ and $\ell_1 \ne \ell_2$, and for possibly $i = j \in [N] \sm \T$, that with $\xinu$-high probability,
    \begin{equation}\label{ineq:cross_term}
        \left|\lambda_{\ell_2}\lambda_{\ell_1}(\VMt{\ell_1})^\top\Gtildetmatrix(z) \VMtj{\ell_2}\right| \le C r^2\frac{ \logN^{6\gamma}}{N \eta} .
    \end{equation}
\end{lemma}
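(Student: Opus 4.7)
The plan is to mirror the strategy of Lemma~\ref{lemma:main_quad}: expand $\Gtildetmatrix(z)$ via its spectral decomposition $\Gtildetmatrix(z) = \sum_\alpha \bvhatt_\alpha(\bvhatt_\alpha)^\top/(\lambdahatt_\alpha - z)$, and split the resulting sum over the eigenvector index $\alpha$ into a ``signal'' part, $\alpha\in\{\ell_1-|\T|,\ell_2-|\T|\}$, and a ``bulk'' part consisting of the remaining indices. The essential new ingredient relative to Lemma~\ref{lemma:main_quad} is that $\ell_1\ne\ell_2$ forces $\bv_{\ell_1}^\top\bv_{\ell_2}=0$, which produces an extra cancellation eliminating the $\lambda_j\delocsq/N$-type contribution that survived in the diagonal case.

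For the signal part, I treat the $\alpha=\ell_1-|\T|$ summand; the $\alpha=\ell_2-|\T|$ summand is symmetric. The object to bound is
\[
\frac{\lambda_{\ell_1}\lambda_{\ell_2}\,(\VMt{\ell_1})^\top\bvhatt_{\ell_1-|\T|}\,(\bvhatt_{\ell_1-|\T|})^\top\VMtj{\ell_2}}{\lambdahatt_{\ell_1-|\T|}-z}.
\]
The key estimate is that since $\ell_1-|\T|\ne\ell_2-|\T|$, a Lemma~\ref{lemma:subspace_bound}-type bound gives $|(\bvhatt_{\ell_1-|\T|})^\top\submatrix{\bv_{\ell_2}}{\T}|\lesssim r\logN^{2\gamma}/\lambda_{\ell_2}$ with $\xinu$-high probability. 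Combined with the delocalization bounds $|v_{i\ell_1}|,|v_{j\ell_2}|\lesssim\deloc/\sqrt{N}$, the trivial bound $|(\submatrix{\bv_{\ell_1}}{\T})^\top\bvhatt_{\ell_1-|\T|}|\le 1$, the lower bound $|\lambdahatt_{\ell_1-|\T|}-z|\gtrsim \lambda_{\ell_1}$ (valid since $\lambda_{\ell_1}\gg\Sigma$ by Assumptions~\ref{assumption:signal_strength} and~\ref{assumption:eigen_bound}), and $\lambda_{\ell_1}\asymp\lambda_{\ell_2}$, the two signal terms are bounded by $Cr\logN^{4\gamma}/N$, which is absorbed into the target bound since $\eta\le 3$.

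For the bulk part, I apply Cauchy--Schwarz to decouple the two latent-position vectors:
\[
\sum_{\alpha\ne \ell_1-|\T|,\,\ell_2-|\T|}\frac{|(\submatrix{\bv_{\ell_1}}{\T})^\top\bvhatt_\alpha|\,|(\bvhatt_\alpha)^\top\submatrix{\bv_{\ell_2}}{\T}|}{|\lambdahatt_\alpha-z|}
\le
\left(\sum_{\alpha\ne\ell_1-|\T|}\frac{|(\submatrix{\bv_{\ell_1}}{\T})^\top\bvhatt_\alpha|^2}{|\lambdahatt_\alpha-z|}\right)^{1/2}\!\left(\sum_{\alpha\ne\ell_2-|\T|}\frac{|(\submatrix{\bv_{\ell_2}}{\T})^\top\bvhatt_\alpha|^2}{|\lambdahatt_\alpha-z|}\right)^{1/2}\!.
\]
Bounding $1/|\lambdahatt_\alpha-z|\le 1/\eta$ and invoking Lemma~\ref{lemma:subspace_bound} on each factor yields $Cr\logN^{2\gamma}/(\lambda_{\ell_a}\sqrt\eta)$ for $a=1,2$. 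Multiplying by the $\lambda_{\ell_1}\lambda_{\ell_2}\cdot\delocsq/N$ prefactor that arises from $|v_{i\ell_1}v_{j\ell_2}|\lesssim\delocsq/N$ produces $Cr^2\logN^{6\gamma}/(N\eta)$, exactly matching the claim.

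The main obstacle is that Lemma~\ref{lemma:subspace_bound} was stated in the $[\T^c]$ (zero-padding) formulation, whereas here we work with $\submatrix{\cdot}{\T}$ (submatrix-removal); however, the proof of Lemma~\ref{lemma:hb_bound} already implicitly invoked exactly such a transfer (eigenspaces are preserved up to obvious zero-entry restrictions and the eigengap is unchanged), so the Davis--Kahan argument of Lemma~\ref{lemma:subspace_bound} carries over after bookkeeping. Uniformity over $\T$, over $\ell_1\ne\ell_2\in\{N-r+1,\dots,N\}$, and over possibly coinciding $i,j\in[N]\sm\T$ follows from the uniformity of that underlying bound together with a union bound over these polynomially many indices, which is easily absorbed by the $(\xi,\nu)$-high-probability tolerance.
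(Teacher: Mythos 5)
Your proof is correct and follows essentially the same route as the paper: the same spectral expansion of $\Gtildetmatrix(z)$, the same split into the two signal summands $\alpha\in\{\ell_1-|\T|,\ell_2-|\T|\}$ controlled via Lemma~\ref{lemma:subspace_bound} and $|\lambdahatt_{\ell_a-|\T|}-z|\gtrsim\lambda_{\ell_a}$, and the same Cauchy--Schwarz treatment of the bulk with $|\lambdahatt_\alpha-z|\ge\eta$. Your remark that Lemma~\ref{lemma:subspace_bound} is stated for the zero-padded resolvent $\mB[\T^c]$ rather than the principal submatrix $\submatrix{\mB}{\T}$ is an accurate observation about a transfer the paper makes implicitly (already in Lemmas~\ref{lemma:hb_bound} and~\ref{lemma:main_quad}); it is a bookkeeping step, not a gap in your argument.
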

\begin{proof}
    We follow an argument similar to that used in Lemmas~\ref{lemma:hb_bound} and~\ref{lemma:main_quad}. 
    Let $\T \subset [N]$ and $\ell_1, \ell_2 \in  \{N - r +1, \dots, N\}$ distinct.
    By the spectral decomposition of $\Gtildetmatrix(z)$ and Lemma \ref{lemma:pop_eigenvalues_bound}, we have
    \begin{equation}\label{eq:something6}
    \begin{aligned}
         \abs{\lambda_{\ell_2}\lambda_{\ell_1}(\VMt{\ell_1})^\top\Gtildetmatrix(z) \VMtj{\ell_2}} 
         &\le  C\lambda_N^2\abs{(\VMt{\ell_1})^\top \left(\sum_{\alpha}\frac{\bvhatt_{\alpha}(\bvhatt_{\alpha})^\top}{\lambdahatt_{\alpha} - z} \right) \VMtj{\ell_2}} \\
         &\le C\lambda_N^2 \abs{v_{i\ell_1}}\abs{v_{j\ell_2}} \left|(\vt{\ell_1})^\top \!\left(\!\sum_{\alpha}\frac{\bvhatt_{\alpha}(\bvhatt_{\alpha})^\top}{\lambdahatt_{\alpha} - z} \! \right) \vt{\ell_2}\right|.
    \end{aligned} \end{equation}
    with $\xinu$-high probability. 
    Recalling our assumption on the entries of $\bv_k$, for $k \in \{N-r+1, \dots, N\}$ from Assumption \eqref{assumption:deloc}, we have $N\abs{v_{i\ell_1}}\abs{v_{j\ell_2}} \le C \delocsq$. Applying this inequality to Equation \eqref{eq:something6} yields
    \begin{equation*} 
         \left|\lambda_{\ell_2}\lambda_{\ell_1}(\VMt{\ell_1})^\top\Gtildetmatrix(z) \VMtj{\ell_2}\right| 
         \le \frac{C\lambda_N^2\delocsq}{N} \left|(\vt{\ell_1})^\top \left(\sum_{\alpha}\frac{\bvhatt_{\alpha}(\bvhatt_{\alpha})^\top}{\lambdahatt_{\alpha} - z} \right) \vt{\ell_2}\right|
    \end{equation*}
    with $\xinu$-high probability.
    Distributing and applying the triangle inequality, 
    \begin{equation} \label{eq:something9}
    \begin{aligned}
        &\left|\lambda_{\ell_2}\lambda_{\ell_1}(\VMt{\ell_1})^\top\Gtildetmatrix(z) \VMtj{\ell_2}\right| \\
        &~~~\le \frac{C\lambda_N^2\delocsq}{N} \Bigg( \abs{\frac{(\vt{\ell_1})^\top\bvhatt_{\ell_1 - |\T|}(\bvhatt_{\ell_1 - |\T|})^\top\vt{\ell_2}}{\lambdahat_{\ell_1 - |\T|}^{(\T)} - z}} 
          + \abs{ \frac{(\vt{\ell_1})^\top\bvhatt_{\ell_2 - |\T|}(\bvhatt_{\ell_2 - |\T|})^\top\vt{\ell_2}}{\lambdahat_{\ell_2 - |\T|}^{(\T)} - z}} \\ 
           & \quad \quad \quad \quad \quad \quad \quad \quad \quad \quad \quad \quad 
           + \sum_{\alpha \notin \mathcal{S}} \abs{ \frac{(\vt{\ell_1})^\top\bvhatt_{\alpha}(\bvhatt_{\alpha})^\top\vt{\ell_2}}{\lambdahatt_{\alpha} - z}} \Bigg)
     \end{aligned}
    \end{equation}
    where $\mathcal{S} = \{\ell_1 - |\T|, \ell_2 - |\T| \}$. 
    
    The first two terms of Equation \eqref{eq:something9} are controlled similarly. We will examine the first term as an example. 
    Using the fact that both $\bv_{\ell_1}$ and $\bvhatt_{\ell_1 - |\T|}$ are normalized eigenvectors, we have $\left|(\vt{\ell_1})^\top\bvhatt_{\ell_1 - |\T|} \right| \le 1$. 
    Moreover, $|\lambdahat^{(\T)}_{j - |\T|} - z| \ge c\lambda_j$ by Assumption \eqref{assumption:eigen_bound}.
    These two bounds together imply that with $\xinu$-high probability,
    \begin{equation}\label{eq:something10}
        \abs{ \frac{(\vt{\ell_1})^\top\bvhatt_{\ell_1 - |\T|}(\bvhatt_{\ell_1 - |\T|})^\top\vt{\ell_2}}{\lambdahat_{\ell_1 - |\T|}^{(\T)} - z}} \le \frac{C}{\lambda_{\ell_1}} \left|(\bvhatt_{\ell_1 - |\T|})^\top\vt{\ell_2} \right|
        \le \frac{C}{\lambda_{\ell_1}}\frac{r\logN^{2\gamma}}{\lambda_{\ell_2}} ,
    \end{equation}
    where the second inequality follows from Lemma \ref{lemma:subspace_bound} and Assumption \eqref{assumption:deloc}. 
    Using the fact that $\lambda_{\ell_1} \ge \lambda_{N-r+1}$ and substituting Equation \eqref{eq:something10} into \eqref{eq:something9} yields that with $\xinu$-high probability,
\begin{equation}\label{eq:something11}
\begin{aligned}
        &\left|\lambda_{\ell_2}\lambda_{\ell_1}(\VMt{\ell_1})^\top\Gtildetmatrix(z) \VMtj{\ell_2}\right| \\
        &~~~~~~\le \frac{C\lambda_N^2\delocsq}{N} \Bigg(\frac{Cr\logN^{2\gamma}}{\lambda_{N-r+1}^2} +
            \sum_{\alpha \notin \mathcal{S}} \left| \frac{(\vt{\ell_1})^\top\bvhatt_{\alpha}(\bvhatt_{\alpha})^\top\vt{\ell_2}}{\lambdahatt_{\alpha} - z}\right| \Bigg) .
\end{aligned} \end{equation}
Since $\abs{\lambdahatt_{\alpha} - z} \ge \eta$, 
the triangle inequality yields
    \begin{equation*} 
    \begin{aligned}
        \sum_{\alpha \notin \mathcal{S}} \left| \frac{(\vt{\ell_1})^\top\bvhatt_{\alpha}(\bvhatt_{\alpha})^\top\vt{\ell_2}}{\lambdahatt_{\alpha} - z}\right| 
        &\le \frac{1}{\eta} \sum_{\alpha \notin \mathcal{S}} |(\vt{\ell_1})^\top\bvhatt_{\alpha}||(\bvhatt_{\alpha})^\top\vt{\ell_2}| \\
        &\le \frac{1}{\eta}\left(\sum_{\alpha \notin \mathcal{S}} |(\vt{\ell_1})^\top\bvhatt_{\alpha}|^2 \right)^{1/2}\left(\sum_{\alpha \notin \mathcal{S}} |(\bvhatt_{\alpha})^\top\vt{\ell_2}|^2 \right)^{1/2} \\
        & \le \frac{C}{\eta} \left(\frac{r\logN^{2\gamma}}{\lambda_{N-r+1}}\right)^2,
    \end{aligned}
    \end{equation*}
    where the second inequality follows from the Cauchy-Schwarz inequality
    and the last inequality holds with $\xinu$-high probability by Lemma \ref{lemma:subspace_bound}. 
    Applying this to Equation \eqref{eq:something11} and using Assumption~\ref{assumption:eigen_bound}, we obtain 
    \begin{equation*}
        \left|\lambda_{\ell_2}\lambda_{\ell_1}(\VMt{\ell_1})^\top\Gtildetmatrix(z) \VMtj{\ell_2}\right| 
        \le  C r^2\frac{ \logN^{6\gamma}}{N \eta},
    \end{equation*}
     with $\xinu$-high probability.
     The choice of $\ell_1, \ell_2 \in \{N - r +1, \dots, N\}$ and $\T \subset [N]$ were arbitrary, so this bound holds for any choice of $\ell_1, \ell_2 \in \{N - r +1, \dots, N\}$ and $\T$ such that $|\T| \le \tau$ with $\xinu$-high probability, completing the proof.
\end{proof} 

Following \cite{Erdos}, we will introduce a control parameter,
\begin{equation}\label{eq:def:Phi}
    \Phi(z) := \frac{r^2(\log N)^{\xi + 3\gamma}}{q} + r^2(\log N)^{2\xi + 6\gamma} \left(\sqrt{\frac{\Im \mtilde(z)}{N\eta} } + \frac{r^2}{N\eta}\right)
\end{equation}
where $q$ satisfies Equation \eqref{eq:def:q}.

\begin{lemma}[\cite{Erdos} Proposition 7.6, adapted] \label{lemma:Lo_Zi_Ai_estimates}
Let $\mB$ satisfy Equation \eqref{eq:def:B}. For $z = E + \iu\eta \in D_L$, we have that on on $\Ot(\eta)$ (see Equation \eqref{eq:def:Ot}) with $(\xi, \nu)$-high probability,
    \begin{equation} \label{eq:Lo:bound}
        \Lo(z) \le C\Phi(z),
    \end{equation} 
    \begin{equation} \label{eq:Ztilde_i}
        \max_{i \in [N]} |\Ztilde_i(z)| \le C\Phi(z)
    \end{equation}
    and
    \begin{equation} \label{eq:calAtilde_i}
        \max_{i \in [N]} |\calAtilde_i(z)| \le  \frac{Cr\logN^{2\gamma}}{\sqrt{N}} + \frac{Cr^4\logN^{6\gamma}}{N\eta} ,
    \end{equation}
     where $\Ztilde$ and $\calAtilde$ are as in Equations \eqref{eq:def:Ztilde} and \eqref{eq:def:calAtilde}, respectively,
     and $\Lo(z)$ is as in Equation \eqref{eq:def:Ld-Lo}.
\end{lemma}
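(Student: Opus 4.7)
The plan is to handle the three bounds by expanding the relevant quadratic or bilinear forms in the decomposition $\Bi = \Hi + \sum_{\ell}\lambda_\ell \VM{\ell}$, and then dispatching each resulting piece to one of Lemmas~\ref{lemma:concetration}, \ref{lemma:hb_bound}, \ref{lemma:main_quad}, or \ref{lemma:cross_term}. The overall strategy mirrors \cite{Erdos}, with the new wrinkle that the rank-$r$ signal produces $O(r^2)$ cross terms which must be absorbed into the $r^2\logN^{6\gamma}$ factor already present in $\Phi(z)$ (see Equation~\eqref{eq:def:Phi}).

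I would dispose of bound~\eqref{eq:calAtilde_i} first, since it is essentially bookkeeping. The term $|L_{ii}|$ is of order $r\rho_n\delocsq$ by Equation~\eqref{eqn:L-entry-bound}, which under Assumption~\ref{assumption:eigen_bound} is $r\logN^{2\gamma}/\sqrt{N}$ up to constants. The $r$ diagonal quadratic forms $\lambda_\ell^2(\VM{\ell})^\top\Gtildeimatrix\VM{\ell}$ are bounded by Lemma~\ref{lemma:main_quad} and the $r(r-1)$ off-diagonal cross terms $\lambda_{\ell_1}\lambda_{\ell_2}(\VM{\ell_1})^\top\Gtildeimatrix\VM{\ell_2}$ by Lemma~\ref{lemma:cross_term}; summing, these contribute $Cr^4\logN^{6\gamma}/(N\eta)$. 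Finally, the residual $N^{-1}\sum_j \Gtilde_{ij}\Gtilde_{ji}/\Gtilde_{ii}$ is of order $(N\eta)^{-1}$, by the Ward identity $\sum_j|\Gtilde_{ij}|^2 = \eta^{-1}\Im\Gtilde_{ii}$ together with the lower bound $|\Gtilde_{ii}|\ge c$ on $\Ot(\eta)$ (which follows from $\Ld(z)\le (\log N)^{-\xi}$ and $|\msc(z)|\asymp 1$ on $D_L$).

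For bound~\eqref{eq:Ztilde_i} on $\Ztilde_i(z)$, expanding the quadratic form $(\Bi)^\top\Gtildeimatrix\Bi$ produces three types of terms: the pure-noise quadratic form $(\Hi)^\top\Gtildeimatrix\Hi$; the $r$ linear-in-$\Hi$ cross terms $\lambda_\ell(\VM{\ell})^\top\Gtildeimatrix\Hi$; and the $\submatrix{\mH}{i}$-measurable double sum over $\VM{\ell_1},\VM{\ell_2}$. The operator $Q_i$ kills the third. The first is controlled by the quadratic-form concentration estimate in Lemma~\ref{lemma:concetration} with $F_{jk}=\Gtildei_{jk}$, where $\sum_{j\ne k}|\Gtildei_{jk}|^2 \le \eta^{-1}\Im\Tr\Gtildeimatrix$ is converted from $\mtilde^{(\sm i)}(z)$ back to $\mtilde(z)$ via Lemma~\ref{lemma:trace_difference}; this yields the $\Phi(z)$ bound. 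Each of the $r$ linear-in-$\Hi$ terms is controlled directly by Lemma~\ref{lemma:hb_bound} with $\T=\{i\}$, absorbed again into the $r^2$ factor in $\Phi(z)$.

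Bound~\eqref{eq:Lo:bound} on $\Lo(z)$ is the main obstacle, since it requires an off-diagonal analogue of the Schur-complement expansion from Lemma~\ref{lemma:7.2}. The plan is to invoke the standard off-diagonal resolvent identity, which writes $\Gtilde_{ij}(z) = -\Gtilde_{ii}(z)\Gtildeij_{jj}(z)\bigl(B_{ij} - \bm{b}_i^\top \Gtildeijmatrix(z)\bm{b}_j\bigr)$, where $\bm{b}_i,\bm{b}_j$ denote the $i$-th and $j$-th columns of $\mB$ with rows $i,j$ removed. Expanding each $\bm{b}$ as its $\mH$-column plus $\sum_\ell \lambda_\ell \VMij{i}{\ell}$ (respectively $\VMij{j}{\ell}$) yields four bilinear forms, each of order $\Phi(z)$: the pure-noise piece via the bilinear concentration estimate~\eqref{lemma:concentration:quadratic_form}; the two mixed $\mH$--$\mL$ pieces via the $\T=\{i,j\}$ case of Lemma~\ref{lemma:hb_bound} (which was stated for general $|\T|\le\tau$); and the deterministic $\mL$--$\mL$ pieces via Lemma~\ref{lemma:cross_term}, which was written precisely for the case $i\ne j$ with indices $\{i,j\}$ removed. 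The prefactor $|\Gtilde_{ii}\Gtildeij_{jj}|$ is $O(1)$ on $\Ot(\eta)$ by the same bounded-resolvent argument used above. The chief technical care required is keeping the exponents in $r$ and $\logN$ tight across all four bilinear forms so that they fit into a single $\Phi(z)$, and verifying that the union bound over the $O(N^2)$ pairs $(i,j)$ is absorbed by the $(\xi,\nu)$-high-probability tail.
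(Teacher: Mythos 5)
Your sketch follows essentially the same route as the paper: expanding $\Bi$ (respectively the columns of $\mB$ with rows $\{i,j\}$ removed) into its $\mH$ and rank-$r$ signal parts, then dispatching the pure-noise, mixed, and signal-signal pieces to Lemmas~\ref{lemma:concetration}, \ref{lemma:hb_bound}, \ref{lemma:main_quad}, and \ref{lemma:cross_term}, with the Ward identity and bounded resolvents closing out the remaining terms. The only small slip is in the $\Lo(z)$ paragraph, where you attribute the deterministic $\mL$--$\mL$ pieces solely to Lemma~\ref{lemma:cross_term}: that lemma handles only $\ell_1\ne\ell_2$, and the $r$ diagonal pieces with $\ell_1=\ell_2$ need Lemma~\ref{lemma:main_quad}, exactly as the paper applies it in Equation~\eqref{eq:initial_estimates:main_quad_bound}.
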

\begin{proof}
    Starting with the estimate for $\Lo(z)$, we see that from Lemma \ref{lemma:resolvent_identity_corollary}, we have
    \begin{equation*}
        \Gtilde_{ij}(z) = \Gtilde_{ii}(z)\Gtildei_{jj}(z)\left((\submatrix{\mB_{\cdot i}}{ij})^\top\Gtildeijmatrix(z) \submatrix{\mB_{\cdot j}}{ij} \right) - \Gtilde_{ii}(z)\Gtildei_{jj}(z)B_{ij}.
    \end{equation*}
    By Lemma \ref{lemma:bounded_resolvents}, we have $\abs{\Gtilde_{ii}(z)} \le C$ and $|\Gtildei_{jj}(z)| \le C$ for all $i,j \in [N]$.
    Therefore,
    \begin{equation} \label{eq:Gtildeij:checkpt}
        \abs{\Gtilde_{ij}(z)} \le C\abs{(\submatrix{\mB_{\cdot i}}{ij})^\top\Gtildeijmatrix(z) \submatrix{\mB_{\cdot j}}{ij}} + C\abs{B_{ij}} = C\abs{ \sum_{k,l}^{(\sm ij)}B_{ik} \Gtildeij_{kl}(z) B_{lj} }+ C|B_{ij}|.
    \end{equation}
    By Lemma \ref{lemma:B_control}, we have that with $(\xi, \nu)$-high probability, it holds uniformly over for all $i,j \in [N]$ that 
    \begin{equation*} 
    \abs{B_{ij}} \le  C \frac{r\rhon\delocsq}{\sqrt{N}} + \frac{C}{q} . 
    \end{equation*} 
    Substituting this into Equation \eqref{eq:Gtildeij:checkpt}, we have 
    \begin{equation} \label{eq:initial_estimates:Gtildeij:checkpt2}
        |\Gtilde_{ij}(z)| \le C\left| \sum_{k,l}^{(\sm ij)}B_{ik} \Gtildeij_{kl}(z) B_{lj} \right| + C\frac{r\rhon\delocsq}{\sqrt{N}} + \frac{C}{q} .
    \end{equation}
    Recalling the definition of $\mB$ from Equation~\eqref{eq:def:B},
    \begin{equation}\label{ineq:Lo_expansion_2}
       \left| \sum_{k,l}^{(\sm ij)}B_{ik} \Gtildeij_{kl}(z)B_{lj} \right| 
       =  \left| \sum_{k,l}^{(\sm ij)} \left(h_{ik} + \sum_{\alpha = N - r + 1}^N \lambda_{\alpha}v_{i\alpha}v_{k\alpha} \right) \Gtildeij_{kl}(z) \left(h_{lj} + \sum_{\alpha = N - r + 1}^N \lambda_{\alpha}v_{l\alpha}v_{j\alpha} \right)\right|.
    \end{equation}
    
    After applying triangle inequality to the right-hand side of Equation \eqref{ineq:Lo_expansion_2}, there will be a term controllable by Lemma \ref{lemma:HGH_control} according to  
    \begin{equation}\label{eq:initial_estimates:HGH_bound}
    \abs{\sum_{k,l}^{(\sm ij)} h_{ik}\Gtildeij_{kl}(z) h_{lj} } \le \frac{C(\log N)^{\xi}}{q} + C(\log N)^{2\xi}\left(\sqrt{\frac{\Im \mtilde(z)}{N\eta}}  + \frac{1}{N\eta} \right).
    \end{equation}
    on the event $\Ot(\eta)$ with $(\xi, \nu)$-high probability.

    After applying triangle inequality in Equation \eqref{ineq:Lo_expansion_2}, there will also be $2r$ terms that can be controlled by Lemma~\ref{lemma:hb_bound} as, for $\ell = i,j$,
    \begin{equation}\label{eqn:apx:main_est:hb-terms}
       \abs{\lambda_{\alpha_1}(\VMij{\alpha_1}{i})^\top\Gtildeijmatrix(z)\submatrix{\mH_{\cdot \ell}}{ij} } 
       \le  Cr(\log N)^{\xi + 3\gamma}\left(\frac{1}{q} + \sqrt{\frac{\Im \mtilde(z)}{N\eta}} + \frac{1}{N\eta} \right) 
    \end{equation}
    with $(\xi, \nu)$-high probability.
    By comparing the quantity on the right-hand side of Equation \eqref{eqn:apx:main_est:hb-terms} to the definition of $\Phi(z)$ in Equation \eqref{eq:def:Phi}, we see that on $\Ot(\eta)$ with $(\xi, \nu)$-high probability,
    \begin{equation} \label{eq:initial_estimates:hb_bounds}
    \abs{\lambda_{\alpha_1}(\VMij{\alpha_1}{i})^\top\Gtildeijmatrix(z)\submatrix{\mH_{\cdot \ell}}{ij} } 
    \le Cr(\log N)^{\xi + 3\gamma}\left(\frac{1}{q} + \sqrt{\frac{\Im \mtilde(z)}{N\eta}} + \frac{1}{N\eta} \right)
    \le C\Phi(z) .
    \end{equation}
    
    There will be $r$ terms in Equation~\eqref{ineq:Lo_expansion_2} controllable by Lemma \ref{lemma:main_quad} according to
    \begin{equation}\label{eq:initial_estimates:main_quad_bound}
       \left|\lambda^2_{\alpha_1}(\VMij{\alpha_1}{i})^\top\Gtildeijmatrix(z) \VMij{\alpha_1}{j}\right| 
        \le \left(\frac{C\lambda_{\alpha_1}\delocsq}{N} + C \frac{r^2\logN^{6\gamma}}{N\eta} \right)
    \end{equation}
    with $(\xi, \nu)$-high probability. 
    
    There will be $\binom{r}{2}$ terms in Equation~\eqref{ineq:Lo_expansion_2} controllable by Lemma~\ref{lemma:cross_term} as
    \begin{equation}\label{eq:initial_estimates:cross_term_bound}
      \left|\lambda_{\alpha_2}\lambda_{\alpha_1} (\VMij{\alpha_1}{i})^\top \Gtildeijmatrix(z) \VMij{\alpha_2}{j} \right| 
        \le C \frac{r^2 \logN^{6\gamma}}{N \eta}
    \end{equation}
    with $(\xi, \nu)$-high probability. 
    The upper bounds provided by Lemmas \ref{lemma:main_quad} and \ref{lemma:cross_term} are dominated by $\Phi(z)$ since for any $j$, looking at Equation~\eqref{eq:def:Phi},
     \begin{equation*}
       C \frac{r^2 \logN^{6\gamma}}{N \eta}
        \le C\frac{r^2(\log N)^{2\xi + 6\gamma}} {N\eta} 
        \le \frac{(\log N)^{\xi + 3\gamma}}{q} + (\log N)^{2\xi + 6\gamma} \left(\sqrt{\frac{\Im \mtilde(z)}{N\eta} } + \frac{1}{N\eta}\right) \le \frac{ C\Phi(z) }{ r^2 }
    \end{equation*}
    and 
    \begin{equation*}
    \frac{C\lambda_{\alpha_1}\delocsq}{N} 
    \le \frac{C\delocsq}{\sqrt{N}}
    \le \frac{r(\log N)^{\xi + 3\gamma}}{q} + r(\log N)^{2\xi + 6\gamma} \left(\sqrt{\frac{\Im \mtilde(z)}{N\eta} } + \frac{r}{N\eta}\right) \le \frac{ C\Phi(z) }{ r }.
    \end{equation*}
    Applying the above two bounds to Equations~\eqref{eq:initial_estimates:main_quad_bound} and~\eqref{eq:initial_estimates:cross_term_bound},
    \begin{equation}\label{eq:initial_estimates:main_quad_bound:done}
       r \left|\lambda^2_{\alpha_1}(\VMij{\alpha_1}{i})^\top\Gtildeijmatrix(z) \VMij{\alpha_1}{j}\right| 
       \le C\Phi(z)
    \end{equation}
    and
    \begin{equation}\label{eq:initial_estimates:cross_term_bound:done}
      \binom{r}{2}
      \left|\lambda_{\alpha_2}\lambda_{\alpha_1}(\VMij{\alpha_1}{i})^\top\Gtildeijmatrix(z) \VMij{\alpha_2}{j}\right| 
      \le C \Phi(z) .
    \end{equation}
    Applying Equations \eqref{eq:initial_estimates:HGH_bound}, \eqref{eq:initial_estimates:hb_bounds}, \eqref{eq:initial_estimates:main_quad_bound:done}, and \eqref{eq:initial_estimates:cross_term_bound:done} to \eqref{eq:initial_estimates:Gtildeij:checkpt2}, and taking the maximum over $i \ne j$, we have that on the event $\Ot(\eta)$ with $\xinu$-high-probability, 
    \begin{equation*}
        \Lo(z) := \max_{i\ne j} |\tilde{G}_{ij}(z)| \le C\Phi(z),
    \end{equation*}
    which establishes Equation~\eqref{eq:Lo:bound}.
    Note that to accommodate the union bound over $i,j \in [N]$, we increase the value of $\nu$ in the definition of $\xinu$-high-probability by a constant factor.
    
    Recalling the definition of $\Ztilde_i(z)$ from Equation \eqref{eq:def:Ztilde},  
    \begin{equation*}
        \Ztilde_{i}(z)
        = Q_i \left( (\Bi)^\top\Gtildeimatrix(z)\Bi\right)
        = (\Bi)^\top\Gtildeimatrix(z)\Bi - \E{(\Bi)^\top\Gtildeimatrix(z)\Bi \mid \submatrix{\mH}{i}} ,
    \end{equation*}
    where we have used the definition of $Q_i(\cdot )$ from Equation \eqref{eq:def:Q_i}.
    Adding and subtracting appropriate quantities and using properties of the conditional expectation,
    \begin{equation}\label{eq:Z_estimate:main}
    \begin{aligned}
         \Ztilde_{i}(z) &= \sum_{\alpha = N - r + 1} (\Hi)^\top\Gtildeimatrix(z)\lambda_{\alpha}\VM{\alpha} + \sum_{\alpha = N - r + 1} \lambda_{w}(\VM{\alpha})^\top\Gtildeimatrix(z)\Hi \\
           &~~~~~~~~~+ (\Hi)^\top\Gtildeimatrix(z)\Hi - \E{(\Hi)^\top\Gtildeimatrix(z)\Hi \mid \submatrix{\mH}{i}}
    \end{aligned}
    \end{equation}
    The first two terms are controlled using Lemma \ref{lemma:hb_bound}:
    \begin{equation} \label{eq:Ztilde:firstTwoTerms}
    \begin{aligned}
    \Bigg|
    &\sum_{\alpha = N - r + 1} (\Hi)^\top\Gtildeimatrix(z)\lambda_{\alpha}\VM{\alpha} + \sum_{\alpha = N - r + 1} \lambda_{w}(\VM{\alpha})^\top\Gtildeimatrix(z)\Hi
    \Bigg| \\
    & ~~~~~~~~~~~~~~~~~~~~~~~~~~~~~~~~~~~~~~~~~~~~~~~~~~~~~~~~~~~~~~~~~\le 2Cr(\log N)^{\xi+ 3\gamma}\left(\frac{1}{q} + \sqrt{\frac{\Im \mtilde(z)}{N\eta}} + \frac{1}{N\eta}\right).
    \end{aligned}
    \end{equation}
    To control the latter two terms in Equation~\eqref{eq:Z_estimate:main}, we follow the argument first given in Lemma 3.13 from \cite{Erdos}.
    We have from Lemma \ref{lemma:concetration}, with $(\xi, \nu)$-high probability:
    \begin{equation}\label{eq:Z_estimate:first} 
    \begin{aligned}
       & \abs{(\Hi)^\top\Gtildeimatrix(z)\Hi - \E{(\Hi)^\top\Gtildeimatrix(z)\Hi \big| \submatrix{\mH}{i}}}  \\
       & ~~~~~~~~~~~~~~~~~~~~~~~~~~~~~~~~~~~~~~~~~~~~~~\le \abs{\sum_{k\ne l} h_{kl}\Gtildei_{kl}(z) h_{lk}} + \abs{\sum_{k}^{(\sm i)} \abs{h_{kk}}^2 \Gtildei_{kk}(z) - \frac{1}{N}\Gtildei_{kk}(z)},
    \end{aligned}
    \end{equation}
    where we have used the fact that
    \begin{equation*}
        \E{(\Hi)^\top\Gtildeimatrix(z)\Hi \mid \submatrix{\mH}{i}} 
        = \E{\sum_{k,l\ne i} h_{ki} \Gtildei_{kl}(z) h_{il} ~\Big| \submatrix{\mH}{i}} 
        = \sum_{k}^{(\sm i)}\frac{1}{N}\Gtilde_{kk}(z) .
    \end{equation*}
    The right-hand side of Equation \eqref{eq:Z_estimate:first} is estimated using Lemma \ref{lemma:concetration} for both terms, yielding
    \begin{equation*}
    \begin{aligned}
        &\abs{(\Hi)^\top\Gtildeimatrix(z)\Hi - \E{(\Hi)^\top\Gtildei(z)\Hi \mid \submatrix{\mH}{i}}} \\ 
           &~~~~~~~~~~~~~~~\le (\log N)^{2\xi} \left[\frac{\max_{k \ne l} \abs{\Gtildei_{kl}(z)}}{q} + \left(\frac{1}{N^2} \sum_{k\ne l} \abs{\Gtildei_{kl}(z)}^2 \right)^{1/2} \right] + (\log N)^{\xi}\frac{\max_{k}\abs{\Gtildei_{kk}(z)}}{q}.
    \end{aligned}
    \end{equation*}
   with $(\xi, \nu)$-high probability. 
   By Lemma \ref{lemma:bounded_resolvents}, on the event $\Ot(\eta)$, the $\Gtildei_{kk}(z)$ and $\Gtildei_{kl}(z)$ terms are controlled by constants, whence
    \begin{equation} \label{eq:Z_estimate:third}
    \begin{aligned}
       & \abs{(\Hi)^\top\Gtildeimatrix(z)\Hi - \E{(\Hi)^\top\Gtildei(z)\Hi \mid \submatrix{\mH}{i}}} \\ 
        &~~~~~~~~~~~~~~~~~~~~~~ \le (\log N)^{2\xi}\! \left[\frac{C(\log N)^{-\xi}}{q} + \left(\frac{1}{N^2} \sum_{k\ne l} \abs{\Gtildei_{kl}(z)}^2 \right)^{\!\!1/2} \right] + (\log N)^{\xi}\frac{C}{q} 
    \end{aligned}
    \end{equation}
    on the event $\Ot(\eta)$ with $(\xi, \nu)$-high probability. From Ward's Identity \citep[see Equation (3.6) in][]{Erdos_lecture_notes}, we have 
    \begin{equation} \label{eqn:lemma:Wards}
        \sum_{j = 1}^N |\Gtilde_{ij}(z)|^2 = \frac{\Im \Gtilde_{ii}(z)}{\eta} .
    \end{equation}
    So then on the event $\Ot(\eta)$, 
    \begin{equation}\label{eq:Z_estimate:second}
    \frac{1}{N^2}\sum_{k,l}^{(\sm i)} \abs{\Gtildei_{kl}(z)}^2 
    =  \frac{1}{N^2\eta}\sum_k^{(\sm i)}\Im \Gtildei_{kk}(z) 
    \le \frac{C}{N\eta} \left( \Im \mtilde(z) + \Lo^2(z) \right) ,
    \end{equation}
    where in the inequality we have used the fact that $\Gtildei_{kk}(z) = \Gtilde_{kk}(z) + C\Lo^2(z)$ on $\Ot(\eta)$ by Lemma \ref{lemma:resolvent_identities} and the fact that $\mtilde(z) = \frac{1}{N} \Tr \Gtildematrix(z)$. 
    Equation \eqref{eq:Z_estimate:second} then implies that on $\Ot(\eta)$ with $\xinu$-high probability,
    \begin{equation}\label{eq:Z_estimate:fourth}
        \left(\frac{1}{N^2}\sum_{k,l}^{(\sm i)} \abs{\Gtildei_{kl}(z)}^2 \right)^{1/2} 
        \le C\left(\sqrt{\frac{\Im \mtilde(z)}{N\eta}} + \frac{\Lo}{\sqrt{N\eta}} \right) 
        \le C\left(\sqrt{\frac{\Im \mtilde(z)}{N\eta}} + \frac{\Phi(z)}{\sqrt{N\eta}} \right),
    \end{equation}
    where the second inequality follows from Equation \eqref{eq:Lo:bound}. Substituting Equation \eqref{eq:Z_estimate:fourth} into Equation \eqref{eq:Z_estimate:third} and collecting terms, we have 
    \begin{equation*} \begin{aligned}
        & \abs{(\Hi)^\top\Gtildeimatrix(z)\Hi - \E{(\Hi)^\top\Gtildeimatrix(z)\Hi \mid \submatrix{\mH}{i}}} \\
        &~~~~~~~~~~~~~~~~~~~~~~~~~~~~~~~~~~~~~~~~~~~~~~~~~~~ \le C\left(\frac{(\log N)^{\xi}}{q} + (\log N)^{2\xi} \left(\sqrt{\frac{\Im \mtilde(z)}{N\eta}} + \frac{\Phi(z)}{\sqrt{N\eta}} \right)\right) 
    \end{aligned} \end{equation*}
    on $\Ot(\eta)$ with $\xinu$-high probability. 
    Hence, recalling the definition of $D_L$ from Equation~\eqref{eq:def:DL} and our bounds on $L$ from Equation~\eqref{eq:L_constraints}, we have 
    \begin{equation}\label{eq:Z_estimate:fifth}
        \abs{(\Hi)^\top\Gtildeimatrix(z)\Hi - \E{(\Hi)^\top\Gtildeimatrix(z)\Hi \mid \submatrix{\mH}{i}}}
        \le C\Phi(z).
    \end{equation}
     on $\Ot(\eta)$ with $\xinu$-high probability.
     Applying the triangle inequality in Equation \eqref{eq:Z_estimate:main} and using Equations \eqref{eq:Z_estimate:fifth} and \eqref{eq:Ztilde:firstTwoTerms},
    \begin{equation}
        \abs{\Ztilde_i(z)} \le C\Phi(z) + Cr^2(\log N)^{\xi + 3\gamma}\left(\frac{1}{q} + \sqrt{\frac{\Im \mtilde(z)}{N\eta}} + \frac{1}{N\eta} \right) \le C\Phi(z)
    \end{equation}
     on $\Ot(\eta)$ with $\xinu$-high probability. 
     Hence, taking the maximum over $i \in [N]$ we have, after increasing the value of $\nu$ by a constant factor, on the event $\Ot(\eta)$ with $\xinu$-high-probability,
    \begin{equation*}
        \max_i |\Ztilde_i(z)| \le C\Phi(z) ,
    \end{equation*}
    yielding Equation~\eqref{eq:Ztilde_i}.
    
    Finally, to bound $\calAtilde_i(z)$, recall its definition from Equation~\eqref{eq:def:calAtilde},
    \begin{equation} \label{eq:initial_estimates:calAtilde:expansion}
    \begin{aligned}
    \calAtilde_i(z) &= L_{ii} - \sum_{\alpha = N - r + 1}^N \lambda_{\alpha}^2 (\VM{\alpha})^\top\Gtildei(z)\VM{\alpha} \\
    &~~~~~~~~~- \sum_{\alpha_1 \ne \alpha_2} \lambda_{\alpha_1}\lambda_{\alpha_2} (\VM{\alpha_1})^\top\Gtildei(z)\VM{\alpha_2} 
    + \frac{1}{N}\sum_j\frac{\Gtilde_{ij}(z)\Gtilde_{ji}(z)}{\Gtilde_{ii}(z)}
    \end{aligned} \end{equation}
    For the first term, Assumption \eqref{assumption:deloc} ensures that
    \begin{equation} \label{eq:Atilde:Lbound}
    L_{ii} = C\left( \frac{r\rhon\delocsq}{\sqrt{N}} \right) .
    \end{equation}
    There are $r$ terms in Equation~\eqref{eq:initial_estimates:calAtilde:expansion} that can be controlled using Lemma~\ref{lemma:main_quad}, providing the bounds of the form
    \begin{equation}\label{eq:initial_estimates:calAtilde:main_quad_term}
        \left|\lambda_{\alpha_1}^2(\VM{\alpha_1})^\top\Gtildeimatrix(z) \VM{\alpha_1}\right| 
        \le \left(\frac{C\lambda_j\delocsq}{N} + C \frac{r^2\logN^{6\gamma}}{N\eta}\right)
    \end{equation}
     with $\xinu$-high probability.
     There are $\binom{r}{2}$ cross-terms in Equation~\eqref{eq:initial_estimates:calAtilde:expansion} that can be controlled using Lemma~\ref{lemma:cross_term}, providing the bounds for $\alpha_1 \ne \alpha_2$ of the form
    \begin{equation}\label{eq:initial_estimates:calAtilde:cross_term}
      \abs{\lambda_{\alpha_2}\lambda_{\alpha_1}(\VM{\alpha_1})^\top\Gtildeimatrix(z) \VM{\alpha_2}} 
      \le C \frac{ r^2\logN^{6\gamma}}{N \eta}
    \end{equation}
    with $\xinu$-high probability.
    The last term in Equation~\eqref{eq:initial_estimates:calAtilde:expansion} is controlled using Lemma \ref{lemma:bounded_resolvents} and Ward's Identity given in Equation \eqref{eqn:lemma:Wards}. Altogether, applying Equations \eqref{eq:Atilde:Lbound}, \eqref{eq:initial_estimates:calAtilde:main_quad_term} and \eqref{eq:initial_estimates:calAtilde:cross_term}, Lemma \eqref{assumption:deloc} and Ward's Identity to Equation \eqref{eq:initial_estimates:calAtilde:expansion}, we have
    \begin{equation*}
        |\calAtilde_i(z)| \le C\frac{r\logN^{2\gamma}}{\sqrt{N}} + C\frac{r^4\logN^{6\gamma}}{N\eta}
    \end{equation*}
    on $\Ot(\eta)$ with $\xinu$-high probability. 
    Taking the maximum over $i \in [N]$, and reducing the value of $\nu$ by a constant factor yields that on the event $\Ot(\eta)$ with $\xinu$-high probability,
    \begin{equation*}
        \max_i |\calAtilde_i(z)| \le \frac{Cr\logN^{2\gamma}}{\sqrt{N}} + \frac{Cr^4\logN^{6\gamma}}{N\eta} ,
    \end{equation*}
    establishing Equation~\eqref{eq:calAtilde_i} and completing the proof.
\end{proof} 

The following is a straightforward adaptation of Lemma 7.7 from \cite{Erdos}.
Details are included for the sake of completeness.

\begin{lemma} \label{lemma:controlDiagResolvent}
    For $z = E + \iu\eta \in D_L$, we have on the event $\Ot(\eta)$ with $\xinu$-high probability,
    \begin{equation} \label{eq:Gtildemtilde:close}
        \max_i |\Gtilde_{ii}(z) - \mtilde(z)| \le C\Phi(z)
    \end{equation}
    and
    \begin{equation} \label{ineq:Ldtilde_bound}
       \Ld(z) \le \Lambdatilde(z) + C\Phi(z) .
    \end{equation}
\end{lemma}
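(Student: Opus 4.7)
The plan is to use the resolvent identity from Lemma~\ref{lemma:7.2} together with the bounds on $\Ztilde_i$, $\calAtilde_i$ already established in Lemma~\ref{lemma:Lo_Zi_Ai_estimates}. The key algebraic observation is that, by definition of $\psitilde_i(z) = \Gtilde_{ii}(z) - \msc(z)$ in Equation~\eqref{eqn:def:psi},
\begin{equation*}
\average{\psitilde(z)} = \frac{1}{N}\sum_i \Gtilde_{ii}(z) - \msc(z) = \mtilde(z) - \msc(z),
\end{equation*}
so that Lemma~\ref{lemma:7.2} simplifies to
\begin{equation*}
\Gtilde_{ii}(z) = \frac{1}{-z - \mtilde(z) + \Ytilde_i(z)} .
\end{equation*}

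First I would establish Equation~\eqref{eq:Gtildemtilde:close}. Setting $a_i := -z - \mtilde(z) + \Ytilde_i(z)$, I would write
\begin{equation*}
\Gtilde_{ii}(z) - \mtilde(z)
= \frac{1}{N} \sum_{j=1}^N \bigl( \Gtilde_{ii}(z) - \Gtilde_{jj}(z) \bigr)
= \frac{1}{N} \sum_{j=1}^N \Gtilde_{ii}(z) \Gtilde_{jj}(z) \bigl( \Ytilde_j(z) - \Ytilde_i(z) \bigr) .
\end{equation*}
On $\Ot(\eta)$, Lemma~\ref{lemma:bounded_resolvents} supplies $|\Gtilde_{ii}(z)|, |\Gtilde_{jj}(z)| \le C$, so the triangle inequality gives
\begin{equation*}
|\Gtilde_{ii}(z) - \mtilde(z)| \le 2C \max_j |\Ytilde_j(z)|
\le 2C \left( \max_j |H_{jj}| + \max_j |\Ztilde_j(z)| + \max_j |\calAtilde_j(z)| \right) .
\end{equation*}
Lemma~\ref{lemma:Lo_Zi_Ai_estimates} bounds the $\Ztilde_j$ and $\calAtilde_j$ terms by $C\Phi(z)$ with $\xinu$-high probability. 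For the diagonal entry $H_{jj}$, I would apply the moment condition in Equation~\eqref{eq:def:h_moments} via a Markov bound with a suitable power $p$ (as in the derivation of Lemma~\ref{lemma:B_control}) and a union bound over $j \in [N]$ to obtain $\max_j |H_{jj}| \lesssim (\log N)^\xi / q$ with $\xinu$-high probability, which is dominated by $\Phi(z)$ as defined in Equation~\eqref{eq:def:Phi}. Taking the maximum over $i \in [N]$ and absorbing the resulting union-bound factor into $\nu$ then yields Equation~\eqref{eq:Gtildemtilde:close}.

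The second bound is an immediate consequence: by the triangle inequality,
\begin{equation*}
\Ld(z) = \max_i |\Gtilde_{ii}(z) - \msc(z)|
\le \max_i |\Gtilde_{ii}(z) - \mtilde(z)| + |\mtilde(z) - \msc(z)|
\le C\Phi(z) + \Lambdatilde(z) ,
\end{equation*}
establishing Equation~\eqref{ineq:Ldtilde_bound}. I do not anticipate any serious technical obstacle here, since all of the hard estimates (controlling $\Ztilde_i$, $\calAtilde_i$, and bounding $\Gtilde_{ii}$ on $\Ot(\eta)$) have been carried out in the preceding lemmas; the content of this lemma is largely an algebraic bootstrapping step that repackages those bounds into a statement about the diagonal resolvent entries.
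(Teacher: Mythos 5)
Your proposal is correct and follows essentially the same route as the paper's proof: simplify the identity from Lemma~\ref{lemma:7.2} so that $\Gtilde_{ii}(z) - \Gtilde_{jj}(z) = \Gtilde_{ii}(z)\Gtilde_{jj}(z)(\Ytilde_j(z) - \Ytilde_i(z))$, invoke Lemma~\ref{lemma:bounded_resolvents} to bound the diagonal resolvent entries on $\Ot(\eta)$, control $\max_j|\Ytilde_j(z)|$ via Lemma~\ref{lemma:Lo_Zi_Ai_estimates} together with a high-probability bound on $\max_j|H_{jj}|$, and then derive the second inequality by the triangle inequality. The one small imprecision is that Lemma~\ref{lemma:Lo_Zi_Ai_estimates} bounds $\max_j|\calAtilde_j(z)|$ by $\frac{Cr(\log N)^{2\gamma}}{\sqrt{N}} + \frac{Cr^4(\log N)^{6\gamma}}{N\eta}$ rather than directly by $C\Phi(z)$; you would need to note (as the paper does implicitly) that both of these terms are dominated by $\Phi(z)$ using $q \le C_2\sqrt{N}$ and $(\log N)^{2\xi} \ge 1$.
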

\begin{proof}
    Recall the definition of $\Ytilde_i(z)$ in Equation \eqref{eq:def:Ytilde}. Then by Lemma \ref{lemma:Lo_Zi_Ai_estimates},
    \begin{equation*}
        \max_i|\Ytilde_i(z)| = \max_i \left| H_{ii} - \Ztilde_i(z) + \calAtilde_i(z) \right| \le \frac{C}{q} + C\Phi(z) + \frac{Cr\logN^{2\gamma}}{\sqrt{N}} + \frac{Cr^4\logN^{6\gamma}}{N\eta} 
    \end{equation*}
    on $\Ot(\eta)$ with $\xinu$-high probability. Then we see that
    \begin{equation*}
    \begin{aligned}
        |\Gtilde_{ii}(z) - \Gtilde_{jj}(z)| = \left| \frac{\Gtilde_{ii}(z)\Gtilde_{jj}(z)}{\Gtilde_{jj}(z)} - \frac{\Gtilde_{jj}(z)\Gtilde_{ii}(z)}{\Gtilde_{ii}(z)} \right| &= |\Gtilde_{jj}(z)||\Gtilde_{ii}(z)|\left| \frac{1}{\Gtilde_{jj}(z)} - \frac{1}{\Gtilde_{ii}(z)} \right| \\ 
        &= |\Gtilde_{jj}(z)||\Gtilde_{ii}(z)|\left| \Ytilde_j(z) - \Ytilde_i(z) \right|.
    \end{aligned}
    \end{equation*}
    The last equality follows from substituting the identity for $\Gtilde_{ii}(z)$, found in Lemma \ref{lemma:7.2}. 
    Then since on $\Ot(\eta)$, for any $i \in [N]$ uniformly, $ c \le |\Gtilde_{ii}(z)| \le C$, we have by triangle inequality with $\xinu$-high probability,
    \begin{equation*}
        |\Gtilde_{ii}(z) - \Gtilde_{jj}(z)| \le C\left( \frac{C}{q} + C\Phi(z) + \frac{Cr\logN^{2\gamma}}{\sqrt{N}} + \frac{Cr^4\logN^{6\gamma}}{N\eta}  \right)
    \end{equation*}
    Hence,
    \begin{equation*}
        \left|\frac{1}{N} \sum_j (\Gtilde_{ii}(z) - \Gtilde_{jj}(z)) \right| \le C\Phi(z) 
    \end{equation*}
    on $\Ot(\eta)$ with $\xinu$-high probability. Taking the maximum over $i \in [N]$ and recalling that
    \begin{equation*}
    \mtilde(z) = \frac{1}{N}\Tr\Gtildematrix(z)
    \end{equation*}
    yields Equation~\eqref{eq:Gtildemtilde:close}.
    
    Recalling the definition of $\Ld(z)$ from Equation~\eqref{eq:def:Ld-Lo} and applying the triangle inequality,
    \begin{equation*}
    \Ld(z) = \max_i | \Gtilde_{ii}(z) - \msc(z) | 
    \le | \mtilde(z) - \msc(z) | + \max_i | \Gtilde_{ii}(z) - \mtilde(z)|
    \le  \Lambdatilde(z) + \max_i | \Gtilde_{ii}(z) - \mtilde(z)| ,
    \end{equation*}
    where the second inequality follows from the definition of $\Lambdatilde(z)$ in Equation~\eqref{eq:def:stieltjes_errors}.
    Applying Equation~\eqref{eq:Gtildemtilde:close} then yields Equation~\eqref{ineq:Ldtilde_bound}, completing the proof.
    \end{proof} 
    
    \begin{lemma}[\cite{Erdos} Lemma 7.8, adapted]
        If $\eta \ge 2$, then $\Ot(\eta)$ holds with $(\xi, \nu)$-high probability, where $\Ot(\eta)$ is defined in Equation \eqref{eq:def:Ot}.
    \end{lemma}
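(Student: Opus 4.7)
The plan is to exploit the deterministic bound that when $\eta \ge 2$, we have $\norm{\Gtildematrix(z)} \le 1/\eta \le 1/2$, which follows from $\Gtildematrix(z) = (\mB - z\mI)^{-1}$ and $\Im z = \eta \ge 2$. In particular, $\max_{i,j} |\Gtilde_{ij}(z)| \le 1/2$ holds for every realization, which is exactly the entry-wise control that the event $\Ot(\eta)$ was introduced to enforce in the earlier lemmas. Similarly, from the integral definition in Equation~\eqref{def:msc}, one has $|\msc(z)| \le 1/\eta \le 1/2$ for all such $z$.

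The first step is to revisit the proofs of Lemmas~\ref{lemma:hb_bound}--\ref{lemma:controlDiagResolvent} and observe that their only use of $\Ot(\eta)$ is to guarantee that individual resolvent entries $\Gtilde_{ij}(z)$ and $\Gtildet_{ij}(z)$ are bounded by constants and, for diagonal entries, bounded away from zero. For $\eta \ge 2$ the upper bound is automatic from the preceding paragraph, and the lower bound on $|\Gtilde_{ii}(z)|$ follows from the identity in Lemma~\ref{lemma:7.2} once $\Ytilde_i(z)$ is small. Hence the same proofs yield, unconditionally with $\xinu$-high probability, the estimates $\Lo(z) \le C\Phi(z)$, $\max_i |\Ztilde_i(z)| \le C\Phi(z)$, and the analogous bound for $\calAtilde_i(z)$ at each fixed $z \in D(\eta)$.

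The second step is to close the loop on $\Ld(z)$ via the self-consistent equation of Lemma~\ref{lemma:7.2}. Averaging $\Gtilde_{ii}(z) = (-z - \msc(z) - \average{\psitilde(z)} + \Ytilde_i(z))^{-1}$ over $i$ and using the $\xinu$-high-probability bounds on $\Ytilde_i$ from the previous step, together with the fact that $\msc(z)$ is the unique stable root of $-1/\msc = z + \msc$ in the regime $\eta \ge 2$, yields $\Lambdatilde(z) \le C\Phi(z)$ and therefore $\Ld(z) \le C\Phi(z)$ via Lemma~\ref{lemma:controlDiagResolvent}. Under Assumptions~\eqref{assumption:rank} and~\eqref{assumption:moments}, $\Phi(z)$ is dominated in this regime by $r^2(\log N)^{2\xi + 6\gamma}/\sqrt{N\eta}$, which is $\ll (\log N)^{-\xi}$ for sufficiently large $N$. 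So at each fixed $z \in D(\eta)$, $\Ld(z) + \Lo(z) \le (\log N)^{-\xi}$ with $\xinu$-high probability.

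The main obstacle is promoting the pointwise-in-$z$ bound to the supremum over $D(\eta)$ that appears in the definition of $\Ot(\eta)$. The standard remedy is a discretization argument: since $\partial_z \Gtildematrix(z) = \Gtildematrix(z)^2$, we have $|\Gtilde_{ij}(z) - \Gtilde_{ij}(z')| \le |z-z'|/\eta^2 \le |z-z'|/4$ on $D(\eta)$, and the same Lipschitz property holds for $\msc(z)$. Thus $\Ld + \Lo$ is Lipschitz in $z$ with a constant depending only on $\eta$, so it suffices to establish the pointwise estimate on a grid in $D(\eta)$ of mesh $N^{-c}$ for some fixed $c > 0$. A union bound over this polynomial-sized grid, each point contributing a $\xinu$-high-probability failure, absorbs the discretization error and yields $\sup_{z \in D(\eta)}(\Ld(z) + \Lo(z)) \le (\log N)^{-\xi}$ with $\xinu$-high probability, completing the proof.
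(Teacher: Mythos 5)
Your overall structure mirrors the paper's: exploit the deterministic resolvent bound $\|\Gtildematrix(z)\| \le 1/\eta \le 1/2$ for $\eta \ge 2$, re-derive the pointwise estimates on $\Lo$, $\Ztilde_i$, $\calAtilde_i$ in this regime, close via the self-consistent equation to get $\Ld$, then pass to the supremum by a Lipschitz/discretization argument. The paper defers the last two steps to \cite{Erdos} (Lemmas 3.16, 7.8, 7.9), so giving them explicit content is welcome, and your observation $\partial_z \Gtildematrix = \Gtildematrix^2$ implies a $1/\eta^2$ Lipschitz constant is the right vehicle.

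However, there is a genuine circularity in your first step. You propose to obtain the \emph{lower} bound $|\Gtilde_{ii}(z)| \ge c$ from the self-consistent identity $\Gtilde_{ii}(z) = (-z - \msc(z) - \average{\psitilde(z)} + \Ytilde_i(z))^{-1}$ ``once $\Ytilde_i(z)$ is small.'' But showing $\Ytilde_i(z)$ is small requires bounding $\Ztilde_i(z)$ and $\calAtilde_i(z)$, and the proofs of those bounds (e.g.\ the resolvent-minor manipulations in Lemma~\ref{lemma:minor_resolvents} and the quadratic-form estimates in Lemma~\ref{lemma:HGH_control}) themselves divide by diagonal resolvent entries $\Gtilde_{kk}(z)$, $\Gtildei_{kk}(z)$, and hence presuppose exactly the lower bound you are trying to establish. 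You cannot bootstrap out of this by first invoking the upper bound alone, because the intermediate estimates require $c \le |\Gtildet_{kk}(z)|$ \emph{before} you can conclude $\Ytilde_i$ is small.

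The paper breaks this circularity with a dedicated, a priori argument: Lemma~\ref{lemma:initial_elementary_resolvent_estimates} establishes $c \le |\Gtildet_{kk}(z)| \le 1/\eta$ for $\eta \ge 2$ \emph{directly} from the spectral decomposition, lower-bounding $\Im\Gtildet_{kk}(z)$ via $\eta\sum_{\alpha\notin\calS}|\vhat_{k\alpha}|^2/((\lambdahatt_\alpha - E)^2 + \eta^2)$, using $\|\mH\| \le C$ and the signal-eigenvector delocalization (via Lemma~\ref{lemma:subspace_bound} and Assumption~\eqref{assumption:deloc}) to show the non-signal mass $\sum_{\alpha\notin\calS}|\vhat_{k\alpha}|^2 \ge 1 - Cr/\sqrt{N} \ge c$. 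This is not a resolvent-identity argument and makes no reference to $\Ytilde_i$, $\Ztilde_i$, or $\calAtilde_i$. Your proposal either needs to reproduce this spectral argument or cite it; without it, the chain of estimates in your first step does not close. A related minor point: the lower bound $|\msc(z)| \ge c$ (used implicitly when inverting the self-consistent equation) also deserves justification; the paper cites Lemma~3.2 of \cite{Erdos}, whereas your integral estimate only gives the upper bound. Finally, the paper's proof also explicitly modifies the argument of Lemma~\ref{lemma:hb_bound} by replacing the bound $R \le C\sqrt{\eta}$ with the trivial $R \le 1$ in the $\eta \ge 2$ regime; your proposal does not flag this, but it is a straightforward detail compared to the lower-bound gap.
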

    \begin{proof}
    Let $z = E + \iu\eta$.
    To estimate $\Lo$, we follow the approach from Lemma \ref{lemma:Lo_Zi_Ai_estimates}. Again, we apply Lemma \ref{lemma:resolvent_identity_corollary}
    to obtain
    \begin{equation*}
    \Gtilde_{ij}(z)
    = \Gtilde_{ii}(z)\Gtildei_{jj}(z)\left((\submatrix{\mB_{\cdot i}}{ij})^\top\Gtildeijmatrix(z) \submatrix{\mB_{\cdot j}}{ij} \right) - \Gtilde_{ii}(z)\Gtildei_{jj}(z)B_{ij} .
    \end{equation*}
    Using Lemma \ref{lemma:initial_elementary_resolvent_estimates}, we have with $(\xi, \nu)$-high probability,
    \begin{equation*}
    \abs{\Gtilde_{ij}(z)} 
    \le C\abs{(\submatrix{\mB_{\cdot i}}{ij})^\top\Gtildeijmatrix(z) \submatrix{\mB_{\cdot j}}{ij}} + C\abs{B_{ij}} 
    = C\abs{ \sum_{k,l}^{(\sm ij)}B_{ik} \Gtildeij_{kl}(z) B_{lj} } + C\abs{B_{ij}}.
    \end{equation*}
    Applying Lemma \ref{lemma:h_control} and Lemma \eqref{lemma:B_control},
    \begin{equation}\label{eq:initial_estimates:Lo_control:first}
    \abs{\Gtilde_{ij}(z)}
    \le C\abs{ \sum_{k,l}^{(\sm ij)}B_{ik} \Gtildeij_{kl}(z) B_{lj} } + C\frac{r\rhon\delocsq}{\sqrt{N}} + \frac{C}{q} 
    \end{equation}

    To estimate $\left| \sum_{k,l}^{(\sm ij)}B_{ik} \Gtildeij_{kl}(z) B_{lj} \right|$, we again have to estimate $ \left|\sum_{k,l}^{(\sm ij)} h_{ik}\Gtildeij_{kl}(z) h_{lj} \right|$.
    So then by Equation \eqref{lemma:concentration:quadratic_form} from Lemma \ref{lemma:concetration}, we have
    \begin{equation}\label{eq:initial_estimates:HGH_control:first_step}
    \left|\sum_{k,l}^{(\sm ij)} h_{ik}\Gtildeij_{kl}(z)h_{lj} \right|  
    \le (\log N)^{2\xi} \! \left[\frac{\max_k \abs{\Gtildeij_{kk}(z)}}{q^2} + \frac{\max_{i\ne j} \abs{\Gtildeij_{kl}(z)}}{q} + \left( \sum_{k,l}^{(\sm ij)} \abs{ \frac{ \Gtildeij_{kl}(z) }{ N }}^2 \right)^{1/2}\right]
    \end{equation}
    with $(\xi, \nu)$-high probability.
    Examining the rightmost term, we see that 
    \begin{equation}\label{eq:initial_estimates:HGH_control:second}
    \frac{1}{N^2}\sum_{k,l}^{(\sm ij)} \abs{\Gtildeij_{kl}(z)}^2 =  \frac{1}{N^2\eta}\sum_k^{(\sm ij)}\Im \Gtildeij_{kk}(z) = \frac{1}{N\eta}\Im \mtilde^{(\sm ij)}(z) \le \frac{C}{N}
    \end{equation}
    with $(\xi, \nu)$-high probability, where the first equality is by Ward's Identity from Equation \eqref{eqn:lemma:Wards}, 
    the second equality is by the definition of $\mtilde^{(\sm ij)}(z)$ given in Equation \eqref{eqn:def:mtildet},
    and the inequality follows from $\eta \ge 2$ and Lemma \ref{lemma:initial_elementary_resolvent_estimates}.
    Substituting Equation \eqref{eq:initial_estimates:HGH_control:second} into \eqref{eq:initial_estimates:HGH_control:first_step} yields
    \begin{equation*}
    \abs{\sum_{k,l}^{(\sm ij)} h_{ik}\Gtildeij_{kl}(z) h_{lj} }
    \le (\log N)^{2\xi} \left [\frac{\max_i \abs{\Gtildeij_{kk}(z)}}{q^2} + \frac{\max_{i\ne j} \abs{\Gtildeij_{kl}(z)}}{q} + \frac{C}{\sqrt{N}}\right]
    \end{equation*}
    with $(\xi, \nu)$-high probability.
    Distributing and using Lemma \ref{lemma:initial_elementary_resolvent_estimates}, we have
    \begin{equation}\label{eq:initial_estimates:HGH_control:third}
    \left|\sum_{k,l}^{(\sm ij)} h_{ik}\Gtildeij_{kl}(z) h_{lj} \right|  \le \frac{C(\log N)^{2\xi}}{q^2} + \frac{C(\log N)^{2\xi}\Lo(z)}{q} + \frac{C(\log N)^{2\xi}}{\sqrt{N}}
    \end{equation}
    with $(\xi, \nu)$-high probability.
    For the terms of $\left| \sum_{k,l}^{(\sm ij)}B_{ik} \Gtildeij_{kl}(z) B_{lj} \right|$ controlled by Lemma \ref{lemma:hb_bound} (i.e., terms that are linear in $\Hi$, 
    we modify the argument in Lemma \ref{lemma:hb_bound} in places where the bound $R \le C\eta$ was used, such as Equation \eqref{eq:something1}, by replacing the bound with the trivial $R \le 1$.
    In doing so, we find that, as in the result of Lemma \ref{lemma:hb_bound},
    for any $\alpha_1 \in \{N-r+1, \dots, N \}$, when $\eta \ge 2$,
    \begin{equation*} 
        \abs{\lambda_{\alpha_1}(\VMij{\alpha_1}{i})^\top\Gtildeijmatrix(z)\submatrix{\mH_{\cdot i}}{ij}  } 
        \le Cr(\log N)^{\xi + 3\gamma}\left(\frac{1}{q} + \sqrt{\frac{\Im \mtilde(z)}{N\eta}} + \frac{1}{N\eta} \right)
    \end{equation*}
    with $(\xi, \nu)$-high probability.
    Then using Lemma \ref{lemma:initial_elementary_resolvent_estimates} and the assumption that $\eta \ge 2$ here, we have that with $(\xi, \nu)$-high probability,
    \begin{equation}\label{eqn:apx:large-eta-hb}
        \abs{\lambda_{\alpha_1}(\VMij{\alpha_1}{i})^\top\Gtildeijmatrix(z)\submatrix{\mH_{\cdot i}}{ij} } 
        \le  Cr(\log N)^{\xi + 3\gamma}\left(\frac{1}{q} + \frac{1}{\sqrt{N}} \right) .
    \end{equation}
    Applying the triangle inequality to $\abs{ \sum_{k,l}^{(\sm ij)}B_{ik} \Gtildeij_{kl}(z) B_{lj} }$ yields 
    $2r$ terms that, by Lemma \ref{lemma:main_quad}, are bounded by $C(\delocsq/\sqrt{N})$;
    and
    $\binom{r}{2}$ terms that, by Lemma \ref{lemma:cross_term}, are bounded by $C(r^2\logN^{6\gamma}/N)$, 
    both with $(\xi, \nu)$-high probability. That is,
    \begin{equation}\label{eqn:BGB_bound}
        \abs{ \sum_{k,l}^{(\sm ij)}B_{ik} \Gtildeij_{kl}(z) B_{lj} } \le 2r\frac{C\delocsq}{\sqrt{N}} + {C\binom{r}{2}} \frac{r^2\logN^{6\gamma}}{N}
    \end{equation}
    with $\xinu$-high probability.
    Altogether, from Lemmas \ref{lemma:main_quad}, \ref{lemma:cross_term}, and Equation \eqref{eqn:apx:large-eta-hb}, 
    we have that with $(\xi, \nu)$-high probability,
    \begin{equation}\label{eq:Gtilde:initial_estimate}
    \begin{aligned}
    |\Gtilde_{ij}(z)| &\le \frac{C(\log N)^{2\xi}}{q^2} + \frac{C(\log N)^{2\xi}\Lo(z)}{q} + \frac{C(\log N)^{2\xi}}{\sqrt{N}} \\
    &~~~~~~~~~+ Cr^2(\log N)^{\xi + 3\gamma}\left(\frac{1}{q} + \frac{1}{\sqrt{N}} \right) + \frac{Cr^4 \logN^{6\gamma}}{N} . 
    \end{aligned}
    \end{equation}
    Taking the maximum over $i\ne j$ (adjusting $\nu$ by a constant factor as needed), using the fact that $1/q \le 1/(r^2(\log N)^{3\xi + 3\gamma})$ from Equation \eqref{eq:def:q}, 
    and Assumption \eqref{assumption:rank}, we have, with $\xinu$-high-probability,
    \begin{equation}\label{eq:initial_Lo_estimate}
     \Lo(z) \le C(\log N)^{-2\xi} .
    \end{equation}
     
    To bound $|\Ztilde_i(z)|$, we apply Equation \eqref{eqn:apx:large-eta-hb} $2r$ times to obtain, from Equation \eqref{eq:Z_estimate:main}, \begin{equation}\label{eq:Ztilde_initial_bound}
    \begin{aligned}
        \abs{\Ztilde_i(z)} &\le 
        Cr^2(\log N)^{\xi + 3\gamma}\left(\frac{1}{q} + \frac{1}{\sqrt{N}} \right)
        + \Bigg| (\Hi)^\top\Gtildeimatrix(z)\Hi \\
        &~~~~~~~~~~~~~~~~~~ - \E{(\Hi)^\top\Gtildeimatrix(z)\Hi \mid \submatrix{\mH}{i}} \Bigg|
    \end{aligned} \end{equation} with $(\xi, \nu)$-high probability. 
    As in the proof of \ref{lemma:Lo_Zi_Ai_estimates}, the right two terms of Equation \eqref{eq:Ztilde_initial_bound} are bounded as 
    \begin{equation*} \begin{aligned}
        &\abs{(\Hi)^\top\Gtildeimatrix(z)\Hi - \E{(\Hi)^\top\Gtildeimatrix(z)\Hi \mid \submatrix{\mH}{i}}} \\
        &~~~~~~~~~~~~~\le (\log N)^{2\xi} \left[\frac{\max_{k \ne l} \abs{\Gtildei_{kl}(z)}}{q} + \left(\frac{1}{N^2} \sum_{k\ne l} \abs{\Gtildei_{kl}(z)}^2 \right)^{1/2} \right] + (\log N)^{\xi}\frac{\max_{k}\abs{\Gtildei_{kk}(z)}}{q}
    \end{aligned}
    \end{equation*}
    with $(\xi, \nu)$-high probability.
    Using Lemma \ref{lemma:initial_elementary_resolvent_estimates} and Equation \eqref{eq:initial_Lo_estimate}, we have 
    \begin{equation} \label{eq:Gtildei:inter}
    \begin{aligned}
       & \abs{(\Hi)^\top\Gtildeimatrix(z)\Hi - \E{(\Hi)^\top\Gtildeimatrix(z)\Hi \mid \submatrix{\mH}{i}}}  \\
        &~~~~~~~~~~~~~~~~~~~~~~~~~~~~~~~~~~~~~~~~~~~~~~~~~~~~~~ \le \frac{C}{q} + \frac{C(\log N)^{\xi}}{q} + (\log N)^{2\xi}\left(\frac{1}{N^2} \sum_{k\ne l} \abs{\Gtildei_{kl}(z)}^2 \right)^{1/2}
    \end{aligned}
    \end{equation}
    with $(\xi, \nu)$-high probability.
    Following the arguments leading up to Equation \eqref{eq:initial_estimates:HGH_control:second}, using Ward's identity, we obtain
    \begin{equation*}
    \left(\frac{1}{N^2} \sum_{k\ne l} \abs{\Gtildei_{kl}(z)}^2 \right)^{1/2} \le \frac{C}{\sqrt{N}}
    \end{equation*}
    with $(\xi, \nu)$-high probability. 
    Hence we have, applying this bound to Equation~\eqref{eq:Gtildei:inter}, 
    \begin{equation*}
    \abs{(\Hi)^\top\Gtildei(z)\Hi - \E{(\Hi)^\top\Gtildei(z)\Hi \mid \submatrix{\mH}{i}}}  
    \le \frac{C(\log N)^{\xi}}{q} + \frac{C(\log N)^{2\xi}}{\sqrt{N}} 
    \end{equation*}
    with $(\xi, \nu)$-high probability.
    Applying this bound to Equation \eqref{eq:Ztilde_initial_bound} yields
    \begin{equation*}
    \abs{\Ztilde_i(z)} \le Cr^2(\log N)^{\xi + 3\gamma}\left(\frac{1}{q} + \frac{1}{\sqrt{N}} \right) + \frac{C(\log N)^{2\xi}}{\sqrt{N}} \le C(\log N)^{-2\xi}
    \end{equation*}
    with $(\xi, \nu)$-high probability,
    where in the second inequality we applied Equation \eqref{eq:def:q} to upper bound $q$.
    Taking the maximum over $i$ and reducing the value of $\nu$ yields the bound for $\Ztilde_i(z)$ with $\xinu$-high-probability. 
    
    To estimate $\calAtilde_i(z)$, we follow the approach taken in Lemma \ref{lemma:Lo_Zi_Ai_estimates}. 
    After applying the triangle inequality,
    one application of Assumption \eqref{assumption:deloc}, $r$ applications of \ref{lemma:main_quad}, and $\binom{r}{2}$ applications of Lemma \ref{lemma:cross_term} yield that with $(\xi, \nu)$-high probability,
    \begin{equation*}
    \abs{\calAtilde_i(z)} 
    \le
    \frac{Cr\rhon\delocsq}{\sqrt{N}}
    + C\left(\frac{r\sqrt{N}\delocsq}{N} +  \frac{r^3\logN^{6\gamma}}{N\eta} \right)
    +  C\frac{r^4\logN^{6\gamma}}{N\eta} 
    + \abs{ \frac{1}{N}\sum_{j=1}^N\frac{\Gtilde_{ij}(z)\Gtilde_{ji}(z)}{\Gtilde_{ii}(z)} } .
    \end{equation*}
    Substituting our choice of $\eta \ge 2$ and applying Assumption \eqref{assumption:rank}, we obtain
    \begin{equation}\label{eq:calAtilde_initial_estimate_1}
        \abs{\calAtilde_i(z)} \le C\frac{\logN^{2\gamma + \zeta}}{\sqrt{N}} + C\frac{\logN^{6\gamma + 4\zeta}}{N} + \left| \frac{1}{N}\sum_{j=1}^N\frac{\Gtilde_{ij}(z)\Gtilde_{ji}(z)}{\Gtilde_{ii}(z)} \right|
    \end{equation}
    with $(\xi, \nu)$-high probability.
    From Lemma \ref{lemma:resolvent_identity_corollary}, we have
    \begin{equation*}
        \frac{\Gtilde_{ij}(z)}{\Gtilde_{ii}(z)}\Gtilde_{ji}(z) = -\Gtildei_{jj}(z)(B_{ij} - \sum_{k,l}^{(\sm ij)}B_{ik}\Gtilde_{kj}^{(\sm ij)}(z) B_{lj}) \Gtilde_{ji}(z)
    \end{equation*}
    Hence by Lemma \ref{lemma:initial_elementary_resolvent_estimates}, we have
    \begin{equation} \label{eq:RHSLo} \begin{aligned}
    \frac{1}{N} \sum_{j=1}^N\abs{\frac{\Gtilde_{ij}(z)}{\Gtilde_{ii}(z)}\Gtilde_{ji}(z)} 
        &\le \abs{C\Lo(z) \left(B_{ij} - \sum_{k,l}^{(\sm ij)}B_{ik}\Gtilde_{kj}^{(\sm ij)}(z) B_{lj} \right) } \\
        & \le C\Lo(z) \left(\abs{B_{ij}} + \abs{\sum_{k,l}^{(\sm ij)}B_{ik}\Gtilde_{kj}^{(\sm ij)}(z) B_{lj} } \right)
    \end{aligned} \end{equation}
    with $(\xi, \nu)$-high probability.
    To bound the absolute value of the right-hand side of the last inequality, we apply Equations \eqref{eqn:BGB_bound} and \eqref{eq:Gtilde:initial_estimate} to obtain
    \begin{equation*}
    \abs{\sum_{k,l}^{(\sm ij)}B_{ik}\Gtilde_{kj}^{(\sm ij)}(z) B_{lj} }
    \le C \frac{r^2(\log N)^{\xi+3\gamma}}{q} 
    \le C (\log N)^{-2\xi} ,
    \end{equation*}
    where we have used $q \ge r^2(\log N)^{3\xi + 3\gamma}$ from Equation \eqref{eq:def:q}.
    Applying this to Equation~\eqref{eq:RHSLo}, controlling $|B_{ij}|$ with Lemma \ref{lemma:B_control}, and bounding $\Lo \le C$ by Lemma \ref{lemma:initial_elementary_resolvent_estimates}, we have
    \begin{equation*}
        \frac{1}{N} \sum_{j=1}^N\abs{\frac{\Gtilde_{ij}(z)}{\Gtilde_{ii}(z)}\Gtilde_{ji}(z)} \le C(\log N)^{-2\xi}
    \end{equation*}
    with $(\xi, \nu)$-high probability.
    Substituting this bound into Equation \eqref{eq:calAtilde_initial_estimate_1},
    \begin{equation*}
      \abs{\calAtilde_i(z)} \le C\frac{\logN^{2\gamma + \zeta}}{\sqrt{N}} + C\frac{\logN^{6\gamma + 4\zeta}}{N}
      + C(\log N)^{-2\xi} \le C(\log N)^{-2\xi}
    \end{equation*}
    for sufficiently large $N$ with $(\xi, \nu)$-high probability. 
    Taking the maximum over $i$ (reducing the value of $\nu$ by a constant factor, if needed) yields the result with $\xinu$-high-probability.
    All of these rates are the exact same as in Lemma 7.8 from \cite{Erdos}. The rest of the proof follows from the arguments contained in Lemma 3.16 in \cite{Erdos}.
    Hence, the remainder of the proof follows exactly as given for Lemmas 7.8 and 7.9 from \cite{Erdos}, and details are omitted.
    \end{proof} 

    The following result, which establishes the global and strong local semicircle laws for the eigenvalues of $\mB$, might be of independent interest. 
    Note that the conditions on $\xi$ and $q$ are slightly stronger than those made elsewhere in this appendix.
    
    \begin{theorem} \label{thm:semicirc}
    Let $z = E + \iu \eta \in D$ and define $\kappa_E = ||E| - 2|$.
    Suppose $\mB$ satisfies Equation \eqref{eq:def:B}, and Assumptions \eqref{assumption:deloc} -- \eqref{assumption:moments} hold, and suppose that
    \begin{equation*}
        \xi = \frac{C_0}{2}\log(\log N) \quad \text{and} \quad  r^2(\log N)^{C_3 + 3\gamma} \le q . 
    \end{equation*}
    Then there are universal constants $C_3, C_4 >0$ such that for some $\nu > 0$ that depends on $C_0, C_1,$ and $C_2$ in Equations \eqref{def:H}, \eqref{eq:def:q}, and Definition \ref{def:H},
    such that the event
    \begin{equation}\label{eqn:Global_law}
            | \mtilde(z) - \msc(z) | \le \frac{r\pi}{N\eta} + (\log N)^{C_4 \xi} \left( \min \left( \frac{1}{q^2\sqrt{\kappa_E + \eta}}, \frac{1}{q} \right) + \frac{1}{N\eta} \right ) 
        \end{equation}
    holds with $\xinu$-high probability uniformly over all $z \in D$.
        Moreover, we have with $\xinu$-high probability uniformly over all $z \in D$
        \begin{equation}\label{thm:strong-law} 
\max_{1\le i,j\le N}\abs{\Gtilde_{ij}(z) - I(i = j)\msc(z)} \le \frac{r^2(\log N)^{C_3\xi + 3\gamma}}{q} + r^2(\log N)^{C_3\xi + 6\gamma} \left(\sqrt{\frac{\Im \msc(z)}{N\eta} } + \frac{r^2}{N\eta}\right) .
        \end{equation}
    \end{theorem}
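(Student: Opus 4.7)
The plan is to follow the bootstrap strategy developed in \cite{Erdos}, adapted to the rank-$r$ signal setting using the initial estimates assembled above. The skeleton has four moving parts: (i) a self-consistent equation for $\mtilde(z)$ derived from Lemma~\ref{lemma:7.2}, (ii) a stability analysis of that equation against the defining equation $\msc(z) = -1/(z+\msc(z))$, (iii) a continuity argument in $\eta$ to propagate the event $\Ot(\eta)$ from $\eta \ge 2$ downward to the full domain $D_L$, and (iv) a final translation from $\Lambdatilde$ to the entrywise strong law via Lemmas~\ref{lemma:Lo_Zi_Ai_estimates} and~\ref{lemma:controlDiagResolvent}.

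Concretely, the first step is to average the identity from Lemma~\ref{lemma:7.2} over $i \in [N]$. Writing $\Gtilde_{ii}(z) = \msc(z) + \psitilde_i(z)$ and expanding $1/(-z - \msc - \average{\psitilde} + \Ytilde_i)$ to second order in $\average{\psitilde}$, one obtains a quadratic self-consistent equation for $\mtilde(z)$ of the form
\begin{equation*}
    \bigl(1 + \msc(z)^2\bigr)\bigl(\mtilde(z) - \msc(z)\bigr) = \msc(z)^2 \average{\Ytilde(z)} + O\bigl(\Lambdatilde(z)^2 + \Phi(z)^2\bigr),
\end{equation*}
on the event $\Ot(\eta)$. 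The key input is that $\average{\Ytilde(z)}$ is bounded by $\Phi(z)$ plus a small martingale/fluctuation average of $H_{ii}$, $\Ztilde_i$, and $\calAtilde_i$, all of which are controlled by Lemma~\ref{lemma:Lo_Zi_Ai_estimates}. The standard stability analysis of the semicircle equation (as in Lemma 4.5 of \cite{Erdos}) then converts this into the dichotomy $\Lambdatilde(z) \le C\Phi(z)$ or $\Lambdatilde(z) \ge c/\sqrt{\kappa_E + \eta}$, with the minimum-expression on the right-hand side of Equation~\eqref{eqn:Global_law} emerging from two competing bounds on $\Phi(z)$ depending on whether $\Lambdatilde$ is small or large.

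The main technical obstacle is the continuity/bootstrap step that rules out the bad branch of the dichotomy on all of $D_L$. The idea is to first observe, via the initial estimates at $\eta \ge 2$ (i.e., the already-established $\Ot(2)$ with $\xinu$-high probability), that $\Lambdatilde(z)$ is automatically $O(\Phi(z))$ there. For general $z \in D_L$, one moves $\eta$ downward along a sufficiently fine lattice $\eta_k = 2 - k/N^3$ and uses Lipschitz continuity of $\mtilde$ and $\Gtilde_{ij}$ in $\eta$ with constant $O(\eta^{-2}) \le O(N^2)$ to transfer bounds between lattice points. At each $\eta_k$ the self-consistent equation constrains $\Lambdatilde(z)$ to lie in one of the two branches, and continuity forbids jumps between them once the gap between the two bounds exceeds $N^{-2}$; this pins $\Lambdatilde$ to the good branch and hence maintains $\Ot(\eta_k)$ at every step, up to adjusting $\nu$ by a factor linear in the number of lattice points via a union bound. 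The rank-$r$ adaptation requires no new ideas here but forces the polynomial prefactors in $\Phi(z)$ to absorb the extra factors of $r^2$ and $(\log N)^{6\gamma}$ that propagated through Lemmas~\ref{lemma:main_quad}, \ref{lemma:cross_term}, and~\ref{lemma:Lo_Zi_Ai_estimates}.

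Once the bound $\Lambdatilde(z) \le C\Phi(z)$ has been established on $D_L$ with $\xinu$-high probability, the global law Equation~\eqref{eqn:Global_law} follows by combining with Lemma~\ref{lemma:7.1} (to convert $\Lambdatilde$ into $\Lambda$, at the cost of the additive $r\pi/(N\eta)$ term) and by substituting $\Im \mtilde(z) = \Im \msc(z) + O(\Phi(z))$ back into the definition of $\Phi$ to produce the $\min(q^{-2}(\kappa_E + \eta)^{-1/2}, q^{-1})$ form; the two regimes come from whether $\sqrt{\Im \msc/(N\eta)}$ or $1/q$ dominates in $\Phi(z)$, and the square root of $\kappa_E + \eta$ arises from the bound $\Im \msc(z) \asymp \sqrt{\kappa_E + \eta}$ inside the bulk and near the edge. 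The strong local law Equation~\eqref{thm:strong-law} then follows immediately: the off-diagonal case is Equation~\eqref{eq:Lo:bound} of Lemma~\ref{lemma:Lo_Zi_Ai_estimates}, while the diagonal case uses Equation~\eqref{ineq:Ldtilde_bound} of Lemma~\ref{lemma:controlDiagResolvent} to write $\max_i \abs{\Gtilde_{ii}(z) - \msc(z)} \le \Lambdatilde(z) + C\Phi(z) \le C\Phi(z)$, after which one replaces $\Im \mtilde$ with $\Im \msc$ inside $\Phi(z)$ (absorbing the difference into the constants), yielding the stated expression.
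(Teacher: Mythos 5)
Your proposal attacks the global law \eqref{eqn:Global_law} by re-deriving the semicircle law for $\mtilde(z)$ from scratch: average the self-consistent identity from Lemma~\ref{lemma:7.2}, run a stability analysis against $\msc^2 + z\msc + 1 = 0$, and bootstrap from $\eta \ge 2$ downward via a lattice-continuity argument. The paper takes a much shorter route for this part: Lemma~\ref{lemma:7.1} (which is the $r$-fold Weyl interlacing from Lemma~\ref{lemma:weyl}) converts $\Lambdatilde$ to $\Lambda$ at cost $r\pi/(N\eta)$, and then $\Lambda$ is controlled by a direct citation of the \emph{already known} semicircle law for the pure Wigner-type matrix $\mH$ (Theorem 2.8 of \cite{Erdos}). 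This shortcut entirely sidesteps the self-consistent equation and bootstrap for the global law; the bootstrap machinery is only needed later, for the strong entrywise estimate. Your route also works and is arguably more self-contained, but it is doing substantially more work than necessary for \eqref{eqn:Global_law}. For \eqref{thm:strong-law}, your plan matches the paper's: invoke Lemmas~\ref{lemma:Lo_Zi_Ai_estimates} and~\ref{lemma:controlDiagResolvent}, then replace $\Im\mtilde$ by $\Im\msc$ inside $\Phi(z)$ using the already-proven bound on $\Lambdatilde$.

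Two small corrections. First, the stability coefficient in your linearized self-consistent equation should be $1 - \msc(z)^2$, not $1 + \msc(z)^2$. Since $\msc(z) = -1/(z + \msc(z))$, a perturbation $\delta$ of the denominator gives $\Gtilde_{ii} - \msc = \msc^2\delta + O(\delta^2)$, and averaging yields $(1 - \msc^2)(\mtilde - \msc) = -\msc^2\average{\Ytilde} + O(\cdot)$. This matters because $1 - \msc(z)^2$ degenerates precisely at the spectral edges $|E| = 2$, which is what produces the dichotomy and the $\sqrt{\kappa_E + \eta}$ dependence; with $1 + \msc^2$ the analysis would not exhibit the edge degeneration at all. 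Second, your closing invocation of Lemma~\ref{lemma:7.1} ``to convert $\Lambdatilde$ into $\Lambda$'' is backwards: once the bootstrap has bounded $\Lambdatilde(z)$ directly, you already have the global law and do not need to detour through $\Lambda$. Lemma~\ref{lemma:7.1} is needed only in the paper's reduction-to-Wigner argument, where the goal is to \emph{import} a bound on $\Lambda$ and transfer it to $\Lambdatilde$; in your scheme it is superfluous and the additive $r\pi/(N\eta)$ term is subsumed by the $(\log N)^{C_4\xi}/(N\eta)$ term that your $\Phi(z)$ already produces.
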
 
    \begin{proof}
    To show Equation \eqref{eqn:Global_law}, we have from Lemma \ref{lemma:7.1},
    \begin{equation*}
        \abs{\Lambdatilde(z) - \Lambda} \le \frac{r\pi}{N\eta} .
    \end{equation*}
    From which, observe that 
    \begin{equation*}
        \Lambdatilde(z) \le \frac{r\pi}{N\eta} + \Lambda(z) .
    \end{equation*}
    Applying Theorem 2.8 from \cite{Erdos}, specifically Equation (2.16), and recalling the definition of $\Lambdatilde(z)$ and $\Lambda(z)$ from Equation \eqref{eq:def:stieltjes_errors} we have uniformly over $z \in D$ with $\xinu$-high probability, the event
    \begin{equation*}
    |\mtilde(z) - \msc(z)| \le \frac{r\pi}{N\eta} 
    + (\log N)^{C_2\xi} \left( \min \left( \frac{1}{q^2\sqrt{\kappa_E + \eta}}, \frac{1}{q} \right) + \frac{1}{N\eta} \right) ,
    \end{equation*}
    as desired.

    To show Equation \eqref{thm:strong-law},
    apply Lemmas \ref{lemma:Lo_Zi_Ai_estimates} and \ref{lemma:controlDiagResolvent} and Equation \ref{eqn:Global_law} to obtain the bound uniformly for $z \in D_L$.
    To extend the bound uniformly for $z \in D$, follow the arguments given in \cite{Erdos} after Equation (4.28) until the end of section 4.1.
    \end{proof} 

\subsection{Proof of Delocalization}

We are now prepared to prove our main result on the delocalization of the eigenvectors of $\mB$. We remark that only the weak local law is needed to show delocalization.
\begin{theorem}
\label{theorem:perturbed_bulk_eigenvectors_delocalized}
Let $\mB$ satisfy Equation \eqref{eq:def:B} and let $\mathcal{S} = \{N-r+1, \dots, N \}$.
Under Assumptions \eqref{assumption:deloc} -- \eqref{assumption:moments},
for some $\nu > 0$ that depends on $C_0, C_1,$ and $C_2$ in Definition \ref{def:H}, Equations \eqref{eq:def:xi} and \eqref{eq:def:q},  we have
\begin{equation}
   \max_{\alpha \in \mathcal{S}} \max_j \abs{\vhat_{j\alpha}} \le  C\frac{r^2(\log N)^{4\xi+6\gamma}}{\sqrt{N}}
\end{equation}
with $\xinu$-high probability.
\end{theorem}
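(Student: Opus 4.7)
My plan is to use the standard random matrix theory trick that relates eigenvector entries to the imaginary part of the diagonal of the resolvent. For any eigenpair $(\lambdahat_\alpha,\bvhat_\alpha)$ of $\mB$ and any $\eta>0$, the spectral decomposition of $\Gtildematrix(z)$ yields
\begin{equation*}
\Im \Gtilde_{jj}(\lambdahat_\alpha+\iu\eta) = \sum_\beta \frac{\eta\,|\vhat_{j\beta}|^2}{(\lambdahat_\beta-\lambdahat_\alpha)^2+\eta^2} \ge \frac{|\vhat_{j\alpha}|^2}{\eta},
\end{equation*}
so $|\vhat_{j\alpha}|^2 \le \eta\,\Im \Gtilde_{jj}(\lambdahat_\alpha+\iu\eta)$. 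The proof then reduces to controlling the diagonal of the resolvent at spectral parameters very close to the real eigenvalues of $\mB$.

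For this I would invoke the strong local law established in Theorem \ref{thm:semicirc}, specifically Equation \eqref{thm:strong-law}, which bounds $|\Gtilde_{jj}(z)-\msc(z)|$ uniformly over $z\in D$ by a quantity of the form $r^2(\log N)^{C_3\xi+3\gamma}/q \;+\; r^2(\log N)^{C_3\xi+6\gamma}\bigl(\sqrt{\Im\msc(z)/(N\eta)} + r^2/(N\eta)\bigr)$ with $\xinu$-high probability. Since both $|\msc(z)|$ and $\Im\msc(z)$ are $O(1)$ on $D$, choosing $\eta$ at the lower edge of $D_L$, namely $\eta \asymp r^4(\log N)^{L+12\gamma}/N$ with $L$ matched to the prescribed exponent $4\xi$, forces every term on the right-hand side to be at most polylogarithmic. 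Consequently $\Im\Gtilde_{jj}(\lambdahat_\alpha+\iu\eta) \le C(\log N)^{c}$, so substituting back gives $|\vhat_{j\alpha}|^2 \lesssim r^4(\log N)^{L+12\gamma+c}/N$, whose square root is the claimed $r^2(\log N)^{4\xi+6\gamma}/\sqrt{N}$ rate. A union bound over $j\in[N]$ and over $\alpha$ is absorbed into the polylogarithmic factor because the local-law estimate already holds uniformly on $D$.

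The main obstacle is verifying that the spectral parameter $\lambdahat_\alpha+\iu\eta$ actually lies in $D$, since $D$ only covers real parts with $|E|\le\Sigma$ for a fixed constant $\Sigma$. For bulk eigenvalues ($\alpha\notin\mathcal{S}$) this follows from the norm bound $\|\mH\|\le 2+o(1)$ that holds with $\xinu$-high probability (already used in Lemma \ref{lemma:leave_one_out_E}), combined with Weyl's inequality and the $r$-step interlacing from Lemma \ref{lemma:weyl} to confine the non-signal eigenvalues of $\mB$ to a constant neighborhood of $[-2,2]$. The signal eigenvalues in $\mathcal{S}$, however, have magnitude $\asymp\rhon\sqrt{N}$ and lie far outside $D$, so the resolvent route cannot be applied to them directly; for those indices I would instead decompose $\bvhat_\alpha=\sum_\ell(\bvhat_\alpha^\top\bv_\ell)\bv_\ell$ and combine Lemma \ref{lemma:subspace_bound} (which controls $\sum_{\ell\neq\alpha}|\bvhat_\alpha^\top\bv_\ell|^2$ by $r^2(\log N)^{4\gamma}/\lambda_\alpha^2$) with Assumption \eqref{assumption:deloc} on the $\bv_\ell$ for $\ell\in\mathcal{S}$, which transfers delocalization from the signal eigenvectors of $\mL$ to those of $\mB$ with only a factor $r(\log N)^{2\gamma}$ loss, matching the target rate.
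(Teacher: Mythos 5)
First, note that the target statement as written appears to contain a typo: $\mathcal{S}=\{N-r+1,\dots,N\}$ indexes the \emph{signal} eigenvalues, while the paper's own proof, the equivalent main-text Theorem~\ref{theorem:main:deloc} ($\max_{\alpha>r}$ in descending indexing), and the way the result is invoked inside Theorem~\ref{theorem:main_result:weighted} all concern $\alpha\notin\mathcal{S}$, i.e.\ the eigenvectors of the perturbed null space. Read as $\alpha\notin\mathcal{S}$, your main argument matches the paper's: apply $|\vhat_{j\alpha}|^2 \le \eta\,\Im\Gtilde_{jj}(\lambdahat_\alpha+\iu\eta)$, control $\Im\Gtilde_{jj}$ by a local law on $D_L$, take $\eta$ at the lower edge of $D_L$ with $L=8\xi$, and use $\|\mH\|\le 2+o(1)$ with $r$-step interlacing (Lemma~\ref{lemma:weyl}) to place $\lambdahat_\alpha$ inside $|E|\le\Sigma$ for $\alpha\notin\mathcal{S}$. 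The one substantive difference is that you invoke the strong local law (Theorem~\ref{thm:semicirc}), which is not actually available under the hypotheses of the theorem you are proving: Theorem~\ref{thm:semicirc} requires the specific choice $\xi=\tfrac{C_0}{2}\log\log N$ and a stronger lower bound $q\ge r^2(\log N)^{C_3+3\gamma}$, neither of which follows from Assumptions~\eqref{assumption:deloc}--\eqref{assumption:moments}. The paper instead goes through the weak law (Theorem~3.1 of \cite{Erdos} together with Lemmas~\ref{lemma:7.1} and~\ref{lemma:controlDiagResolvent}), which suffices and stays within the stated hypotheses. Also, once $\Ld(z)\le C$ one has $\Im\Gtilde_{jj}\le C$ --- a constant, not a polylog --- so the residual $(\log N)^c$ factor you carry should not appear, and no union bound over $j,\alpha$ is needed since the local law is uniform on $D_L$.

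The $\alpha\in\mathcal{S}$ branch of your proposal has a genuine gap, which is further evidence the intended statement is for $\alpha\notin\mathcal{S}$. Expanding $\bvhat_\alpha=\sum_\ell(\bvhat_\alpha^\top\bv_\ell)\bv_\ell$ and invoking Lemma~\ref{lemma:subspace_bound} controls only the $\ell^2$ mass $\sum_{\ell\neq\alpha}|\bvhat_\alpha^\top\bv_\ell|^2$, but the vectors $\bv_\ell$ with $\ell\notin\mathcal{S}$ span the null space of $\mL$ and are \emph{not} covered by Assumption~\eqref{assumption:deloc}; they can have $O(1)$ entries. Cauchy--Schwarz on the null-space contribution then gives only $|\vhat_{j\alpha}|\lesssim r\delocsq/\lambda_\alpha \asymp r\delocsq/(\rhon\sqrt{N})$, which exceeds the target $r^2(\log N)^{4\xi+6\gamma}/\sqrt{N}$ whenever $\rhon\to 0$ --- a regime Assumption~\eqref{assumption:signal_strength} explicitly permits. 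Entrywise delocalization of the \emph{signal} eigenvectors of $\mB$ is a $(\tti)$-norm perturbation fact that the paper does not attempt in this theorem; for the estimation results it is supplied by the separate hypothesis $\etan$ in Equation~\eqref{eq:def:etan}. You should drop this branch.
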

\begin{proof}
Following \cite{Erdos}, set
\begin{equation}\label{eqn:deloc-L-choice}
    L = 8\xi ,
\end{equation} and set
\begin{equation}\label{eqn:deloc-eta-choice}
\eta = \frac{r^4(\log N)^{L+12\gamma}}{N} .
\end{equation}
By Equation~\eqref{ineq:Ldtilde_bound} in Lemma~\ref{lemma:controlDiagResolvent}, 
\begin{equation} \label{eq:Lambdatilde:UB}
    \Ld(z) \le \Lambdatilde(z) + C\Phi(z).
\end{equation}
We examine $\Lambdatilde(z)$ first.
By Lemma~\ref{lemma:7.1} and Theorem 3.1 from \cite{Erdos}, we have
\begin{equation} \label{ineq:Lambdatilde_bound}
\Lambdatilde(z) \le | \Lambdatilde(z) - \Lambda(z) | + \Lambda(z)
\le \frac{r\pi}{N\eta} + \frac{C(\log N)^{\xi}}{\sqrt{q}} + \frac{C(\log N)^{2\xi}}{(N\eta)^{1/3}} .
\end{equation}
Applying this to Equation~\eqref{eq:Lambdatilde:UB} and recalling the definition of the spectral control parameter $\Phi(z)$ from Equation \eqref{eq:def:Phi},
\begin{equation}\label{eq:eigenvector_delocalization:Ld_bound_1}
\Ld(z) 
\le \frac{r\pi}{N\eta} + \frac{C(\log N)^{\xi}}{\sqrt{q}} 
+ \frac{C(\log N)^{2\xi}}{(N\eta)^{1/3}} + \frac{r^2(\log N)^{\xi + 3\gamma}}{q} 
+ r^2(\log N)^{2\xi + 6\gamma} \left(\sqrt{\frac{\Im \mtilde(z)}{N\eta} } 
+ \frac{r^2}{N\eta}\right) .
\end{equation}
By the triangle inequality,
\begin{equation*}
    \Im \mtilde(z) \le |\Im \mtilde(z) - \Im \msc(z)| + \Im \msc(z) 
    \le |\mtilde(z) - \msc| + \Im \msc(z) = \Lambdatilde(z) + \Im \msc(z)
\end{equation*}
where in the equality, we have substituted the definition of $\Lambdatilde(z)$ from Equation~\eqref{eq:def:stieltjes_errors}.

Now since $C^{-1} \le |\msc (z)| \le C$ for $z \in D_L$ by Lemma 3.2 in \cite{Erdos},

and using Equation~\eqref{ineq:Lambdatilde_bound}, we have 
\begin{equation*}
    \Im \mtilde(z) \le \frac{r\pi}{N\eta} + \frac{C(\log N)^{\xi}}{\sqrt{q}} + \frac{C(\log N)^{2\xi}}{(N\eta)^{1/3}} + C
\end{equation*}
Substituting our value for $\eta$ from Equation \eqref{eqn:deloc-eta-choice},
recalling the bounds on $q$ from Equation \eqref{eq:def:q} and simplifying, 
we have
\begin{equation}\label{eq:eigenvector_delocalization:m_tilde_bound_1}
    \Im \mtilde(z) \le \frac{r\pi}{r^4(\log N)^{8\xi+12\gamma}} + \frac{C(\log N)^{\xi}}{r(\log N)^{(3\xi+ 3\gamma)/2}} + \frac{C(\log N)^{2\xi}}{(r^4(\log N)^{8\xi+12\gamma})^{1/3}} + C \le C
\end{equation}
Hence, we find that by substituting Equation \eqref{eq:eigenvector_delocalization:m_tilde_bound_1} into Equation \eqref{eq:eigenvector_delocalization:Ld_bound_1},
\begin{equation*}
\begin{aligned}
    \Ld(z) \le \frac{r\pi}{r^4(\log N)^{8\xi+12\gamma}} &+ \frac{C(\log N)^{\xi}}{r(\log N)^{(3\xi+ 3\gamma)/2}} + \frac{C(\log N)^{2\xi}}{(r^4(\log N)^{8\xi+12\gamma})^{1/3}} + \frac{r^2(\log N)^{\xi + 3\gamma}}{r^2(\log N)^{3\xi + 3\gamma}}  \\
    &~~~~~~~~~~~~~~~~~~+ r^2(\log N)^{2\xi + 6\gamma}\left(\sqrt{\frac{C}{r^4(\log N)^{8\xi+12\gamma}}} + \frac{r^2}{r^4(\log N)^{8\xi+12\gamma}}\right) .
\end{aligned}
\end{equation*}
Bounding decaying terms of the form $(\log n)^{-c}$ by a constant,
\begin{equation*}
    \Ld(z) \le C .
\end{equation*}
Recalling the definition of $\Ld(z)$ from \eqref{eq:def:Ld-Lo}, we have for any $j \in [N]$ 
\begin{equation*}
    |\Gtilde_{jj}(z) - \msc(z) | \le C .
\end{equation*}
Recalling again that $|\msc (z)|$ is bounded by constants from Lemma 3.2 in \cite{Erdos}, 
we have that
\begin{equation*}
     |\Gtilde_{jj}(z)| \le C,
\end{equation*}
which then implies that for $\alpha \notin \{N-r+1, N-r +2, \dots, N \}$,
\begin{equation*}
\begin{aligned}
    C \ge \Im \Gtilde_{jj}(\lambdahat_{\alpha} + \iu\eta) = \sum_{\beta = 1}^N
    \frac{\eta|\vhat_{j\beta}|^2}{(\lambdahat_{\beta} - \lambdahat_{\alpha})^2 + \eta^2} 
    = \frac{\eta\abs{\vhat_{j\alpha}}^2}{\eta^2} + &\sum_{\beta \ne \alpha}
    \frac{\eta|\vhat_{j\beta}|^2}{(\lambdahat_{\beta} - \lambdahat_{\alpha})^2 + \eta^2}
    \ge \frac{\abs{\vhat_{j\alpha}}^2}{\eta} .
\end{aligned}
\end{equation*} 
Rearranging and recalling our choice of $\eta$ from Equation \eqref{eqn:deloc-eta-choice} and $L$ from Equation \eqref{eqn:deloc-L-choice}, 
\begin{equation*}
    \abs{\vhat_{j\alpha}} \le C\frac{r^2(\log N)^{4\xi+6\gamma}}{\sqrt{N}} ,
\end{equation*}
as we set out to show.
\end{proof} 

\subsection{Auxiliary Estimates}

Here we collect a handful of useful matrix results, mostly related to resolvents.

\begin{lemma}[Resolvent Identities; \cite{Erdos_lecture_notes} Lemma 3.5]
\label{lemma:resolvent_identities}
For any Hermitian matrix $\mB$ and $\T \subset [N]$, we have for any $i,j,\notin \T$ and $i \ne j$
\begin{equation}\label{eq:resolvent:a}
    G_{ij}^{(\sm\T)}(z) = -G_{ii}^{(\sm\T)}(z) \sum_{k}^{(\sm\T i)}B_{ik}G_{kj}^{(\sm \T i)}(z) = -G_{jj}^{(\sm\T)}(z)\sum_k ^{(\sm\T j)}G_{ik}^{(\sm\T j)}(z)B_{kj}
\end{equation} 
Moreover, for any $i,j,$ and $k \notin \T$ and $i,j \ne k$
    \begin{equation}\label{eq:resolvent:b}
          G_{ij}^{(\sm\T)}(z) =  G_{ij}^{(\sm\T k)}(z) + \frac{ G_{ik}^{(\sm\T)}(z) G_{kj}^{(\sm\T)}(z)}{ G_{kk}^{(\sm\T)}(z)}
    \end{equation}
\end{lemma}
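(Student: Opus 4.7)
The plan is to prove both identities by first reducing to the case $\T = \emptyset$, and then applying standard Schur-complement block-inversion formulas. Since $(\mB)^{\sm\T}$ is itself a real symmetric matrix and its resolvent is exactly $\Gtildetmatrix(z)$, each identity applied to $(\mB)^{\sm\T}$ with indices drawn from $[N] \sm \T$ recovers the stated form. I would state this reduction explicitly once, then prove the two identities for the ``bare'' resolvent $\mathbf{G}(z) = (\mB - z\mI)^{-1}$.

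For Equation~\eqref{eq:resolvent:a}, I would write $\mB - z\mI$ in $2 \times 2$ block form, isolating the $i$-th row and column from the $(N-1)$-dimensional complement:
\begin{equation*}
\mB - z\mI = \begin{pmatrix} B_{ii} - z & (\mathbf{B}_{\cdot i})^{\sm i \, \top} \\ (\mathbf{B}_{\cdot i})^{\sm i} & (\mB)^{\sm i} - z \mI^{\sm i} \end{pmatrix}.
\end{equation*}
The standard block-matrix inversion formula gives the $i$-th row of $\mathbf{G}(z)$ explicitly: for $j \neq i$,
\begin{equation*}
G_{ij}(z) = -G_{ii}(z) \, \big[(\mathbf{B}_{\cdot i})^{\sm i}\big]^{\top} \mathbf{G}^{\sm i}(z) \, (\mathbf{e}_j)^{\sm i} = -G_{ii}(z) \sum_{k \neq i} B_{ik} G^{\sm i}_{kj}(z),
\end{equation*}
which is the first equality in \eqref{eq:resolvent:a}. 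The second equality is obtained either by symmetry of $\mathbf{G}$, since $\mB$ is Hermitian, or by performing the same block decomposition around row/column $j$ instead.

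For Equation~\eqref{eq:resolvent:b}, I would apply block inversion around row/column $k$ to express $G_{ij}(z)$ for $i,j \neq k$ as
\begin{equation*}
G_{ij}(z) = G^{\sm k}_{ij}(z) + \frac{\big(\mathbf{G}^{\sm k}(z)\, (\mathbf{B}_{\cdot k})^{\sm k}\big)_i \big(\mathbf{G}^{\sm k}(z)\, (\mathbf{B}_{\cdot k})^{\sm k}\big)_j}{(B_{kk} - z) - \big[(\mathbf{B}_{\cdot k})^{\sm k}\big]^{\top} \mathbf{G}^{\sm k}(z) (\mathbf{B}_{\cdot k})^{\sm k}}.
\end{equation*}
The denominator is precisely $1/G_{kk}(z)$ by the Schur-complement formula for the diagonal entry (the same block identity that underlies Lemma~\ref{lemma:7.2}), and each factor in the numerator equals $-G_{ik}(z)/G_{kk}(z)$ (resp.\ $-G_{jk}(z)/G_{kk}(z)$) by applying part (a) to row $k$. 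Substituting and simplifying yields $G_{ij}(z) = G^{\sm k}_{ij}(z) + G_{ik}(z) G_{kj}(z)/G_{kk}(z)$, as claimed.

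These are entirely algebraic identities, so there is no real analytic obstacle; the only place where care is required is the index bookkeeping, in particular ensuring that indices are properly reindexed when passing between $\mathbf{G}$ and $\mathbf{G}^{\sm k}$, and that the reduction to $\T = \emptyset$ is handled once so that the general-$\T$ statement follows without repeating the argument.
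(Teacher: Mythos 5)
Your proof is correct. The paper itself does not supply an argument for this lemma -- it simply cites Appendix~A of \cite{Erdos_lecture_notes} -- so there is nothing in the paper to compare against; your Schur-complement/block-inversion derivation is exactly the standard route taken in that reference (and essentially everywhere else these identities appear). The two points worth noting are minor: first, the reduction to $\T = \emptyset$ is clean because $(\mB)^{\sm\T}$ is itself Hermitian with resolvent $\Gtildetmatrix(z)$, so applying the bare-matrix identity to the submatrix and reindexing gives the general statement, as you say; second, in the numerator-to-$G_{ik}/G_{kk}$ substitution for \eqref{eq:resolvent:b} you need to use the already-established identity \eqref{eq:resolvent:a} with $\T = \{k\}$ removed appropriately (equivalently, the off-diagonal block formula $h = -D^{-1}b\,s^{-1}$ from the same Schur inversion), which you do implicitly and which works out. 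No gap.
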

\begin{proof}
    A proof is given in Appendix A of \cite{Erdos_lecture_notes}.
\end{proof}

\begin{lemma}\label{lemma:resolvent_identity_corollary} 
For a resolvent $\mathbf G = (\mF - z\mI)^{-1}$, where $\mF$ is Hermitian, 
we have for any $i\ne j \in [N]$
\begin{equation*}
     G_{ij}(z) = G_{ii}(z)G_{jj}^{(\sm i)}(z) \sum_{k,l}^{(\sm ij)}F_{ik}G_{kj}^{(\sm ij)}(z)F_{lj} - G_{ii}(z)G_{jj}^{(\sm i)}(z)H_{ij} .
\end{equation*}
\end{lemma}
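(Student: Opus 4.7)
The plan is to derive the identity by two successive applications of the resolvent identity in Equation~\eqref{eq:resolvent:a} of Lemma~\ref{lemma:resolvent_identities}. Note that the displayed statement contains two apparent typos: the summand on the right should read $F_{ik} G^{(\sm ij)}_{kl}(z) F_{lj}$ (with the inner resolvent entry indexed by $k,l$, so that the sum is a genuine quadratic form), and the final term $H_{ij}$ should be $F_{ij}$, since the lemma is stated purely in terms of the matrix $\mF$. With these readings in mind, the derivation is a routine Schur-complement calculation.

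First, I would apply Equation~\eqref{eq:resolvent:a} with $\T = \emptyset$, which gives
\begin{equation*}
    G_{ij}(z) = -G_{ii}(z) \sum_{k}^{(\sm i)} F_{ik}\, G^{(\sm i)}_{kj}(z).
\end{equation*}
The next step is to peel off the $k = j$ contribution from this sum, so as to expose the diagonal resolvent entry $G^{(\sm i)}_{jj}(z)$ and isolate the direct coupling $F_{ij}$:
\begin{equation*}
    G_{ij}(z) = -G_{ii}(z)\, G^{(\sm i)}_{jj}(z)\, F_{ij} \;-\; G_{ii}(z) \sum_{k}^{(\sm ij)} F_{ik}\, G^{(\sm i)}_{kj}(z).
\end{equation*}

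Second, I would apply Equation~\eqref{eq:resolvent:a} once more to the off-diagonal entries $G^{(\sm i)}_{kj}(z)$ appearing in the remaining sum, this time with $\T = \{i\}$, using the second form of the identity (the expansion in terms of $G^{(\sm i j)}$ and row $j$ of $\mF$):
\begin{equation*}
    G^{(\sm i)}_{kj}(z) = -G^{(\sm i)}_{jj}(z) \sum_{l}^{(\sm ij)} G^{(\sm ij)}_{kl}(z)\, F_{lj}, \qquad k \ne j,\ k \ne i.
\end{equation*}
Substituting this into the previous display factors out $G^{(\sm i)}_{jj}(z)$ from the double sum and yields
\begin{equation*}
    G_{ij}(z) = G_{ii}(z)\, G^{(\sm i)}_{jj}(z) \sum_{k,l}^{(\sm ij)} F_{ik}\, G^{(\sm ij)}_{kl}(z)\, F_{lj} \;-\; G_{ii}(z)\, G^{(\sm i)}_{jj}(z)\, F_{ij},
\end{equation*}
which is the claimed identity.

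There is no substantive obstacle here: both ingredients are supplied verbatim by Lemma~\ref{lemma:resolvent_identities}, and the only care required is in carefully separating the $k=j$ term before the second expansion (since Equation~\eqref{eq:resolvent:a} requires $i \ne j$ when applied to $G^{(\sm i)}_{kj}$, i.e.\ $k \ne j$) and in tracking the sign that arises from each application. The resulting expression is exactly a Schur-complement representation of the off-diagonal resolvent entry, which is the form most convenient for the concentration estimates deployed in Lemmas~\ref{lemma:Lo_Zi_Ai_estimates} and the surrounding text.
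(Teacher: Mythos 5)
Your proof is correct and follows essentially the same two-step Schur-complement argument as the paper: apply Equation~\eqref{eq:resolvent:a} with $\T = \emptyset$, peel off the $k=j$ term, then apply Equation~\eqref{eq:resolvent:a} again with $\T = \{i\}$ to the remaining off-diagonal entries $G^{(\sm i)}_{kj}$, and recombine. You have also correctly spotted that the statement contains two typos -- the inner resolvent entry should be $G^{(\sm ij)}_{kl}(z)$ rather than $G^{(\sm ij)}_{kj}(z)$, and the last term should read $F_{ij}$ rather than $H_{ij}$ since the lemma is stated for a generic Hermitian $\mF$.
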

\begin{proof}
This follows from Equation \eqref{eq:resolvent:a} in Lemma~\ref{lemma:resolvent_identities}. In particular, setting $\T = \emptyset$ in that lemma, we have
\begin{equation}\label{eq:appendix:resolvent_1}
    G_{ij}(z) = -G_{ii}(z) \sum_k^{(\sm i)}F_{ik}G_{kj}^{(\sm i)}(z) = -G_{ii}(z)\sum_k^{(\sm ij)}F_{ik}G_{kj}^{(\sm i)}(z) -G_{ii}(z)G_{jj}^{(\sm i)}(z)F_{ij} .
\end{equation}
Using Equation \eqref{eq:resolvent:a} again with $\T = \{i\}$, we have
\begin{equation*} 
G_{kj}^{(\sm i)}(z) = -G_{jj}^{(\sm i)}(z)\sum_{l}^{(\sm ij)}G_{kl}^{(\sm ij)}(z)F_{lj}.
\end{equation*}
Substituting this into \eqref{eq:appendix:resolvent_1} completes the proof. 
\end{proof} 

\begin{lemma}\label{lemma:minor_resolvents}
Let $\tau < N$ be constant with respect to $N$. For $\abs{\T} \le \tau$,
where $\T \subset [N]$ we have that on the event $\Ot(\eta)$ defined in Equation \eqref{eq:def:Ot},
\begin{equation*}
   \max_{i\ne j} \abs{\Gtildet_{ij}(z)} \le C\max_{i\ne j} \abs{\Gtilde_{ij}(z)} \le (\log N)^{-\xi}
\end{equation*}
\end{lemma}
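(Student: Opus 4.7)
\medskip

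\noindent\textbf{Proof Proposal.} The natural approach is to build $\Gtildet(z)$ from $\Gtildematrix(z)$ by removing the indices in $\T$ one at a time and applying the Schur-complement--style resolvent identity \eqref{eq:resolvent:b} at each step. Specifically, for any $\T' \subset [N]$ and $k \notin \T'$, identity \eqref{eq:resolvent:b} rearranges to
\begin{equation*}
    G^{(\sm \T' k)}_{ij}(z) \;=\; G^{(\sm \T')}_{ij}(z) \;-\; \frac{ G^{(\sm \T')}_{ik}(z)\, G^{(\sm \T')}_{kj}(z) }{ G^{(\sm \T')}_{kk}(z) }, \qquad i,j \notin \T'\cup\{k\},
\end{equation*}
and this is valid both for $i\ne j$ (giving the off-diagonal update) and for $i=j$ (giving the diagonal update). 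Enumerating the elements of $\T$ as $k_1,k_2,\dots,k_{|\T|}$ and setting $\T_0 = \emptyset$, $\T_s = \{k_1,\dots,k_s\}$, I would apply this identity $|\T|$ times, each time passing from $G^{(\sm \T_s)}$ to $G^{(\sm \T_{s+1})}$.

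The plan is then to run a simple induction on $s$, tracking simultaneously
\begin{equation*}
    M_s := \max_{\substack{i\ne j \\ i,j \notin \T_s}} |G^{(\sm \T_s)}_{ij}(z)|
    \qquad \text{and} \qquad
    m_s := \min_{i \notin \T_s} |G^{(\sm \T_s)}_{ii}(z)|,
\end{equation*}
and showing that $M_s \le C_s M_0$ and $m_s \ge c_s > 0$ for absolute constants $c_s, C_s$ depending only on $\tau$. The base case $s=0$ is exactly what $\Ot(\eta)$ provides: from Equation \eqref{eq:def:Ot} we have $M_0 \le \Lo(z) \le (\log N)^{-\xi}$, while $\Ld(z) \le (\log N)^{-\xi}$ combined with the standard fact that $|\msc(z)|$ is bounded below by a positive constant on $D_L$ (cf. Lemma 3.2 of \cite{Erdos}) yields $m_0 \ge c_0 > 0$ for $N$ sufficiently large. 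For the inductive step, the off-diagonal update bounds
\begin{equation*}
    M_{s+1} \;\le\; M_s + \frac{M_s^2}{m_s} \;\le\; M_s\bigl(1 + M_s/m_s\bigr),
\end{equation*}
and the diagonal update gives
\begin{equation*}
    m_{s+1} \;\ge\; m_s - \frac{M_s^2}{m_s}.
\end{equation*}
Since $M_0 \to 0$ and $|\T| \le \tau$ is constant in $N$, both recursions are easily closed: a telescoping product shows $M_{|\T|} \le M_0 \prod_{s<|\T|}(1 + M_s/m_s) \le C M_0$ for a constant $C = C(\tau)$, and $m_s$ stays above $m_0/2$ throughout. This yields
\begin{equation*}
    \max_{i\ne j}|\Gtildet_{ij}(z)| = M_{|\T|} \le C M_0 = C \max_{i\ne j}|\Gtilde_{ij}(z)|,
\end{equation*}
which is the first inequality, and the second inequality follows from the definition of $\Ot(\eta)$ (absorbing the constant into the slack of $\xi$).

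The only genuine subtlety, and what I would view as the main obstacle, is keeping the lower bound on the diagonal entries $|G^{(\sm \T_s)}_{kk}|$ uniform across the induction, since those diagonal entries appear as denominators in the off-diagonal update and could in principle shrink with $s$. This is handled by the coupled recursion above: because $M_s$ remains of order $(\log N)^{-\xi}$ throughout the finitely many steps, the correction $M_s^2/m_s$ to the diagonal is negligible, and $m_s$ never leaves a neighborhood of $m_0$. Once this is verified, everything else is a uniform union bound over the at most $\binom{N}{\tau} \le N^\tau$ choices of $\T$, which is absorbed trivially since the bound on $\Ot(\eta)$ is a deterministic implication and requires no additional probability.
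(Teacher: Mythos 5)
Your proof is correct and takes essentially the same approach as the paper: iterative single-index removal via the resolvent identity \eqref{eq:resolvent:b}, driving the off-diagonal recursion $M_{s+1}\le M_s\bigl(1+M_s/m_s\bigr)$. The only difference in packaging is that the paper outsources the denominator control $m_s\ge c>0$ to Lemma~\ref{lemma:bounded_resolvents}, whereas you derive it inline as the second half of a coupled induction; the underlying mechanism is the same.
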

\begin{proof}
    This is an application of Lemma \ref{lemma:resolvent_identities}, from which, for $k \in \T$
    \begin{equation*}
        \Gtilde_{ij}(z) = \Gtilde_{ij}^{(\sm k)}(z) + \frac{\Gtilde_{ik}(z)\Gtilde_{kj}(z)}{\Gtilde_{kk}(z)}.
    \end{equation*}
    Applying the reverse triangle inequality, 
    \begin{equation*}
        \abs{ \abs{\Gtilde_{ij}^{(\sm k)}(z) } - \abs{\frac{\Gtilde_{ik}(z)\Gtilde_{kj}(z)} {\Gtilde_{kk}(z)}} } \le \abs{\Gtilde_{ij}(z) } ,
    \end{equation*}
    from which the triangle inequality yields
    \begin{equation}\label{eq:appendix:minor_resolvents_1}
        \abs{\Gtilde_{ij}^{(\sm k)}(z)} 
        \le  \abs{\Gtilde_{ij}(z)} + \frac{\abs{\Gtilde_{ik}(z)}\abs{\Gtilde_{kj}}}{\abs{\Gtilde_{kk}(z)}} .
    \end{equation}
    By Lemma \ref{lemma:bounded_resolvents},
    we have that on $\Ot(\eta)$, $c \le \abs{\Gtilde_{ii}(z)} \le C$, which then implies
    \begin{equation*} 
         \frac{\abs{\Gtilde_{ik}(z)}\abs{\Gtilde_{kj}(z)}}{\abs{\Gtilde_{kk}(z)}} \le C\max_{i\ne j}\abs{\Gtilde_{ij}(z)}^2.
    \end{equation*}
    Applying this to Equation \eqref{eq:appendix:minor_resolvents_1} and taking the maximum over $i\ne j$ yields
    \begin{equation*} 
        \max_{i\ne j} \abs{\Gtilde_{ij}^{(\sm k)}(z)} \le  \max_{i\ne j}\abs{\Gtilde_{ij}(z)} +  C\max_{i\ne j}\abs{\Gtilde_{ij}(z)}^2 \le  \max_{i\ne j}\abs{\Gtilde_{ij}(z)} (1 +  C\max_{i\ne j}\abs{\Gtilde_{ij}(z)}) \le \max_{i\ne j}\abs{\Gtilde_{ij}(z)}.
    \end{equation*}
    where we used in the last inequality that on $\Ot(\eta)$, we have $\max_{i \ne j} \abs{\Gtilde_{ij}(z)} \le (\log N)^{-\xi} \le C$ by construction of $\Ot(\eta)$. Recalling the definition of $\Lo(z)$ from Equation \eqref{eq:def:Ld-Lo}
    yields that on $\Ot(\eta)$,
    \begin{equation}
       \max_{i\ne j}\abs{\Gtilde_{ij}^{(\sm k)}(z)} \le  C\Lo(z) \le C(\log N)^{-\xi}.
    \end{equation}
    where the second inequality follows from the definition of $\Ot(\eta)$.
    To see that $ \max_{i\ne j}\abs{\Gtildet_{ij}(z)} \le C\Lo(z)$, we repeat the above for each $k \in \T$. 
    In particular, in the next iteration, set $\Gtildematrix := \Gtildematrix^{(\sm k)}$ and repeat the above for $k' \in \T\sm \{k\}$. 
\end{proof} 

\begin{lemma} \label{lemma:h_control}
Let $\mH$ be matrix whose entries, $H_{ij}$ are independent up to symmetry and satisfy Equation \eqref{eq:def:h_moments}. For $C > 0$ sufficiently large, we have with $(\xi, \nu)$-high probability,
\begin{equation}
    \max_{i,j} \abs{H_{ij}} \le \frac{C}{q} .
\end{equation}
\end{lemma}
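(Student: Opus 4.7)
The plan is a standard high-moment Markov plus union bound argument. Fix indices $i,j \in [N]$ and any integer $p$ in the admissible range $3 \le p \le (\log N)^{C_0 \log \log N}$. By Markov's inequality applied to $|H_{ij}|^p$ and the moment assumption in Equation~\eqref{eq:def:h_moments},
\begin{equation*}
\Prob{|H_{ij}| \ge t} \;\le\; \frac{\E{|H_{ij}|^p}}{t^p} \;\le\; \frac{C_1^p}{N q^{p-2} t^p}.
\end{equation*}
Setting $t = C/q$ for a constant $C$ to be chosen, this becomes $(C_1/C)^p \, q^2 / N$. Choosing $C = e C_1$ (or any $C \ge e C_1$) makes $(C_1/C)^p \le e^{-p}$, yielding $\Prob{|H_{ij}| \ge C/q} \le q^2 e^{-p}/N$.

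Next I would take a union bound over the at most $N^2$ symmetric pairs $(i,j)$, and use $q \le C_2 N^{1/2}$ from Equation~\eqref{eq:def:q} to bound $q^2 \le C_2^2 N$. This gives
\begin{equation*}
\Prob{\max_{i,j} |H_{ij}| \ge C/q} \;\le\; C_2^2 \, N^2 \, e^{-p}.
\end{equation*}

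Now I would choose $p$ so that this probability is at most $\exp\{-\nu(\log N)^\xi\}$. Because $\xi \ge 1 + c_0 > 1$ by Equation~\eqref{eq:def:xi}, we have $(\log N)^\xi \gg \log N$ for $N$ large, so taking $p = \lceil (2\nu + 1)(\log N)^\xi \rceil$ (which lies in $[3, (\log N)^{C_0 \log\log N}]$ for $N$ large since $\xi \le C_0 \log \log N$) yields
\begin{equation*}
C_2^2 N^2 e^{-p} \;=\; \exp\{ 2 \log N + 2\log C_2 - p \} \;\le\; \exp\{-\nu (\log N)^\xi\},
\end{equation*}
as required by the definition of $(\xi,\nu)$-high probability.

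There is no substantive obstacle here, only bookkeeping: one must check that the chosen $p$ is in the allowed range where the moment bound applies, that the union bound loses only a factor of $N^2$, and that the dependence of $\nu$ on $C_1, C_2$ is consistent with the quantification used throughout Appendix~\ref{apx:deloc}. The argument makes no use of independence beyond individual moment bounds, so symmetry of $\mH$ is immaterial.
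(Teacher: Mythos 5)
Your proof is correct and takes essentially the same high-moment Markov plus union bound route as the paper; the only cosmetic difference is that you fix $C = eC_1$ and then make $p$ large, whereas the paper sets $p = \nu(\log N)^\xi$ and then takes $C$ large, which are interchangeable ways of killing the $N^2$ factor.
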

\begin{proof}
    Set $p = \nu (\log N)^{\xi}$ in Equation \eqref{eq:def:h_moments}. Then by Markov's inequality,
    \begin{equation*}
        \Prob{\abs{H_{ij}} \ge \frac{C}{q}} = \Prob{\abs{H_{ij}}^p \ge \frac{C^p}{q^p}} \le \frac{\E{\abs{H_{ij}}^p}q^p}{C^p} 
        \le  \frac{C_1^pq^p}{Nq^{p-2}C^p} 
        = \left(\frac{C_1}{C}\right)^p \frac{q^2}{N} \le \left(\frac{C_1}{C}\right)^p,
    \end{equation*}
    where the second inequality follows from Equation \eqref{eq:def:h_moments} and the last inequality follows from Equation \eqref{eq:def:q}. Taking a union bound over all $i,j \in [N]$ yields
    \begin{equation*}
        \Prob{\max_{i,j}\abs{H_{ij}} \ge \frac{C}{q}} \le N^2\left(\frac{C_1}{C}\right)^p = e^{2(\log N)} \left(\frac{C_1}{C}\right)^{\nu (\log N)^{\xi}}.
    \end{equation*}
    So then for sufficiently large $C > 0$, and using the fact that $1 < \xi$ by Definition~\eqref{eq:def:xi},
    \begin{equation*}
        \Prob{\max_{i,j}\abs{H_{ij}} \ge  \frac{C}{q}} 
        \le e^{- \nu (\log N)^{\xi}},
    \end{equation*}
    completing the proof.
\end{proof}

\begin{lemma}\label{lemma:HGH_control}
Let $\mH$ be as in Equation \eqref{eq:def:h_moments} and $z\in D_L$, then on $\Ot(\eta)$ defined in Equation \eqref{eq:def:Ot},
we have for all $i \ne j \in [N]$ that
\begin{equation}
    \left|\sum_{k,l}^{(\sm ij)} h_{ik}\Gtildeij_{kl}(z) h_{lj} \right| \le \frac{C (\log N)^{\xi}}{q} + C(\log N)^{2\xi}\left(\sqrt{\frac{\Im \mtilde(z)}{N\eta}}  + \frac{1}{N\eta} \right)
\end{equation}
with $\xinu$-high probability.
\end{lemma}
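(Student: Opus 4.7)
The plan is to apply the quadratic-form concentration inequality (Equation~\eqref{lemma:concentration:quadratic_form} of Lemma~\ref{lemma:concetration}) to the two independent families of random variables $\{h_{ik}: k \ne i,j\}$ and $\{h_{lj}: l \ne i,j\}$, treating $\Gtildeij_{kl}(z)$ as the deterministic coefficient matrix conditional on $\submatrix{\mH}{ij}$. Since $i \ne j$ and row/column $i$ and $j$ have been removed from $\mH$ to form $\submatrix{\mH}{ij}$, the two families are mutually independent of each other and of the resolvent $\Gtildeijmatrix(z)$, and each satisfies the moment bound in Equation~\eqref{eqn:moment_condition} by Assumption~\eqref{assumption:moments}. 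Applying the bound yields, with $\xinu$-high probability,
\begin{equation*}
\left|\sum_{k,l}^{(\sm ij)} h_{ik}\Gtildeij_{kl}(z) h_{lj}\right|
\le (\log N)^{2\xi}\left[\frac{\max_k |\Gtildeij_{kk}(z)|}{q^2} + \frac{\max_{k\ne l}|\Gtildeij_{kl}(z)|}{q} + \left(\frac{1}{N^2}\sum_{k\ne l}|\Gtildeij_{kl}(z)|^2\right)^{\!1/2}\right].
\end{equation*}

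Next I would control each of the three terms on the right-hand side using the fact that we are on the event $\Ot(\eta)$. The diagonal resolvent entries $|\Gtildeij_{kk}(z)|$ are bounded by a constant by Lemma~\ref{lemma:bounded_resolvents}, giving the first summand the bound $C/q^2$, which is dominated by $C/q$. The off-diagonal entries satisfy $\max_{k\ne l}|\Gtildeij_{kl}(z)| \le C\Lo(z) \le C(\log N)^{-\xi}$ by Lemma~\ref{lemma:minor_resolvents} combined with the defining bound of $\Ot(\eta)$, so the second summand contributes at most $C(\log N)^{-\xi}/q$; multiplied by $(\log N)^{2\xi}$ this gives $C(\log N)^{\xi}/q$, matching the first term in the claim.

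The third summand is the main term and is handled by Ward's identity together with Lemma~\ref{lemma:trace_difference}. Writing
\begin{equation*}
\frac{1}{N^2}\sum_{k\ne l}|\Gtildeij_{kl}(z)|^2
\le \frac{1}{N^2}\sum_{k,l}|\Gtildeij_{kl}(z)|^2
= \frac{1}{N^2\eta}\sum_k \Im \Gtildeij_{kk}(z)
= \frac{\Im \mtildeij(z)}{N\eta},
\end{equation*}
I would then invoke Lemma~\ref{lemma:trace_difference} (applied twice, peeling off indices $i$ and $j$) to replace $\mtildeij(z)$ by $\mtilde(z) + O(1/(N\eta))$, yielding the bound $\Im \mtilde(z)/(N\eta) + C/(N\eta)^2$. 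Taking the square root and distributing gives the stated $\sqrt{\Im \mtilde(z)/(N\eta)} + 1/(N\eta)$ behavior after multiplication by $(\log N)^{2\xi}$.

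The calculations are all routine once the concentration inequality has been set up; the one subtle point to verify carefully is the independence used in applying Equation~\eqref{lemma:concentration:quadratic_form}, namely that conditional on $\submatrix{\mH}{ij}$ (which determines $\Gtildeijmatrix(z)$), the vectors $(h_{ik})_{k \ne i,j}$ and $(h_{lj})_{l \ne i,j}$ are independent. This is immediate from the construction of $\submatrix{\mH}{ij}$ by deleting both the $i$-th and $j$-th rows and columns, so the entries $h_{ik}$ (off-diagonal in row $i$) and $h_{lj}$ (off-diagonal in row $j$) are disjoint sets of independent random variables. With that conditioning in place, the Ward/trace-difference step is the only one requiring the event $\Ot(\eta)$, and the proof closes.
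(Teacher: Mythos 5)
Your proposal is correct and follows the same overall strategy as the paper's proof: apply the quadratic-form concentration bound from Lemma~\ref{lemma:concetration}, then control the three resulting terms on $\Ot(\eta)$ using Lemma~\ref{lemma:bounded_resolvents}, Lemma~\ref{lemma:minor_resolvents}, the constraint on $q$, and Ward's identity. The one genuine variation is in the main (third) term: after Ward's identity, the paper expands $\Gtildeij_{kk}(z) = \Gtilde_{kk}(z) + O(\Lo^2(z))$ directly via the resolvent identity in Lemma~\ref{lemma:resolvent_identities} and Lemma~\ref{lemma:bounded_resolvents}, producing an error of the form $C\Lo(z)/\sqrt{N\eta}$ that is then absorbed using the domain constraint $(\log N)^{4\xi}\le\sqrt{N\eta}$; you instead invoke Lemma~\ref{lemma:trace_difference} twice to replace $\mtilde^{(\sm ij)}(z)$ by $\mtilde(z)+O(1/(N\eta))$, producing the $1/(N\eta)$ error directly. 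Both routes rest on the same underlying resolvent identities (Lemma~\ref{lemma:trace_difference} is itself proved via Lemma~\ref{lemma:resolvent_identities}), and both land on the stated bound; your version is arguably a touch cleaner because the $1/(N\eta)$ term in the lemma statement appears immediately rather than via an intermediate $\Lo(z)$ bookkeeping step. One small caution: as you noted yourself, the independence argument hinges on $i\ne j$, so that $h_{ij}=h_{ji}$ is excluded from both index families $\{h_{ik}: k\ne i,j\}$ and $\{h_{lj}: l\ne i,j\}$ — if you spell this out, also point out explicitly that these families are disjoint from $\submatrix{\mH}{ij}$, which is what makes the conditional application of Equation~\eqref{lemma:concentration:quadratic_form} legitimate.
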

\begin{proof}
This is an application of Lemma \ref{lemma:concetration}.
By Equation \eqref{lemma:concentration:quadratic_form} in that lemma, we have that with $\xinu$-high probability
\begin{equation}\label{eq:HGH_control:first_step}
    \left|\sum_{k,l}^{(\sm ij)} h_{ik}\Gtildeij_{kl}(z)h_{lj} \right|  
    \le (\log N)^{2\xi} \! \left [\frac{\max_k \abs{\Gtildeij_{kk}(z)}}{q^2} + \frac{\max_{i\ne j} \abs{\Gtildeij_{kl}(z)}}{q} + \!\left(\! \frac{1}{N^2} \sum_{k,l}^{(\sm ij)} \abs{\Gtildeij_{kl}}^2 \!\right)^{\! \!1/2}\right] .
\end{equation}
Note that on $\Ot(\eta)$, we have $ \max_{i\ne j} \abs{G_{kl}^{
(\sm ij)}} \le C\max_{i\ne j} \abs{G_{kl}}$ by Lemma \ref{lemma:minor_resolvents}.
By the definition of $q$ in Equation \eqref{eq:def:q}, we have $q \ge (\log N)^{3\xi}$.
Hence, on $\Ot(\eta)$, substituting this bound for $q$ into Equation \eqref{eq:HGH_control:first_step}, using the fact that $\max_i \abs{G_{ii}} \le C$ on $\Ot(\eta)$, and distributing,
\begin{equation}\label{eq:HGH_control:second}
    \left|\sum_{k,l}^{(\sm ij)} h_{ik}\Gtildeij_{kl}(z)h_{lj} \right|  \le  \frac{C(\log N)^{2\xi}}{q(\log N)^{3\xi}} + \frac{C(\log N)^{2\xi}\max_{i\ne j} \abs{G_{ij}}}{q} + (\log N)^{2\xi}\left(\frac{1}{N^2} \sum_{i \ne j} \abs{\Gtilde_{ij}(z)}^2 \right)^{1/2}
\end{equation}
with $\xinu$-high probability.
Notice by Ward's Lemma (Equation \eqref{eqn:lemma:Wards}), the summation in Equation \eqref{eq:HGH_control:second} can be expressed as
\begin{equation}\label{eq:HGH_control:third}
\sum_{k,l}^{(\sm ij)} \abs{\Gtildeij_{kl}(z)}^2 
= \frac{1}{\eta}\sum_k^{(\sm ij)}\Im \Gtildeij_{kk}(z).
\end{equation}
Now notice that by using Equation \eqref{eq:resolvent:b} Lemma \ref{lemma:resolvent_identities} twice and using Lemma \ref{lemma:bounded_resolvents},
we have
\begin{equation}\label{eq:HGH_control:fourth}
    \Gtildeij_{kk}(z) 
    = \Gtilde_{kk}(z) + \frac{\abs{\Gtilde_{ik}(z)}^2}{\Gtilde_{ii}(z)} +  \frac{\abs{\Gtilde_{jk}(z)}^2}{\Gtilde_{jj}(z)} 
    = \Gtilde_{kk}(z) + O(\Lo^2(z)).
\end{equation} 
Substituting Equation \eqref{eq:HGH_control:fourth} into \eqref{eq:HGH_control:third} and dividing by $N^2$ yields
\begin{equation}\label{eq:HGH_control:fifth}
     \frac{1}{N^2}\sum_{k,l}^{(\sm ij)} \abs{\Gtildeij_{kl}(z)}^2 
     =  \frac{1}{N^2\eta}\sum_k^{(\sm ij)}\Im \Gtildeij_{kk}(z) 
     \le \frac{1}{N\eta} \left( \Im \mtilde(z) + C\Lo^2(z)\right),
\end{equation}
where the inequality follows from the fact that
\begin{equation*}
    \mtilde(z) = \frac{1}{N} \Tr \Gtildematrix(z).
\end{equation*}
Substituting Equation \eqref{eq:HGH_control:fifth} into Equation \eqref{eq:HGH_control:second} yields that with $\xinu$-high probability
\begin{equation*} 
    \left|\sum_{k,l}^{(\sm ij)} h_{ik}\Gtildeij_{kl}(z)h_{lj} \right|  \le \frac{C}{q(\log N)^{\xi}} + \frac{(\log N)^{2\xi}\max_{i\ne j} \abs{\Gtilde_{ij}}}{q} + (\log N)^{2\xi}\left(\frac{1}{N\eta} \left( \Im \mtilde(z) + C\Lo^2(z)\right)\right)^{1/2}.
\end{equation*}
Since $\sqrt{a+ b} \le \sqrt{a} + \sqrt{b}$, it follows that
\begin{equation}\label{eq:HGH_control:seventh}
    \left|\sum_{k,l}^{(\sm ij)} h_{ik}\Gtildeij_{kl}(z)h_{lj} \right|  \le  \frac{C}{q(\log N)^{\xi}} + \frac{C(\log N)^{2\xi}\Lo(z)}{q} + (\log N)^{2\xi}\left(\sqrt{\frac{\Im \mtilde(z)}{N\eta}} + \frac{C\Lo(z)}{\sqrt{N\eta}} \right)
\end{equation} 
with $\xinu$-high probability.
Using that $(\log N)^{4\xi} \le \sqrt{N\eta}$ on $D_L$,
we have following Equation \eqref{eq:HGH_control:seventh}
\begin{equation*} 
    \left|\sum_{k,l}^{(\sm ij)} h_{ik}\Gtildeij_{kl}(z)h_{lj} \right|  \le  \frac{C}{q(\log N)^{\xi}} + \frac{C(\log N)^{2\xi}\Lo(z)}{q}  + \sqrt{\frac{\Im \mtilde(z)}{N\eta}} + \frac{C\Lo(z)}{(\log N)^{4\xi}}.
\end{equation*}
Hence, by increasing the value of $\nu$ by a constant factor, %
we have that with $\xinu$-high probability, it holds uniformly over all $i\ne j\in [N]$ that
\begin{equation*}
    \left|\sum_{k,l}^{(\sm ij)} h_{ik}\Gtildeij_{kl}(z)h_{lj} \right|  \le  \frac{C}{q(\log N)^{\xi}} + \frac{C(\log N)^{2\xi}\Lo(z)}{q}  + \sqrt{\frac{\Im \mtilde(z)}{N\eta}},
\end{equation*}
as we set out to show.
\end{proof} 

\begin{lemma}\label{lemma:bounded_resolvents} 
Recall the definitions of $\Lo(z)$ and $\Ld(z)$ from Equation~\eqref{eq:def:Ld-Lo},
\begin{equation*}
\Lo(z) := \max_{i\ne j} |\Gtilde_{ij}(z)|, \quad \Ld(z) := \max_{i} |\Gtilde_{ii}(z) - \msc(z)|.
\end{equation*}
On the event $\Ot(\eta)$ given in Equation \eqref{eq:def:Ot}, 
we have for any $i\in [N]$ and $z \in D_L$
\begin{equation} \label{eq:bounded_resolvents:goal1}
    c \le \abs{\Gtilde_{ii}(z)} \le C .
\end{equation}
Further, for any $\T \subset [N]$ with $\abs{\T} \le \tau < N$, where $\tau$ is constant with respect to $N$,
 \begin{equation} \label{eq:bounded_resolvents:goal2}
      c \le \abs{\Gtildet_{ii}(z)} \le C. 
 \end{equation}
\end{lemma}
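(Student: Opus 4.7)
The plan is to derive both bounds directly from the definitions of $\Ld$ and $\Lo$ on the event $\Ot(\eta)$ together with the fact that $\msc(z)$ is bounded above and below by absolute constants on $D_L$ (a standard property of the Stieltjes transform of the semicircle law; this bound $c \le |\msc(z)| \le C$ on $D_L$ has already been invoked earlier in this appendix, citing Lemma 3.2 of \cite{Erdos}).

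First I would establish Equation~\eqref{eq:bounded_resolvents:goal1}. On $\Ot(\eta)$ we have $\Ld(z) \le (\log N)^{-\xi}$ by definition, and hence for every $i$,
\begin{equation*}
|\msc(z)| - (\log N)^{-\xi} \le |\Gtilde_{ii}(z)| \le |\msc(z)| + (\log N)^{-\xi}
\end{equation*}
by the triangle and reverse triangle inequalities. Combining this with the two-sided bound $c \le |\msc(z)| \le C$ on $D_L$, and taking $N$ large enough so that $(\log N)^{-\xi}$ is smaller than $c/2$, yields $c/2 \le |\Gtilde_{ii}(z)| \le C + 1$, which gives Equation~\eqref{eq:bounded_resolvents:goal1} after relabeling constants.

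Next, for Equation~\eqref{eq:bounded_resolvents:goal2}, I would remove the indices in $\T$ one at a time using the resolvent identity~\eqref{eq:resolvent:b} from Lemma~\ref{lemma:resolvent_identities}. Concretely, for any $k \in \T$,
\begin{equation*}
\Gtilde_{ii}^{(\sm k)}(z) = \Gtilde_{ii}(z) - \frac{\Gtilde_{ik}(z)\Gtilde_{ki}(z)}{\Gtilde_{kk}(z)}.
\end{equation*}
Using part~\eqref{eq:bounded_resolvents:goal1} to lower bound $|\Gtilde_{kk}(z)|$ by a constant and the event $\Ot(\eta)$ to upper bound $|\Gtilde_{ik}(z)|,|\Gtilde_{ki}(z)|$ by $(\log N)^{-\xi}$, the correction term is bounded by $C(\log N)^{-2\xi}$, which is negligible. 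Thus
\begin{equation*}
|\Gtilde_{ii}^{(\sm k)}(z) - \Gtilde_{ii}(z)| \le C(\log N)^{-2\xi},
\end{equation*}
and in particular $\Gtilde_{ii}^{(\sm k)}(z)$ remains within a constant of $\msc(z)$. Iterating this argument $|\T| \le \tau$ times (with $\tau$ constant in $N$), the cumulative perturbation is at most $\tau \cdot C(\log N)^{-2\xi}$, still vanishing. At each step we also need to verify that the analogue of Lemma~\ref{lemma:minor_resolvents} keeps the off-diagonal entries of the successive minors bounded by $C\Lo(z)$, which is exactly what Lemma~\ref{lemma:minor_resolvents} provides, so the correction term remains of the same order throughout the $\tau$ removals. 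Combining with the two-sided bound on $|\msc(z)|$ again gives $c \le |\Gtildet_{ii}(z)| \le C$.

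The proof is essentially routine — there is no genuine obstacle, since everything follows from the definition of $\Ot(\eta)$, the boundedness of $|\msc|$ on $D_L$, and a finite iteration of the resolvent identity. The only point requiring care is making sure that after each removal the off-diagonal bound $\Lo$ does not deteriorate, and that the constants do not blow up over the (bounded) number $\tau$ of iterations; both are handled by Lemma~\ref{lemma:minor_resolvents} and the fact that $\tau$ is a constant.
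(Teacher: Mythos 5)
Your proof is correct and follows essentially the same strategy as the paper's: part~\eqref{eq:bounded_resolvents:goal1} combines the definition of $\Ot(\eta)$ with the two-sided bound $c \le |\msc(z)| \le C$ on $D_L$, and part~\eqref{eq:bounded_resolvents:goal2} peels off the indices of $\T$ one at a time via the resolvent identity~\eqref{eq:resolvent:b}, with each correction term of order $(\log N)^{-2\xi}$ and the total accumulation controlled because $\tau$ is fixed.

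The one point worth tightening: your appeal to Lemma~\ref{lemma:minor_resolvents} to keep the off-diagonal entries of successive minors small is formally circular as stated, because the paper's proof of Lemma~\ref{lemma:minor_resolvents} itself cites Lemma~\ref{lemma:bounded_resolvents}. This is not a real gap --- the intended reading, implicit in both your write-up and the paper's (which compresses the iteration into a single, slightly imprecise displayed identity where the corrections should involve entries of successive minors rather than of $\Gtildematrix$), is a joint induction on $|\T|$: at step $k$ one uses only the diagonal lower bound and off-diagonal smallness already established for minors of size $k-1$, and the $\tau$-step accumulation stays $O(\tau(\log N)^{-2\xi})$. A clean write-up should make that induction explicit rather than cite the two lemmas against each other.
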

\begin{proof}
Notice that on $\Ot(\eta)$, since $\Ld(z) \ge 0$, we have
\begin{equation*}
  \Lo(z) \le \Ld(z) + \Lo(z) \le (\log N)^{-\xi} ,
\end{equation*}
so that for any $i \in [N]$,
\begin{equation*}
  -(\log N)^{-\xi} \le \Gtilde_{ii}(z) - \msc(z) \le (\log N)^{-\xi}.
\end{equation*}
Adding $\msc(z)$ to both sides, we obtain
\begin{equation}\label{eq:appendix:bounded_resolvents_3}
\msc(z) -(\log N)^{-\xi} \le \Gtilde_{ii}(z) \le (\log N)^{-\xi} + \msc(z).
\end{equation}
For $z \in D_L$, Lemma 3.2 in \cite{Erdos} asserts that $C^{-1} \le |\msc (z)| \le C$. Using this fact in Equation \eqref{eq:appendix:bounded_resolvents_3} yields
\begin{equation}\label{eq:appendix:bounded_resolvents_4}
c \le \abs{\Gtilde_{ii}(z)} \le C,
\end{equation}
proving the claim in Equation~\eqref{eq:bounded_resolvents:goal1}. 

For the claim in Equation~\eqref{eq:bounded_resolvents:goal2}, suppose we have $k \in \T$.
We apply Equation \eqref{eq:resolvent:b}, from which we obtain
\begin{equation}\label{eq:appendix:bounded_resolvents_intermediate}
    \Gtilde_{ii}(z) - \frac{\abs{\Gtilde_{ik}(z)}^2}{\Gtilde_{kk}(z)} = \Gtilde_{ii}^{(k)}(z).
\end{equation}
Repeatedly applying Equation \eqref{eq:resolvent:b} to \eqref{eq:appendix:bounded_resolvents_intermediate} for the remaining indices in $\T$ yields
\begin{equation}\label{eq:appendix:bounded_resolvents_5}
    \Gtilde_{ii}(z) - \sum_{k \in \T} \frac{\abs{\Gtilde_{ik}(z)}^2}{\Gtilde_{kk}(z)} = \Gtildet_{ii}(z).
\end{equation}
Applying Equation \eqref{eq:appendix:bounded_resolvents_5} to Equation \eqref{eq:appendix:bounded_resolvents_3} and recalling $\msc(z)$ is lower and upper bounded by constants from Lemma 3.2 in \cite{Erdos}, we see 
\begin{equation}\label{eq:appendix:bounded_resolvents_6} 
\msc(z) -(\log N)^{-\xi} - \sum_{k \in \T} \frac{\abs{\Gtilde_{ik}(z)}^2}{\Gtilde_{kk}(z)} \le \Gtilde_{ii}^{(k)}(z) \le (\log N)^{-\xi} + \msc(z) - \sum_{k \in \T} \frac{\abs{\Gtilde_{ik}(z)}^2}{\Gtilde_{kk}(z)}.
\end{equation}
Notice that on $\Ot(\eta)$ and $z \in D_L$, we have from Equation \eqref{eq:appendix:bounded_resolvents_4}
that $c \le \abs{\Gtilde_{kk}(z)} \le C$, so then on $\Ot(\eta)$ ,
\begin{equation}
\frac{\abs{\Gtilde_{ik}(z)}^2}{\Gtilde_{kk}(z)} = O((\log N)^{-2\xi}).
\end{equation}
It then follows from applying this bound to Equation \eqref{eq:appendix:bounded_resolvents_6} that 
\begin{equation}
c \le \abs{\Gtildet_{ii}(z) }\le C,
\end{equation}
proving Equation~\eqref{eq:bounded_resolvents:goal2}.
\end{proof} 

The following lemma provides initial bounds for when $z$ is far from the real line ($3 \ge \eta \ge 2$) and the resolvent entries are easily controlled. In particular, they will be used to show that the event $\Ot(\eta)$ occurs with high probability and hence our main resolvent estimates are bounded by the function $\Phi(z)$. Hence, using that $\Phi(z), \Gtilde_{ij}(z)$, and $\mtilde(z)$ are all Lipschitz-continuous, the resolvent entries can then be bounded for arbitrarily small $\eta$.
\begin{lemma} 
\label{lemma:initial_elementary_resolvent_estimates}
    Let $\tau < N$ be constant with respect to $N$. For $z = E + \iu \eta \in D_L$ and $ 2 \le \eta \le 3$, 
    suppose that $\T \subset [N]$ satisfies $\abs{\T} \le \tau$.
    Then with $(\xi, \nu)$-high probability,
    \begin{equation*} 
        c \le \abs{\Gtildet_{kk}(z)} \le \frac{1}{\eta}, \qquad \abs{\mtilde^{(\sm \T)}(z)} \le \frac{1}{\eta}, \qquad \abs{\Gtildet_{ij}(z)} \le \frac{1}{\eta}, \qquad \text{ and } \qquad
        \max_{i\ne j}\abs{\Gtildet_{ij}(z)} \le C \Lo(z) .
    \end{equation*}
\end{lemma}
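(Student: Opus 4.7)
The plan is to exploit the fact that $\eta \ge 2$ puts us far enough from the spectrum to get direct spectral estimates, with no bootstrap or high-probability event conditioning needed beyond a basic control on $\|\submatrix{\mB}{\T}\|$.

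First I would establish the second and third bounds simultaneously from operator-norm considerations. Since $\submatrix{\mB}{\T}$ is real symmetric and $\Im z = \eta$, we have $\|\Gtildetmatrix(z)\| \le 1/\eta$ deterministically, so every entry satisfies $|\Gtildet_{ij}(z)| \le 1/\eta$. The trace bound $|\mtildet(z)| \le \frac{1}{N}|\Tr\Gtildetmatrix(z)| \le \frac{N-|\T|}{N\eta} \le 1/\eta$ follows from summing $|\lambdahatt_\alpha - z|^{-1} \le 1/\eta$ over the $N-|\T|$ eigenvalues.

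Next, for the lower bound $|\Gtildet_{kk}(z)| \ge c$, I would expand in the eigenbasis:
\begin{equation*}
\Im \Gtildet_{kk}(z) = \sum_{\alpha} \frac{\eta\,|\vhatt_{k\alpha}|^2}{(\lambdahatt_{\alpha} - E)^2 + \eta^2}.
\end{equation*}
By Lemma~\ref{lemma:leave_one_out_E} applied with $\calS = \T$ (which gives $\mE(\T) = \submatrix{\mB}{\T}$ when re-indexed, or can be applied directly to $\submatrix{\mH}{\T}$ plus a low-rank piece), together with Assumptions \eqref{assumption:signal_strength}, \eqref{assumption:eigen_bound} and \eqref{assumption:rank}, we have $\|\submatrix{\mB}{\T}\| \le C$ with $\xinu$-high probability, so all $\lambdahatt_\alpha$ lie in $[-C,C]$. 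Since $|E|\le \Sigma$ and $\eta \le 3$, the denominator is bounded by some constant $C'$, whence
\begin{equation*}
\Im \Gtildet_{kk}(z) \ge \frac{\eta}{C'}\sum_\alpha |\vhatt_{k\alpha}|^2 = \frac{\eta}{C'} \ge \frac{2}{C'} =: c,
\end{equation*}
using normalization of the eigenvectors. Since $|\Gtildet_{kk}(z)| \ge \Im \Gtildet_{kk}(z)$, this yields the claimed lower bound.

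Finally, for the last inequality $\max_{i\ne j}|\Gtildet_{ij}(z)| \le C\Lo(z)$, I would iterate the resolvent identity from Equation~\eqref{eq:resolvent:b}. Enumerating $\T = \{k_1,\dots,k_{|\T|}\}$ and removing indices one at a time, we get at each step
\begin{equation*}
G^{(\sm\T_{s+1})}_{ij} = G^{(\sm\T_s)}_{ij} - \frac{G^{(\sm\T_s)}_{i k_{s+1}} G^{(\sm\T_s)}_{k_{s+1} j}}{G^{(\sm\T_s)}_{k_{s+1} k_{s+1}}}.
\end{equation*}
The lower bound $|G^{(\sm\T_s)}_{k k}| \ge c$ already proved (applied at each intermediate $\T_s$), combined with the trivial $|G^{(\sm\T_s)}_{ab}| \le 1/\eta \le 1/2$ for $a\ne b$, gives by induction that the off-diagonal entries at each scale are bounded by $C \Lo(z)$, with $|\T|\le\tau$ absorbed into the constant $C$ since $\tau$ is fixed independently of $N$. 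The main subtlety is making sure the lower bound $|G^{(\sm\T_s)}_{kk}|\ge c$ holds uniformly in $s$ with $\xinu$-high probability; a union bound over the $\tau$-many intermediate sets (each of which still satisfies $\|\submatrix{\mB}{\T_s}\| \le C$ by the same argument) handles this at a cost of a constant factor in $\nu$. No new ideas beyond spectral norm control and the resolvent identity are required, so the main obstacle is just bookkeeping of constants and the high-probability event.
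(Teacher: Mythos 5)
The upper bounds $|\Gtildet_{kk}(z)| \le 1/\eta$, $|\mtildet(z)| \le 1/\eta$, $|\Gtildet_{ij}(z)| \le 1/\eta$ and the final estimate $\max_{i\ne j}|\Gtildet_{ij}(z)|\le C\Lo(z)$ are handled correctly and essentially as in the paper. The gap is in the lower bound $|\Gtildet_{kk}(z)|\ge c$, where you assert $\|\submatrix{\mB}{\T}\|\le C$ with high probability. This is false. By Assumptions \eqref{assumption:signal_strength} and \eqref{assumption:eigen_bound} the signal eigenvalues of $\mL$ are of order $\rho_n\sqrt{N}\gg 1$, and deleting a bounded number $|\T|\le\tau$ of rows and columns does not remove that mass, so $\|\submatrix{\mB}{\T}\|\gtrsim\rho_n\sqrt{N}\to\infty$. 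You have also misread Lemma~\ref{lemma:leave_one_out_E}: what that lemma controls is $\mE(\T)=\mH[\T^c]-\mL[\T]$, a genuinely small object (full $\mH$ with $\T$ zeroed out, minus only the $\T$-supported slice of $\mL$); it is not a re-indexing of $\submatrix{\mB}{\T}=\submatrix{\mL}{\T}+\submatrix{\mH}{\T}$, which carries the full low-rank signal.

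Consequently the step
\begin{equation*}
\Im \Gtildet_{kk}(z) = \sum_{\alpha} \frac{\eta\,|\vhatt_{k\alpha}|^2}{(\lambdahatt_{\alpha} - E)^2 + \eta^2} \ge \frac{\eta}{C'}\sum_\alpha|\vhatt_{k\alpha}|^2 = \frac{\eta}{C'}
\end{equation*}
does not go through: for the top $r$ indices (the set $\calS$) the eigenvalues $\lambdahatt_\alpha$ are of order $\rho_n\sqrt{N}$, so $(\lambdahatt_\alpha - E)^2+\eta^2$ is not bounded by a constant, and the corresponding terms of the sum are negligible rather than bounded below. To fix this you must discard the $\alpha\in\calS$ terms (which \emph{are} bounded below by $0$, but contribute essentially nothing), keep only the bulk terms $\alpha\notin\calS$ (for which Weyl interlacing and $\|\mH\|\le C$ give $|\lambdahatt_\alpha|\le C$), and then prove $\sum_{\alpha\notin\calS}|\vhatt_{k\alpha}|^2 \ge c$. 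The latter is not free: normalization gives $\sum_\alpha|\vhatt_{k\alpha}|^2=1$, but you must rule out the possibility that almost all of this mass sits on the $r$ signal indices. The paper does this by combining Assumption \eqref{assumption:deloc} with Lemma~\ref{lemma:subspace_bound} to show $|\vhat_{k\alpha}|\lesssim N^{-1/4}$ for $\alpha\in\calS$, so $\sum_{\alpha\in\calS}|\vhatt_{k\alpha}|^2\lesssim r/\sqrt{N}\to 0$. This is a genuine extra idea that your argument omits, and without it the lower bound on $|\Gtildet_{kk}|$ is unproven.
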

\begin{proof}
We first lower bound $\abs{\Gtildet_{kk}(z)}$.
Notice that by taking the imaginary part of the spectral decomposition,
\begin{equation}
    \Im \Gtildet_{kk}(z) = \eta \sum_{\alpha = 1}^{N - \abs{\T}} \frac{\abs{\vhatt_{\alpha}(k)}^2}{(\lambdahat_{\alpha} - E)^2 + \eta^2} \ge \eta\sum_{\alpha \notin \mathcal{S}} \frac{\abs{\vhatt_{\alpha}(k)}^2}{(\lambdahat_{\alpha} - E)^2 + \eta^2}
\end{equation}
where $\calS = \{N-r+1 - \abs{\T}, \dots,N - \abs{\T}\}$. 
Now, by Lemma 4.3 from \cite{Erdos} and Lemma \ref{lemma:weyl}, 
with $\xinu$-high probability, we have $\lambdahat_{\alpha} \le \norm{\mH} \le C$ for all $\alpha \in \calS$. 
Moreover, $E \le 3$ by definition of $D_L$, so that $(\lambdahat_{\alpha} - E)^2 \le C$ for all $\alpha \notin \calS$ with $(\xi, \nu)$-high probability. Then since $\eta$ is bounded above and below by constants, we have for some $c > 0$, 
\begin{equation}\label{eq:elementary_estimates:ImG_lower_bound}
    \Im \Gtildet_{kk}(z)
    \ge \eta\sum_{\alpha \notin \calS}\frac{\abs{\vhatt_{\alpha}(k)}^2}{(\lambdahat_{\alpha} - E)^2 + \eta^2}
    \ge c \sum_{\alpha \notin \calS} \abs{\vhatt_{\alpha}(k)}^2
\end{equation} 
with $\xinu$-high probability.
To lower bound $\sum_{\alpha \notin \calS}\abs{\vhatt_{\alpha}(k)}^2$, notice that since $\submatrix{\mB}{\T}$ is a real, symmetric matrix, the eigenvectors of $\submatrix{\mB}{\T}$ form an orthonormal basis.
Thus, $\sum_{\alpha =1}^{N - \abs{\T}} \abs{\vhatt_{\alpha}(k)}^2 = 1$. We note that the eigenvectors of $\submatrix{\mB}{\T}$ can be used to obtain $N - \abs{\T}$ eigenvectors of $\mB^{[\T^c]}$ by filling in zeros at the indices given by $\T$.
It follows that for each $\alpha \in \calS$, corresponding to the leading eigenvalues of $\mB$,
\begin{equation}\label{eq:initial_estimates:subspace_difference}
\begin{aligned}
     \norm{\bvhatt_{\alpha - \abs{\T}}(\bvhat^{(\sm \T)}_{\alpha - \abs{\T}})^\top - \submatrix{\bv_{\alpha}}{\T}(\submatrix{\bv_{\alpha}}{\T})^\top}_{F}^2 
     &\le \norm{\bvhatt_{\alpha - \abs{\T}}(\bvhatt_{\alpha - \abs{\T}})^\top - \submatrix{\bv_{\alpha}}{\T}(\submatrix{\bv_{\alpha}}{\T})^\top}_{F}^2 + \norm{ \bv^{[\T]}_{\alpha}(\bv^{[\T]}_{\alpha})^\top}_F^2 \\
     &= \norm{\bvhat^{[\T^c]}_{\alpha}(\bvhat^{[\T^c]}_{\alpha})^\top- \bv_{\alpha}\bv_{\alpha}^\top}_{F}^2
\end{aligned}
\end{equation}
By Lemma \ref{lemma:subspace_bound}, with $\xinu$-high probability,
\begin{equation*}
    \norm{\bvhat^{[\T^c]}_{\alpha}(\bvhat^{[\T^c]}_{\alpha})^\top- \bv_{\alpha}\bv_{\alpha}^\top}_{F}^2 = 2  \norm{\left( \mI - \bvhat^{[\T^c]}_\alpha (\bvhat^{[\T^c]}_\alpha)^\top \right)^\top\bv_{\alpha}}_F^2
    \le \frac{C}{N} .
\end{equation*}
Applying this to Equation \eqref{eq:initial_estimates:subspace_difference},
\begin{equation} \label{eq:initial_estimates:refThisOne}
     \norm{\bvhatt_{\alpha - \abs{\T}}(\bvhatt_{\alpha - \abs{\T}})^\top - \submatrix{\bv_{\alpha}}{\T}(\submatrix{\bv_{\alpha}}{\T})^\top}_{F}^2 \le \frac{C}{N}
\end{equation}
with $\xinu$-high probability.
In particular, the $(i,j) \in \left([N] \sm \T\right)^2$ entries of $\bvhatt_{\alpha}(\bvhatt_{\alpha})^\top -  \submatrix{\bv_{\alpha}}{\T}(\submatrix{\bv_{\alpha}}{\T})^\top$ are given by $\vhat_{i\alpha}\vhat_{j\alpha} - v_{i\alpha}v_{j\alpha}$, and thus
\begin{equation*}
    \sum_{i,j \in [N] \sm \T} (\vhat_{i\alpha}\vhat_{j\alpha} - v_{i\alpha}v_{j\alpha})^2 = \norm{\bvhatt_{\alpha}(\bvhatt_{\alpha})^\top -  \submatrix{\bv_{\alpha}}{\T}(\submatrix{\bv_{\alpha}}{\T})^\top}_{F}^2 .
\end{equation*}
Using the trivial upper bound
\begin{equation*}
    \sum_{j \in [N] \sm \T} (\vhat_{j\alpha}\vhat_{j\alpha} - v_{j\alpha}v_{j\alpha})^2 
    \le  \sum_{i,j \in [N] \sm \T} (\vhat_{i\alpha}\vhat_{j\alpha} - v_{i\alpha}v_{j\alpha})^2,
\end{equation*}
we have for $k \in [N] \sm \T$
\begin{equation*}
 (\vhat_{k\alpha}^2 - v_{k\alpha}^2)^2 \le \sum_{i,j \in [N] \sm \T} (\vhat_{i\alpha}\vhat_{j\alpha} - v_{i\alpha}v_{j\alpha})^2.
\end{equation*}
Hence, applying this bound to Equation \eqref{eq:initial_estimates:refThisOne} we have with $\xinu$-high probability
\begin{equation*}
    (\vhat_{k\alpha}^2 - v_{k\alpha}^2)^2 \le \frac{C}{N} .
\end{equation*}
Taking square roots, we have that with $\xinu$-high probability,
\begin{equation}
    \abs{\vhat_{k\alpha}^2 - v_{k\alpha}^2} \le \frac{C}{\sqrt{N}} .
\end{equation} 
By Assumption \eqref{assumption:deloc},
\begin{equation*}
    \vhat_{k\alpha}^2 \le \frac{C}{\sqrt{N}} + v_{k\alpha}^2 \le \frac{C}{\sqrt{N}} + \frac{C\delocsq}{N},
\end{equation*}
from which
\begin{equation*}
    \vhat_{k\alpha}^2 \le \frac{C}{\sqrt{N}} + v_{k\alpha}^2 \le \frac{C}{\sqrt{N}} + \frac{C\delocsq}{N} \le \frac{C}{\sqrt{N}},
\end{equation*}
and after taking square roots, with $\xinu$-high probability,
\begin{equation}
    \abs{\vhat_{k\alpha}} \le \frac{C}{N^{1/4}} .
\end{equation}
So since the eigenvectors of $\Gtildetmatrix(z)$ form an orthogonal matrix, 
\begin{equation}
    \sum_{\alpha = 1}^{N - \abs{\T}} \abs{\vhat_{k\alpha}}^2 = 1,
\end{equation}
we have with $\xinu$-high probability,
\begin{equation}
    \sum_{\alpha \notin \mathcal{S}} \abs{\vhat_{k\alpha}}^2 = 1 - \sum_{\alpha \in \mathcal{S}}\abs{\vhat_{k\alpha}}^2 \ge 1 - \frac{C r}{\sqrt{N}} \ge c
\end{equation}
for some $c > 0$, sufficiently large $N$, and Assumption~\eqref{assumption:rank}.
It then follows from Equation \eqref{eq:elementary_estimates:ImG_lower_bound} 
that
\begin{equation*}
   c \le \Im \Gtildet_{kk}(z) \le \abs{\Gtildet_{kk}(z)}.
\end{equation*}
with $(\xi, \nu)$-high probability.

To upper bound $\abs{\Gtildet_{kk}(z)}$, note that $\submatrix{\mB - z\mI}{\T}$ is normal and its singular values are given by
\begin{equation*}
    \left\{ \sqrt{(\lambdahatt_{\alpha} - E)^2 + \eta^2} : \alpha=1,2,\dots,N - \abs{\T} \right\}.
\end{equation*}
Hence, the singular values of $\Gtildetmatrix = (\submatrix{\mB - z\mI}{\T})^{-1}$ can be bounded according to
\begin{equation*}
    \frac{1}{\sqrt{(\lambdahatt_{\alpha} - E)^2 + \eta^2}} \le \frac{1}{\eta}.
\end{equation*}
It then follows that 
\begin{equation}\label{eq:elementary_estimates:Gtilde_entries_bound}
    \max_{i,j}\abs{\Gtildet_{ij}(z)} \le \norm{\Gtildetmatrix(z)} \le \frac{1}{\eta}. 
\end{equation}

For the upper bound on $\mtildet(z)$, notice that
\begin{equation*}
    \abs{\mtildet(z)} = \abs{\frac{1}{N} \sum_{i} \Gtildet_{ii}(z)} \le \frac{1}{N} \sum_{i}\abs{ \Gtildet_{ii}(z)} \le \frac{1}{\eta} 
\end{equation*}
where the inequality follows from Equation \eqref{eq:elementary_estimates:Gtilde_entries_bound}. 

To show the last claim that $\max_{i\ne j}\abs{\Gtildet_{ij}(z)} \le C \Lo(z)$, observe that from Equation \eqref{eq:appendix:minor_resolvents_1} in the proof of Lemma \ref{lemma:minor_resolvents} and Equation \eqref{eq:elementary_estimates:Gtilde_entries_bound} and recalling $1/\eta \le 1/2$, 
we have
\begin{equation}\label{eq:elementary_estimates:minor_resolvents_adaptation}
\begin{aligned}
   \max_{i\ne j} \abs{\Gtilde_{ij}^{(\sm k)}}(z)
   &= \max_{i\ne j} \abs{-\Gtilde_{ij}(z) + \frac{\Gtilde_{ik}(z)\Gtilde_{kj}(z)}{\Gtilde_{kk}(z)}} \le  \max_{i\ne j} \abs{\Gtilde_{ij}(z)} +\max_{i\ne j} \frac{\abs{\Gtilde_{ik}(z)}\abs{\Gtilde_{kj}(z)}}{\abs{\Gtilde_{kk}(z)}} \\
   &\le \max_{i\ne j} \abs{\Gtilde_{ij}(z)} +  C\max_{i\ne j}\abs{\Gtilde_{ij}(z)}^2 .
\end{aligned} \end{equation}
Then take $\T = \emptyset$ in Equation \eqref{eq:elementary_estimates:Gtilde_entries_bound}, so we have
\begin{equation}
\max_{i\ne j} \abs{\Gtilde_{ij}(z)} +  C\max_{i\ne j}\abs{\Gtilde_{ij}(z)}^2 = \max_{i\ne j} \abs{\Gtilde_{ij}(z)}(1 + C\max_{i\ne j} \abs{\Gtilde_{ij}(z)}) \le \max_{i\ne j} \abs{\Gtilde_{ij}(z)} \left(1 + \frac{C}{\eta} \right),
\end{equation}
it follows that
\begin{equation}
   \max_{i\ne j} \abs{\Gtilde_{ij}^{(\sm k)}(z)} \le C\max_{i\ne j}\abs{\Gtilde_{ij}(z)}.
\end{equation}
The claim then follows by the same argument as given in the proof of Lemma~\ref{lemma:minor_resolvents} for the remaining $k'\in \T \sm \{k\}$ by replacing $\Gtildematrix$ with $\Gtildematrix^{(\sm k)}$ in Equation \eqref{eq:elementary_estimates:minor_resolvents_adaptation} and repeating the above. 
\end{proof}

\begin{lemma}\label{lemma:B_control}
   Let $\mB$ be as in Equation \eqref{eq:def:B}. We have with $\xinu$-high-probability,
\begin{equation*}
   \max_{i,j} \abs{B_{ij}} \le  C\frac{r\rhon\delocsq}{\sqrt{N}} + \frac{C}{q} .
\end{equation*}
\end{lemma}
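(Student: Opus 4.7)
The plan is to bound $|B_{ij}|$ by splitting into the signal and noise contributions via the triangle inequality and bounding each separately. Since $B_{ij} = L_{ij} + H_{ij}$ by Equation \eqref{eq:def:B}, we have
\begin{equation*}
\max_{i,j} |B_{ij}| \le \max_{i,j} |L_{ij}| + \max_{i,j} |H_{ij}|.
\end{equation*}

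To control the signal term, I would use the spectral decomposition in Equation \eqref{def:L:spectral_decomp} to write $L_{ij} = \sum_{\ell=N-r+1}^N \lambda_\ell v_{i\ell} v_{j\ell}$, then apply the triangle inequality together with Assumption \eqref{assumption:deloc} on the entries of the eigenvectors and Assumption \eqref{assumption:eigen_bound} on the eigenvalues. This is essentially the entrywise bound on $\mL$ already derived inside the proof of Lemma~\ref{lemma:leave_one_out_E} (see Equation~\eqref{eqn:L-entry-bound} of that proof), and it immediately yields
\begin{equation*}
\max_{i,j} |L_{ij}| \le r \cdot C\rhon\sqrt{N} \cdot \frac{C(\log N)^{2\gamma}}{N} \le C\frac{r\rhon\delocsq}{\sqrt{N}}.
\end{equation*}

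For the noise term, I would directly invoke Lemma~\ref{lemma:h_control}, which gives $\max_{i,j}|H_{ij}| \le C/q$ with $\xinu$-high probability. Combining the two bounds completes the proof.

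There is no real obstacle here — this is a short routine estimate whose two ingredients (entrywise delocalization of the signal eigenvectors and the Markov-based tail bound on $H_{ij}$) have already been established. The only minor care needed is to ensure the probability statement: the bound on $\max_{i,j}|L_{ij}|$ is deterministic (since $\mL$ is non-random), so the $\xinu$-high-probability qualifier is inherited entirely from Lemma~\ref{lemma:h_control}.
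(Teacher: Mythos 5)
Your proof is correct and takes essentially the same approach as the paper: split $B_{ij} = L_{ij} + H_{ij}$, bound the deterministic signal term via the spectral decomposition of $\mL$ combined with Assumptions~\eqref{assumption:deloc} and~\eqref{assumption:eigen_bound}, and bound the noise term via Lemma~\ref{lemma:h_control}. Your observation that the high-probability qualifier comes entirely from the $H_{ij}$ bound is also accurate.
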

\begin{proof}
Note we can decompose any entry $B_{ij}$ as 
\begin{equation}\label{eq:A_decomp}
    B_{ij} = L_{ij} + H_{ij}.
\end{equation}
We can further decompose $L_{ij}$ by recalling Equation \eqref{eqn:L-entry-bound}
\begin{equation*}
    \mL = \sum_{\alpha = N - r + 1}^N \lambda_{\alpha}\bv_{\alpha}\bv_{\alpha}^\top .
\end{equation*}
By Assumption \eqref{assumption:eigen_bound}, $c\sqrt{N} \le \lambda_{\alpha} \le C\sqrt{N}$ for all $\alpha \in \{N-r+1, N-r+2, \dots, N \}$.
So then for each $i,j \in [N]$, we have
\begin{equation*}
L_{ij}
= \left(\sum_{\alpha = N - r + 1}^N \lambda_{\alpha}\bv_{\alpha}\bv_{\alpha}^\top \right)_{ij}
\le C\sqrt{N}\left(\sum_{\alpha = N - r + 1}^N\bv_{\alpha}\bv_{\alpha}^\top \right)_{ij}.
\end{equation*}
By Assumption \eqref{assumption:deloc}, $\bv_i\bv_i^\top$ has entries of order $1/N$, whence 
\begin{equation*}
L_{ij} = O\left( \frac{r\rhon\delocsq}{\sqrt{N}} \right) .
\end{equation*}
Using this bound for $L_{ij}$ and controlling the entries of $\mH$ with Lemma \ref{lemma:h_control}, applying the triangle inequality to Equation \eqref{eq:A_decomp} yields
\begin{equation*}
\max_{i,j} \abs{B_{ij}} \le 
\max_{i,j} \abs{L_{ij}} + \max_{i,j} \abs{H_{ij}} \le C \frac{r\rhon\delocsq}{\sqrt{N}} + \frac{C}{q}
\end{equation*}
with $\xinu$-high-probability, as we set out to show.
\end{proof} 

\begin{lemma}[Weyl's Inequality; Theorem 4.3.1 in \cite{Horn_Johnson}]
\label{lemma:weyl}
    Let $\mA, \mB$ be Hermitian whose eigenvalues are ordered as $\lambdamin = \lambda_1 \le \lambda_2 \le \dots \le \lambda_N = \lambdamax$.
    Then
    \begin{equation} \label{eq:weyl:a}
        \lambda_i(\mA + \mB) \le \lambda_{i+j}(\mA) + \lambda_{N-j}(\mB)
    \end{equation}
    where $j = 0, \dots N-i$ for each $i\in [N]$. 
    Similarly,
    \begin{equation} \label{eq:weyl:b}
        \lambda_{i - j+1}(\mA) + \lambda_{j}(\mB) \le \lambda_i(\mA+\mB) 
    \end{equation}
    where $j = 1,2, \dots, i$ for each $i \in [N]$.

    Further, if 
    $\mA, \mB$ are Hermitian matrices, and $\mB$ is rank $r$.
    Then
    \begin{equation*}
    \lambda_i(\mA) \le \lambda_i(\mA + \mB) \le \lambda_{i+r}(\mA) \quad
    \text{ for } 1 \le i \le N-r .
    \end{equation*}
\end{lemma}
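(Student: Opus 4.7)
The plan is to invoke the Courant--Fischer min-max characterization of eigenvalues: for any Hermitian $\mathbf{M}$ with ordered eigenvalues $\lambda_1 \le \cdots \le \lambda_N$,
\begin{equation*}
\lambda_k(\mathbf{M}) = \min_{\dim W = k}\ \max_{x \in W,\, \|x\|=1} x^\top \mathbf{M} x = \max_{\dim W = N-k+1}\ \min_{x \in W,\, \|x\|=1} x^\top \mathbf{M} x.
\end{equation*}
The combinatorial fact I will use repeatedly is that two subspaces of $\R^N$ of dimensions $d_1, d_2$ meet in a subspace of dimension at least $d_1 + d_2 - N$, which will let me build test subspaces as intersections of eigenspaces of $\mA$ and $\mB$.

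For \eqref{eq:weyl:a}, I would let $V_A$ be the span of the eigenvectors of $\mA$ associated with its $i+j$ smallest eigenvalues and $V_B$ be the span of the eigenvectors of $\mB$ associated with its $N-j$ smallest eigenvalues. Their intersection contains a subspace $W$ of dimension at least $(i+j) + (N-j) - N = i$, and for any unit $x \in W$ both $x^\top \mA x \le \lambda_{i+j}(\mA)$ and $x^\top \mB x \le \lambda_{N-j}(\mB)$ hold simultaneously, so $x^\top(\mA+\mB)x$ is bounded by the sum. Plugging such a $W$ into the min-max form of $\lambda_i(\mA+\mB)$ yields the claim. For \eqref{eq:weyl:b} I would run the dual max-min argument using spans of the \emph{top} eigenvectors; the analogous dimension count produces a subspace of dimension $N-i+1$ on which $x^\top(\mA+\mB)x \ge \lambda_{i-j+1}(\mA) + \lambda_j(\mB)$.

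For the final rank-$r$ interlacing, I will assume $\mB$ is positive semidefinite of rank $r$, which is the setting in which the lemma is actually invoked in the proof of Lemma~\ref{lemma:7.1} (there $\mB$ plays the role of $\mL$, whose nonzero eigenvalues are positive by Assumption~\eqref{assumption:eigen_bound}). Under this assumption the lower bound $\lambda_i(\mA) \le \lambda_i(\mA+\mB)$ is immediate from monotonicity of eigenvalues under positive-semidefinite perturbations. For the upper bound I would take $V_0 = \ker \mB$, of dimension at least $N-r$, together with the span $V_A$ of the bottom $i+r$ eigenvectors of $\mA$; their intersection contains an $i$-dimensional subspace $W$ on which $\mB$ acts as zero and $x^\top \mA x \le \lambda_{i+r}(\mA)$, so the min-max form closes the argument.

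There is no substantive obstacle: the whole lemma reduces to Courant--Fischer plus the elementary dimension count for subspace intersections. The only item worth flagging is the positive-semidefiniteness hypothesis that appears to be implicit in the stated rank-$r$ interlacing; without it, the same Courant--Fischer machinery only yields the weaker two-sided bound $\lambda_{i-r}(\mA) \le \lambda_i(\mA+\mB) \le \lambda_{i+r}(\mA)$, which is not what the lemma asserts.
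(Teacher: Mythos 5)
Your Courant--Fischer argument is correct, and it is the standard textbook route for Weyl's inequality (the paper itself offers no proof here, simply citing Horn and Johnson, Theorem 4.3.1, so there is no alternative argument in the paper to compare against). Parts \eqref{eq:weyl:a} and \eqref{eq:weyl:b} go through exactly as you describe via the subspace-intersection dimension count, and your derivation of the upper half of the rank-$r$ interlacing, $\lambda_i(\mA+\mB)\le\lambda_{i+r}(\mA)$, from an $i$-dimensional subspace inside $\ker\mB$ is valid for any Hermitian $\mB$ of rank $r$, PSD or not (alternatively one can get it from \eqref{eq:weyl:a} with $j=r$ since $\lambda_{N-r}(\mB)\le 0$ always holds when $\rank\mB\le r<N$).

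Your flag on the lower half is a genuine and worthwhile catch. As stated, the lemma asserts $\lambda_i(\mA)\le\lambda_i(\mA+\mB)$ for any Hermitian rank-$r$ perturbation, but this is false without a sign condition: take $\mA=\mathbf 0$ and $\mB=\operatorname{diag}(-1,0,\dots,0)$, so $\lambda_1(\mA)=0>-1=\lambda_1(\mA+\mB)$. The inequality does hold once $\mB\succeq 0$, which follows from \eqref{eq:weyl:b} with $j=1$ because $\lambda_1(\mB)=0$. In the paper's lone invocation of this part (the eigenvalue interlacing in the proof of Lemma~\ref{lemma:7.1}, with $\mA\leftarrow\mH$ and $\mB\leftarrow\mL$), the matrix $\mL$ is PSD by construction, since its spectral decomposition in Equation~\eqref{def:L:spectral_decomp} has strictly positive nonzero eigenvalues, so the application is sound; but the hypothesis should properly appear in the lemma statement, and you are right to note that without it the Courant--Fischer machinery only delivers the symmetric two-sided bound $\lambda_{i-r}(\mA)\le\lambda_i(\mA+\mB)\le\lambda_{i+r}(\mA)$.
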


\end{document}